\DeclareMathOperator{\dive}{div}
\DeclareMathOperator{\e}{\varepsilon}
\DeclareMathOperator{\dist}{dist}
\DeclareMathOperator{\Lip}{Lip}
\DeclareMathOperator{\loc}{loc}
\DeclareMathOperator{\bR}{\mathbb R}
\DeclareMathOperator{\sgn}{sgn}
\newtheorem{thm}{Theorem}[section]
\newtheorem{lemma}[thm]{Lemma}
\newtheorem{prop}[thm]{Proposition}
\newtheorem{cor}[thm]{Corollary}
\newtheorem{dfn}[thm]{Definition}
\theoremstyle{definition}
\newtheorem*{acknowledgments*}{Acknowledgments}
\theoremstyle{definition}
\newtheorem{remark}[thm]{Remark}
\renewcommand\footnotemark{}
\newcommand{\@endstuff}{\par\vspace{\baselineskip}\noindent\small
\begin{tabular}{@{}l}\scshape{Department of Mathematics, Universitat Aut{\`o}noma de Barcelona,}\\\scshape{08193 Bellaterra (Barcelona), Spain}\\ \\\textit{E-mail address}: \texttt{gsakellaris@mat.uab.cat} \end{tabular}}
\numberwithin{equation}{section}
\begin{document}
\title{On scale invariant bounds for Green's function for second order elliptic equations with lower order coefficients and applications}

\author{Georgios Sakellaris \thanks{\hspace*{-7pt}2010 \textit{Mathematics Subject Classification}. Primary 35A08, 35J08, 35J15. Secondary 35B50, 35J20, 35J86. \newline
\hspace*{10.5pt} \textit{Key words and phrases}. Green’s function; fundamental solution; lower order coefficients; pointwise bounds; Lorentz bounds; maximum principle; Moser type estimate.\newline
\hspace*{14pt}The author has received funding from the European Union's Horizon 2020 research and innovation programme under Marie Sk{\l}odowska-Curie grant agreement No 665919, and is partially supported by MTM-2016-77635-P (MICINN, Spain) and 2017 SGR 395 (Generalitat de Catalunya).}}

\date{\vspace{-5ex}}
\maketitle
\begin{abstract}
We construct Green's functions for elliptic operators of the form $\mathcal{L}u=-\dive(A\nabla u+bu)+c\nabla u+du$ in domains $\Omega\subseteq\bR^n$, under the assumption $d\geq\dive b$, or $d\geq\dive c$. We show that, in the setting of Lorentz spaces, the assumption $b-c\in L^{n,1}(\Omega)$ is both necessary and optimal to obtain pointwise bounds for Green's functions. We also show weak type bounds for Green's functions and their gradients. Our estimates are scale invariant and hold for general domains $\Omega\subseteq\bR^n$. Moreover, there is no smallness assumption on the norms of the lower order coefficients. As applications we obtain scale invariant global and local boundedness estimates for subsolutions to $\mathcal{L}u\leq -\dive f+g$ in the case $d\geq\dive c$.
\end{abstract}

\section{Introduction}
In this article we are interested with Green's function for the operator
\[
-\dive(A\nabla u+bu)+c\nabla u+du=0
\]
in a domain (open and connected set) $\Omega\subseteq\bR^n$, where $n\geq 3$.

We will assume that the matrix $A$ is bounded and uniformly elliptic in $\Omega$: that is,
\[
\left<A(x)\xi,\xi\right>\geq\lambda|\xi|^2,\quad \forall x\in\Omega,\,\,\forall\xi\in\bR^n.
\]
For the lower order coefficients, we will assume that
\[
b,c\in L^{n,q}(\Omega)\,\,\,\text{for some}\,\,\,q\in[1,\infty),\quad b-c\in L^{n,1}(\Omega),\quad d\in L^{\frac{n}{2},\infty}(\Omega),
\]
where the spaces $L^{n,q}(\Omega),L^{\frac{n}{2},\infty}(\Omega)$ are defined in \eqref{eq:LorentzDfn}. Moreover, we will assume that either $d\geq\dive b$, or $d\geq\dive c$ in the sense of distributions. We remark that, throughout this article, there will be no smallness assumption on the norms of the coefficients; in addition, there will be no size assumption on $\Omega$ and no regularity assumption on $\partial\Omega$. In particular, we can have $\Omega=\bR^n$, or $\Omega=\bR^n_+$.

The consideration of the Lorentz spaces described above is natural if we want to show scale invariant estimates, since these spaces remain invariant under the natural scaling of the equation. Moreover, this consideration is necessary, since the assumption $b-c\in L^{n,q}$ for some $q>1$ does not guarantee weak type and pointwise bounds for Green's function (Definition~\ref{GreenDfn}). Indeed, if we set
\begin{equation}\label{eq:CFormula}
c(x)=-\frac{x}{r^2\ln r},\quad x\in B=B_{1/e}(0),
\end{equation}
then Proposition 7.5 in \cite{KimSak} and the comments after it show that Green's function for the equation $-\Delta u+\delta c\nabla u=0$ in $B$ cannot satisfy $L^{\frac{n}{n-2},\infty}$ and pointwise bounds, for any $\delta>0$. In the setting of Lorentz spaces, it can be shown that $c\in L^{n,q}(B)$ for any $q>1$, but $c\notin L^{n,1}(B)$ (Lemma~\ref{bNorms}); hence, we show in Proposition~\ref{NoPointwiseG} that weak type and pointwise bounds cannot be expected for the operator
\[
-\Delta u+c\nabla u=0,
\]
even if $c\in L^{n,q}$ for some $q>1$ is assumed to have small norm.

On the other hand, by considering the Lorentz space $L^{n,1}$, we will show in Theorem~\ref{Green} and Proposition~\ref{GreenUniqueness} that Green's function $G$ for the operator $\mathcal{L}u=-\dive(A\nabla u+bu)+c\nabla u+du=0$ in a domain $\Omega$ for the case $d\geq\dive b$, exists, it is unique, it satisfies scale invariant pointwise and weak type bounds, and its derivative satisfies weak type bounds; that is,
\[
\|G(\cdot,y)\|_{L^{\frac{n}{n-2},\infty}(\Omega)}+\|\nabla G(\cdot,y)\|_{L^{\frac{n}{n-1},\infty}(\Omega)}\leq C,\quad G(x,y)\leq C'|x-y|^{2-n},
\]
where $C$ depends on $n,\lambda$ and $\|b-c\|_{L^{n,1}}$ only, and $C'$ depends on $n,\lambda,\|A\|_{\infty}$ and $\|b-c\|_{L^{n,1}}$ only. Considering the fore mentioned counterexample, we obtain that the space $L^{n,1}(\Omega)$ is both necessary and optimal in the setting of Lorentz spaces for good bounds on Green's function in the case $d\geq\dive b$.

In this article we also study Green's function in the case $d\geq\dive c$. In this case, Green's function was constructed in \cite{KimSak} (for domains $\Omega$ with finite measure, and with $b-c\in L^n(\Omega)$, $d\in L^{\frac{n}{2}}(\Omega)$), and was shown to satisfy weak type bounds. However, using the function $c$ in \eqref{eq:CFormula}, we show in Proposition~\ref{NoPointwiseGt} that for the equation
\[
-\Delta u-\dive(cu)=0,
\]
even assuming that the $L^{n,q}$ norm of $c$ for some $q>1$ is small, the pointwise bounds for Green's function can fail. On the other hand, if $b-c\in L^{n,1}(\Omega)$, we show in Theorem~\ref{Green} and Proposition~\ref{GreenUniqueness} that in the case $d\geq\dive c$, Green's function for the operator $\mathcal{L}u=-\dive(A\nabla u+bu)+c\nabla u+du=0$ exists, it is unique, and satisfies bounds of the form
\[
\|g(\cdot,x)\|_{L^{\frac{n}{n-2},\infty}(\Omega)}+\|\nabla g(\cdot,x)\|_{L^{\frac{n}{n-1},\infty}(\Omega)}\leq C,\quad g(y,x)\leq C'|y-x|^{2-n},
\]
where $C$ depends on $n,\lambda$ and $\|b-c\|_{L^{n,1}}$ only, and $C'$ depends on $n,\lambda,\|A\|_{\infty}$ and $\|b-c\|_{L^{n,1}}$ only. We also show the symmetry relation $G(x,y)=g(y,x)$ for almost every $(x,y)\in\Omega^2$, where $G$ is Green's function for the adjoint operator mentioned above. Hence, the setting of $L^{n,1}$ is optimal for the pointwise bounds in the case $d\geq\dive c$ as well.

As a first application of our results we show in Proposition~\ref{MaxPrinciplet} a scale invariant inhomogeneous maximum principle for subsolutions $u\in W^{1,2}(\Omega)$ to $\mathcal{L}u=-\dive(A\nabla u+bu)+c\nabla u+du\leq-\dive f+g$ in the case $d\geq\dive c$, when $|\Omega|<\infty$ and $f\in L^{n,1}(\Omega),g\in L^{\frac{n}{2}}(\Omega)$: that is,
\[
\sup_{\Omega}u\leq C\left(\sup_{\partial\Omega}u^++\|f\|_{L^{n,1}(\Omega)}+\|g\|_{L^{\frac{n}{2},1}(\Omega)}\right),
\]
where $C$ depends on $n,\lambda$ and $\|b-c\|_{n,1}$ only. Note that, in the case when $c$ and $d$ are identically $0$ and $b$ has arbitrarily small $L^{n,q}(\Omega)$ norm for some $q>1$, Lemma 7.4 in \cite{KimSak} and Proposition~\ref{bNorms} show that this bound does not necessarily hold. So, in the case $d\geq\dive c$, the assumption $b-c\in L^{n,1}$ is both necessary and optimal to obtain such a maximum principle.

A second application of our results is Proposition~\ref{LocalBoundt}, in which we show a Moser-type local boundedness estimate for nonnegative subsolutions and solutions to $\mathcal{L}u\leq-\dive f+g$ in a ball $B_r$ in the case $d\geq\dive c$ and $f\in L^{n,1}(B_r),g\in L^{\frac{n}{2},1}(B_r)$: that is,
\[
\sup_{B_{r/2}}|u|\leq C\left(\fint_{B_r}|u|+\|f\|_{L^{n,1}(B_r)}+\|g\|_{L^{\frac{n}{2},1}(B_r)}\right),
\]
where $C$ depends on $n,\lambda,\|A\|_{\infty}$ and $\|b-c\|_{n,1}$ only. Again in this case, Lemma 7.4 in \cite{KimSak} and Proposition~\ref{bNorms} show the optimality of $L^{n,1}$ to deduce those types of estimates.

We remark that analogous estimates to the previous two are harder to obtain in the case $d\geq\dive c$ than in the case $d\geq\dive b$. This can be seen, for example, by the fact that the usual maximum principle is not necessarily satisfied for solutions in the case $d\geq\dive c$, while it is satisfied if $d\geq\dive b$ (see Theorem 8.1 in \cite{Gilbarg} for example, and Proposition~\ref{MaxPrinciple}).

In order to show our results, the main core of this article relies on estimates for decreasing rearrangements. The main idea is that, by considering the decreasing rearrangement $u^*$ of a function $u$ (defined in \eqref{eq:DecrRearr}), we obtain a radial function such that various quantities involved with $u$ are maximized or minimized by the analogous quantities for $u^*$. This idea is exhibited by the P{\'o}lya-Szeg{\"o} inequality (see for example (1) in \cite{BrothersZiemer}), and the fact that extremizers that achieve equality in the Sobolev inequality are radially symmetric functions \cite{TalentiSobolev}. Furthermore, this technique has been applied in many past works in order to show estimates of solutions to various problems concerning second order elliptic equations, for example in \cite{TalentiElliptic}, \cite{AlvinoTrombetti78}, \cite{AlvinoTrombetti81}, \cite{BettaMercaldoComparisonAndRegularity}, \cite{DelVecchioPosteraroExistenceMeasure}, \cite{DelVecchioPosteraroNoncoercive}, \cite{AlvinoTrombettiLionsMatarasso}, \cite{AlvinoFeroneTrombetti}, and the more recent \cite{Buccheri}.

A few historical remarks are in order. Green's function for second order elliptic operators of the form $\mathcal{L}u=-\dive(A\nabla u)$ with elliptic and bounded $A$ in bounded domains $\Omega$ was studied in \cite{Littman}, and also later in \cite{Gruter}. More recently, Green's function was constructed in \cite{HofmannLewis} (Chapter III, Lemma 4.3) where the Bourgain condition on the harmonic measure of the domain was assumed (estimate (4.2), Chapter III in \cite{HofmannLewis}). Later on, Hofmann and Kim in \cite{HofmannKim} constructed Green’s functions for elliptic systems, and Kim and Kang showed pointwise bounds for Green's functions for systems in \cite{KimKangGreen}. In all of the previous papers, lower order coefficients are not present. Considering lower order coefficients, Green's function was constructed in \cite{RiahiGreen} by considering the Kato class in domains with $C^{1,1}$ boundary. In \cite{ZhugeZhangGreen} elliptic systems are considered, but smallness assumptions on the norms and coercivity is imposed. Systems are also considered in \cite{MayborodaGreen}.

Towards removing smallness assumptions and considering general domains, the critical and subcritical settings are considered in \cite{KimSak}  (see also \cite{thesis} for the case when $b,d$ vanish and $c$ is bounded, or the case when $c,d$ vanish and $b$ is bounded). The article \cite{KimSak} is the first in which Green's function in the critical setting $b-c\in L^n$ for the case $d\geq\dive c$ is constructed, without any coercivity and smallness assumptions; however, the estimates are not scale invariant and domains with finite measure are considered. As we mentioned above, \cite{KimSak} also shows that an assumption of the form $b-c\in L^n$ does not suffice for good bounds for Green's function in the case $d\geq\dive b$.

Green's function is also constructed in the very recent article \cite{MourgoglouRegularity}, for which the author shows scale invariant weak type and pointwise bounds (Theorem 6.1 and Lemma 6.3 in \cite{MourgoglouRegularity}) under the assumption $d\geq\dive b$ or $d\geq\dive c$, where $b-c$ is a member of a Dini-type Kato-Stummel class $\mathcal{K}_{\text{Dini},1/2}$ (Section 2.1 in \cite{MourgoglouRegularity}). We note that the Lorentz space $L^{n,1}$ that we consider in the present article is not contained in $\mathcal{K}_{\text{Dini},1/2}$, since it can be checked that for any $a>1$, $x|x|^{-2}\left(-\ln|x|\right)^a\in L^{n,1}(B)\setminus\mathcal{K}_{\text{Dini},1/2}(B)$, where $B$ is the ball centered at $0$ with radius $1/e$; moreover, the techniques in the present article are completely different compared to the ones in \cite{MourgoglouRegularity}. We remark that in \cite{MourgoglouRegularity}, except for Green's functions, a wide range of properties is also studied, including solvability with right hand sides and scale invariant estimates under the weaker assumption $b-c\in L^n(\Omega)$, as well as continuity of solutions.

To the best of our knowledge, the present article and \cite{MourgoglouRegularity} are the first to show scale invariant bounds (both of weak type, and pointwise) for Green's function for operators $\mathcal{L}$ with lower order coefficients, under no coercivity assumption on $\mathcal{L}$, no smallness assumption on the lower order coefficients, and no regularity on the domain.

The organization of this article is as follows. In Section 2 we introduce Lorentz spaces, we show preliminary lemmas on Lorentz functions, changes of variables and symmetrization techniques, and we define Green's function. In Section 3 we show various estimates on solutions and subsolutions, where we impose less regularity than what we will need for the sake of generality. In Section 4 we show the main scale invariant estimates for approximate Green's functions: the $L^{\frac{n}{n-2},\infty}$ and the pointwise estimate on approximate Green's functions, and the $L^{\frac{n}{n-1},\infty}$ estimate on their gradients. In this section, the lower order coefficients are assumed to be Lipschitz and $\Omega$ is assumed to be bounded for technical reasons. Those assumptions are removed in Section 5: we first drop the boundedness assumption on the lower order coefficients, and we then pass to general domains. The optimality of our conditions, concerning the pointwise bounds, is shown in Section 6, where counterexamples are provided. Finally, in Section 7 we show global and local scale invariant boundedness estimates for solutions and subsolutions with right hand sides in Lorentz spaces.

\begin{acknowledgments*}
We would like to thank Seick Kim for the collaboration in \cite{KimSak}, which served as a starting point to this article. We would also like to thank Mihalis Mourgoglou for sharing with us an early draft of his paper \cite{MourgoglouDraft} and for helpful conversations.	
\end{acknowledgments*}

\section{Preliminaries}
\subsection{Definitions}
For a domain $\Omega\subseteq\bR^n$, $W_0^{1,2}(\Omega)$ will denote the closure of $C_c^{\infty}(\Omega)$ under the $W^{1,2}$ norm, where
\[
\|u\|_{W^{1,2}(\Omega)}=\|u\|_{L^2(\Omega)}+\|\nabla u\|_{L^2(\Omega)}.
\]
The fact that $W_0^{1,2}(\Omega)$ is a Hilbert space is important in showing existence of solutions using the Lax-Milgram theorem (as in Section 6.2 in \cite{Evans}, or Section 4 of \cite{KimSak}, for example). However, the space $W_0^{1,2}(\Omega)$ is not well suited to the problems that we are interested in, if $\Omega$ has infinite measure. For this, we set $Y_0^{1,2}(\Omega)$ to be the closure of $C_c^{\infty}(\Omega)$ under the $Y^{1,2}$ norm, where
\[
\|u\|_{Y^{1,2}(\Omega)}=\|u\|_{L^{2^*}(\Omega)}+\|\nabla u\|_{L^2(\Omega)},
\]
and $2^*=\frac{2n}{n-2}$ is the Sobolev conjugate to $2$. From the Sobolev estimate
\[
\|\phi\|_{L^{2^*}(\Omega)}\leq C_n\|\nabla\phi\|_{L^2(\Omega)},
\]
for all $\phi\in C_c^{\infty}(\Omega)$, we obtain that $Y_0^{1,2}(\Omega)=W_0^{1,2}(\Omega)$ if $|\Omega|<\infty$. We also set $Y^{1,2}(\Omega)$ to be the space of weakly differentiable $u\in L^{2^*}(\Omega)$, such that $\nabla u\in L^2(\Omega)$, with the $Y^{1,2}$ norm. Then, considering the embedding
\[
T:Y^{1,2}(\Omega)\to L^{2^*}(\Omega)\times (L^2(\Omega))^n,\quad Tu=(u,\partial_1u,\dots\partial_nu),
\]
we can identify $Y^{1,2}(\Omega)$ with a closed subspace of $L^{2^*}(\Omega)\times (L^2(\Omega))^n$. Since $L^{2^*}(\Omega)\times (L^2(\Omega))^n$ is reflexive, we obtain that $Y^{1,2}(\Omega)$ is reflexive as well. Therefore $Y_0^{1,2}(\Omega)$ is also reflexive.

We denote by $\Lip(\Omega)$ the space of Lipschitz functions in $\Omega$: that is, we say that $f\in\Lip(\Omega)$ if $|f(x)-f(y)|\leq M|x-y|$ for some $M>0$ and for all $x,y\in\Omega$. Moreover, $L_c^{\infty}(\Omega)$ will denote the space of compactly supported bounded functions in $\Omega$.

If $u$ is a measurable function in $\Omega$, we define the distribution function
\[
\mu_u(t)=\left|\left\{x\in\Omega: |u(x)|>t\right\}\right|,\quad t>0.
\]
If $u\in L^p(\Omega)$ for some $p\geq 1$, then $\mu_u(t)<\infty$ for any $t>0$. Moreover, we define the decreasing rearrangement (as in (1.4.2), page 45 of \cite{Grafakos}) by
\begin{equation}\label{eq:DecrRearr}
u^*(s)=\inf\{t>0:\mu_u(t)\leq s\}.
\end{equation}
An important property of $u^*$ is that it is equimeasurable to $u$: that is,
\[
\left|\left\{x\in\Omega:|u(x)|>t\right\}\right|=\left|\left\{s>0:u^*(s)>t\right\}\right|\,\,\,\text{for all}\,\,\,t>0.
\]
Note that, if $|\Omega|<\infty$, then $u^*(s)=0$ for $s>|\Omega|$. Also, $\mu_u,u^*$ are right continuous in $(0,\infty)$. 

From Proposition 1.4.5 (2) in \cite{Grafakos}, we obtain that
\begin{equation}\label{eq:muu}
\mu_u(u^*(s))\leq s\,\,\,\text{for all}\,\,\, s\geq 0.
\end{equation}
We also recall Hardy's inequality: for $u,v\in L^1(\Omega)$,
\begin{equation}\label{eq:Hardy}
\int_{\Omega}|uv|\leq\int_0^{\infty}u^*v^*.
\end{equation}
Moreover, we will need the following version of a reverse inequality to the above: for $f,g:[0,\infty)\to[0,\infty)$ with $f,g\geq 0$, $f$ increasing and $g$ integrable, we have that
\begin{equation}\label{eq:HardyLower}
\int_0^{\infty}f(x)g^*(x)\,dx\leq\int_0^{\infty}f(x)g(x)\,dx.
\end{equation}
Let $p\in(0,\infty)$ and $q\in(0,\infty]$. If $f$ is a measurable function defined in $\Omega$, using the decreasing rearrangement of $f$, as on page 48 in \cite{Grafakos} we define the Lorentz seminorm
\begin{equation}\label{eq:LorentzDfn}
\|f\|_{L^{p,q}(\Omega)}=\left\{\begin{array}{c l} \displaystyle \left(\int_0^{\infty}\left(t^{\frac{1}{p}}f^*(t)\right)^q\frac{dt}{t}\right)^{\frac{1}{q}}, & q<\infty \\ \displaystyle\sup_{t>0}t^{\frac{1}{p}}f^*(t),& q=\infty.\end{array}\right.
\end{equation}
Then, from Propositions 1.4.5 and 1.4.9 in \cite{Grafakos}, we obtain that
\begin{equation}\label{eq:EquivalentForInf}
\|f\|_{L^{p,q}(\Omega)}=\left\{\begin{array}{c l} \displaystyle p^{\frac{1}{q}}\left(\int_0^{\infty}\left(\mu_f(s)^{\frac{1}{p}}s\right)^q\frac{ds}{s}\right)^{\frac{1}{q}}, & q<\infty \\ \displaystyle \sup_{s>0}s\mu_f(s)^{\frac{1}{p}},& q=\infty.\end{array}\right.
\end{equation}
We say that $f\in L^{p,q}(\Omega)$ if $\|f\|_{L^{p,q}(\Omega)}<\infty$. Note that if $p\in(1,\infty]$ and $q\in[1,\infty)$, then $L^{p,q}(\Omega)$ is a Banach space with a norm equivalent to the seminorm defined above (see for example Theorem 3.21, page 204 in \cite{SteinWeiss}).

From Remark 1.4.7 on page 48 in \cite{Grafakos},
\begin{equation}\label{eq:LorentzNormPowers}
\left\||u|^r\right\|_{p,q}=\|u\|_{pr,qr}^r,\quad 0<p,r<\infty,\,\,0<q\leq\infty.
\end{equation}
We also have, from Proposition 1.4.10 in \cite{Grafakos}, that Lorentz spaces increase if we increase the second index, and also
\begin{equation}\label{eq:LorentzNormsRelations}
\|f\|_{L^{p,r}}\leq C_{p,q,r}\|f\|_{L^{p,q}}\,\,\,\text{for all}\,\,\,0<p\leq\infty,\,\,0<q<r\leq\infty.
\end{equation}

For $b\in L^{n,\infty}(\Omega)$ and $d\in L^{\frac{n}{2},\infty}(\Omega)$, the assumption $d\geq\dive b$ in the sense of distributions will mean that, for every $\phi\in C_c^{\infty}(\Omega)$ with $\phi\geq 0$,
\begin{equation}\label{eq:Div0}
\int_{\Omega}b\nabla\phi+d\phi\geq 0.
\end{equation}
Using an approximation argument, we can see that \eqref{eq:Div0} is equivalent to the statement
\begin{equation}\label{eq:Div}
\int_{\Omega}b\nabla v+dv\geq 0\,\,\,\text{for every}\,\,\,v\in Y_0^{\left(\frac{n}{2},1\right),1}(\Omega),\,\,\,\text{with}\,\,\,v\geq 0,
\end{equation}
where $Y_0^{(p,q),1}(\Omega)$ for $1<p<n$ and $1\leq q\leq\infty$ is the closure of $C_c^{\infty}(\Omega)$ under the seminorm
\[ \|\phi\|_{Y_0^{(p,q),1}(\Omega)}=\|\phi\|_{L^{\frac{np}{n-p},q}(\Omega)}+\|\nabla\phi\|_{L^{p,q}(\Omega)}.
\]
From Theorem 4.2 (i) in \cite{CosteaSobolevLorentz}, the above seminorm is equivalent to $\|\nabla\phi\|_{L^{p,q}(\Omega)}$ in $C_c^{\infty}(\Omega)$.

For a function $u\in Y^{1,2}$, we will say that $u\leq 0$ on $\partial\Omega$ if $u^+=\max\{u,0\}\in Y_0^{1,2}(\Omega)$. Moreover, $\sup_{\partial\Omega}u$ will be defined as the infimum of all $s>0$ such that $u\leq s$ on $\partial\Omega$.

We now turn to solutions and subsolutions. For this, let $\Omega\subseteq\bR^n$ be a domain, and suppose that $A$ is bounded, $b,c\in L^{n,\infty}(\Omega)$, $d\in L^{\frac{n}{2},\infty}(\Omega)$ and $f\in L^2(\Omega)$, $g\in L^{2_*}(\Omega)$, where $2_*=\frac{2n}{n+2}$. We say that $u\in Y^{1,2}(\Omega)$ is a solution to the equation $\mathcal{L}u=-\dive(A\nabla u+bu)+c\nabla u+du=-\dive f+g$ in $\Omega$, if
\[
\int_{\Omega}A\nabla u\nabla\phi+b\nabla\phi\cdot u+c\nabla u\cdot\phi+du\phi=\int_{\Omega}f\nabla\phi+g\phi,\,\,\,\text{for all}\,\,\,\phi\in C_c^{\infty}(\Omega).
\]
We also say that $u\in Y^{1,2}(\Omega)$ is a subsolution to $\mathcal{L}u\leq-\dive f+g$ in $\Omega$, if
\[
\int_{\Omega}A\nabla u\nabla\phi+b\nabla\phi\cdot u+c\nabla u\cdot\phi+du\phi\leq\int_{\Omega}f\nabla\phi+g\phi,\,\,\,\text{for all}\,\,\,\phi\in C_c^{\infty}(\Omega),\,\,\phi\geq 0.
\]
Finally, we say that $u\in Y^{1,2}(\Omega)$ is a supersolution to $\mathcal{L}u\geq-\dive f+g$ in $\Omega$, if $-u$ is a subsolution to $\mathcal{L}(-u)\leq\dive f-g$ in $\Omega$.

Using the definitions above, we can now define Green's function.
\begin{dfn}\label{GreenDfn}
Let $\Omega\subseteq\bR^n$ be open. Let $A$ be uniformly bounded and elliptic, and $b,c\in L^{n,\infty}(\Omega)$, $d\in L^{\frac{n}{2},\infty}(\Omega)$. Set $\mathcal{L}u=-\dive(A\nabla u+bu)+c\nabla u+du$. We say that $G(x,y)=G_y(x)$ is Green's function for $\mathcal{L}$ in $\Omega$, if $G_y\in L^1(\Omega)$ for almost every $y\in\Omega$, and if, for any $f\in L^{\infty}_c(\Omega)$, the function
\[
u(y)=\int_{\Omega}G(x,y)f(x)\,dx
\]
is a $Y_0^{1,2}(\Omega)$ solution to the adjoint equation $\mathcal{L}^tu=-\dive(A^t\nabla u+cu)+b\nabla u+du=f$ in $\Omega$. Similarly, we say that $g(y,x)=g_x(y)$ is Green's function for $\mathcal{L}^t$ in $\Omega$, if $g_x\in L^1(\Omega)$ for almost every $y\in\Omega$, and if, for any $f\in L^{\infty}_c(\Omega)$, the function
\[
v(x)=\int_{\Omega}g(y,x)f(y)\,dy
\]
is a $Y_0^{1,2}(\Omega)$ solution to the equation $\mathcal{L}u=f$ in $\Omega$.
\end{dfn}

Note that Definition 5.1 of Green's function in \cite{KimSak} coincides with Definition~\ref{GreenDfn} in the case that $|\Omega|<\infty$, $A$ is uniformly elliptic and bounded, $b,c\in L^n(\Omega)$, $d\in L^{\frac{n}{2}}(\Omega)$, and also $d\geq\dive b$ or $d\geq\dive c$. This follows from Lemmas 4.2 and 4.4 in \cite{KimSak}.

\subsection{Basic Lemmas}
The following lemma will be used in order to deduce estimates for coefficients with low regularity.

\begin{lemma}\label{ImprovedSobolev}
Let $\Omega\subseteq\bR^n$ be a domain. Then for any $u\in Y_0^{1,2}(\Omega)$,
\begin{equation}\label{eq:ImprovedSobolev}
\|u\|_{L^{2^*,2}(\Omega)}\leq C_n\|\nabla u\|_{L^2(\Omega)},
\end{equation}
where $C_n$ depends only on $n$.
\end{lemma}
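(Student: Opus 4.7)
The inequality is the endpoint Lorentz refinement of the Sobolev embedding; I would prove it by symmetrization, following the usual chain: reduce to the radial decreasing case via Pólya--Szegő, then handle the resulting one-dimensional inequality by integration by parts and Cauchy--Schwarz. By density of $C_c^\infty(\Omega)$ in $Y_0^{1,2}(\Omega)$ it suffices to treat $u\in C_c^\infty(\Omega)$, and by replacing $u$ with $|u|$ I may assume $u\geq0$.

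\textbf{Rewriting the Lorentz norm.} Unwinding the definition \eqref{eq:LorentzDfn}, and using that $2/2^{*}=(n-2)/n$, gives
\[
\|u\|_{L^{2^{*},2}(\Omega)}^{2}=\int_{0}^{\infty}\bigl(t^{1/2^{*}}u^{*}(t)\bigr)^{2}\,\frac{dt}{t}=\int_{0}^{\infty}t^{-2/n}u^{*}(t)^{2}\,dt,
\]
so the goal becomes to control this integral by $\|\nabla u\|_{L^{2}}^{2}$.

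\textbf{Pólya--Szegő step.} I would use the coarea formula together with the isoperimetric inequality applied to the level sets $\{u>t\}$, exactly as in the standard derivation of the Pólya--Szegő inequality: Cauchy--Schwarz on each level set yields $\mathcal{H}^{n-1}(\{u=t\})^{2}\leq(-\mu_{u}'(t))\int_{\{u=t\}}|\nabla u|\,d\mathcal{H}^{n-1}$, and the isoperimetric inequality gives $\mathcal{H}^{n-1}(\{u=t\})\geq n\omega_{n}^{1/n}\mu_{u}(t)^{(n-1)/n}$. Integrating in $t$ and making the change of variables $s=\mu_{u}(t)$ (so $u^{*}$ is the essential inverse of $\mu_{u}$) produces
\[
\|\nabla u\|_{L^{2}(\Omega)}^{2}\geq n^{2}\omega_{n}^{2/n}\int_{0}^{\infty}s^{2-2/n}\bigl((u^{*})'(s)\bigr)^{2}\,ds.
\]

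\textbf{One-dimensional Hardy-type inequality.} It remains to show
\[
\int_{0}^{\infty}s^{-2/n}u^{*}(s)^{2}\,ds\leq\Bigl(\tfrac{2n}{n-2}\Bigr)^{2}\int_{0}^{\infty}s^{2-2/n}\bigl((u^{*})'(s)\bigr)^{2}\,ds.
\]
Integrating by parts (using $s^{(n-2)/n}$ as the antiderivative of $\frac{n-2}{n}s^{-2/n}$) converts the left side into $-\frac{2n}{n-2}\int_{0}^{\infty}s^{(n-2)/n}u^{*}(s)(u^{*})'(s)\,ds$, and then Cauchy--Schwarz, splitting the weight as $s^{(n-2)/n}=s^{-1/n}\cdot s^{(n-1)/n}$, gives the desired inequality. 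Combined with the previous step this yields the stated bound with $C_{n}=\frac{2}{(n-2)\omega_{n}^{1/n}}$.

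\textbf{Main obstacle.} The only nontrivial point is justifying the boundary terms in the integration by parts: one needs $s^{(n-2)/n}u^{*}(s)^{2}\to 0$ both as $s\to 0^{+}$ and as $s\to\infty$. For $u\in C_c^\infty(\Omega)$ the function $u^{*}$ is bounded and vanishes past $|\mathrm{supp}\,u|$, so both limits are immediate; this is exactly why the preliminary reduction to smooth compactly supported $u$ is useful, and the general $Y_{0}^{1,2}(\Omega)$ statement follows by approximation since both sides of \eqref{eq:ImprovedSobolev} are continuous with respect to $\|\nabla\cdot\|_{L^{2}(\Omega)}$ (using that Lorentz norm convergence follows from the standard $L^{2^{*}}$ embedding for the ambient norm).
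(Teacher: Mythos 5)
Your proof is correct but takes a genuinely different route from the paper. The paper treats the smooth case as known, simply citing Remark 5 in Tartar's \emph{An Introduction to Sobolev Spaces and Interpolation Spaces} (or Theorem A of Mal\'y--Pick) for the inequality on $C_c^\infty(\Omega)$, and then handles the passage to $Y_0^{1,2}(\Omega)$ via a duality argument: $L^{2^*,2}$ is the dual of $L^{2_*,2}$, so the bounded sequence $(\phi_m)$ has a weak-* convergent subsequence by Banach--Alaoglu, and weak-* lower semicontinuity of the norm gives the conclusion. You instead prove the smooth case from first principles by the symmetrization chain --- coarea plus isoperimetry to obtain the P\'olya--Szeg\H{o}-type bound $\|\nabla u\|_2^2 \geq n^2\omega_n^{2/n}\int_0^\infty s^{2-2/n}\bigl((u^*)'(s)\bigr)^2\,ds$, then a one-dimensional Hardy inequality via integration by parts and Cauchy--Schwarz --- and you close the limiting argument by observing that the inequality applied to $\phi_m-\phi_k$ makes the sequence Cauchy in $L^{2^*,2}$, rather than invoking weak-* compactness. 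Both limit arguments are valid (completeness of $L^{2^*,2}$ for $p>1$, $q\geq1$ is recorded in the paper, citing Stein--Weiss). Your route is longer but self-contained and yields the explicit constant $C_n = 2/\bigl((n-2)\omega_n^{1/n}\bigr)$, whereas the paper's is shorter and modular, deferring the core estimate to the literature. The only step that deserves slightly more care in your write-up is the coarea/isoperimetry passage, where one should note that by Sard's theorem almost every level set of a $C_c^\infty$ function is a regular hypersurface, and that the change of variables $s=\mu_u(t)$ is justified since $\mu_u$ is absolutely continuous on level-set ranges for smooth $u$; but these are routine and the argument is classical.
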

\begin{proof}
The estimate holds if $u=\phi\in C_c^{\infty}(\Omega)$ (see Remark 5 in \cite{TartarImbedding}, or Theorem A in \cite{MalyPick} for example). Now, if $u\in Y_0^{1,2}(\Omega)$, then there exists a sequence $(\phi_m)$ in $C_c^{\infty}(\Omega)$ such that $\phi_m\to u$ in $Y_0^{1,2}(\Omega)$. Then $(\phi_m)$ is bounded in $L^{2^*,2}(\Omega)$ from \eqref{eq:ImprovedSobolev}. From Theorem 1.4.17 in \cite{Grafakos}, $L^{2^*,2}(\Omega)$ is the dual to $L^{2_*,2}(\Omega)$, hence, from the Banach-Alaoglou theorem, $(\phi_m)$ has a subsequence $(\phi_{k_m})$ that converges weakly-* to some $v\in L^{2^*,2}(\Omega)$. Since $\phi_m\to u$ in $L^{2^*}(\Omega)$, we then obtain that $u=v\in L^{2^*,2}(\Omega)$, and also
\[
\|u\|_{L^{2^*,2}(\Omega)}\leq\liminf_{m\to\infty}\|\phi_{k_m}\|_{L^{2^*,2}(\Omega)}\leq C_n\liminf_{m\to\infty}\|\nabla\phi_m\|_{L^2(\Omega)}=C_n\|\nabla u\|_{L^2(\Omega)},
\]
which completes the proof.
\end{proof}

The next lemma deals with the validity of the formula $\mu_u(u^*(s))=s$.

\begin{lemma}\label{MuInverse}
Let $\Omega\subseteq\bR^n$ be a domain, $u\in L^p(\Omega)$ for some $p\geq 1$, and set
\[
A_u=\left\{s\in(0,\infty):|[u^*=s]|>0\right\}.
\]
Then $A_u$ is at most countable. Moreover, if $u^*(s)\notin A_u$, then $\mu_u(u^*(s))=s$.
\end{lemma}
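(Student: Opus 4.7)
The plan is to exploit the fact that $u^*$ is non-increasing (and right continuous), so that its level sets are intervals, and then to combine this with the equimeasurability identity $\mu_u(t) = |\{s' > 0 : u^*(s') > t\}|$ already stated in the excerpt.

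For the countability of $A_u$: since $u^*$ is non-increasing, for every real number $s$ the level set $[u^* = s] = \{s' > 0 : u^*(s') = s\}$ is an interval in $(0,\infty)$. Indeed, if $s'_1 < s'_2$ both satisfy $u^*(s'_i) = s$, then for any $s'_1 \leq s'_3 \leq s'_2$ we have $s = u^*(s'_1) \geq u^*(s'_3) \geq u^*(s'_2) = s$, so $u^*(s'_3) = s$. For $s \in A_u$, this interval has positive measure, hence is non-degenerate, so it contains a rational number. For distinct $s_1, s_2 \in A_u$ the intervals $[u^* = s_1]$ and $[u^* = s_2]$ are disjoint, which produces an injection from $A_u$ into $\mathbb{Q}$. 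Therefore $A_u$ is at most countable.

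For the identity $\mu_u(u^*(s)) = s$ when $u^*(s) \notin A_u$: by equimeasurability,
\[
\mu_u(u^*(s)) = |\{s' > 0 : u^*(s') > u^*(s)\}|.
\]
Since $u^*$ is non-increasing, the inequality $u^*(s') > u^*(s)$ forces $s' < s$ (otherwise $u^*(s') \leq u^*(s)$), so the right-hand set is contained in $[0,s)$ and its measure is at most $s$. The difference $[0,s) \setminus \{s' : u^*(s') > u^*(s)\}$ consists of those $s' \in [0,s)$ with $u^*(s') \leq u^*(s)$; combined with monotonicity $u^*(s') \geq u^*(s)$ for $s' < s$, this difference equals $\{s' \in [0,s) : u^*(s') = u^*(s)\}$, which is contained in $[u^* = u^*(s)]$. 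The hypothesis $u^*(s) \notin A_u$ gives $|[u^* = u^*(s)]| = 0$, so this difference is null and $\mu_u(u^*(s)) = s$ as claimed.

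There is no real obstacle here; the only point that requires a small amount of care is keeping track of strict versus non-strict inequalities and using the monotonicity of $u^*$ to identify the set whose measure could make the inequality in \eqref{eq:muu} strict. The key observation is that this set lies inside $[u^* = u^*(s)]$, so avoiding $A_u$ is exactly the condition that closes the gap.
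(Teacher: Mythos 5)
Your proof is correct and follows essentially the same line of reasoning as the paper's. The countability argument is the same idea in both cases: the level sets $[u^*=s]$ are pairwise disjoint intervals, and those with positive measure are necessarily non-degenerate, hence at most countably many (you inject into $\mathbb{Q}$, the paper counts disjoint nonempty open interiors — equivalent). For the main identity, the paper proves the contrapositive (if $\mu_u(u^*(s)) < s$ then $u^*$ is constant on some interval $(t_0,s)$, hence $u^*(s) \in A_u$), while you compute $\mu_u(u^*(s))$ directly via equimeasurability and show the deficit set $(0,s)\setminus\{s':u^*(s')>u^*(s)\}$ lies inside $[u^*=u^*(s)]$, which is null by hypothesis. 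Both arguments hinge on the same observation — that the discrepancy between $s$ and $\mu_u(u^*(s))$ is entirely accounted for by the level set $[u^*=u^*(s)]$ — so this is a streamlined presentation of the same proof rather than a genuinely different route.
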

\begin{proof}
Since $u^*$ is decreasing, for different $s\in A_u$, the sets $([u^*=s])^{\mathrm{o}}$ are nonempty, pairwise disjoint open intervals; hence there can only be countably many of those sets. Therefore $A_u$ is at most countable.

Let now $s\in(0,\infty)$. From \eqref{eq:muu}, we have that $\mu_u(u^*(s))\leq s$. So, in order to show the second part, we will show that, if $s$ is such that $\mu_u(u^*(s))<s$, then $u^*(s)\in A_u$. To show this, let $t_0\in(\mu_u(u^*(s)),s)$. Since $u^*$ is decreasing, we have that $u^*(t_0)\geq u^*(s)$. If now $u^*(t_0)>u^*(s)$, then $u^*(t)>u^*(s)$ for every $t\in(0,t_0)$, hence
\[
(0,t_0)\subseteq [u^*>u^*(s)]\,\,\Rightarrow\,\,\,t_0\leq \left|[u^*>u^*(s)]\right|=\left|[|u|>u^*(s)]\right|=\mu_u(u^*(s)),
\]
which is a contradiction. Hence $u^*(t_0)=u^*(s)$, and since $u^*$ is decreasing, we obtain that $u^*$ is constant in $(t_0,s)$, therefore $u^*(s)\in A_u$. This completes the proof.
\end{proof}

The following lemma will be useful when we will consider the Lorentz seminorm on disjoint subsets of our domain.

\begin{lemma}\label{NormOnDisjoint0}
Let $\Omega\subseteq\bR^n$ be open, and let $f\in L^{p,1}(\Omega)$ for some $1\leq p<\infty$. If $X,Y\subseteq\Omega$ with $X,Y\neq\emptyset$ and $X\cap Y=\emptyset$, then
\[
\|f\|_{L^{p,1}(\Omega)}^p\geq\|f\|_{L^{p,1}(X)}^p+\|f\|_{L^{p,1}(Y)}^p.
\]
\end{lemma}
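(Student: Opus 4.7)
The plan is to reduce everything to the distribution function via the alternative formula \eqref{eq:EquivalentForInf} and then deploy Minkowski's integral inequality in its ``superadditive'' form. Specializing \eqref{eq:EquivalentForInf} to $q = 1$ yields
\[
\|f\|_{L^{p,1}(Z)} = p\int_0^\infty \mu_{f|_Z}(s)^{1/p}\,ds
\]
for any measurable $Z \subseteq \Omega$, where $\mu_{f|_Z}(s) = |\{x \in Z : |f(x)| > s\}|$. So after dividing by $p^p$, the inequality to prove becomes
\[
\left(\int_0^\infty \mu_f(s)^{1/p}\,ds\right)^{\!p} \;\geq\; \left(\int_0^\infty \mu_{f|_X}(s)^{1/p}\,ds\right)^{\!p} + \left(\int_0^\infty \mu_{f|_Y}(s)^{1/p}\,ds\right)^{\!p}.
\]

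Next I would exploit the disjointness of $X$ and $Y$: for every $s > 0$,
\[
\mu_f(s) \;\geq\; \bigl|\{x \in X \cup Y : |f(x)| > s\}\bigr| \;=\; \mu_{f|_X}(s) + \mu_{f|_Y}(s),
\]
so it suffices to show the cleaner statement
\[
\left(\int_0^\infty (u(s)^p + v(s)^p)^{1/p}\,ds\right)^{\!p} \;\geq\; \left(\int_0^\infty u(s)\,ds\right)^{\!p} + \left(\int_0^\infty v(s)\,ds\right)^{\!p}
\]
for any nonnegative measurable $u, v$, applied with $u = \mu_{f|_X}^{1/p}$ and $v = \mu_{f|_Y}^{1/p}$.

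The final step is Minkowski's integral inequality with exponent $p \geq 1$, using counting measure on the two-point set $\{1,2\}$. Setting $F(1,s) = u(s)$ and $F(2,s) = v(s)$, the inequality
\[
\left(\sum_{i=1}^2 \left(\int F(i,s)\,ds\right)^{\!p}\right)^{\!1/p} \;\leq\; \int \left(\sum_{i=1}^2 F(i,s)^p\right)^{\!1/p} ds
\]
gives exactly the desired bound after raising both sides to the $p$-th power. Combining all three steps completes the proof. The only place to be careful is the direction of the Minkowski inequality: since the ``outer'' exponent on the integrals is $1/p \leq 1$ one could naively worry about subadditivity going the wrong way, but the correct viewpoint is that $(u^p + v^p)^{1/p}$ is an $\ell^p$ norm of the vector $(u,v)$ and Minkowski moves integrals inside such a norm at the cost of an inequality in the direction we want. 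No additional regularity of $f$ or of the sets $X, Y$ is needed beyond the hypothesis $f \in L^{p,1}(\Omega)$.
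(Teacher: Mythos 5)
Your proof is correct and essentially the same as the paper's: both reduce to the distribution functions via \eqref{eq:EquivalentForInf}, use disjointness to get $\mu_f \geq \mu_{f|_X} + \mu_{f|_Y}$, and then invoke what the paper calls the ``reverse Minkowski inequality'' for the exponent $1/p \leq 1$ — which is exactly your Minkowski integral inequality with counting measure on $\{1,2\}$, just phrased differently. The two are the same inequality in disguise, so there is nothing to choose between the routes.
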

\begin{proof}
Let $\mu_f,\mu_f^X,\mu_f^Y$ be the distribution functions of $f,f|_X,f|_Y$, respectively. If $t>0$, then
\[
\{x\in\Omega:|f(x)|>t\}\supseteq\{x\in X:|f(x)|>t\}\cup \{x\in Y:|f(x)|>t\},
\]
and the last two sets are disjoint; hence, $\mu_f\geq \mu_f^X+\mu_f^Y$. Therefore, from the reverse Minkowski inequality (since $1/p<1$), we obtain
\[
\left(p\int_0^{\infty}\left(\mu_f^X(s)+\mu_f^Y(s)\right)^{\frac{1}{p}}\,ds\right)^p\geq\left( p\int_0^{\infty}\mu_f^X(s)^{\frac{1}{p}}\,ds\right)^p+\left(p\int_0^{\infty}\mu_f^Y(s)^{\frac{1}{p}}\,ds\right)^p,
\]
and combining with \eqref{eq:EquivalentForInf} completes the proof.
\end{proof}

We will also need the following lemma.

\begin{lemma}\label{NormOnDisjoint}
Let $\Omega\subseteq\bR^n$ be open, and let $f\in L^{p,q}(\Omega)$, for some $p\in(1,\infty)$ and $q\in[1,\infty)$. If $(A_m)$ is a sequence of subsets of $\Omega$ with $\chi_{A_m}\to 0$ almost everywhere, then
\[
\|f\|_{L^{p,q}(A_m)}\xrightarrow[m\to\infty]{}0.
\]
\end{lemma}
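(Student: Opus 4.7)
The plan is a straightforward dominated convergence argument, applied at two levels: first on the distribution functions of $f\chi_{A_m}$, and then on the integrand defining the Lorentz seminorm.

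First I would set $f_m=f\chi_{A_m}$ and observe the pointwise monotonicity of the distribution function: for every $s>0$,
\[
\mu_{f_m}(s)=|\{x\in A_m:|f(x)|>s\}|\leq \mu_f(s).
\]
Since $f\in L^{p,q}(\Omega)\subseteq L^{p,\infty}(\Omega)$ by \eqref{eq:LorentzNormsRelations}, the distribution $\mu_f(s)$ is finite for every $s>0$. Thus the set $\{|f|>s\}$ has finite measure, and the functions $\chi_{A_m\cap\{|f|>s\}}$ are dominated by the integrable function $\chi_{\{|f|>s\}}$ and tend to $0$ almost everywhere by hypothesis. Dominated convergence gives $\mu_{f_m}(s)\to 0$ for every $s>0$.

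Next I would promote this to the decreasing rearrangement. Fix $t>0$; for every $s>0$, eventually $\mu_{f_m}(s)\leq t$, and from the definition \eqref{eq:DecrRearr} this forces $f_m^*(t)\leq s$. Letting $s\downarrow 0$ yields $f_m^*(t)\to 0$ for every $t>0$. On the other hand, since $\mu_{f_m}\leq\mu_f$, from \eqref{eq:DecrRearr} we also have $f_m^*(t)\leq f^*(t)$ for every $t>0$, so
\[
\left(t^{1/p}f_m^*(t)\right)^q\frac{1}{t}\leq\left(t^{1/p}f^*(t)\right)^q\frac{1}{t},
\]
and the right-hand side is integrable on $(0,\infty)$ because $f\in L^{p,q}(\Omega)$ and $q<\infty$. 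Applying dominated convergence once more to the definition \eqref{eq:LorentzDfn} yields
\[
\|f\|_{L^{p,q}(A_m)}^q=\int_0^{\infty}\left(t^{1/p}f_m^*(t)\right)^q\frac{dt}{t}\xrightarrow[m\to\infty]{}0,
\]
which is the desired conclusion.

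There is no real obstacle: the only place where the hypotheses $p\in(1,\infty)$ and $q\in[1,\infty)$ are used is to ensure that $f^*(t)$ is finite for each $t>0$ (so $\mu_f(s)<\infty$ for $s>0$, enabling the first dominated convergence) and that $\|f\|_{L^{p,q}}$ is given by the integral formula with $q<\infty$ (enabling the second). The restriction $p>1$ is actually superfluous in this argument, but it comes for free from the statement.
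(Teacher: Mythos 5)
Your proof is correct. The paper itself does not give a self-contained argument for this lemma: it simply observes that, in the terminology of Bennett and Sharpley, $\chi_{A_m}\to 0$ a.e.\ means $A_m\to\emptyset$, and then cites the Bennett--Sharpley theory of absolutely continuous norms in rearrangement-invariant spaces (Definitions I-3.1 and IV-4.1, Theorems IV-4.7 and IV-4.8), which establishes that every function in $L^{p,q}$ with $q<\infty$ has absolutely continuous norm, a property that is exactly the statement of the lemma. Your argument replaces that citation chain with a direct two-stage dominated-convergence argument: first on $\chi_{A_m\cap\{|f|>s\}}$ to get $\mu_{f\chi_{A_m}}(s)\to 0$ for each $s>0$ (here finiteness of $\mu_f(s)$, guaranteed by $f\in L^{p,\infty}$, supplies the dominating integrable function), then from this to $(f\chi_{A_m})^*(t)\to 0$ pointwise with $(f\chi_{A_m})^*\le f^*$ as the dominating majorant, and finally dominated convergence on the Lorentz integral itself, which is where $q<\infty$ enters. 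Both proofs are valid; yours is longer on the page but entirely elementary and self-contained, while the paper's is shorter but opaque to a reader who doesn't have Bennett--Sharpley at hand. One small thing worth making explicit if you wrote this up formally: the seminorm $\|f\|_{L^{p,q}(A_m)}$ is computed from the rearrangement of $f|_{A_m}$ with respect to the measure on $A_m$, which agrees for $s>0$ with the distribution function of $f\chi_{A_m}$ on all of $\Omega$, so the identification $f_m=f\chi_{A_m}$ is legitimate -- but this is routine and doesn't affect the correctness of what you wrote.
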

\begin{proof}
Using the terminology of page 14 in \cite{BennettSharpley}, the assumption we have on $(A_m)$ is stated as $A_m\to\emptyset$. Then, the proof follows combining Definitions I-3.1 and IV-4.1, and Theorems IV-4.7 and IV-4.8 in \cite{BennettSharpley}.
\end{proof}

The next lemma shows that Sobolev functions have decreasing rearrangements that are locally absolutely continuous in $(0,\infty)$. This fact will be crucial in some technical steps.

\begin{lemma}\label{AbsoluteContinuity}
Let $\Omega\subseteq\bR^n$ be a bounded domain, and let $u\in W_0^{1,2}(\Omega)$. Then, the decreasing rearrangement $u^*$ is absolutely continuous in every interval of the form $(\e,M)$ for $0<\e<M<\infty$.
\end{lemma}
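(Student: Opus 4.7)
The plan is to reduce the problem to one-dimensional absolute continuity via Schwarz symmetrization. Since $\Omega$ is bounded, $u$ extends by zero to a compactly supported element of $W^{1,2}(\bR^n)$, and because $u^{\ast} = |u|^{\ast}$ I may replace $u$ by $|u|$ and assume $u \geq 0$. Applying the P\'olya--Szeg\H{o} inequality (as in (1) of \cite{BrothersZiemer}) to the symmetric decreasing rearrangement $u^{\sharp}$ of $u$, we get $u^{\sharp} \in W^{1,2}(\bR^n)$ with $\|\nabla u^{\sharp}\|_{L^2(\bR^n)} \le \|\nabla u\|_{L^2(\bR^n)}$.

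Since $u^{\sharp}$ is radially symmetric and nonincreasing, write $u^{\sharp}(x) = h(|x|)$ for some nonincreasing $h \colon [0,\infty) \to [0,\infty)$. A direct computation of the distribution function of $u^{\sharp}$, combined with the equimeasurability of $u$ and $u^{\sharp}$, yields
\[
u^{\ast}(s) \;=\; (u^{\sharp})^{\ast}(s) \;=\; h\bigl((s/\omega_n)^{1/n}\bigr), \qquad s > 0,
\]
where $\omega_n$ denotes the volume of the unit ball in $\bR^n$. Passing to polar coordinates, the finiteness of $\int_{\bR^n}|\nabla u^{\sharp}|^2$ reads
\[
\int_0^{\infty} |h'(r)|^{2}\, r^{n-1}\, dr < \infty.
\]

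On any compact subinterval $[r_1, r_2] \subset (0, \infty)$ the weight $r^{n-1}$ is bounded between positive constants, hence $h' \in L^2([r_1, r_2]) \subseteq L^1([r_1, r_2])$; consequently $h$ is absolutely continuous on $[r_1, r_2]$. The map $s \mapsto (s/\omega_n)^{1/n}$ is a $C^{1}$ diffeomorphism of $(0, \infty)$ with derivative bounded on any $[\varepsilon, M] \subset (0,\infty)$, so the composition $u^{\ast}$ inherits absolute continuity on every such $[\varepsilon, M]$. The only substantive input is the P\'olya--Szeg\H{o} step---that symmetrization preserves $W^{1,2}$-regularity even at this low level of smoothness---which we invoke rather than reprove; the remainder is a routine one-dimensional Sobolev manipulation, with the restriction $\varepsilon > 0$ essential because the radial weight $r^{n-1}$ degenerates at the origin.
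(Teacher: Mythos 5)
Your proof is correct and takes a genuinely different route from the paper's. The paper simply cites Corollary 2.6 of Brothers--Ziemer, which already asserts local absolute continuity of the radial profile $\tilde{u}$ of $u^\sharp$ away from the origin, and then records the change of variables $u^*(s)=\tilde{u}\bigl(\sqrt[n]{s/\alpha(n)}\bigr)$. You instead re-derive that corollary from the P\'olya--Szeg\H{o} inequality: $u^\sharp\in W^{1,2}(\bR^n)$, a polar-coordinates slicing argument yields $\int_0^\infty|h'(r)|^2 r^{n-1}\,dr<\infty$, and local square-integrability of $h'$ on compacta away from $r=0$ gives local absolute continuity of $h$. Two small points are left implicit, though neither amounts to a genuine gap: (i) passing from $u^\sharp\in W^{1,2}(\bR^n)$ radial to $h\in W^{1,2}_{\loc}(0,\infty)$ with the stated weighted integral finite requires a polar-coordinates/Fubini argument on annuli, since Sobolev membership is a priori an equivalence-class statement; and (ii) concluding that the pointwise monotone function $h$ coincides with its absolutely continuous representative once $h'\in L^1_{\loc}$ uses that a monotone function whose distributional derivative is an $L^1_{\loc}$ density has no jumps. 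The paper's citation packages exactly these facts; your version is more self-contained and makes the underlying mechanism visible. Both approaches require $\varepsilon>0$ for the same reason, the degeneracy of the weight $r^{n-1}$ at the origin.
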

\begin{proof}
Consider the function $u^*$ defined in (2) of \cite{BrothersZiemer} (note that this $u^*$ is not the same as the decreasing rearrangement in \eqref{eq:DecrRearr}!). We will use this function to define the function $\tilde{u}$ as on page 154 of \cite{BrothersZiemer}; that is, we set
\[
\tilde{u}(s)=\sup\{t>0:\mu_u(t)>\alpha(n)s^n\}=\inf\{t>0:\mu_u(t)\leq\alpha(n)s^n\},
\]
where $\alpha(n)$ is the volume of the unit ball in $\bR^n$. Then, from Corollary 2.6 in \cite{BrothersZiemer}, $\tilde{u}$ is absolutely continuous in every interval of the form $(\e,M)$. Hence,
\[
u^*(s)=\inf\{t>0:\mu_u(t)\leq s\}=\tilde{u}\left(\sqrt[n]{\frac{s}{\alpha(n)}}\right)
\]
is absolutely continuous in every interval $(\e,M)$, which completes the proof.
\end{proof}

Finally, we will need the following version of Gronwall's inequality (see for example Proposition 2.1 in \cite{DelVecchioPosteraroExistenceMeasure}).

\begin{lemma}\label{Gronwall}
Let $a\geq 0$ and suppose that $f,g_1,g_2,g_3$ are measurable functions defined in $(a,\infty)$, with $g_2,g_3\geq 0$, and $g_3g_1,g_3g_2,g_3f\in L^1(a,\infty)$. If, for almost every $t>a$,
\[
f(t)\leq g_1(t)+g_2(t)\int_t^{\infty}g_3(\tau)f(\tau)\,d\tau,
\]
then, if $\exp$ is the exponential function, for almost every $t>a$, 
\[
f(t)\leq g_1(t)+g_2(t)\int_t^{\infty}g_1(\tau)g_3(\tau)\exp\left(\int_t^{\tau}g_2(\rho)g_3(\rho)\,d\rho\right)\,d\tau.
\]
\end{lemma}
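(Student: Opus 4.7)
\textbf{Proof plan for Lemma~\ref{Gronwall}.} The natural approach is to convert the integral inequality into a differential inequality for the tail integral, and then solve it with an integrating factor.

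First, set
\[
F(t)=\int_t^{\infty}g_3(\tau)f(\tau)\,d\tau,\qquad t>a.
\]
Since $g_3f\in L^1(a,\infty)$, the function $F$ is well defined, absolutely continuous, and $F'(t)=-g_3(t)f(t)$ for a.e.\ $t>a$; moreover $F(T)\to 0$ as $T\to\infty$. The hypothesis says $f(t)\leq g_1(t)+g_2(t)F(t)$ a.e. Multiplying this inequality by $g_3(t)\geq 0$ and rewriting in terms of $F'$ gives
\[
-F'(t)=g_3(t)f(t)\leq g_1(t)g_3(t)+g_2(t)g_3(t)F(t)\quad\text{a.e.},
\]
i.e.\ $F'(t)+g_2(t)g_3(t)F(t)\geq -g_1(t)g_3(t)$.

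Next, introduce the integrating factor $E(t)=\exp\!\left(\int_a^t g_2(\rho)g_3(\rho)\,d\rho\right)$, which is absolutely continuous with $E'=g_2g_3\,E$. Note that $E$ is bounded on $(a,\infty)$ since $g_2g_3\in L^1(a,\infty)$. Then $(F\cdot E)'=(F'+g_2g_3F)E\geq -g_1g_3E$ a.e., and integrating from $s$ to $T$ yields
\[
(F\cdot E)(s)\leq (F\cdot E)(T)+\int_s^T g_1(\tau)g_3(\tau)E(\tau)\,d\tau.
\]
Letting $T\to\infty$ kills the first term (since $F(T)\to 0$ and $E$ is bounded), so dividing by $E(s)$ and using $E(\tau)/E(s)=\exp\!\left(\int_s^\tau g_2g_3\right)$ gives
\[
F(s)\leq \int_s^{\infty} g_1(\tau)g_3(\tau)\exp\!\left(\int_s^{\tau}g_2(\rho)g_3(\rho)\,d\rho\right)d\tau.
\]
Substituting this bound for $F(t)$ back into $f(t)\leq g_1(t)+g_2(t)F(t)$ produces the claimed conclusion.

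The only mildly delicate point is the passage $T\to\infty$, which requires both the integrability $g_3f\in L^1$ (to guarantee $F(T)\to 0$) and $g_2g_3\in L^1$ (to guarantee $E$ is bounded and that $\int_s^\infty g_1g_3 E<\infty$, using $g_1g_3\in L^1$). These are exactly the hypotheses provided. Everything else is an elementary ODE argument; no heavy machinery is needed.
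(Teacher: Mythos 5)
Your proof is correct, and the integrating-factor argument is the standard way to establish this kind of tail Gronwall inequality. Note that the paper does not actually prove Lemma~\ref{Gronwall}; it simply cites Proposition 2.1 of \cite{DelVecchioPosteraroExistenceMeasure}, so there is no in-text argument to compare against, but your proof is self-contained and uses exactly the hypotheses provided: $g_3 f\in L^1$ gives $F(t)=\int_t^\infty g_3 f$ absolutely continuous on compact subintervals with $F(T)\to 0$, $g_2 g_3\in L^1$ and $g_2 g_3\geq 0$ make $E(t)=\exp\left(\int_a^t g_2 g_3\right)$ bounded, absolutely continuous and $\geq 1$ (so division by $E(s)$ is safe), and $g_1 g_3\in L^1$ together with boundedness of $E$ ensures the limiting integral converges. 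The one step worth making explicit for full rigor is that $FE$ is absolutely continuous on each $[s,T]\subset(a,\infty)$ (product of two bounded AC functions), so that integrating the pointwise a.e.\ inequality $(FE)'\geq -g_1 g_3 E$ via the fundamental theorem of calculus is legitimate; with that remark, the argument is complete.
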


\subsection{Derivatives of compositions}
In this subsection we prove basic lemmas about derivatives of compositions. We start with the following decomposition.
\begin{lemma}\label{Splitting}
Let $\Omega\subseteq\bR^n$ be a domain, and $u\in W_0^{1,2}(\Omega)$. Then we can split
\[
(0,\infty)=G_u\cup D_u\cup N_u,
\]
where the sets $G_u,D_u$ and $N_u$ are disjoint, such that the following properties hold.
\begin{enumerate}[i)]
\item If $x\in G_u$, then $u^*$ is differentiable at $x$, $\mu_u$ is differentiable at $u^*(x)$, and $(u^*)'(x)\neq 0$. Moreover,
\begin{equation}\label{eq:xGuFormulas}
\mu_u(u^*(x))=x\quad\text{and}\quad\mu_u'(u^*(x))=\frac{1}{u^*(x)},\quad\text{for all}\quad x\in G_u.
\end{equation}
\item If $x\in D_u$, then $u^*$ is differentiable at $x$, with $(u^*)'(x)=0$.
\item $N_u$ is a null set.
\end{enumerate}
\end{lemma}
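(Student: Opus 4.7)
The plan is to carve $(0,\infty)$ into the good set $G_u$, the zero-derivative set $D_u$, and a null exceptional set $N_u$, and then verify the three properties. The two inputs driving everything are Lemma~\ref{AbsoluteContinuity}, which gives that $u^*$ is absolutely continuous on every $(\varepsilon, M)$ and hence differentiable almost everywhere on $(0,\infty)$, and the classical fact that the monotone function $\mu_u$ is also differentiable almost everywhere.

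Let $N_1 \subset (0,\infty)$ be the null set where $(u^*)'$ fails to exist and let $E \subset (0,\infty)$ be the null set where $\mu_u'$ fails to exist. I define
\begin{align*}
D_u &= \{x \in (0,\infty) \setminus N_1 : (u^*)'(x) = 0\},\\
N_2 &= \{x \in (0,\infty) \setminus (N_1 \cup D_u) : u^*(x) \in E\},\\
N_u &= N_1 \cup N_2, \qquad G_u = (0,\infty) \setminus (D_u \cup N_u).
\end{align*}
Then (ii) and the differentiability assertions in (i) hold by construction. The only nontrivial point in (iii) is $|N_2|=0$. I would deduce this from the classical consequence of absolute continuity that $(u^*)'(x)=0$ for almost every $x \in (u^*)^{-1}(B)$ whenever $|B|=0$; one way to see this is to slice $(u^*)^{-1}(B)\cap\{|(u^*)'|\geq 1/n\}$ and bound its measure via the area formula $\int_C |(u^*)'|\,dx = \int N(u^*,C,\cdot)\,dy$ for absolutely continuous maps. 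Applying this with $B=E$ kills $N_2$.

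Next I verify the identity $\mu_u(u^*(x))=x$ on $G_u$. By Lemma~\ref{MuInverse} it suffices to rule out $u^*(x) \in A_u$. If $u^*(x)=s \in A_u$, then by monotonicity of $u^*$ the level set $[u^*=s]$ is an interval of positive length, either $[a,b]$ or $[a,b)$. Every interior point lies in $D_u \cup N_1$ because $u^*$ is locally constant there, and at an endpoint the two one-sided derivatives are zero and of one sign respectively, so if $(u^*)'$ exists at the endpoint it equals zero and the endpoint is in $D_u$; otherwise it sits in $N_1$. In every case $x \in D_u \cup N_1$, contradicting $x \in G_u$.

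Finally I establish the derivative relation (i.e.\ the natural inverse-function formula $\mu_u'(u^*(x))=1/(u^*)'(x)$) by producing a good difference quotient. Since $(u^*)'(x)\neq 0$, $u^*$ cannot be constant on any one-sided neighborhood of $x$; by continuity of $u^*$ and countability of $A_u$ (Lemma~\ref{MuInverse}), the image $u^*((x,x+\delta))$ is not contained in $A_u$ for any $\delta>0$. Pick $h_n \downarrow 0$ with $y_n := u^*(x+h_n) \notin A_u$; then $y_n \to u^*(x)$, $y_n \neq u^*(x)$ for large $n$, and $\mu_u(y_n) = x + h_n$ by Lemma~\ref{MuInverse}. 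Differentiability of $\mu_u$ at $u^*(x)$ and of $u^*$ at $x$ now gives
\[
\mu_u'(u^*(x)) = \lim_{n} \frac{\mu_u(y_n) - x}{y_n - u^*(x)} = \lim_{n} \frac{h_n}{u^*(x+h_n) - u^*(x)} = \frac{1}{(u^*)'(x)}.
\]
The main obstacle is the step $|N_2|=0$: transferring a null set of $\mu_u$-non-differentiability back through $u^*$ on the region $\{(u^*)' \neq 0\}$ is exactly where absolute continuity of $u^*$ is essential. Once this is in hand, the remainder reduces to bookkeeping that combines Lemma~\ref{MuInverse} with the level-set structure of $u^*$.
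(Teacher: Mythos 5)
Your decomposition is the paper's in different notation (your $N_1,\,E,\,D_u,\,N_2,\,G_u$ correspond to the paper's $N^*_u,\,\tilde N_u,\,E_u(3)\cup F_u(3),\,F_u(2),\,E_u(2)$), and the verification runs along the same route: a Serrin--Varberg-type null-image principle for $u^*$ kills $N_2$, the level-set structure of $A_u$ together with Lemma~\ref{MuInverse} gives $\mu_u(u^*(x))=x$, and a difference quotient along a sequence $h_n$ with $u^*(x+h_n)\notin A_u$ yields the derivative formula. The only deviation is cosmetic: the paper cites Theorem~1 of \cite{SerrinVarberg} for the null-image step, whereas you sketch the same fact via the Banach indicatrix / area formula for absolutely continuous functions, which is an equally valid justification.
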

\begin{proof}
Set $\tilde{N}_u$ to be the set of $y\in(0,\infty)$ such that $\mu_u$ is not differentiable at $y$. Let also $N^*_u$ to be the set of $x\in(0,\infty)$ such that $u^*$ does not have a finite derivative at $x$. Since $\mu_u,u^*$ are decreasing, we obtain that $|\tilde{N}_u|=|N^*_u|=0$. We also set $Z_u$ to be the set of $x$ such that $(u^*)'(x)=0$.
	
We now define
\[
E_u=(u^*)^{-1}((0,\infty)\setminus\tilde{N}_u),\quad F_u=(u^*)^{-1}(\tilde{N}_u).
\]
We further split
\[
E_u(1)=E_u\cap N_u^*,\quad E_u(2)=(E_u\setminus Z_u)\setminus N_u^*,\quad E_u(3)=(E_u\cap Z_u)\setminus N_u^*,
\]
and
\[
F_u(1)=F_u\cap N_u^*,\quad F_u(2)=(F_u\setminus Z_u)\setminus N_u^*,\quad F_u(3)=(F_u\cap Z_u)\setminus N_u^*.
\]
We then have that the sets $E_u(i),F_u(j)$ for $i,j=1,2,3$ are pairwise disjoint, and also
\begin{equation}\label{eq:Split}
(0,\infty)=\bigcup_{i=1}^3E_u(i)\cup\bigcup_{j=1}^3F_u(j).
\end{equation}
Note that $|E_u(1)|=0$ and $|F_u(1)|=0$. In addition, $u^*$ has a finite derivative everywhere in $F_u(2)$, and $u^*(F_u(2))\subseteq u^*(F_u)\subseteq\tilde{N}_u$, so $|u^*(F_u(2))|=0$. Therefore, Theorem 1 in \cite{SerrinVarberg} shows that $(u^*)'(y)=0$ for almost every $y\in F_u(2)$, and since $F_u(2)\cap Z_u=\emptyset$, we have that $|F_u(2)|=0$. So, if we define $N_u=E_u(1)\cup F_u(1)\cup F_u(2)$, then $|N_u|=0$.
	
Set now $D_u=E_u(3)\cup F_u(3)$. From the definition of $Z_u$, we then have that $u^*$ is differentiable at every $y\in D_u$, with $(u^*)'(y)=0$. Finally, set $G_u=E_u(2)$; then $u^*$ is differentiable at every $x\in G_u$, and $(u^*)'(x)\neq 0$. Moreover, if $x\in G_u$, then $u^*(x)\in(0,\infty)\setminus\tilde{N}_u$, therefore $\mu_u$ is differentiable at $u^*(x)$. Then, we obtain that the sets $G_u,D_u,N_u$ are disjoint, and \eqref{eq:Split} shows that
\[
(0,\infty)=G_u\cup D_u\cup N_u.
\]
It remains to show the formulas for $x\in G_u$. For this, note that if $x\in G_u$, then $(u^*)'(x)\neq 0$ and since $u^*$ is decreasing, we obtain that $u^*(x)\notin A_u$, where $A_u$ is defined in Lemma~\ref{MuInverse}. Therefore, from the same lemma, we obtain that $\mu_u(u^*(x))=x$.

To show the second formula note that $u^*$ is continuous in $(0,\infty)$ from Lemma~\ref{AbsoluteContinuity}. Since $A_u$ is at most countable, for every $x\in G_u$ there exists a sequence $h_n\to 0$ such that $u^*(x+h_n)\notin A_u$. Then, from Lemma~\ref{MuInverse}, $\mu_u(u^*(x+h_n))=x+h_n$, which implies that $u^*(x+h_n)\neq u^*(x)$. Since now $\mu_u$ is differentiable at $u^*(x)$ and $u^*(x+h_n)\to u^*(x)$,
\[
\mu_u'(u^*(x))=\lim_{n\to\infty}\frac{\mu_u(u^*(x+h_n))-\mu_u(u^*(x))}{u^*(x+h_n)-u^*(x)}=\lim_{n\to\infty}\frac{h_n}{u^*(x+h_n)-u^*(x)}=\frac{1}{(u^*)'(x)},
\]
which completes the proof.
\end{proof}

The following lemma follows from Theorem 2 in \cite{SerrinVarberg}, and will be used when we will have to differentiate the composition $F\circ u$ in the case that $u$ is a monotone function and $F$ is an integrable function. 

\begin{lemma}\label{Decomp}
Let $M>0$, suppose that $g:(0,\infty)\to\mathbb(0,\infty)$ is a monotone function, and let $F:(0,\infty)\to\bR$ be a locally absolutely continuous function. Then, the function $v=F\circ g$ is differentiable almost everywhere, and for almost every $x\in(0,M)$,
\[
v'(x)=f(g(x))\cdot g'(x),
\]
where $f$ is a function with $F'=f$ almost everywhere.
\end{lemma}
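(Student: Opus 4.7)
The idea is to invoke Theorem~2 of \cite{SerrinVarberg} (the same paper cited in the proof of Lemma~\ref{Splitting}), which is precisely a chain rule for compositions of an absolutely continuous function with a monotone one. The only real work is to organize the argument so that the ``bad'' set where $F$ fails to be differentiable is seen to contribute nothing.

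First, since $g:(0,\infty)\to(0,\infty)$ is monotone, the Lebesgue differentiation theorem for monotone functions gives that $g$ has a finite classical derivative $g'(x)$ at almost every $x\in(0,M)$. Since $F$ is locally absolutely continuous, there is a null set $N\subset(0,\infty)$ off which $F$ is classically differentiable; define $f=F'$ on $(0,\infty)\setminus N$ and (say) $f=0$ on $N$, so $F'=f$ a.e.\ and we have freedom in how $f$ is set on $N$.

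Next, I would split the domain as $(0,M)=P\cup Q$, where $P=\{x\in(0,M):g(x)\notin N\}$ and $Q=\{x\in(0,M):g(x)\in N\}$. On $P$, the composition is classical: at any $x\in P$ at which $g'(x)$ exists, $F$ is differentiable at $g(x)$, and the ordinary chain rule applies to give
\[
v'(x)=F'(g(x))\,g'(x)=f(g(x))\,g'(x).
\]
On $Q$ the image $g(Q)$ lies in $N$, which is a null set. Here one cannot invoke the ordinary chain rule directly because $F$ is possibly not differentiable at $g(x)$; however, $g$ is monotone and maps $Q$ into a set of measure zero, so Theorem~1 of \cite{SerrinVarberg} (the same tool used for Lemma~\ref{Splitting}) gives $g'(x)=0$ for almost every $x\in Q$. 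Thus the right-hand side $f(g(x))\,g'(x)$ equals $0$ a.e.\ on $Q$ regardless of the (arbitrary) value assigned to $f$ at points of $N$.

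It remains to verify that $v$ is differentiable a.e.\ on $Q$ with $v'=0$. This is exactly the content of Theorem~2 of \cite{SerrinVarberg}: the combination of local absolute continuity of $F$ with monotonicity of $g$ forces $(F\circ g)'(x)=0$ at almost every $x$ at which $g'(x)=0$, by controlling $|F(g(x+h))-F(g(x))|$ via the integral of $|f|$ on an interval whose length is $o(|h|)$ when $g'(x)=0$. Combining the two cases, $v$ is differentiable a.e.\ in $(0,M)$ and $v'(x)=f(g(x))\,g'(x)$ a.e., as claimed. The only subtlety is the second formula on $Q$; this is where the cited Serrin--Varberg theorem is doing the essential work, and it is the step I would not try to reprove.
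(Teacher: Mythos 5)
Your proof follows the paper's route — the paper simply cites Theorem~2 of \cite{SerrinVarberg} with no further detail — and you have usefully unpacked the decomposition $(0,M)=P\cup Q$ that underlies it. The chain rule on $P$, the use of Theorem~1 of \cite{SerrinVarberg} on $Q$ to get $g'=0$ a.e.\ there, and the observation that the choice of $f$ on the null set $N$ is immaterial, are all correct.

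There is one genuine gap: the a.e.\ differentiability of $v=F\circ g$, which is itself part of what Lemma~\ref{Decomp} asserts. Theorem~2 of \cite{SerrinVarberg} gives the identity $(F\circ g)'(x)=f(g(x))\,g'(x)$ at almost every $x$ at which \emph{both} $g$ and $F\circ g$ are already differentiable; it does not by itself supply differentiability of the composition. Your final paragraph, invoking Theorem~2 to conclude that $v$ is differentiable a.e.\ on $Q$ with $v'=0$, therefore overreaches — it shows that $v'(x)=0$ whenever $v'(x)$ exists and $g'(x)=0$, not that $v'(x)$ exists a.e.\ on $Q$. The missing step, and the place where monotonicity of $g$ does structural work beyond guaranteeing that $g'$ exists a.e., is the observation that $F$ is locally of bounded variation (being locally absolutely continuous), and composing a BV function with a monotone one yields a BV function: for any partition $\e\le x_0<\dots<x_n\le M$ the points $g(x_i)$ are monotone, so $\sum_i|F(g(x_i))-F(g(x_{i-1}))|$ is bounded by the total variation of $F$ on the compact interval $g([\e,M])\subset(0,\infty)$. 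Hence $v$ is BV on each $[\e,M]$ with $0<\e<M<\infty$, and thus differentiable a.e.\ on $(0,M)$ by Lebesgue's theorem. With that recorded, Theorem~2 applies at almost every point and the lemma follows.
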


As a corollary, we obtain the next change of variables inequality.

\begin{cor}\label{ChangeOfVariables}
Let $g:(0,\infty)\to[0,\infty)$ be a nonnegative decreasing function. Let also $f$ be a nonnegative function in $(0,\infty)$ with $f\in L^1(g(b),g(a))$ for some $0<a<b$. Then, the function $f(g(x))\cdot g'(x)$ is measurable in $(a,b)$, and
\[
\int_a^bf(g(x))\cdot g'(x)\,dx\geq-\int_{g(b)}^{g(a)}f.
\]
\end{cor}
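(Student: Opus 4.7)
The plan is to reduce the statement to an application of Lemma~\ref{Decomp} followed by the classical inequality $\int_a^b h'\geq h(b)-h(a)$ for a monotone decreasing function $h$. First, since $f\in L^1(g(b),g(a))$, define the antiderivative
\[
F(y)=\int_{g(b)}^{y}f(t)\,dt,\quad y\in[g(b),g(a)],
\]
and extend $F$ to all of $(0,\infty)$ by setting $F(y)=0$ for $y\leq g(b)$ and $F(y)=\int_{g(b)}^{g(a)}f$ for $y\geq g(a)$. Then $F$ is locally absolutely continuous on $(0,\infty)$, nondecreasing (because $f\geq 0$), and $F'=f$ almost everywhere on $(g(b),g(a))$.

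Second, apply Lemma~\ref{Decomp} with this $F$ and the given decreasing function $g$: this tells us that $v:=F\circ g$ is differentiable almost everywhere, and for a.e.\ $x\in(a,b)$,
\[
v'(x)=f(g(x))\cdot g'(x).
\]
Since $v$ is a composition of a monotone function with a locally absolutely continuous function, it is measurable and its a.e.\ derivative $v'$ is measurable; consequently $f(g(x))g'(x)$ is measurable on $(a,b)$, which handles the measurability claim.

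Third, observe that since $F$ is nondecreasing and $g$ is decreasing, the function $v=F\circ g$ is decreasing on $[a,b]$. Applied to $-v$, which is increasing, the standard inequality $\int_a^b h'\leq h(b)-h(a)$ for monotone increasing $h$ gives
\[
\int_a^b v'(x)\,dx \;\geq\; v(b)-v(a) \;=\; F(g(b))-F(g(a)) \;=\; -\int_{g(b)}^{g(a)}f.
\]
Combining this with the formula $v'(x)=f(g(x))g'(x)$ from the second step yields the desired inequality. The only delicate points are the applicability of Lemma~\ref{Decomp} (which is granted directly, provided $F$ is globally locally absolutely continuous, hence the explicit extension above) and the monotone-function inequality, which is the Lebesgue-decomposition fact that the absolutely continuous part of a monotone function controls its derivative integral from one side.
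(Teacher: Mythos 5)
Your proposal is correct and follows essentially the same route as the paper's proof: define the antiderivative $F$, apply Lemma~\ref{Decomp} to differentiate $v=F\circ g$ and obtain measurability, and then use the one-sided integral inequality for the derivative of a monotone function to conclude. The only differences are cosmetic but in your favor: you explicitly extend $F$ by constants outside $[g(b),g(a)]$ so that $F$ is globally defined on $(0,\infty)$ as Lemma~\ref{Decomp} requires, and you correctly label $F$ as locally absolutely continuous rather than Lipschitz (the paper asserts Lipschitz, which would require $f$ bounded; absolute continuity is all Lemma~\ref{Decomp} needs).
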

\begin{proof}
Set $\displaystyle F(x)=\int_{g(b)}^xf$. Then $F$ is Lipschitz continuous, with $F'=f$ for almost every $x$. Hence, setting $v=F\circ g$, Lemma~\ref{Decomp} shows that $v$ is differentiable almost everywhere, and also $v'(x)=f(g(x))\cdot g'(x)$ for almost every $x\in(a,b)$. This shows that $(f\circ g)\cdot g'$ is measurable in $(a,b)$. Note now that $v$ is decreasing, therefore Corollary 3.29 in \cite{AmbrosioFuscoPallara} implies that
\[
\int_a^bf(g(x))\cdot g'(x)\,dx=\int_a^bv'(x)\,dx\geq v(b)-v(a)=F(g(b))-F(g(a))=-\int_{g(b)}^{g(a)}f,
\]
which completes the proof.
\end{proof}

\subsection{Symmetrization}
An important construction that we will use is the pseudo-rearrangement of a function $f$ with respect to some function $u$. For this, we first need the following construction from \cite{AlvinoTrombetti78} and \cite{AlvinoTrombetti81} (see also, for example, page 65 in \cite{BettaMercaldoComparisonAndRegularity}, and page 856 in \cite{FeroneVolpicelliSomeRelations}): for $u:\Omega\to\mathbb R$ measurable, there exists a set valued map $s\mapsto \Omega_u(s)\subseteq\Omega$, such that
\[
\left\{\begin{array}{c l}|\Omega_u(s)|=s, &0\leq s\leq |\Omega| \\
\Omega_u(s_1)\subseteq \Omega_u(s_2),& s_1\leq s_2\\
\Omega_u(s)=\left\{x:|u(x)|>t\right\}, & \text{if}\,\,\,s=\mu_u(t),\,\,\,\text{for some}\,\,\,t\geq 0.
\end{array}\right.
\]

We now let $u\in L^1(\Omega)$, and define the pseudo-rearrangement of a function $f\in L^1(\Omega)$ with respect to $u$ as
\begin{equation}\label{eq:DDfn}
\Psi_uf(s)=\frac{d}{ds}\int_{\Omega_u(s)}|f|.
\end{equation}
The fact that $\Psi_uf$ is well defined follows from the absolute continuity of $\displaystyle s\mapsto\int_{\Omega_u(s)}|f|$. Moreover, if $f$ is bounded, then $\Psi_uf$ is bounded as well, since for every $s<t$,
\[
\left|\int_{\Omega_u(t)}|f|-\int_{\Omega_u(s)}|f|\right|=\int_{\Omega_u(t)\setminus\Omega_u(s)}|f|\leq\|f\|_{\infty}\left|\Omega_u(t)\setminus\Omega_u(s)\right|=\|f\|_{\infty}|t-s|,
\]
which implies that $\displaystyle s\mapsto\int_{\Omega_u(s)}|f|$ is Lipschitz. Hence $\Psi_uf$ is bounded.

The following lemma is similar to Lemma 3.5 in \cite{Buccheri}.

\begin{lemma}\label{ChangeDerivative}
Let $u,f\in L^1(\Omega)$. Then, for almost every $s$,
\begin{equation}\label{eq:D_uToLevelSets}
-\frac{d}{ds}\int_{|u|>s}|f|=\Psi_uf(\mu_u(s))(-\mu_u'(s)),
\end{equation}
where we interpret $\Psi_uf(\mu_u(s))(-\mu_u'(s))$ as $0$ when $\mu_u'(s)=0$.
\end{lemma}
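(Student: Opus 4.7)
The plan is to reduce the identity to a chain rule via the defining property of the level-set map $\Omega_u$. The third bullet in the construction of $\Omega_u$ asserts that $\Omega_u(\mu_u(s)) = \{|u|>s\}$ for every $s\geq 0$, so if I define
\[
F(t)=\int_{\Omega_u(t)}|f|,
\]
then $\int_{|u|>s}|f| = F(\mu_u(s))$. The goal becomes differentiating this composition.

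The first step is to check that $F$ is locally absolutely continuous on $(0,|\Omega|)$ so that $F'(t)=\Psi_u f(t)$ a.e.\ (by the definition \eqref{eq:DDfn}). This follows from the monotonicity of $\Omega_u$ together with the property $|\Omega_u(t)\setminus\Omega_u(s)|=t-s$ for $s<t$: one has
\[
F(t)-F(s)=\int_{\Omega_u(t)\setminus\Omega_u(s)}|f|,
\]
and the absolute continuity of the Lebesgue integral with respect to the domain on which it is integrated yields absolute continuity of $F$ on any compact subinterval of $(0,|\Omega|)$.

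The second step is to apply Lemma~\ref{Decomp} with the monotone (decreasing) function $g=\mu_u$ and the locally absolutely continuous function $F$, taking $f$ in that lemma to be $\Psi_u f$. This gives, for almost every $s$,
\[
\frac{d}{ds}F(\mu_u(s))=\Psi_u f(\mu_u(s))\cdot\mu_u'(s),
\]
and negating both sides produces the claimed formula. This is where it is essential to have Lemma~\ref{Decomp} available rather than a naive chain rule, since $\mu_u$ is only monotone (not absolutely continuous).

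The main subtlety will be the exceptional set where $\mu_u'(s)=0$: there the composition $\Psi_u f \circ \mu_u$ need not be well-behaved because $\mu_u(s)$ may fail to be a point of differentiability of $F$. However, Lemma~\ref{Decomp} guarantees that the product $\Psi_u f(\mu_u(s))\cdot\mu_u'(s)$ still represents the derivative of $F\circ\mu_u$ a.e., and the convention announced in the statement (that the right-hand side is declared to be $0$ whenever $\mu_u'(s)=0$) precisely encodes the agreement between this formula and the derivative of the left-hand side, which also vanishes on any interval where $\mu_u$ is constant.
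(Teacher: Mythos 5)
Your proof is correct and follows the same route as the paper: express $\int_{|u|>s}|f|$ as $F\circ\mu_u$ using the defining property of $\Omega_u$, establish that $F(t)=\int_{\Omega_u(t)}|f|$ is absolutely continuous, and invoke Lemma~\ref{Decomp} to differentiate the composition. You add helpful detail on why $F$ is absolutely continuous and on the convention at points where $\mu_u'(s)=0$, but the underlying argument is the one the paper uses.
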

\begin{proof}
Since  $\displaystyle s\mapsto\int_{\Omega_u(s)}|f|$ is absolutely continuous, and $\displaystyle\int_{\Omega_u(\mu_u(s))}|f|=\int_{|u|>s}|f|$ for every $s>0$, the proof follows from Lemma~\ref{Decomp} after differentiating with respect to $s$.
\end{proof}

Based on the $L^p$ boundedness of the operator $\Psi_u$, we obtain the following estimate in the setting of Lorentz spaces.

\begin{lemma}\label{LorentzEstimate}
Let $\Omega\subseteq\bR^n$ be a bounded domain and $u\in L^1(\Omega)$. Then there exists $C=C_n$ such that $\displaystyle\|\Psi_uf\|_{L^{\frac{n}{2},\frac{1}{2}}(0,|\Omega|)}\leq C\|f\|_{L^{\frac{n}{2},\frac{1}{2}}(\Omega)}$ for all $f\in L^{\frac{n}{2},\frac{1}{2}}(\Omega)$.
\end{lemma}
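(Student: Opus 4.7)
My plan is to prove the estimate by real interpolation from the endpoints $p=1$ and $p=\infty$. Since $\Psi_u f$ depends on $f$ only through $|f|$, it is natural to work with the genuinely linear operator $\tilde\Psi_u f := \frac{d}{ds}\int_{\Omega_u(s)} f$ defined on signed integrands, noting that $|\tilde\Psi_u f(s)|\leq \Psi_u |f|(s)$ a.e.\ and that $\Psi_u |f| = \tilde\Psi_u |f|$. Hence an $L^{p,q}\to L^{p,q}$ bound for $\tilde\Psi_u$ immediately yields the same bound for $\Psi_u$, and it suffices to interpolate the linear operator $\tilde\Psi_u$.

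The endpoint bounds, both with operator norm $\leq 1$ and uniform in $u$, are elementary. The $L^\infty$ bound $\|\tilde\Psi_u f\|_{L^\infty(0,|\Omega|)}\leq \|f\|_{L^\infty(\Omega)}$ follows from the Lipschitz estimate $\bigl|\int_{\Omega_u(t)}|f|-\int_{\Omega_u(s)}|f|\bigr|\leq\|f\|_\infty|t-s|$ displayed just before the lemma (and its signed analogue, obtained via the triangle inequality), since $\tilde\Psi_u f$ is the derivative in $s$ of $\int_{\Omega_u(s)} f$. The $L^1$ bound $\|\tilde\Psi_u f\|_{L^1(0,|\Omega|)}\leq \|f\|_{L^1(\Omega)}$ is obtained by integrating $|\tilde\Psi_u f|\leq \Psi_u|f|$ over $(0,|\Omega|)$ via \eqref{eq:DDfn}, using $|\Omega_u(0)|=0$ and $\Omega_u(|\Omega|)=\Omega$ modulo null sets.

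I would then invoke the standard Marcinkiewicz / real interpolation theorem for linear operators on Lorentz spaces (for instance Theorem~1.4.19 in \cite{Grafakos}, or the corresponding material in Chapter~IV of \cite{BennettSharpley}): a linear operator bounded from $L^{p_0}\to L^{p_0}$ and from $L^{p_1}\to L^{p_1}$ with $p_0<p_1$ is bounded from $L^{p,q}\to L^{p,q}$ for every $p\in(p_0,p_1)$ and every $0<q\leq\infty$, with operator norm controlled by the two endpoint norms and a constant depending only on $p_0,p_1,p,q$. Applied with $p_0=1$, $p_1=\infty$, $p=n/2\in(1,\infty)$ (valid since $n\geq 3$), and $q=1/2$, this delivers the claimed inequality with $C=C_n$. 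The only mild subtlety is that $q=1/2<1$ makes $L^{n/2,1/2}$ merely a quasi-normed space; nevertheless, the cited interpolation theorem remains valid in this regime, and the $u$-independence of the final constant is automatic from the $u$-independence of the two endpoint norms.
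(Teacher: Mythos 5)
Your proof is correct, and it takes a genuinely different route from the paper's. The paper invokes Lemma~1.2 of \cite{BettaMercaldoComparisonAndRegularity} to obtain $L^{4/3}\to L^{4/3}$ and $L^n\to L^n$ boundedness of $\Psi_u$, verifies in-line that $\Psi_u$ is subadditive, and then applies the off-diagonal Marcinkiewicz theorem at the exponents $4/3$ and $n$ (which straddle $n/2$ for $n\geq 3$). You instead pass to the genuinely linear operator $\tilde\Psi_u$, derive the endpoint bounds at $p_0=1$ and $p_1=\infty$ directly and with operator norm $1$ in each case --- the $L^\infty$ bound from the Lipschitz estimate already recorded just before the lemma, the $L^1$ bound from the fundamental theorem of calculus for the absolutely continuous nondecreasing map $s\mapsto\int_{\Omega_u(s)}|f|$ --- and then interpolate. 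Your route has two advantages: it is self-contained (no reliance on the BettaMercaldo reference for the endpoints), and linearizing to $\tilde\Psi_u$ sidesteps the subadditivity verification. The one point worth tightening is the precise citation for the interpolation step: with $p_1=\infty$, the Lorentz-space formulation of Theorem~1.4.19 in \cite{Grafakos} is slightly awkward to apply as stated (the domain $L^{\infty,1}$ is degenerate under the usual conventions), so it is cleaner to invoke the classical facts that the $K$-functional for the couple $(L^1,L^\infty)$ is $K(t,f)=\int_0^t f^*$ and that a linear map bounded on $L^1$ and $L^\infty$ with norm $\leq 1$ is a contraction for the $K$-functional (Calder\'on's theorem, Chapter~III and V of \cite{BennettSharpley}); this yields boundedness on $L^{p,q}$ for every $1<p<\infty$ and $0<q\leq\infty$, including the quasi-normed range $q=1/2$ you need, with a constant depending only on $p$ and $q$, hence $C=C_n$ as claimed.
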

\begin{proof}
The proof is based on interpolation and Marcinkiewicz's theorem. Note first that, from Lemma 1.2 in \cite{BettaMercaldoComparisonAndRegularity}, $\Psi_u:L^{4/3}(\Omega)\to L^{4/3}(0,|\Omega|)$ and $\Psi_u:L^n(\Omega)\to L^n(0,|\Omega|)$. Moreover, $\Psi_u$ is subadditive: if $f,g\in L^1(\Omega)$, then for any $s\in(0,|\Omega|)$ and $h>0$ small enough,
\[
\frac{1}{h}\int_{\Omega(s+h)\setminus\Omega(s)}|f+g|\leq\frac{1}{h}\int_{\Omega(s+h)\setminus\Omega(s)}|f|+\frac{1}{h}\int_{\Omega(s+h)\setminus\Omega(s)}|g|,
\]
and letting $h\to 0$, we then obtain that $|\Psi_u(f+g)|\leq |\Psi_uf|+|\Psi_ug|$ for almost every $s\in(0,|\Omega|)$. Since $\frac{4}{3}<\frac{n}{2}<n$, the proof is concluded by applying the off diagonal Marcinkiewicz interpolation theorem (Theorem 1.4.19, page 56 in \cite{Grafakos}) to $\Psi_u$.
\end{proof}

Finally, the following estimate ((40) in \cite{TalentiElliptic}) will be crucial.
\begin{lemma}\label{TalentiEstimate}
Let $\Omega\subseteq\bR^n$ be a bounded domain, and $u\in W_0^{1,2}(\Omega)$. Then, for almost every $t>0$,
\[
1\leq C_n\mu_u(t)^{\frac{2}{n}-2}(-\mu_u'(t))\left(-\frac{d}{dt}\int_{|u|>t}|\nabla u|^2\right).
\]
\end{lemma}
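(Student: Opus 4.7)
The plan is to derive this classical Talenti inequality by combining three ingredients: the coarea formula (applied twice, once to $|\nabla u|^2$ and once to the constant function $1$), the Cauchy--Schwarz inequality on level sets, and the classical isoperimetric inequality applied to the superlevel set $\{|u|>t\}$.

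First I would invoke the coarea formula to rewrite the two derivative quantities in the statement as integrals over level sets. For $u \in W^{1,2}_0(\Omega)$, replacing $u$ by $|u|$ (which also lies in $W^{1,2}_0(\Omega)$ with $|\nabla |u||=|\nabla u|$ a.e.), we have for a.e.\ $t>0$
\[
-\frac{d}{dt}\int_{|u|>t}|\nabla u|^2 \;=\; \int_{\{|u|=t\}} |\nabla u|\, d\mathcal{H}^{n-1},\qquad -\mu_u'(t) \;=\; \int_{\{|u|=t\}} \frac{1}{|\nabla u|}\, d\mathcal{H}^{n-1},
\]
where in the second identity one uses that the contribution of the critical set $\{|\nabla u|=0\}$ to the measure $\mu_u$ is flat (so its derivative is zero a.e.) and the coarea formula handles the rest. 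The interpretation $1/|\nabla u|=+\infty$ when $|\nabla u|=0$ is harmless for a.e.\ $t$ because a.e.\ level set $\{|u|=t\}$ meets the critical set in an $\mathcal{H}^{n-1}$-null set (by a standard Sard-type argument for Sobolev functions, or by approximation).

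Next, Cauchy--Schwarz on the level set yields
\[
\mathcal{H}^{n-1}\bigl(\{|u|=t\}\bigr)^2 \;=\; \left(\int_{\{|u|=t\}} |\nabla u|^{1/2}\cdot|\nabla u|^{-1/2}\, d\mathcal{H}^{n-1}\right)^{\!2} \;\leq\; \left(-\frac{d}{dt}\int_{|u|>t}|\nabla u|^2\right)(-\mu_u'(t)).
\]
Now the classical isoperimetric inequality, applied to the set $E_t=\{|u|>t\}$ of finite perimeter, says $n\omega_n^{1/n}|E_t|^{(n-1)/n}\leq P(E_t)$; and for a.e.\ $t>0$ the reduced boundary of $E_t$ coincides $\mathcal{H}^{n-1}$-almost everywhere with $\{|u|=t\}$, so $P(E_t)=\mathcal{H}^{n-1}(\{|u|=t\})$. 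Squaring and combining with the previous display gives
\[
c_n\, \mu_u(t)^{2(n-1)/n} \;\leq\; (-\mu_u'(t))\left(-\frac{d}{dt}\int_{|u|>t}|\nabla u|^2\right),
\]
and dividing through by $\mu_u(t)^{2-2/n}$ produces the claimed inequality with $C_n=c_n^{-1}$.

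The main obstacle is the rigorous justification of the two coarea identities for a general $W^{1,2}_0$ function: one must verify that the absolutely continuous part of $\mu_u$ equals $\int_{\{|u|=t\}} |\nabla u|^{-1}\, d\mathcal{H}^{n-1}$ and that the derivative of $t\mapsto\int_{|u|>t}|\nabla u|^2$ is indeed the level-set integral of $|\nabla u|$, rather than having singular parts. This is a standard consequence of the coarea formula for Sobolev functions combined with the fact that $\mathcal{L}^1(\{u(x) : |\nabla u(x)|=0\})=0$ (in the appropriate $\mathcal{H}^{n-1}$ sense on a.e.\ level set), and it is precisely the content of the chain used in \cite{TalentiElliptic}; everything else in the argument is then elementary.
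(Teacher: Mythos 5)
The paper does not prove this lemma at all; it is stated with a citation to formula (40) in \cite{TalentiElliptic}, and the argument you give is in substance Talenti's own: coarea formula, Cauchy--Schwarz on level sets, and the isoperimetric inequality. Your reconstruction is correct.

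One small imprecision is worth flagging. You assert the identity
\[
-\mu_u'(t)=\int_{\{|u|=t\}}\frac{1}{|\nabla u|}\,d\mathcal{H}^{n-1}
\]
on the grounds that the critical-set contribution to $\mu_u$ ``is flat so its derivative is zero a.e.'' This is not literally true: writing $\mu_u(t)=\big|\{|u|>t\}\cap\{|\nabla u|>0\}\big|+\big|\{|u|>t\}\cap\{|\nabla u|=0\}\big|$, the second term is decreasing but its a.e.\ derivative need not vanish. What is true, and what you actually need, is the one-sided bound
\[
-\mu_u'(t)\;\geq\;\int_{\{|u|=t\}\cap\{|\nabla u|>0\}}\frac{1}{|\nabla u|}\,d\mathcal{H}^{n-1}\quad\text{for a.e.\ }t,
\]
which follows because the omitted term is decreasing (hence contributes nonnegatively to $-\mu_u'$) and the coarea formula gives an exact identity for the first term. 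Combined with your observation (via coarea applied to $\chi_{\{|\nabla u|=0\}}$) that $\mathcal{H}^{n-1}\bigl(\{|u|=t\}\cap\{|\nabla u|=0\}\bigr)=0$ for a.e.\ $t$, Cauchy--Schwarz and the isoperimetric inequality then close the argument exactly as you describe. So the chain of inequalities is sound; just replace the claimed equality with the inequality and the proof is complete.
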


\section{Estimates}
\subsection{An estimate on the derivative}
The following lemma is an analog of the Cacciopoli inequality, but here we bound the $L^2$ gradient of a subsolution in terms of the $L^{2^*}$ norm of the subsolution, which suffices in order to deduce our subsequent estimates. The usual Cacciopoli estimate appears in \cite{MourgoglouRegularity}.

\begin{lemma}\label{DerivativeByU}
Let $\Omega\subseteq\bR^n$ be a domain. Suppose that $A$ is uniformly elliptic, $b,c\in L^{n,\infty}(\Omega)$, $d\in L^{\frac{n}{2},\infty}(\Omega)$, and either $d\geq\dive b$ or $d\geq\dive c$. Let also $2^*=\frac{2n}{n-2}$, $2_*=\frac{2n}{n+2}$ and $f\in L^{2_*}(\Omega)$, $g\in L^2(\Omega)$, and suppose that $u\in Y^{1,2}(\Omega)$ is a nonnegative subsolution to
\[
-\dive(A\nabla u+bu)+c\nabla u+du\leq-\dive g+f.
\]
Then, for any $\phi\in C_c^{\infty}(\bR^n)$ with $\phi\geq 0$ such that $u\phi\in Y_0^{1,2}(\Omega)$,
\[
\int_{\Omega}|\phi\nabla u|^2\leq C\|f\phi\|_{2_*}^2+C\|g\phi\|_2^2+C\|u\phi\|_{2^*,2}^2+C\|u\nabla\phi\|_2^2,
\]
where $C$ depends on $n,\lambda,\|A\|_{\infty}$ and $\|b-c\|_{n,\infty}$.

Moreover, if $b-c\in L^n(\Omega)$, we can replace $\|u\phi\|_{2^*,2}$ above by $\|u\phi\|_{2^*}$, and then $C$ depends on $\|b-c\|_n$ also.
\end{lemma}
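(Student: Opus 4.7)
The strategy is to test the subsolution inequality with $\psi=u\phi^2$ and to cancel the $du^2\phi^2$ term by applying the hypothesis to the nonnegative test function $(u\phi)^2$. Since $u\phi\in Y_0^{1,2}(\Omega)$ and $\phi$ is bounded Lipschitz, $u\phi^2=(u\phi)\phi\in Y_0^{1,2}(\Omega)$, while $(u\phi)^2\in L^{n/(n-2),1}(\Omega)$ with $\nabla((u\phi)^2)\in L^{n/(n-1),1}(\Omega)$; the H\"older checks $\tfrac1n+\tfrac1{2^*}+\tfrac12=1$ and $\tfrac2n+\tfrac2{2^*}=1$ guarantee that every term in both weak formulations extends continuously from $C_c^\infty(\Omega)$, so the substitutions are legitimate by density. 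Expanding $\nabla(u\phi^2)=\phi^2\nabla u+2u\phi\nabla\phi$ in the subsolution inequality and substituting the lower bound
\[
du^2\phi^2\ge -2u\phi^2 X\cdot\nabla u-2u^2\phi X\cdot\nabla\phi,\qquad X=b\text{ in the case }d\ge\dive b,\ X=c\text{ in the case }d\ge\dive c,
\]
cancels the $d$-term and reduces the lower order part of the LHS to $\int u\phi^2(b-c)\cdot\nabla u+2\sigma\!\int u^2\phi(b-c)\cdot\nabla\phi$, where $\sigma=0$ in the first case and $\sigma=1$ in the second.

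The main $(b-c)$-term is then controlled by H\"older's inequality in Lorentz spaces:
\[
\Bigl|\int(b-c)\cdot(u\phi)(\phi\nabla u)\Bigr|\le C\|b-c\|_{L^{n,\infty}}\|u\phi\|_{L^{2^*,2}}\|\phi\nabla u\|_{L^2},
\]
using the exponent matching $\tfrac1n+\tfrac1{2^*}+\tfrac12=1$ and the second-index matching $\tfrac1\infty+\tfrac12+\tfrac12=1$, together with Lemma~\ref{ImprovedSobolev} which places $u\phi\in L^{2^*,2}(\Omega)$. The second $(b-c)$-term is estimated analogously, with $\|u\nabla\phi\|_{L^2}$ in place of $\|\phi\nabla u\|_{L^2}$. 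The remaining cross terms are handled by Cauchy--Schwarz for $\int u\phi A\nabla u\cdot\nabla\phi$, $\int g\cdot\phi^2\nabla u$ and $\int ug\phi\nabla\phi$, and by classical H\"older with $\tfrac{1}{2_*}+\tfrac{1}{2^*}=1$ for $\int fu\phi^2$, bounding $\|u\phi\|_{L^{2^*}}\le C_n\|u\phi\|_{L^{2^*,2}}$ via \eqref{eq:LorentzNormsRelations}. Applying ellipticity on the left and Young's inequality to each product on the right, I would choose the $\varepsilon$-weights on every occurrence of $\|\phi\nabla u\|_{L^2}^2$ small enough that their sum is absorbed into $\lambda\|\phi\nabla u\|_{L^2}^2$ on the left, yielding the stated estimate.

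For the refinement when $b-c\in L^n(\Omega)$, the same argument works with classical H\"older replacing its Lorentz version in the $(b-c)$-estimates, giving $\|b-c\|_{L^n}\|u\phi\|_{L^{2^*}}\|\phi\nabla u\|_{L^2}$ and the analogous expression for the second term, so $\|u\phi\|_{L^{2^*,2}}$ can be replaced by $\|u\phi\|_{L^{2^*}}$ throughout. The main technical hurdle I expect is the admissibility of $(u\phi)^2$ in the hypothesis $d\ge\dive b$: since \eqref{eq:Div} is stated for $v\in Y_0^{(n/2,1),1}(\Omega)$ while $\nabla((u\phi)^2)$ only lies in $L^{n/(n-1),1}(\Omega)$, one cannot quote \eqref{eq:Div} verbatim and must rerun the density argument from $C_c^\infty(\Omega)$ using the H\"older pairing $\tfrac1n+\tfrac1{2^*}+\tfrac12=1$, which is routine but must be done explicitly.
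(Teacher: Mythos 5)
Your proposal is correct and follows essentially the same route as the paper: test the subsolution inequality with $u\phi^2$, use the distributional hypothesis $d\ge\dive X$ (with $X=b$ or $X=c$) applied to the nonnegative function $(u\phi)^2$ to cancel the zeroth-order term, and control the resulting $(b-c)$-terms via the Lorentz H\"older pairing $\tfrac1n+\tfrac1{2^*}+\tfrac12=1$ (with second indices $\tfrac1\infty+\tfrac12+\tfrac12=1$) together with Lemma~\ref{ImprovedSobolev}, finishing by ellipticity and absorption of the $\|\phi\nabla u\|_{L^2}^2$ contributions. Your closing observation is a genuine catch: \eqref{eq:Div} as written requires $\nabla v\in L^{n/2,1}(\Omega)$, whereas $\nabla((u\phi)^2)$ only lies in $L^{n/(n-1),1}(\Omega)$ — these exponents differ for $n>3$ — so one must either read the relevant Lorentz--Sobolev class as $Y_0^{(n/(n-1),1),1}(\Omega)$ (consistent with the membership check the paper itself performs) or, as you suggest, rerun the density argument from $C_c^\infty(\Omega)$ using the same H\"older pairings, which is routine but should indeed be made explicit.
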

\begin{proof}
Assume first that $d\geq\dive b$. Since $u\phi\in Y_0^{1,2}(\Omega)$, $u\phi\in L^{2^*,2}(\Omega)$ from Lemma~\ref{ImprovedSobolev}. Hence,  H{\"o}lder's estimate for Lorentz norms (from \cite{Grafakos}, Theorem 1.4.17) shows that
\[
\nabla(u^2\phi^2)=u\phi\nabla(u\phi)+u\phi\nabla(u\phi)\in L^{\frac{n}{n-1},1}(\Omega),\quad u^2\phi^2\in L^{\frac{n}{n-2},1}(\Omega).
\]
Therefore, \eqref{eq:Div} shows that
\begin{equation*}
\int_{\Omega}b\nabla(u^2\phi^2)+du^2\phi^2\geq 0\Rightarrow \int_{\Omega}b\nabla(u\phi^2)\cdot u+du^2\phi^2\geq-\int_{\Omega}b\nabla u\cdot u\phi^2.
\end{equation*}
Hence, using $u\phi^2$ as a test function and combining with the last estimate, we obtain that
\begin{align}\nonumber
\lambda\int_{\Omega}|\phi\nabla u|^2&\leq\int_{\Omega}A\nabla u\nabla u\cdot\phi^2=\int_{\Omega}A\nabla u\nabla(u\phi^2)-2A\nabla u\nabla\phi\cdot u\phi\\
\label{eq:B}
&\leq\int_{\Omega}fu\phi^2+g\nabla(u\phi^2)+\int_{\Omega}(b-c)\nabla u\cdot u\phi^2-2\int_{\Omega}A\nabla u\nabla\phi\cdot u\phi=I_1+I_2+I_3.
\end{align}
To bound $I_1$, we use H{\"o}lder's inequality (since $2^*$, $2_*$ are conjugate exponents), the Cauchy-Schwartz inequality and the Cauchy inequality with $\delta$, to obtain
\begin{align}\nonumber
I_1&\leq\|f\phi\|_{2_*}\|u\phi\|_{2^*}+\|g\phi\|_2\|\phi\nabla u\|_2+2\|g\phi\|_2\|u\nabla\phi\|_2\\
\label{eq:I1}
&\leq \|f\phi\|_{2_*}\|u\phi\|_{2^*}+\frac{1}{\lambda}\|g\phi\|_2^2+\frac{\lambda}{4}\|\phi\nabla u\|_2+2\|g\phi\|_2\|u\nabla\phi\|_2.
\end{align}
For $I_2$, using H{\"o}lder's estimate for Lorentz norms, we estimate
\begin{equation}\label{eq:I2}
I_2\leq C_n\|b-c\|_{n,\infty}\|u\phi\|_{2^*,2}\|\phi\nabla u\|_2\leq\frac{1}{\lambda}\|b-c\|_{n,\infty}^2\|u\phi\|_{2^*,2}^2+\frac{C_n^2\lambda}{4}\|\phi\nabla u\|_2^2.
\end{equation}
Note that, if $b-c\in L^n(\Omega)$, we can replace the $\|u\phi\|_{2^*,2}$ norm by the $\|u\phi\|_{2^*}$, replacing also $\|b-c\|_{n,\infty}$ by $\|b-c\|_n$. Moreover, for $I_3$,
\begin{equation}\label{eq:I3}
I_3\leq 2\|A\|_{\infty}\|\phi\nabla u\|_2\|u\nabla\phi\|_2\leq\frac{4}{\lambda}\|A\|_{\infty}^2\|u\nabla\phi\|_2^2+\frac{\lambda}{4}\|\phi\nabla u\|_2^2.
\end{equation}
Substituting \eqref{eq:I1}, \eqref{eq:I2} and \eqref{eq:I3} in \eqref{eq:B} shows the estimate in the case $d\geq\dive b$. 

In the case $d\geq\dive c$, using a similar argument to the above, we note that \eqref{eq:B} becomes
\[
\lambda\int_{\Omega}|\phi\nabla u|^2\leq\int_{\Omega}fu\phi^2+g\nabla(u\phi^2)\cdot u+\int_{\Omega}(c-b)\nabla(u\phi^2)\cdot u-2\int_{\Omega}A\nabla u\nabla\phi\cdot u\phi,
\]
and we proceed as above to conclude the proof.
\end{proof}

\subsection{Scale invariant estimates}
In this section we will show scale invariant estimates for subsolutions to the equations we are considering. To achieve more generality, we will show those estimates assuming less regularity than what we will need in the construction of Green's function.

We begin with the maximum principle. Under slightly weaker hypotheses, this has appeared in \cite{MourgoglouDraft} (see also \cite{MourgoglouRegularity}).

\begin{prop}\label{MaxPrinciple}
Let $\Omega\subseteq\bR^n$ be a domain. Assume that $A$ is uniformly bounded and elliptic, and $b,c\in L^{n,\infty}(\Omega)$, $d\in L^{\frac{n}{2},\infty}(\Omega)$, with $b-c\in L^{n,q}(\Omega)$ for some $q<\infty$ and $d\geq\dive b$. Let also $u\in Y^{1,2}(\Omega)$ be a subsolution to $-\dive(A\nabla u+bu)+c\nabla u+du\leq 0$ in $\Omega$. Then, $\displaystyle \sup_{\Omega}u\leq \sup_{\partial\Omega}u^+$.
\end{prop}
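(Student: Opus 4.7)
The plan is to run a Stampacchia-type test-function argument followed by a measure-theoretic descent. I would let $M := \sup_{\partial\Omega} u^+$ and assume $M < \infty$, for otherwise there is nothing to prove. For $k \geq M$ the truncation $v_k := (u-k)^+$ lies in $Y_0^{1,2}(\Omega)$ and is supported in $\Omega_k := \{u > k\}$, which has finite measure since $u \in L^{2^*}$. Testing the subsolution inequality against $v_k$ and substituting $u = v_k + k$, $\nabla u = \nabla v_k$ on $\Omega_k$, together with $2\int b v_k \nabla v_k = \int b \nabla(v_k^2)$, should regroup the result as
\begin{equation*}
\int A \nabla v_k \nabla v_k + \int (c-b) v_k \nabla v_k + \Bigl[\int b \nabla(v_k^2) + \int d v_k^2\Bigr] + k\Bigl[\int b \nabla v_k + \int d v_k\Bigr] \leq 0.
\end{equation*}

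The key step is to invoke $d \geq \dive b$ in the form \eqref{eq:Div0} to show that both bracketed quantities are nonnegative. For the first: $v_k^2$ lies in $L^{n/(n-2),1}$ and $\nabla(v_k^2)$ in $L^{n/(n-1),1}$ by \eqref{eq:LorentzNormPowers}, Lemma~\ref{ImprovedSobolev}, and H\"older for Lorentz, so approximating by $\psi_m^2$ with $\psi_m \in C_c^\infty$, $\psi_m \to v_k$ in $Y_0^{1,2}$, passes the distributional inequality to $v_k^2$. For the second: $v_k$ itself need not be in the right class, so I would instead test with the truncation $v_k^{(T)} := \min(v_k, T)$, which is bounded and supported in the finite-measure set $\Omega_k$ and hence admissible, and then send $T \to \infty$ by dominated convergence. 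Discarding both nonnegative brackets and using ellipticity yields $\lambda \|\nabla v_k\|_{L^2}^2 \leq \int (b-c) v_k \nabla v_k$; Hölder for Lorentz with $v_k \nabla v_k \in L^{n/(n-1),1}$, together with one more application of Lemma~\ref{ImprovedSobolev}, then produces
\begin{equation*}
\lambda \|\nabla v_k\|_{L^2(\Omega)}^2 \leq C \|b-c\|_{L^{n,q}(\Omega_k)} \|\nabla v_k\|_{L^2(\Omega)}^2.
\end{equation*}

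The hardest part will be the descent. I would set $s^* := \inf\{s \geq 0 : u \leq M+s \text{ a.e.\ in } \Omega\}$. To prove $s^* < \infty$: since $|\Omega_{M+s}| \to 0$ as $s \to \infty$, Lemma~\ref{NormOnDisjoint} makes $\|b-c\|_{L^{n,q}(\Omega_{M+s})}$ eventually smaller than $\lambda/C$, which by the last display forces $v_{M+s} \equiv 0$. To prove $s^* = 0$: the obstruction is that $\|b-c\|_{L^{n,q}(\Omega_M)}$ itself need not be small, but the classical vanishing $\nabla u = 0$ a.e.\ on the level set $\{u = M+s^*\}$ confines the integrand $(b-c) v_{M+s} \nabla v_{M+s}$ to $\Omega_{M+s} \setminus \{u = M+s^*\} \subseteq \{M+s < u < M+s^*\}$, whose measure tends to $0$ as $s \nearrow s^*$ because $u \leq M+s^*$ a.e. Lemma~\ref{NormOnDisjoint} applied to this shrinking set then forces $\nabla v_{M+s} \equiv 0$ for some $s < s^*$, contradicting the definition of $s^*$ and delivering $u \leq M$ a.e.
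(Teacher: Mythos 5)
Your proof follows essentially the same route as the paper: test against $v_k=(u-k)^+$, use $d\geq\dive b$ to discard the sign-definite pieces, obtain $\lambda\|\nabla v_k\|_{L^2}^2\leq C\|b-c\|_{L^{n,q}(D_k)}\|\nabla v_k\|_{L^2}^2$ by Lorentz H\"older together with Lemma~\ref{ImprovedSobolev}, and close with a measure-theoretic descent via Lemma~\ref{NormOnDisjoint}. Your $s^*$-formulation of the descent (including the two-stage argument that first shows $s^*<\infty$ and then, using $\nabla u=0$ a.e.\ on $\{u=M+s^*\}$, forces $s^*=0$) is correct and substantively equivalent to the paper's argument against $l'=\sup_\Omega u^+$.

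Where you diverge is in how the quantity $\int b\nabla(uv_k)+duv_k$ is organized. The paper first replaces $u$ by $u-M$ (which remains a subsolution precisely because $d\geq\dive b$), reducing to $M=0$, and then invokes the sign condition once for the single product $u^+v_k$, whose membership in $L^{\frac{n}{n-2},1}$ with gradient in $L^{\frac{n}{n-1},1}$ is the natural class in which $\int b\nabla v+dv$ converges absolutely. You keep $M$, producing the two brackets $\int b\nabla(v_k^2)+dv_k^2$ and $k\bigl[\int b\nabla v_k+dv_k\bigr]$, and you propose a truncation $v_k^{(T)}$ to make the second admissible. That detour is unnecessary and its stated rationale is off: boundedness of $v_k^{(T)}$ improves the Lorentz class of the \emph{function}, but $\nabla v_k^{(T)}=\chi_{\{v_k\leq T\}}\nabla v_k$ has exactly the $L^2$ integrability of $\nabla v_k$, so it does not land in $L^{\frac{n}{2},1}$ once $n\geq 4$. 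What actually makes the bracket work is that $v_k$ is supported on $\Omega_k=\{u>k\}$, which has finite measure whenever $k>M$ (strict inequality matters: if $M=0$, $\{u>0\}$ may have infinite measure), and on a finite-measure set one has $L^{2^*,2}\hookrightarrow L^{\frac{n}{n-2},1}$ and $L^2\hookrightarrow L^{\frac{n}{n-1},1}$; so the untruncated $v_k$ is already in the class the paper uses. You could therefore delete the truncation and restrict to $k>M$, or, more economically, shift $u\mapsto u-M$ and treat only $u^+v_k$, as the paper does.
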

\begin{proof}
Set $l=\sup_{\partial\Omega}u^+$. Since the inequality is true if $l=\infty$, we assume that $l<\infty$. Moreover, by considering $u-l$ and using that $d\geq\dive b$, we can assume that $l=0$, so $u^+\in Y_0^{1,2}(\Omega)\subseteq L^{2^*,2}(\Omega)$, by Lemma~\ref{ImprovedSobolev}. In addition, from \eqref{eq:LorentzNormsRelations}, we can assume that $q>n$.

We follow the proof of Theorem 8.1 in \cite{Gilbarg}. Assume that $\sup_{\Omega}u^+=l'>0$, and let $k\in(0,l')$. Set $u_k=(u-k)^+$, then $u_k\in Y_0^{1,2}(\Omega)$, and $uu_k=u^+u_k\geq 0$. Moreover, using Lemma~\ref{ImprovedSobolev}, we obtain that $\nabla(uu_k)\in L^{\frac{n}{n-1},1}(\Omega)$ and $uu_k\in L^{\frac{n}{n-2},1}(\Omega)$, therefore the assumption $d\geq\dive b$ implies that
\[
\int_{\Omega}b\nabla u_k\cdot u+duu_k+\int_{\Omega}b\nabla u\cdot u_k=\int_{\Omega}b\nabla(uu_k)+duu_k\geq 0,
\]
hence
\begin{equation*}
\int_{\Omega}A\nabla u\nabla u_k+(c-b)\nabla u\cdot u_k\leq\int_{\Omega}A\nabla u\nabla u_k+b\nabla u_k\cdot u+c\nabla u\cdot u_k+duu_k\leq 0.
\end{equation*}
If now $p/2$ is the conjugate exponent to $\frac{q}{2}>\frac{n}{2}$, then $p=\frac{2q}{q-2}>2$. Then, if $D_k$ is the support of $\nabla u_k$, using H{\"o}lder's inequality for Lorentz norms (from Theorem 1.4.17 in \cite{Grafakos}) we have that
\begin{align}\nonumber
\lambda\|\nabla u_k\|_{L^2(\Omega)}^2&\leq\int_{\Omega}A\nabla u_k\nabla u_k\leq\|b-c\|_{L^{n,q}(D_k)}\|\nabla u_k\|_{L^2(\Omega)}\|u_k\|_{L^{2^*,p}(\Omega)}\\
\label{eq:Forb-c}
&\leq C\|b-c\|_{L^{n,q}(D_k)}\|\nabla u_k\|_{L^2(\Omega)}\|u_k\|_{L^{2^*,2}(\Omega)}\leq C\|b-c\|_{L^{n,q}(D_k)}\|\nabla u_k\|_{L^2(\Omega)}^2,
\end{align}
for some $C$ depending only on $n,q$, where we used that $p>2$ and \eqref{eq:LorentzNormsRelations} for the second to last estimate, and  \eqref{eq:ImprovedSobolev} for the last estimate. If $\|\nabla u_k\|_2=0$, the fact that $\Omega$ is connected implies $u_k$ is a constant. But, $u_k\in Y_0^{1,2}(\Omega)$, so $u_k\equiv 0$, therefore $u\leq k$ in $\Omega$, which is a contradiction with $k<l'$. Hence $\|\nabla u_k\|_2\neq 0$, and then \eqref{eq:Forb-c} shows that, for every $0<k<l'$,
\begin{equation}\label{eq:NormNotGoingTo0}
\|b-c\|_{L^{n,q}(D_k)}\geq C_{n,q,\lambda}.
\end{equation}
Let now $(k_m)$ be an increasing sequence with $k_m\in(0,l')$ and $k_m\to l'$. Then the sequence $(D_{k_m})$ is decreasing, and also $\nabla u_k=0$ almost everywhere on $[u=l']$; hence,
\[
\bigcap_{m=1}^{\infty}D_{k_m}\subseteq \bigcap_{m=1}^{\infty}[u>k_m]\setminus[u=l']=[u> l']\,\,\,\Rightarrow\,\,\, \left|\bigcap_{m=1}^{\infty}D_{k_m}\right|\leq\left|[u> l']\right|=0
\]
where we used that $l'=\sup_{\Omega}u^+$ in the last equality. Therefore $\chi_{D_{k_m}}\to 0$ almost everywhere, and then Lemma~\ref{NormOnDisjoint} shows that $\|b-c\|_{L^{n,q}(D_{k_m})}\to 0$ as $m\to\infty$. However, this contradicts \eqref{eq:NormNotGoingTo0}, and this completes the proof.
\end{proof}

As a corollary, we obtain uniqueness of $Y_0^{1,2}$ solutions.

\begin{prop}\label{Uniqueness}
Let $\Omega\subseteq\bR^n$ be a domain. Assume that $A$ is uniformly bounded and elliptic, with ellipticity $\lambda$, and $b,c\in L^{n,\infty}(\Omega)$, $d\in L^{\frac{n}{2},\infty}(\Omega)$, with $b-c\in L^{n,q}(\Omega)$ for some $q<\infty$ and $d\geq\dive b$. If $u\in Y_0^{1,2}(\Omega)$ is a solution to the equation
\[
-\dive(A\nabla u+bu)+c\nabla u+du=0
\]
in $\Omega$, then $u\equiv 0$.
\end{prop}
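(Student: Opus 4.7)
The plan is to reduce Proposition 3.3 directly to the maximum principle, Proposition 3.2, applied to both $u$ and $-u$. Since $u\in Y_0^{1,2}(\Omega)$ is a solution to the homogeneous equation, it is in particular a subsolution; and by linearity $-u\in Y_0^{1,2}(\Omega)$ is also a solution of the same equation, hence a subsolution.

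The key preliminary observation is that $u\in Y_0^{1,2}(\Omega)$ forces $u^+\in Y_0^{1,2}(\Omega)$. To see this, pick an approximating sequence $\phi_m\in C_c^\infty(\Omega)$ with $\phi_m\to u$ in $Y^{1,2}(\Omega)$; then $\phi_m^+\to u^+$ in $Y^{1,2}(\Omega)$ because the positive-part operator is a contraction on both $L^{2^*}$ and (via the chain rule $\nabla v^+=\chi_{\{v>0\}}\nabla v$) on the gradient. A standard mollification/cutoff of each $\phi_m^+$ then yields $C_c^\infty$ approximants of $u^+$ in the $Y^{1,2}$ norm, placing $u^+$ in $Y_0^{1,2}(\Omega)$. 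The same reasoning applied to the function $u-s$ (for any $s>0$) gives $(u-s)^+\in Y_0^{1,2}(\Omega)$, so in the terminology of Section 2.1 we have $u\leq s$ on $\partial\Omega$ for every $s>0$, hence $\sup_{\partial\Omega}u=0$ and likewise $\sup_{\partial\Omega}u^+=0$.

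Now Proposition 3.2 applies: since $b-c\in L^{n,q}(\Omega)$ for some $q<\infty$, $d\geq\dive b$, and $u$ is a subsolution of the homogeneous equation with $\sup_{\partial\Omega}u^+=0$, we conclude
\[
\sup_\Omega u\ \leq\ \sup_{\partial\Omega}u^+\ =\ 0,
\]
that is, $u\leq 0$ almost everywhere in $\Omega$. The same argument applied to $-u\in Y_0^{1,2}(\Omega)$ (which is a subsolution of the identical equation by linearity, and whose positive part $u^-$ also lies in $Y_0^{1,2}(\Omega)$ by the truncation argument above) yields $-u\leq 0$ almost everywhere. Combining the two inequalities gives $u\equiv 0$.

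The only non-trivial step is the truncation lemma $u^+\in Y_0^{1,2}(\Omega)$, but this is routine; once it is in hand, the proposition is an immediate double application of Proposition 3.2. No additional estimates, and in particular no reuse of the Caccioppoli-type inequality from Lemma 3.1, are needed.
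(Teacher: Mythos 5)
Your proposal is correct and is exactly the argument the paper intends: the paper states Proposition~\ref{Uniqueness} as an immediate corollary of the maximum principle (Proposition~\ref{MaxPrinciple}) with no written proof, and the content is your double application of that proposition to $u$ and $-u$, together with the routine truncation fact that $u\in Y_0^{1,2}(\Omega)$ forces $u^{\pm}\in Y_0^{1,2}(\Omega)$ and hence $\sup_{\partial\Omega}(\pm u)^+=0$. One small imprecision: the positive-part map is not a contraction at the level of gradients (there is no pointwise bound of $|\nabla\phi_m^+-\nabla u^+|$ by $|\nabla\phi_m-\nabla u|$); the continuity of $v\mapsto v^+$ on $Y^{1,2}$ is obtained by passing to an a.e.\ convergent subsequence and applying dominated convergence to $(\chi_{\{\phi_m>0\}}-\chi_{\{u>0\}})\nabla u$, using that $\nabla u=0$ a.e.\ on $\{u=0\}$, which is the standard argument and gives the same conclusion.
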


The next estimate is a refinement of Lemma 3.13 in \cite{KimSak}, in which we recover the correct way in which the constant depends on $b-c$. This proposition has appeared in \cite{MourgoglouDraft} (see \cite{MourgoglouRegularity}), but here we present a different proof which is based on the maximum principle.

\begin{prop}\label{SupByIntegral}
Let $B_r\subseteq\bR^n$ be a ball of radius $r$. Let $A$ be uniformly bounded and elliptic, with ellipticity $\lambda$, and $b,c\in L^{n,\infty}(\Omega)$, $d\in L^{\frac{n}{2},\infty}(\Omega)$, with $b-c\in L^n(\Omega)$ and $d\geq\dive b$. Assume that $u\in W^{1,2}(B_r)$ is a nonnegative subsolution to the equation $-\dive(A\nabla u+bu)+c\nabla u+du=0$ in $B_r$. Then,
\[
\sup_{B_{r/2}}u\leq C\fint_{B_r}u,
\]
where $C$ depends on $n,\lambda,\|A\|_{\infty}$ and $\|b-c\|_n$.
\end{prop}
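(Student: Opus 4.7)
The plan is to reduce to the case $r=1$ by the natural rescaling $u(x)\mapsto u(rx)$ (with $A,b,c,d$ rescaled accordingly, which preserves the equation and the scale-invariant quantity $\|b-c\|_n$), and then to prove the sup-bound by a Moser-type iteration in which the maximum principle (Proposition~\ref{MaxPrinciple}) is the ingredient that removes any smallness requirement on $\|b-c\|_n$.

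For the Moser step, fix $\beta\ge 1$ and a smooth cutoff $\phi$ supported in $B_1$. Insert $\phi^2 u_M^{2\beta-1}$ (with $u_M=\min(u,M)$ ensuring the test function lies in the right space) into the subsolution inequality, and simultaneously use the nonnegative test function $u\phi^2 u_M^{2\beta-2}$ in the distributional inequality \eqref{eq:Div} coming from $d\ge\dive b$. Subtracting, the $b\cdot\nabla$ and $d$ terms cancel on $\{u\le M\}$, and with $w=u_M^\beta$ one obtains
\[
\int\phi^2|\nabla w|^2\lesssim\int\phi w|\nabla w||\nabla\phi|+\int|b-c|\,\phi^2 w|\nabla w|.
\]
The first term on the right is harmless under Cauchy's inequality with $\delta$.

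The main obstacle is the $|b-c|$ term. H\"older's inequality together with the improved Sobolev bound (Lemma~\ref{ImprovedSobolev}) yields an estimate of the form $C\|b-c\|_n\|\phi\nabla w\|_2\|\nabla(\phi w)\|_2$, whose $\|\phi\nabla w\|_2^2$ component cannot be absorbed into the left-hand side when $\|b-c\|_n$ is large. This is where the maximum principle enters: since $d\ge\dive b$ makes constants supersolutions, the truncations $(u-k)^+$ are themselves nonnegative subsolutions for every $k>0$. Applying Proposition~\ref{MaxPrinciple} to $(u-k)^+$, combined with a Stampacchia-type level-set dichotomy, yields quantitative control of the super-level sets $\{u>k\}$: choosing $k$ large enough in a way depending only on $\|b-c\|_n$ and previously bounded data, the problematic $b-c$ integral is restricted to a set where absorption becomes possible. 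This is the step that ensures the final constant depends only on $\|b-c\|_n$ rather than on finer quantitative properties of $b-c$ beyond its norm.

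Once the absorption is carried out, one arrives at the standard Moser gain $\|\phi u^\beta\|_{L^{2^*}}\le C_\beta\bigl(\|u^\beta\nabla\phi\|_{L^2}+\|\phi u^\beta\|_{L^2}\bigr)$ with $C_\beta$ polynomial in $\beta$. Dyadic iteration on exponents $\beta_k=(n/(n-2))^k$ and radii $\rho_k\searrow 1/2$ (with $|\nabla\phi_k|\lesssim 2^k$) then produces $\|u\|_{L^\infty(B_{1/2})}\le C\|u\|_{L^2(B_1)}$. A final interpolation $\|u\|_{L^2}\le\|u\|_{L^\infty}^{1/2}\|u\|_{L^1}^{1/2}$ combined with a standard Young's inequality bootstrap reduces this to the target estimate $\sup_{B_{1/2}}u\le C\fint_{B_1}u$, with $C$ depending only on $n,\lambda,\|A\|_\infty,\|b-c\|_n$. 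Undoing the rescaling recovers the general case.
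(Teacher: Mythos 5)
Your overall skeleton (rescale to $B_1$, Moser iteration, invoke the maximum principle to remove the smallness restriction, then reduce $L^2$ to $L^1$ by Young/Giaquinta iteration) is in the right spirit, but the central step — the one that is supposed to remove the smallness assumption on $\|b-c\|_n$ — does not work as described, and the gap is exactly the scale-invariance issue this proposition is designed to overcome.

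You write that applying Proposition~\ref{MaxPrinciple} to $(u-k)^+$ together with a ``Stampacchia-type level-set dichotomy'' gives quantitative control of $\{u>k\}$, and that by choosing $k$ large depending only on $\|b-c\|_n$ the problematic integral $\int|b-c|\,\phi^2 w|\nabla w|$ is restricted to a set where absorption is possible. This has two problems. First, Proposition~\ref{MaxPrinciple} is a global statement: it compares $\sup_\Omega u$ with $\sup_{\partial\Omega}u^+$, and to apply it on a subball one must already know the boundary trace of $u$, which is precisely what you are trying to estimate — it does not directly produce bounds on $|\{u>k\}|$. Second, and more fundamentally: even granting a Chebyshev bound $|\{u>k\}|\lesssim k^{-1}\fint u$, the reduction of $\|b-c\|_{L^n(\{u>k\})}$ to below the absorption threshold is governed by the rate of absolute continuity of $|b-c|^n$, not by $\|b-c\|_{L^n(B_1)}$. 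Two coefficients with identical $L^n$ norm can have wildly different rates, and this rate is exactly the non-scale-invariant quantity $r_{b-c}(\e)$ of \cite{KimSak} that the statement forbids the constant from depending on. So your final $C$ would depend on more than $n,\lambda,\|A\|_\infty,\|b-c\|_n$, defeating the purpose of the proposition.

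The paper avoids this by running a dichotomy not on the level sets of $u$ but on the spatial distribution of the $L^n$ mass of $b-c$. After normalizing $r=1$, one inducts on $m$ with the hypothesis ``the bound holds with constant $8^{(m-1)n}C_0$ whenever $\|b-c\|_{L^n(B_1)}^n\leq m\e_0^n$.'' In the inductive step one asks whether $\|b-c\|_{L^n(B_{3/4})}^n\leq m\e_0^n$. If yes, cover $B_{1/2}$ by balls $B_{1/4}(x)\subset B_{3/4}$ and apply the inductive hypothesis after rescaling. If no, then by additivity of $\int|b-c|^n$ the annulus $B_1\setminus B_{3/4}$ carries $L^n$ norm at most $\e_0$, so the base case (small-norm Moser estimate, essentially Lemma 3.13 of \cite{KimSak}) bounds $u$ on a neighborhood of $\partial B_{7/8}$; Proposition~\ref{MaxPrinciple} applied in $B_{7/8}$ then propagates this boundary bound inward. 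The maximum principle is used to carry an already-established boundary estimate into the interior, not to control super-level-set measures. If you want to keep a Moser-iteration-only proof of the base case, that is fine, but you still need the spatial dichotomy (or some equivalent device) to remove the smallness assumption with a constant depending only on $\|b-c\|_n$.
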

\begin{proof}
Since the estimate we want to show is scale invariant, we will assume that $B_r=B_1$. In the following, $B_s$ will denote the ball with radius $s$, centered at $0$, and $\|b-c\|_n=\|b-c\|_{L^n(B_1)}$.

Set $\e_0=\frac{\lambda}{6C_n}$, where $C_n$ is the constant in the Sobolev embedding $W_0^{1,2}(\bR^n)\hookrightarrow L^{2^*}(\bR^n)$. We will show inductively that, for all $m\in\mathbb N$,
\begin{equation}\label{eq:Induction}
\sup_{B_{1/2}}u\leq 8^{(m-1)n}C_0\fint_{B_1}u,\quad\text{if}\quad \|b-c\|_{L^n(B_1)}^n\e_0^{-n}\leq m,
\end{equation}
where $C_0$ only depends on $n,\lambda$ and $\|A\|_{\infty}$.
	
Assume first that $\|b-c\|_{L^n(B_1)}\leq\e_0$. As in \cite{KimSak}, (2.2), we define
\[
R_{b-c}(t)=\left(\int_{|b-c|>t}|b-c|^n\right)^{1/n},\quad r_{b-c}(\e)=\inf\{t>0: R_{b-c}(t)<\e\}.
\]
Then, $R_{b-c}(t)\leq \|b-c\|_{L^n(B_1)}<\frac{\lambda}{3C_n}$ for all $t>0$, therefore $r_{b-c}\left(\frac{\lambda}{3C_n}\right)=0$. Note that the proof of Lemma 3.13 in \cite{KimSak} gives the same result if we assume that $u$ is a nonnegative subsolution; hence there exists $C_0>0$, depending only on $n,\lambda$ and $\|A\|_{\infty}$, such that $\displaystyle\sup_{B_{1/2}}u\leq C_0\fint_{B_1}u$. So, \eqref{eq:Induction} holds for $m=1$. 

Let now $m\geq 1$, and suppose that
\begin{equation}\label{eq:InductiveHypothesis}
\sup_{B_{1/2}}u\leq 8^{(m-1)n}C_0\fint_{B_1}u,\quad\text{if}\quad \|b-c\|_n^n\e_0^{-n}\leq m.
\end{equation}
Suppose that $b,c$ are such that $m<\|b-c\|_n^n\e_0^{-n}\leq m+1$. We distinguish between two cases: $\|b-c\|_{L^n(B_{3/4})}^n\e_0^{-n}\leq m$, and $\|b-c\|_{L^n(B_{3/4})}^n\e_0^{-n}>m$.
	
In the first case, for any $x$ with $|x|<\frac{1}{2}$, $B_{1/4}(x)\subseteq B_{3/4}$, therefore  $\|b-c\|_{L^n(B_{1/4}(x))}^n\e_0^{-n}\leq m$. Then, from the inductive hypothesis \eqref{eq:InductiveHypothesis} and a scaling argument,
\[
\sup_{B_{1/8}(x)}u\leq 8^{(m-1)n}C_0\fint_{B_{1/4}(x)}u\leq 8^{(m-1)n}C_04^n\fint_{B_1}u\leq 8^{mn}C_0\fint_{B_1}u.
\]
Since this estimate holds for any $x$ with $|x|<\frac{1}{2}$, we obtain that
\begin{equation}\label{eq:Case1}
\sup_{B_{1/2}}u\leq\sup_{|x|\leq 1/2}\left(\sup_{B_{1/8}(x)}u\right)\leq 8^{mn}C_0\fint_{B_1}u.
\end{equation}
In the second case, we have that $\|b-c\|_{L^n(B_{3/4})}^n>\e_0^nm$, therefore
\[
\|b-c\|_{L^n(B_1\setminus B_{3/4})}=\left(\int_{B_1}|b-c|^n-\int_{B_{3/4}}|b-c|^n\right)^{1/n}\leq\left(\e_0^n(m+1)-\e_0^nm\right)^{1/n}=\e_0.
\]
Now, for any $y$ with $|y|=\frac{7}{8}$, we have that $B_{1/8}(y)\subseteq B_1\setminus B_{3/4}$, therefore $\|b-c\|_{L^n(B_{1/8}(y))}\leq\e_0$. So, from \eqref{eq:Induction} for $m=1$ and a scaling argument, we obtain that
\[
\sup_{B_{1/16}(y)}u\leq C_0\fint_{B_{1/8}(y)}u\leq 8^nC_0\fint_{B_1}u.
\]
This shows that, in a neighborhood of the sphere $\partial B_{7/8}$, $\displaystyle u\leq 8^nC_0\fint_{B_1}u$ almost everywhere. Then, the maximum principle (Proposition \ref{MaxPrinciple}) shows that
\begin{equation}\label{eq:Case2}
\sup_{B_{1/2}}u\leq\sup_{B_{7/8}}u\leq\sup_{\partial B_{7/8}}u\leq 8^nC_0\fint_{B_1}u\leq 8^{mn}C_0\fint_{B_1}u.
\end{equation}
Hence, in all cases, \eqref{eq:Case1} and \eqref{eq:Case2} show that, if $m<\|b-c\|_n^n\e_0^{-n}\leq m+1$, then
\begin{equation}\label{eq:InductionEnd}
\sup_{B_{1/2}}u\leq 8^{mn}C_0\fint_{B_1}u.
\end{equation}
If now $\|b-c\|_n^n\e_0^{-n}\leq m$, then \eqref{eq:InductiveHypothesis} shows that \eqref{eq:InductionEnd} holds in this case as well; therefore, \eqref{eq:InductionEnd} holds whenever $\|b-c\|_n^n\e_0^{-n}\leq m+1$. Inductively, this shows that \eqref{eq:Induction} holds for any $m\in\mathbb N$.

Now, if $b-c\in L^n$, there exists $m\in\mathbb N$ such that $m-1\leq\|b-c\|_n^n\e_0^{-n}\leq m$. Then,
\begin{equation*}
\sup_{B_{1/2}}u\leq 8^{(m-1)n}C_0\fint_{B_1}u\leq 8^{\|b-c\|_n^n\e_0^{-n}}C_0\fint_{B_1}u,
\end{equation*}
which completes the proof.
\end{proof}

\section{Estimates on approximate Green's functions}
\subsection{Estimates for \texorpdfstring{$G$}{G}}
We now turn to the main estimates for approximate Green's functions. Those will be solutions to our equations with right hand sides being approximations to the Dirac mass at a point in $\Omega$.

Assume that $\Omega\subseteq\bR^n$ is a bounded domain, and $A$ is uniformly elliptic and bounded in $\Omega$. Assume also that $b,c$ are Lipschitz continuous in $\Omega$, and $d\in L^{\infty}(\Omega)$, with $d\geq\dive b$ in $\Omega$. For $y\in\Omega$ fixed and $m>\frac{2}{\delta(y)}$, as right before (5.4) in \cite{KimSak}, there exists $G_y^m\in W_0^{1,2}(\Omega)$ such that
\begin{equation}\label{eq:G_y^m}
-\dive(A\nabla G_y^m+bG_y^m)+c\nabla G_y^m+dG_y^m=h_m:=\frac{1}{|B_{1/m}(y)|}\chi_{B_{1/m}(y)}.
\end{equation}
From the maximum principle (Proposition~\ref{MaxPrinciple}), we then have that $G_y^m\geq 0$ in $\Omega$.

The next lemma will be used at a technical step in the first main estimate for approximate Green's functions.
\begin{lemma}\label{AbsCts}
Let $\Omega\subseteq\bR^n$ be a bounded domain. Assume that $g\in W_0^{1,2}(\Omega)$, $g\geq 0$ and $b\in\Lip(\Omega)$. Then, the function $\displaystyle s\mapsto\int_{[g>s]}b\nabla g$ is Lipschitz in $(0,\infty)$.
\end{lemma}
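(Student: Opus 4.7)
The plan is to express the difference $F(s)-F(t)$, where $F(s):=\int_{[g>s]}b\cdot\nabla g$ and $0<s<t$, as an integral $\int_\Omega b\cdot\nabla\phi_{s,t}$ against the gradient of a Lipschitz truncation $\phi_{s,t}$ of $g$, and then integrate by parts to transfer the derivative onto $b$, where boundedness is available since $b\in\Lip(\Omega)\subseteq W^{1,\infty}(\Omega)$ on the bounded set $\Omega$.

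Concretely, I would take
\[
\phi_{s,t}=(g-s)^+-(g-t)^+,
\]
which satisfies $0\leq\phi_{s,t}\leq t-s$, is supported in $\{g>s\}$, and belongs to $W_0^{1,2}(\Omega)$ by the chain rule (it is obtained from $g\in W_0^{1,2}(\Omega)$ by composition with a $1$-Lipschitz function vanishing at $0$). A standard Sobolev computation gives $\nabla\phi_{s,t}=\chi_{\{s<g<t\}}\nabla g$ almost everywhere, and since $\nabla g=0$ a.e.\ on the level set $\{g=t\}$, this yields
\[
F(s)-F(t)=\int_{\{s<g<t\}}b\cdot\nabla g=\int_\Omega b\cdot\nabla\phi_{s,t}.
\]

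For the next step, the integration by parts $\int b\cdot\nabla\varphi=-\int\varphi\,\dive b$ holds for $\varphi\in C_c^\infty(\Omega)$ and $b\in W^{1,\infty}(\Omega)$, and extends by density (using $|\Omega|<\infty$, so that $\dive b\in L^\infty(\Omega)\subseteq L^2(\Omega)$ and $\phi_{s,t}\in L^2(\Omega)$) to all $\varphi\in W_0^{1,2}(\Omega)$. Applying this to $\phi_{s,t}$ together with the pointwise bound $|\phi_{s,t}|\leq t-s$ gives
\[
|F(s)-F(t)|\leq (t-s)\,\|\dive b\|_{L^\infty(\Omega)}\,|\Omega|,
\]
a uniform Lipschitz estimate on $(0,\infty)$; the case $t<s$ is symmetric.

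I do not expect a significant obstacle. The argument hinges only on the standard truncation identity $\nabla\phi_{s,t}=\chi_{\{s<g<t\}}\nabla g$ (using the vanishing of $\nabla g$ on level sets of Sobolev functions) and on integration by parts against a $W^{1,\infty}$ vector field over a bounded domain. Once the derivative is shifted onto $b$, the pointwise control $|\phi_{s,t}|\leq t-s$ converts directly into the Lipschitz bound with constant $|\Omega|\,\|\dive b\|_{L^\infty(\Omega)}$, independent of $s$ and $t$.
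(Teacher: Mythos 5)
Your proof is correct and uses essentially the same strategy as the paper: transfer the derivative onto $b$ via integration by parts against a truncation of $g$, then exploit the pointwise bound $|\phi_{s,t}|\le t-s$ and the fact that $\dive b\in L^\infty(\Omega)$ with $|\Omega|<\infty$. The paper first rewrites $F(s)=-\int_{[g>s]}\dive b\,(g-s)$ for each $s$ and then bounds the difference, whereas you combine the two truncations into $\phi_{s,t}$ before integrating by parts — a minor rearrangement of the same computation leading to the same Lipschitz constant $\|\dive b\|_\infty|\Omega|$.
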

\begin{proof}
Set $d=\dive b\in L^{\infty}(\Omega)$. For $s>0$, $(g-s)^+\in W_0^{1,2}(\Omega)$. Therefore, integrating by parts,
\[
\int_{[g>s]}b\nabla g=\int_{[g>s]}b\nabla(g-s)^+=\int_{\Omega}b\nabla(g-s)^+=-\int_{\Omega}d(g-s)^+=-\int_{[g>s]}d(g-s).
\]
Hence, if $s,h>0$,
\begin{align*}
\left|\int_{[g>s+h]}b\nabla g-\int_{[g>s]}b\nabla g\right|&=\left|-\int_{[g>s+h]}d(g-s-h)+\int_{[g>s]}d(g-s)\right|\\
&\leq h\int_{[g>s+h]}|d|+\int_{[s\leq g<s+h]}|d||g-s|\leq h\int_{[g\geq s]}|d|\leq \|d\|_{\infty}|\Omega|h,
\end{align*}
which completes the proof.
\end{proof}

We now show a weak type estimate for $G_y^m$.

\begin{lemma}\label{WeakForG}
Let $\Omega\subseteq\bR^n$ be a bounded domain. Let $A$ be uniformly elliptic and bounded in $\Omega$, with ellipticity $\lambda$, and let $b,c\in\Lip(\Omega)$, $d\in L^{\infty}(\Omega)$, with $d\geq\dive b$. For any $y\in\Omega$ and $m\in\mathbb N$ with $m>\frac{2}{\delta(y)}$, the function $G_y^m$ in \eqref{eq:G_y^m} satisfies the estimate 
\[
\|G_y^m\|_{L^{\frac{n}{n-2},\infty}(\Omega)}\leq C,
\]
where $C$ depends on $n,\lambda$ and $\|b-c\|_{n,1}$ only.
\end{lemma}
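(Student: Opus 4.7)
The plan is to combine the Talenti symmetrization machinery with a Gronwall-type bootstrap driven by the pseudo-rearrangement operator $\Psi_u$ of Lemma~\ref{LorentzEstimate}. Set $u:=G_y^m\ge 0$, which lies in $W_0^{1,2}(\Omega)\cap L^\infty(\Omega)$ by the standard theory for \eqref{eq:G_y^m}.

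First, for $t,h>0$ I would test \eqref{eq:G_y^m} against the truncation $\phi_{t,h}:=\min\{(u-t)^+,h\}$, and use the hypothesis $d\ge\dive b$ (applied to the nonnegative test function $u\phi_{t,h}$, cf.\ \eqref{eq:Div}) to cancel the $bu$ and $du$ contributions. Since $\phi_{t,h}\le h$ and $\int h_m=1$, dividing by $h$ and letting $h\to 0^+$ -- Lemma~\ref{AbsCts} provides the Lipschitz continuity of the relevant integrals in $t$ needed to justify the limit -- yields, for a.e.\ $t>0$,
\[
\lambda P(t)\le 1+\int_{[u>t]}|b-c||\nabla u|,\qquad P(t):=-\frac{d}{dt}\int_{[u>t]}|\nabla u|^2.
\]
Combining with Talenti's isoperimetric inequality (Lemma~\ref{TalentiEstimate}) and changing variables to $s=\mu_u(t)$ via Lemma~\ref{Splitting} produces the key differential inequality
\[
-(u^*)'(s)\le\frac{C_n}{\lambda}\,s^{2/n-2}\bigl(1+k(s)\bigr),\qquad k(s):=\int_{\Omega_u(s)}|b-c||\nabla u|.\qquad(\ast)
\]
If $k$ is uniformly bounded by a constant $K$, then integrating $(\ast)$ on $(s,\infty)$ gives $u^*(s)\le C(n,\lambda,K)\,s^{-(n-2)/n}$, which by \eqref{eq:LorentzDfn} is exactly the desired $L^{n/(n-2),\infty}$ estimate.

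The main obstacle is to bound $k$ using only $\|b-c\|_{L^{n,1}}$. Cauchy-Schwarz at the differential level of $\Psi_u$ yields
\[
k'(s)=\Psi_u(|b-c||\nabla u|)(s)\le\sqrt{\Psi_u(|b-c|^2)(s)}\,\sqrt{\Psi_u(|\nabla u|^2)(s)},
\]
while the identity $\Psi_u(|\nabla u|^2)(s)=-(u^*)'(s)\,P(u^*(s))$ combined with $\lambda P(u^*(s))\le 1+k(s)$ and $(\ast)$ gives $\sqrt{\Psi_u(|\nabla u|^2)(s)}\le (C_n/\lambda)\,s^{1/n-1}(1+k(s))$. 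Thus $k'(s)/(1+k(s))\le (C_n/\lambda)\,s^{1/n-1}\sqrt{\Psi_u(|b-c|^2)(s)}$, which integrates (since $k(0)=0$) to
\[
1+k(s)\le\exp\!\left(\frac{C_n}{\lambda}\int_0^\infty\tau^{1/n-1}\sqrt{\Psi_u(|b-c|^2)(\tau)}\,d\tau\right).
\]
The integrand is a H\"older pairing for Lorentz spaces: $\tau^{1/n-1}$ has unit norm in $L^{n/(n-1),\infty}(0,\infty)$, and by \eqref{eq:LorentzNormPowers} together with Lemma~\ref{LorentzEstimate},
\[
\|\sqrt{\Psi_u(|b-c|^2)}\|_{L^{n,1}}^{2}=\|\Psi_u(|b-c|^2)\|_{L^{n/2,1/2}}\le C_n\|b-c\|_{L^{n,1}}^{2}.
\]
Hence $k(s)\le \exp(C_n\|b-c\|_{L^{n,1}}/\lambda)-1$ uniformly in $s$, and substituting back into $(\ast)$ finishes the proof. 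The delicate point is threading the pseudo-rearrangement Cauchy-Schwarz through the Talenti inequality so that the remaining piece reduces to a Lorentz-H\"older pairing at precisely the $L^{n,1}$ endpoint, which is why the $L^{n,1}$ hypothesis (and not any weaker Lorentz space) is exactly what makes the integral finite.
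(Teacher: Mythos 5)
Your proof is correct and uses the same toolkit as the paper: the truncation test function together with $d\ge\dive b$, Talenti's inequality (Lemma~\ref{TalentiEstimate}), the pseudo-rearrangement $\Psi_u$ with Lemma~\ref{LorentzEstimate}, a differential Cauchy--Schwarz step, and a Gronwall-type closure. The organization differs in a way that is worth recording. The paper runs the Gronwall argument in the $t$-variable on the quantity $\gamma_m(t)=\bigl(-\frac{d}{dt}\int_{[G_y^m>t]}|\nabla G_y^m|^2\bigr)^{1/2}$, invoking the abstract Lemma~\ref{Gronwall} with kernel $\nu_m\beta_m$, then squares and re-applies Talenti to get $\mu_m(t)^{2/n-2}(-\mu_m'(t))\ge C$ and integrates that in $t$. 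You instead pass to the $s$-variable early, isolate the auxiliary function $k(s)=\int_{\Omega_u(s)}|b-c||\nabla u|$, observe that a uniform bound on $k$ together with $(\ast)$ immediately yields $u^*(s)\lesssim s^{(2-n)/n}$, and then close the loop on $k$ via the identity $\Psi_u(|\nabla u|^2)(s)=(-(u^*)'(s))\,P(u^*(s))$ and explicit integration of $\ln(1+k)$. Your version is arguably more transparent about why $L^{n,1}$ is the right endpoint (the bound on $k$ is literally a single Lorentz--H\"older pairing, equivalently Hardy against the decreasing function $\tau^{1/n-1}$ as in \eqref{eq:ToGronwall}), at the cost of putting a bit more weight on the a.e.\ change-of-variables machinery from Lemma~\ref{Splitting}, which you do correctly flag. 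Both routes use Lemma~\ref{AbsCts} to justify the differentiation step and Lemma~\ref{AbsoluteContinuity} (plus the one-sided FTC for monotone functions) to legitimize integrating $(\ast)$ up to $|\Omega|$; these are the only places where care is genuinely needed, and your sketch addresses them.
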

\begin{proof}
Fix $y\in\Omega$. We follow the proof of Lemma 3.1 in \cite{DelVecchioPosteraroExistenceMeasure} (see also Theorem 3.1 in \cite{DelVecchioPosteraroNoncoercive}). First, for $t,h>0$, consider the function
\begin{equation}\label{eq:Tth}
T_{t,h}(s)=\left\{\begin{array}{c l}0, & |s|<t \\ s-t\sgn(s), & t\leq |s|< t+h \\ h\sgn(s), & |s|\geq t+h.\end{array}\right.
\end{equation}
We use $\phi=T_{t,h}(G_y^m)$ as a test function, to obtain that
\[
\int_{\Omega}A\nabla G_y^m\nabla\phi+b\nabla\phi\cdot G_y^m+c\nabla G_y^m\cdot\phi+dG_y^m\phi=\fint_{B_{1/m}(y)}\phi.
\]
We have that $sT_{t,h}(s)\geq 0$ for all $s\in\mathbb R$, so $G_y^m\phi\geq 0$. Hence, the assumption $d\geq\dive b$ implies that
\begin{equation}\label{eq:ToUseAbsCts}
\int_{\Omega}A\nabla G_y^m\nabla\phi\leq\fint_{B_{1/m}(y)}\phi+\int_{\Omega}(b-c)\nabla G_y^m\cdot\phi.
\end{equation}
Note now that $|\phi|\leq h$ and $\phi$ is supported on $[G_y^m>t]$. Moreover, $\nabla\phi=\nabla G_y^m$ if $t<G_y^m\leq t+h$, and $\nabla\phi=0$ otherwise. Hence, from the ellipticity of $A$,
\[
\lambda\int_{[t<G_y^m\leq t+h]}|\nabla G_y^m|^2\leq h+h\int_{[G_y^m>t]}|b-c||\nabla G_y^m|.
\]
Since $\Omega$ is bounded, we have that $b-c\in L^2(\Omega)$. Moreover, $\nabla G_y^m\in L^2(\Omega)$, therefore the previous estimate shows that
\begin{equation}\label{eq:FIsLipschitz}
t\mapsto H_m(t):=\int_{[G_y^m>t]}|\nabla G_y^m|^2\quad\text{is Lipschitz in}\quad(0,\infty).
\end{equation}
We now return to \eqref{eq:ToUseAbsCts}. Using the definition of $\phi$ and dividing by $h$, we estimate
\[
\frac{\lambda}{h}\int_{[t<G_y^m\leq t+h]}|\nabla G_y^m|^2\leq 1+\int_{[t<G_y^m\leq t+h]}|b-c||\nabla G_y^m|+\int_{[G_y^m>t+h]}(b-c)\nabla G_y^m.
\]
So, by letting $h\to 0$, and since $\nabla G_y^m=0$ almost everywhere in $[G_y^m=t]$, we obtain that for almost every $t>0$,
\begin{equation}\label{eq:ToDropSquare}
-\frac{d}{dt}\int_{[G_y^m>t]}|\nabla G_y^m|^2\leq C_{\lambda}+C_{\lambda}\int_{[G_y^m>t]}(b-c)\nabla G_y^m.
\end{equation}
Let $\mu_m$ denote the distribution function of $G_y^m$, and set $\nu_m(t)=\mu_m(t)^{\frac{1}{n}-1}(-\mu_m'(t))^{1/2}$. Using Lemma~\ref{TalentiEstimate}, we then obtain that
\begin{equation}\label{eq:g_mDfn}
\gamma_m(t):=\left(-\frac{d}{dt}\int_{[G_y^m>t]}|\nabla G_y^m|^2\right)^{1/2}\leq C_n\nu_m(t)\left(-\frac{d}{dt}\int_{[G_y^m>t]}|\nabla G_y^m|^2\right),
\end{equation}
therefore, plugging in \eqref{eq:ToDropSquare} and using Lemma~\ref{AbsCts}, we obtain that
\begin{align}\label{eq:ToPlugDerivative}
\gamma_m(t)&\leq C\nu_m(t)+C\nu_m(t)\int_t^{\infty}\left(-\frac{d}{ds}\int_{[G_y^m>s]}(b-c)\nabla G_y^m\right)\,ds,
\end{align}
where $C$ depends on $n$ and $\lambda$ only.

We now write, for any $s>0$ and $h>0$ small,
\begin{equation}\label{eq:CSForDerivativeLater}
\frac{1}{h}\int_{[s<G_y^m\leq s+h]}(b-c)\nabla G_y^m\leq\left(\frac{1}{h}\int_{[s<G_y^m\leq s+h]}|b-c|^2\right)^{1/2}\left(\frac{1}{h}\int_{[s<G_y^m\leq s+h]}|\nabla G_y^m|^2\right)^{1/2},
\end{equation}
which implies that, for almost every $s>0$,
\[
-\frac{d}{ds}\int_{[G_y^m>s]}(b-c)\nabla G_y^m\leq\left(-\frac{d}{ds}\int_{[G_y^m>s]}|b-c|^2\right)^{1/2}\gamma_m(s):=\beta_m(s)\gamma_m(s),
\]
where $\gamma_m$ is defined in \eqref{eq:g_mDfn}. Plugging the last estimate in \eqref{eq:ToPlugDerivative}, we obtain that
\begin{equation}\label{eq:ForGronwall}
\gamma_m(t)\leq C\nu_m(t)+C\nu_m(t)\int_t^{\infty}\beta_m(s)\gamma_m(s)\,ds.
\end{equation}
If $\Psi_{G_y^m}$ is as in \eqref{eq:DDfn} and since $b$ and $c$ are bounded, the definitions of $\nu_m,\beta_m$ and \eqref{eq:D_uToLevelSets} show that
\begin{align*}
\nu_m(s)\beta_m(s)&=\mu_m(s)^{\frac{1}{n}-1}(-\mu_m'(s))^{1/2}\left(-\frac{d}{ds}\int_{[G_y^m>s]}|b-c|^2\right)^{1/2}\\
&=\mu_m(s)^{\frac{1}{n}-1}(-\mu_m'(s))\sqrt{\Psi_{G_y^m}(|b-c|^2)(\mu_m(s))}.
\end{align*}
Set now $f(t)=t^{\frac{1}{n}-1}\sqrt{\Psi_{G_y^m}(|b-c|^2)(t)}$ for $t>0$. Then $f\geq 0$ and $f\in L^1(0,\infty)$, since, from \eqref{eq:Hardy} and the fact that $t^{\frac{1}{n}-1}$ is decreasing, we obtain that
\begin{align}\nonumber
\int_0^{\infty}f&\leq\int_0^{\infty}t^{\frac{1}{n}-1}\sqrt{\Psi_{G_y^m}(|b-c|)(t)}\,dt\leq\int_0^{\infty}t^{\frac{1}{n}-1}\left(\sqrt{\Psi_{G_y^m}(|b-c|)}\right)^*(t)\,dt\\
\nonumber
&\leq C\left\|\sqrt{\Psi_{G_y^m}(|b-c|^2)}\right\|_{L^{n,1}(0,|\Omega|)}= C\left\|\Psi_{G_y^m}(|b-c|^2)\right\|_{L^{\frac{n}{2},\frac{1}{2}}(0,|\Omega|)}^{1/2}\\
\label{eq:ToGronwall}
&\leq C\left\||b-c|^2\right\|_{L^{\frac{n}{2},\frac{1}{2}}(\Omega)}^{1/2}=C\|b-c\|_{L^{n,1}(\Omega)},
\end{align}
where $C$ only depends on $n$, and where we have also used Lemma~\ref{ChangeDerivative} and \eqref{eq:LorentzNormPowers}. Then, since $\nu_m\beta_m=(f\circ \mu_m)\cdot(-\mu_m')$, we apply Corrolary~\ref{ChangeOfVariables} and the last estimate to obtain that
\begin{equation}\label{eq:ToGronwall2}
\int_0^{\infty}\nu_m\beta_m=-\int_0^{\infty}f(\mu_m(s))\mu_m'(s)\,ds\leq\int_0^{\infty}f\leq C\|b-c\|_{L^{n,1}(\Omega)}.
\end{equation}
This shows that $\nu_m\beta_m\in L^1(0,\infty)$. Moreover, $\nu_m,\beta_m\geq 0$, and from \eqref{eq:FIsLipschitz} and since $b,c$ are Lipschitz, $\beta_m\gamma_m$ is bounded in $(0,|\Omega|)$ and it vanishes in $(|\Omega|,\infty)$, hence $\beta_m\gamma_m\in L^1(0,\infty)$. Hence, all the hypotheses of Gronwall's inequality (Lemma~\ref{Gronwall}) are satisfied, therefore \eqref{eq:ForGronwall} shows that
\begin{align*}
\gamma_m(t)&\leq C\nu_m(t)+C\nu_m(t)\int_t^{\infty}C\nu_m(\tau)\beta_m(\tau)\exp\left(\int_t^{\tau}\nu_m(s)\beta_m(s)\,ds\right)\,d\tau\\
&\leq C\nu_m(t)+C\nu_m(t)\int_0^{\infty}\nu_m(\tau)\beta_m(\tau)\,d\tau\cdot\exp\left(\int_0^{\infty}\nu_m(\rho)\beta_m(\rho)\,d\rho\right)\leq C\nu_m(t),
\end{align*}
since $\nu_m$ and $\beta_m$ are nonnegative, where $\exp$ is the exponential function, and where we used \eqref{eq:ToGronwall2} in the last estimate. Hence, using the definitions of $\gamma_m$ and $\nu_m$, we obtain that
\begin{equation}\label{eq:UseLater}
\left(-\frac{d}{dt}\int_{[G_y^m>t]}|\nabla G_y^m|^2\right)^{1/2}\leq C\mu_m(t)^{\frac{1}{n}-1}(-\mu_m'(t))^{1/2},
\end{equation}
where $C$ depends on $n,\lambda$ and $\|b-c\|_{n,1}$. Therefore, from Proposition~\ref{TalentiEstimate} we obtain that, for almost every $t>0$, $\mu_m(t)^{\frac{2}{n}-2}(-\mu_m'(t))\geq C$. Therefore, this shows that, for almost every $t>0$, $\mu_m^{\frac{2}{n}-1}$ is differentiable at $t$, and also
\[
\left(\mu_m^{\frac{2}{n}-1}\right)'(t)=\left(\frac{2}{n}-1\right)\mu_m^{\frac{2}{n}-2}(t)\mu_m'(t)=\left(1-\frac{2}{n}\right)\mu_m^{\frac{2}{n}-2}(t)(-\mu_m'(t))\geq\left(1-\frac{2}{n}\right)C.
\]
Since the function $\mu_m(t)^{\frac{2}{n}-1}$ is increasing and nonnegative, the last estimate and Corollary 3.29 in \cite{AmbrosioFuscoPallara} imply that, for $t>0$,
\[
\mu_m(t)^{\frac{2}{n}-1}\geq\mu_m(t/2)^{\frac{2}{n}-1}+\int_{t/2}^t\left(\mu_m(t)^{\frac{2}{n}-1}\right)'\geq\int_{t/2}^t\left(\mu_m(t)^{\frac{2}{n}-1}\right)'\geq\int_{t/2}^tC=Ct.
\]
Hence, $t\mu_m(t)^{1-\frac{2}{n}}\leq C$, and combining with \eqref{eq:EquivalentForInf} we complete the proof.
\end{proof}

The next lemma shows a weak bound on $\nabla G_y^m$.

\begin{lemma}\label{WeakForDG}
Under the same assumptions as in Lemma~\ref{WeakForG}, for any $y\in\Omega$ and $m>\frac{2}{\delta(y)}$,
\[
\|\nabla G_y^m\|_{L^{\frac{n}{n-1},\infty}(\Omega)}\leq C,
\]
where $C$ depends on $n,\lambda$ and $\|b-c\|_{n,1}$ only.
\end{lemma}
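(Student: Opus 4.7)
I plan to reuse the pointwise rearrangement estimate \eqref{eq:UseLater} from the proof of Lemma~\ref{WeakForG}: squaring gives
\[
-\tfrac{d}{dt}\int_{[G_y^m>t]}|\nabla G_y^m|^2 \leq C\mu_m(t)^{\frac{2}{n}-2}(-\mu_m'(t))
\]
for a.e.\ $t>0$, where $C$ depends only on $n,\lambda,\|b-c\|_{n,1}$. Since $\nabla G_y^m=0$ a.e.\ on $[G_y^m=0]$, integrating on $(0,t)$ the left-hand side becomes $\int_{[G_y^m\leq t]}|\nabla G_y^m|^2$. Applying Corollary~\ref{ChangeOfVariables} with the decreasing function $g=\mu_m$ and $f(s)=s^{\frac{2}{n}-2}$, the right-hand side is bounded by $\int_{\mu_m(t)}^{\mu_m(0^+)}s^{\frac{2}{n}-2}\,ds\leq C\mu_m(t)^{\frac{2-n}{n}}$, since $\frac{2}{n}-1<0$. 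This produces the core sublevel estimate
\[
\int_{[G_y^m\leq t]}|\nabla G_y^m|^2 \leq C\mu_m(t)^{\frac{2-n}{n}}.
\]

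Next, I fix $s>0$ and a free parameter $r>0$, and set $v=(G_y^m)^*(r)$. I decompose
\[
|\{|\nabla G_y^m|>s\}|\leq \mu_m(v)+|\{|\nabla G_y^m|>s,\ G_y^m\leq v\}|\leq \mu_m(v)+\frac{1}{s^2}\int_{[G_y^m\leq v]}|\nabla G_y^m|^2.
\]
By the definition of $(G_y^m)^*$, we have $\mu_m(v)\leq r$ and $\mu_m(t)>r$ strictly for every $0<t<v$. Applying the core sublevel estimate at $t=v-\e$ for small $\e>0$ (if $v>0$) gives $\int_{[G_y^m\leq v-\e]}|\nabla G_y^m|^2\leq C\mu_m(v-\e)^{\frac{2-n}{n}}<Cr^{\frac{2-n}{n}}$. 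Letting $\e\to 0$ via monotone convergence and using that $\nabla G_y^m=0$ a.e.\ on $[G_y^m=v]$ to identify $\int_{[G_y^m<v]}|\nabla G_y^m|^2=\int_{[G_y^m\leq v]}|\nabla G_y^m|^2$, I obtain $\int_{[G_y^m\leq v]}|\nabla G_y^m|^2\leq Cr^{\frac{2-n}{n}}$ (the case $v=0$ is trivial: $\mu_m(0^+)\leq r$ and the integral vanishes). Hence for every $r>0$,
\[
|\{|\nabla G_y^m|>s\}|\leq r+\frac{C}{s^2}r^{\frac{2-n}{n}}.
\]
Choosing $r=s^{-\frac{n}{n-1}}$ balances both terms to order $s^{-\frac{n}{n-1}}$, which gives $|\{|\nabla G_y^m|>s\}|\leq Cs^{-\frac{n}{n-1}}$; by \eqref{eq:EquivalentForInf}, this is exactly the claimed bound, with $C$ depending only on $n,\lambda,\|b-c\|_{n,1}$.

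The main difficulty is that $\mu_m$ may jump — equivalently, $G_y^m$ may be constant on positive-measure sets — so no single cutoff $v$ is guaranteed to satisfy $\mu_m(v)=r$, and a naive substitution fails to deliver simultaneously the upper bound $\mu_m(v)\leq r$ and the matching bound on the sublevel integral. The resolution is the two-sided exploitation of $v=(G_y^m)^*(r)$: one inequality is immediate from the definition, and the other follows by applying the sublevel estimate just below $v$ and passing to the limit, which is legitimate because the gradient of $G_y^m$ vanishes on the level set $\{G_y^m=v\}$ where the jump in $\mu_m$ is concentrated.
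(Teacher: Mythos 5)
Your proof is correct, and it takes a genuinely different route from the paper's. The paper's proof runs through the composition $\Phi_m=H_m\circ(G_y^m)^*$, differentiates it via Lemma~\ref{Decomp} and the decomposition in Lemma~\ref{Splitting} to obtain $\Phi_m'(s)\leq Cs^{\frac{2}{n}-2}$, and then invokes the sequence construction of Lemma~3.3 in \cite{AlvinoFeroneTrombetti} (a family $g_j$ with $g_j^*=|\nabla G_y^m|^*$), the cutoffs $\phi_\e$, and the reverse Hardy inequality \eqref{eq:HardyLower} to bound $|\nabla G_y^m|^*(s)$ directly. Your argument instead integrates the squared form of \eqref{eq:UseLater} over $(0,t)$ (using that $H_m$ is Lipschitz, from Lemma~\ref{AbsCts}) and applies Corollary~\ref{ChangeOfVariables} to $g=\mu_m$, $f(s)=s^{2/n-2}$, to get the sublevel energy estimate $\int_{[G_y^m\leq t]}|\nabla G_y^m|^2\leq C\mu_m(t)^{\frac{2-n}{n}}$; you then combine a Chebyshev split of $\{|\nabla G_y^m|>s\}$ along the level $v=(G_y^m)^*(r)$ with the two-sided inequalities $\mu_m(v)\leq r$ (from \eqref{eq:muu}) and $\mu_m(t)>r$ for $t<v$, and optimize $r$. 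The key technical point you correctly flag — that $\mu_m$ may jump at $v$, so one must evaluate the sublevel estimate just below $v$ and pass to the limit, using that $\nabla G_y^m=0$ a.e.\ on $\{G_y^m=v\}$ — is handled properly, as is the degenerate case $v=0$. Your approach is more elementary and self-contained: it avoids the auxiliary sequence $g_j$ and the rearrangement-of-derivative machinery from \cite{AlvinoFeroneTrombetti}, at the (cosmetic) cost of not producing the pointwise bound on $|\nabla G_y^m|^*(s)$ itself, only the equivalent weak-type distributional bound, which is all that is claimed.
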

\begin{proof}
Fix $y\in\Omega$ and consider the function $H_m$ from \eqref{eq:FIsLipschitz}; then $H_m$ is Lipschitz and increasing in $(0,\infty)$. Set $u_m=(G_y^m)^*$ and let $\mu_m$ be the distribution function of $u_m$. Define also $\Phi_m=H_m\circ u_m$. Since $u_m$ is decreasing, Lemma~\ref{Decomp} shows that, for almost every $s\in(0,\infty)$,
\begin{equation}\label{eq:H}
\Phi_m'(s)=\left(H_m\circ u_m\right)'(s)=H_m'(u_m(s))\cdot (u_m)'(s),
\end{equation}
where we interpret $\Phi_m'(s)$ as $0$ whenever $(u_m)'(s)=0$. Consider now the decomposition $(0,\infty)=G_{G_y^m}\cup D_{G_y^m}\cup N_{G_y^m}$ from Lemma~\ref{Splitting}, and define $B_m$ to be the set of $t>0$ such that \eqref{eq:UseLater} holds. Then $B_m$ has full measure in $(0,\infty)$, hence from Theorem 1 in \cite{SerrinVarberg} and Lemma~\ref{Splitting}, $u_m'(s)=0$ for almost every $s\in G_{G_y^m}\cap u_m^{-1}((0,\infty)\setminus B_m)$. Since $u_m'(s)\neq 0$ for all $s\in G_{G_y^m}$, this shows that $u_m(s)\in B_m$ for almost every $s\in G_{G_y^m}$. Then, for those $s$, using \eqref{eq:UseLater} and \eqref{eq:xGuFormulas} we obtain that
\begin{align*}
\Phi_m'(s)&=(H_m\circ u_m)'(s)=-H_m'(u_m(s))\cdot (-u_m'(s))\\
&\leq C\mu_m(u_m(s))^{\frac{2}{n}-2}(-\mu_m'(u_m(s))(-u_m'(s))=Cs^{\frac{2}{n}-2}.
\end{align*}
In addition, for almost every $s\in D_{G_y^m}$, $u_m'(s)=0$, hence $\Phi_m'(s)=0$ almost everywhere in $D_{G_y^m}$, from \eqref{eq:H}. Therefore, for almost every $s\in(0,\infty)$,
\begin{equation}\label{eq:Phi}
\frac{d}{ds}\int_{[G_y^m>(G_y^m)^*(s)]}|\nabla G_y^m|^2=\Phi_m'(s)\leq Cs^{\frac{2}{n}-2}
\end{equation}
which corresponds to (3.11) in \cite{AlvinoFeroneTrombetti}.

We now fix $m\in\mathbb N$ with $m>\frac{2}{\delta(y)}$ and we follow the proof of Lemma 3.3 in \cite{AlvinoFeroneTrombetti}, to construct a sequence $g_j$ of functions in $L^1(0,\infty)$, such that $g_j^*=|\nabla G_y^m|^*$, and for all $\phi$ which are Lipschitz and compactly supported in $(0,|\Omega|]$,
\begin{equation}\label{eq:j}
\int_0^{|\Omega|}g_j^2\phi\xrightarrow[j\to\infty]{}\int_0^{|\Omega|}\Phi_m'\phi.
\end{equation}
Then, we proceed as in Theorem 3.2 in \cite{AlvinoFeroneTrombetti}: we fix $s>0$, and for $0<\e<s$, we set
\[
\phi_{\e}(t)=\left\{\begin{array}{c l} 0, &0\leq t<\e \\ \displaystyle \frac{s}{s-\e}(t-\e), & \e\leq t<s \\ s, & t\geq s.\end{array}\right.
\]
Then, from \eqref{eq:HardyLower}, and since $g_j^*=|\nabla G_y^m|^*\geq 0$ and $\phi_{\e}\geq 0$ is increasing,
\begin{align*}
\int_0^{\infty}g_j^2\phi_{\e}&\geq\int_0^{\infty}(g_j^2)^*\phi_{\e}=\int_0^{\infty}\left(|\nabla G_y^m|^*\right)^2\phi_{\e}\geq\int_0^s\left(|\nabla G_y^m|^*\right)^2\phi_{\e}\\
&\geq\left(|\nabla G_y^m|^*(s)\right)^2\int_{\e}^s\frac{s}{s-\e}(t-\e)\,dt=\left(|\nabla G_y^m|^*(s)\right)^2\frac{s(s-\e)}{2}.
\end{align*}
Note now that $\phi_{\e}(t)\leq t$ for $t\in(0,s)$. Therefore, letting $j\to\infty$ and using \eqref{eq:j},
\[
\frac{s(s-\e)}{2}\left(|\nabla G_y^m|^*(s)\right)^2\leq\int_0^{\infty}\Phi_m'\phi_{\e}\leq \int_0^st\Phi_m'(t)\,dt+s\int_s^{\infty}\Phi_m'(t)\,dt,
\]
so, letting $\e\to 0$ and using \eqref{eq:Phi}, we obtain that
\begin{equation}\label{eq:UseSimilarLater}
\frac{s^2}{2}\left(|\nabla G_y^m|^*(s)\right)^2\leq \int_0^st\Phi_m'(t)\,dt+s\int_s^{\infty}\Phi_m'(t)\,dt\leq Cs^{\frac{2}{n}}.
\end{equation}
This completes the proof.
\end{proof}

We now improve the weak type bound in Lemma~\ref{WeakForG} to a pointwise bound, using the maximum principle and the weak $L^{\frac{n}{n-2},\infty}$ bound on $G_y^m$.

\begin{lemma}\label{Pointwise}
Under the same assumptions as in Lemma~\ref{WeakForG}, for every $x,y\in\Omega$ and $m\in\mathbb N$  with $m>\frac{2}{\delta(y)}$ and $|x-y|>\frac{2}{m}$, the functions $G_y^m$ satisfy the estimate
\[
G_y^m(x)\leq C|x-y|^{2-n},
\]
where $C$ depends on $n,\lambda,\|A\|_{\infty}$ and $\|b-c\|_{n,1}$ only.
\end{lemma}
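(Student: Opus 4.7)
The plan is to upgrade the weak-type bound of Lemma~\ref{WeakForG} to a pointwise bound via the local boundedness estimate of Proposition~\ref{SupByIntegral}. Fix $x,y\in\Omega$ with $|x-y|>2/m$ and set $r=|x-y|/2$, so $r>1/m$. Then $B_r(x)\cap B_{1/m}(y)=\emptyset$, so the right-hand side $h_m$ of \eqref{eq:G_y^m} vanishes in $B_r(x)$. Hence $G_y^m$ is a nonnegative solution (in particular a nonnegative subsolution) of the homogeneous equation in $B_r(x)$. Since $b,c$ are Lipschitz on the bounded domain $\Omega$, we have $b-c\in L^n(B_r(x))$, and by \eqref{eq:LorentzNormsRelations},
\[
\|b-c\|_{L^n(B_r(x))}\le\|b-c\|_{L^n(\Omega)}\le C_n\|b-c\|_{L^{n,1}(\Omega)}.
\]
Applying Proposition~\ref{SupByIntegral} on $B_r(x)$ therefore gives
\[
G_y^m(x)\le\sup_{B_{r/2}(x)}G_y^m\le C\fint_{B_r(x)}G_y^m,
\]
with $C$ depending only on $n,\lambda,\|A\|_\infty$ and $\|b-c\|_{n,1}$.

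Next I estimate the average using the weak-type bound. By Lemma~\ref{WeakForG} together with \eqref{eq:EquivalentForInf}, the distribution function of $G_y^m$ satisfies
\[
\mu_{G_y^m}(t)\le C_1\,t^{-n/(n-2)},\qquad t>0,
\]
where $C_1$ depends on $n,\lambda,\|b-c\|_{n,1}$. Then, by the layer-cake formula and for any $t_0>0$,
\[
\int_{B_r(x)}G_y^m=\int_0^\infty|\{z\in B_r(x):G_y^m(z)>t\}|\,dt\le t_0|B_r(x)|+\int_{t_0}^\infty C_1t^{-n/(n-2)}\,dt,
\]
and the second integral converges since $n/(n-2)>1$, giving a bound $C\,C_1 t_0^{-2/(n-2)}$. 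Optimizing in $t_0$ by choosing $t_0\sim r^{-(n-2)}$ balances the two terms and yields
\[
\fint_{B_r(x)}G_y^m\le Cr^{2-n},
\]
with $C$ depending only on $n,\lambda,\|b-c\|_{n,1}$. Combining with the previous display gives $G_y^m(x)\le C|x-y|^{2-n}$ with the claimed dependence of the constant.

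There is no serious obstacle here: this is the standard weak-to-pointwise upgrade via local boundedness. The only bookkeeping point is that Proposition~\ref{SupByIntegral} is stated with a constant depending on $\|b-c\|_n$, while the lemma requires dependence only on $\|b-c\|_{n,1}$; this is immediate from the continuous embedding $L^{n,1}(\Omega)\hookrightarrow L^n(\Omega)$ recorded in \eqref{eq:LorentzNormsRelations}.
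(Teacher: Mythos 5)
The argument you give is exactly the paper's first case, but you have silently assumed that $B_r(x)\subseteq\Omega$, and this is where the real work of the lemma lies. Proposition~\ref{SupByIntegral} requires $G_y^m\in W^{1,2}(B_r(x))$ and that $G_y^m$ be a subsolution in the full ball $B_r(x)$; if $B_r(x)\not\subseteq\Omega$, the function $G_y^m$ is only defined on $\Omega$ and is not a solution of the homogeneous equation on all of $B_r(x)$, so the local boundedness estimate simply does not apply. Since no regularity of $\partial\Omega$ is assumed and $x$ may be arbitrarily close to the boundary relative to $|x-y|$, this boundary case cannot be ignored. (The averaging step via layer-cake is fine and is equivalent to the paper's Lorentz--H\"older estimate, and the remark about $\|b-c\|_n\le C_n\|b-c\|_{n,1}$ is correct.)

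The paper fills the gap as follows: when $B_{2r}(x)\not\subseteq\Omega$, it first replaces $G_y^m$ by the solution $\widetilde G_y^m\in W_0^{1,2}(\Omega)$ of the drift equation $-\dive(A\nabla\widetilde G_y^m)+(c-b)\nabla\widetilde G_y^m=h_m$, which dominates $G_y^m$ by the maximum principle because $\widetilde G_y^m-G_y^m$ is a supersolution of $\mathcal{L}$ in $\Omega$ vanishing on $\partial\Omega$. It then extends $A$ (by $\lambda I$) and $b-c$ (by a Lipschitz extension preserving the $L^{n,1}$ norm up to a factor $2$) to $\Omega\cup B_{2r}(x)$, solves the extended drift equation there to get $\overline G_y^m\ge\widetilde G_y^m$ on $\Omega$ via another maximum-principle comparison, and finally applies the interior argument (Proposition~\ref{SupByIntegral} plus the weak-type bound) to $\overline G_y^m$ on $B_r(x)\subseteq\Omega\cup B_{2r}(x)$. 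You would need to supply this (or some equivalent boundary device) to make the proof complete.
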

\begin{proof}
The proof is similar to the proof of Proposition 6.1 in \cite{KimSak}, which is based on an argument in \cite{Gruter}: consider $y\in\Omega$, and set $r=\frac{1}{4}|x-y|$, $B_s=B_s(x)$ for $s>0$. Then, the assumption $m>\frac{2}{|x-y|}$ shows that $B_{2r}\cap B_{1/m}(y)=\emptyset$. We now distinguish between two cases: $B_{2r}\subseteq\Omega$, and $B_{2r}\not\subseteq\Omega$.
	
In the first case: since $G_y^m\in W^{1,2}(B_{2r})$ and $B_{2r}\cap B_{1/m}(y)=\emptyset$, \eqref{eq:G_y^m} shows that $G_y^m$ is a $W^{1,2}(B_{2r})$ solution to the equation
\[
-\dive(A\nabla G_y^m+bG_y^m)+c\nabla G_y^m+dG_y^m=0
\]
in $B_{2r}$. In addition, since $b,c$ and $d$ are bounded, Theorem 8.22 in \cite{Gilbarg} shows that $G_y^m$ is H{\"o}lder continuous in $B_r$. Therefore, from Proposition~\ref{SupByIntegral} and H{\"o}lder's inequality,
\[
G_y^m(x)\leq\sup_{B_{r/2}}G_y^m\leq C\fint_{B_r}G_y^m\leq Cr^{-n}\|G_y^n\|_{L^{\frac{n}{n-2},\infty}(B_r)}\|\chi_{B_r}\|_{L^{\frac{n}{2},1}(B_r)}\leq Cr^{2-n}=C|x-y|^{2-n},
\]
where $C$ depends on $n,\lambda$, $\|A\|_{\infty}$ and $\|b-c\|_n$. Since $\|b-c\|_n\leq C_n\|b-c\|_{n,1}$, we obtain that $C$ depends on $n,\lambda$, $\|A\|_{\infty}$ and $\|b-c\|_{n,1}$.
	
In the second case: we consider the solution $\widetilde{G}_y^m\in W_0^{1,2}(\Omega)$ to the equation
\[
-\dive\left(A\nabla\widetilde{G}_y^m\right)+(c-b)\nabla\widetilde{G}_y^m=h_m
\]
in $\Omega$, where $h_m$ is as in \eqref{eq:G_y^m}. Then $\widetilde{G}_y^m\geq 0$, and $\tilde{v}=\widetilde{G}_y^m-G_y^m\in W_0^{1,2}$ is a supersolution to $-\dive(A\nabla\tilde{v}+b\tilde{v})+c\nabla\tilde{v}+d\tilde{v}\geq 0$ in $\Omega$; hence Proposition~\ref{MaxPrinciple} shows that $\tilde{v}\geq 0$ in $\Omega$. Therefore $G_y^m\leq\widetilde{G}_y^m$.

Denote by $\overline{A}$ the extension of $A$ by $\lambda I$ outside $\Omega$. Using also Theorem 3 on page 174 of \cite{SteinSingular}, we construct Lipschitz extensions $\overline{b},\overline{c}$ of $b,c$ in $\bR^n$, such that $\|\overline{b}-\overline{c}\|_{L^{n,1}(\bR^n)}\leq 2\|b-c\|_{n,1}$. We also consider $\overline{G}_y^m\in W_0^{1,2}(\Omega\cup B_{2r})$ that solves the equation
\[
-\dive\left(\overline{A}\nabla\overline{G}_y^m\right)+\left(\overline{c}-\overline{b}\right)\nabla\overline{G}_y^m=h_m
\]
in $\Omega\cup B_{2r}$. Then, $\overline{v}=\overline{G}_y^m-\widetilde{G}_y^m$ is a $W^{1,2}(\Omega)$ solution to the equation $-\dive\left(A\nabla\overline{v}\right)+(c-b)\nabla\overline{v}=0$ in $\Omega$. From Proposition~\ref{MaxPrinciple}, $\overline{G}_y^m\geq 0$ in $\Omega\cup B_{2r}$, therefore $\overline{v}\geq 0$ on $\partial\Omega$. Hence Proposition~\ref{MaxPrinciple} shows that $\overline{v}\geq 0$ in $\Omega$, therefore $\widetilde{G}_y^m\leq\overline{G}_y^m$ in $\Omega$. Since $G_y^m\leq\widetilde{G}_y^m$, we have that $G_y^m\leq\overline{G}_y^m$. Moreover, since $h_m$ vanishes in $B_{2r}$, we apply Lemma~\ref{WeakForG} and the argument of the first case to $\overline{G}_y^m$ to obtain that
\[
G_y^m(x)\leq\overline{G}_y^m(x)\leq\sup_{B_{r/2}}\overline{G}_y^m\leq C\fint_{B_r}\overline{G}_y^m\leq Cr^{-n}\|\overline{G}_y^m\|_{L^{\frac{n}{n-2},\infty}(B_r)}\|\chi_{B_r}\|_{L^{\frac{n}{2},1}(B_r)}\leq C|x-y|^{2-n},
\]
where $C$ depends on $n,\lambda$, $\|\overline{A}\|_{\infty}$ and $\|\overline{b}-\overline{c}\|_{n,1}$; hence, $C$ depends on $n,\lambda$, $\|A\|_{\infty}$ and $\|b-c\|_{n,1}$ only. This completes the proof.
\end{proof}

\subsection{Estimates for \texorpdfstring{$g$}{g}}
In this subsection we will show estimates for approximate Green's function for the adjoint equation
\[
-\dive(A^t\nabla u+cu)+b\nabla u+du=0
\]
in a bounded domain $\Omega$, where $b,c\in\Lip(\Omega)$, $d\in L^{\infty}(\Omega)$, and $d\geq\dive b$. Under these assumptions, we fix $x\in\Omega$, and for $k>\frac{2}{\delta(x)}$ consider the approximate Green's function $g_x^k\in W_0^{1,2}(\Omega)$, which solves the equation
\begin{equation}\label{eq:g_x^k}
-\dive(A^t\nabla g_x^k+cg_x^k)+b\nabla g_x^k+dg_x^k=f_k:=\frac{1}{|B_{1/k}(x)|}\chi_{B_{1/k}(x)}.
\end{equation}
The existence of these functions follows as right before (5.2) in \cite{KimSak}.

The next lemma follows using $g_x^k$ as a test function in \eqref{eq:G_y^m} and $G_y^m$ as a test function in \eqref{eq:g_x^k}.
\begin{lemma}\label{Symmetry}
Under the same assumptions as in Lemma~\ref{WeakForG}, and if $G_y^m,g_x^k$ are as in \eqref{eq:G_y^m} and \eqref{eq:g_x^k}, respectively, then, for $m>\frac{2}{\delta(y)}$ and $k>\frac{2}{\delta(x)}$,
\[
\fint_{B_{1/m}(y)}g_x^k=\fint_{B_{1/k}(x)}G_y^m.
\]
In particular, $g_x^k\geq 0$ in $\Omega$.
\end{lemma}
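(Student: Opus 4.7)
The plan is to test each equation against the other's solution. Since $b, c$ are Lipschitz and $d, A$ are bounded on the bounded domain $\Omega$, every term in the weak formulation extends by density from $C_c^{\infty}(\Omega)$ to test functions in $W_0^{1,2}(\Omega)$. In particular, $g_x^k \in W_0^{1,2}(\Omega)$ is admissible as a test function in \eqref{eq:G_y^m}, and $G_y^m \in W_0^{1,2}(\Omega)$ is admissible as a test function in \eqref{eq:g_x^k}.

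Testing \eqref{eq:G_y^m} against $g_x^k$ yields
\[
\int_{\Omega} A\nabla G_y^m \cdot \nabla g_x^k + b\nabla g_x^k \cdot G_y^m + c\nabla G_y^m \cdot g_x^k + d G_y^m g_x^k \;=\; \fint_{B_{1/m}(y)} g_x^k,
\]
while testing \eqref{eq:g_x^k} against $G_y^m$ yields
\[
\int_{\Omega} A^t\nabla g_x^k \cdot \nabla G_y^m + c\nabla G_y^m \cdot g_x^k + b\nabla g_x^k \cdot G_y^m + d g_x^k G_y^m \;=\; \fint_{B_{1/k}(x)} G_y^m.
\]
The pointwise identity $A^t \nabla g_x^k \cdot \nabla G_y^m = \nabla g_x^k \cdot A \nabla G_y^m = A\nabla G_y^m \cdot \nabla g_x^k$ shows the two left-hand sides coincide term by term, giving the desired equality of averages.

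For the second conclusion, recall from Lemma~\ref{WeakForG} (or directly from the maximum principle, Proposition~\ref{MaxPrinciple}, applied to $-G_y^m$) that $G_y^m \geq 0$ in $\Omega$, so the right-hand side $\fint_{B_{1/k}(x)} G_y^m$ is nonnegative. Hence $\fint_{B_{1/m}(y)} g_x^k \geq 0$ for every $y \in \Omega$ and every $m > 2/\delta(y)$. Since $g_x^k \in W_0^{1,2}(\Omega) \subseteq L^1(\Omega)$ (as $\Omega$ is bounded), the Lebesgue differentiation theorem gives $\fint_{B_{1/m}(y)} g_x^k \to g_x^k(y)$ for almost every $y \in \Omega$, and we conclude $g_x^k \geq 0$ a.e.\ in $\Omega$. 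The only subtlety worth flagging is the density extension so that $W_0^{1,2}$ functions are valid test functions; this is immediate from the $L^{\infty}$/Lipschitz hypotheses on the coefficients on a bounded domain, so there is no real obstacle.
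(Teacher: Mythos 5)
Your proof is correct and takes the same approach as the paper, which is exactly "using $g_x^k$ as a test function in \eqref{eq:G_y^m} and $G_y^m$ as a test function in \eqref{eq:g_x^k}"; the density extension to $W_0^{1,2}$ test functions is indeed routine on a bounded domain with bounded coefficients. Your deduction of $g_x^k\geq 0$ via the identity plus Lebesgue differentiation is also the right route, since the maximum principle of Proposition~\ref{MaxPrinciple} requires $d\geq\dive b$ and therefore cannot be applied directly to the adjoint operator (which would need $d\geq\dive c$).
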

The following lemma shows an $L^{\frac{n}{n-2},\infty}(\Omega)$ estimate for $g_x^k$. We also deduce preliminary bounds that will lead to a $L^{\frac{n}{n-1},\infty}$ type estimate for $\nabla g_x^k$.

\begin{lemma}\label{WeakForGt}
Let $\Omega\subseteq\bR^n$ be a bounded domain. Let $A$ be uniformly elliptic and bounded in $\Omega$, and let $b,c\in\Lip(\Omega)$, $d\in L^{\infty}(\Omega)$, with $d\geq\dive b$. For any $x\in\Omega$ fixed and $k\in\mathbb N$ with $k>\frac{2}{\delta(y)}$ consider the function $g_x^k$ in \eqref{eq:g_x^k} and let $\mu_k$ be its distribution function. Then, the function $\displaystyle\tilde{H}_k(t)=\int_{[g_x^k>t]}|\nabla g_x^k|^2$ is Lipschitz in $(0,\infty)$. In addition, if $\Psi_{g_x^k}$ is as in \eqref{eq:DDfn}, for almost every $t>0$,
\begin{equation}\label{eq:UseLatert}
-\tilde{H}'_k(t)\leq C+Ct^2\Psi_{g_x^k}(|b-c|^2)(\mu_k(t))(-\mu_k'(t)). 
\end{equation}
Finally, if $R_k(t)=Ct^{\frac{1}{n}-1}\sqrt{\Psi_{g_x^k}(|b-c|^2)(t)}$ and $v_k(t)=(g_x^k)^*(t)$, then 
\begin{equation}\label{eq:hBounds}
\int_0^{\infty}R_k(s)\,ds\leq C,\,\,\,\text{and}\,\,\,v_k(t)\leq Ct^{\frac{2}{n}-1},\,\,\,-v_k'(t)\leq Ct^{\frac{2}{n}-2}+Ct^{\frac{2}{n}-1}R_k(t)\,\,\,\text{for almost every}\,\,\,t.
\end{equation}
In particular, $\|g_x^k\|_{L^{\frac{n}{n-2},\infty}(\Omega)}\leq C$, where $C$ depends on $n,\lambda$ and $\|b-c\|_{n,1}$ only.
\end{lemma}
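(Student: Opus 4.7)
The plan is to adapt the proof of Lemma~\ref{WeakForG} to the adjoint setting. Fix $x\in\Omega$ and test \eqref{eq:g_x^k} with $\phi=T_{t,h}(g_x^k)\geq 0$; since $g_x^k\geq 0$ by Lemma~\ref{Symmetry}, the product $v=g_x^k\phi$ is nonnegative, and as $g_x^k\in W_0^{1,2}(\Omega)\subseteq L^{2^*,2}(\Omega)$ by Lemma~\ref{ImprovedSobolev}, H\"older in Lorentz spaces gives $v\in L^{n/(n-2),1}(\Omega)$ and $\nabla v\in L^{n/(n-1),1}(\Omega)$. Hence \eqref{eq:Div} applies with this $v$, yielding $\int_\Omega b\nabla g_x^k\cdot\phi+dg_x^k\phi\geq -\int_\Omega bg_x^k\cdot\nabla\phi$. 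Combined with the weak formulation of \eqref{eq:g_x^k} and the ellipticity of $A^t$, this produces
\[
\lambda\int_{[t<g_x^k\leq t+h]}|\nabla g_x^k|^2\leq h+(t+h)\int_{[t<g_x^k\leq t+h]}|b-c||\nabla g_x^k|,
\]
where the factor $(t+h)$ arises because $-\dive(cg_x^k)$ contributes $cg_x^k\nabla\phi$ (rather than $c\cdot\nabla g_x^k\,\phi$) in the weak form, and $g_x^k\leq t+h$ on the support of $\nabla\phi$. The Lipschitz property of $\tilde H_k$ on compact subintervals of $(0,\infty)$ then follows as in \eqref{eq:FIsLipschitz}, using $b-c\in L^\infty(\Omega)$ and $\nabla g_x^k\in L^2(\Omega)$.

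Dividing by $h$, letting $h\to 0$, and applying Cauchy-Schwarz as in \eqref{eq:CSForDerivativeLater} produces $-\lambda\tilde H_k'(t)\leq 1+t\beta_k(t)\sqrt{-\tilde H_k'(t)}$, where $\beta_k(t)^2=-\frac{d}{dt}\int_{[g_x^k>t]}|b-c|^2$. Treating this as a quadratic in $\sqrt{-\tilde H_k'(t)}$ yields $-\tilde H_k'(t)\leq C+Ct^2\beta_k(t)^2$, and Lemma~\ref{ChangeDerivative} rewrites $\beta_k(t)^2=\Psi_{g_x^k}(|b-c|^2)(\mu_k(t))(-\mu_k'(t))$, establishing \eqref{eq:UseLatert}. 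The bound $\int_0^\infty R_k\leq C$ is obtained exactly as in \eqref{eq:ToGronwall}, via Hardy's inequality, Lemma~\ref{LorentzEstimate} and \eqref{eq:LorentzNormPowers}, with $C$ depending on $n$ and $\|b-c\|_{n,1}$.

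For \eqref{eq:hBounds}, I combine \eqref{eq:UseLatert} with Talenti's estimate $1\leq C_n\mu_k(t)^{2/n-2}(-\mu_k'(t))(-\tilde H_k'(t))$ from Lemma~\ref{TalentiEstimate}, then substitute $t=v_k(s)$ via Lemma~\ref{Splitting} (so $\mu_k(v_k(s))=s$ and $-\mu_k'(v_k(s))=1/(-v_k'(s))$ on $G_{g_x^k}$). The resulting quadratic inequality in $-v_k'(s)$ has positive root
\[
-v_k'(s)\leq Cs^{2/n-2}+Cv_k(s)R_k(s),
\]
which extends a.e.\ to $(0,|\Omega|)$ since $v_k'\equiv 0$ on $D_{g_x^k}$. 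Integrating from $t$ to $|\Omega|$ and using $v_k(|\Omega|)=0$ (from Lemma~\ref{AbsoluteContinuity}) yields $v_k(t)\leq Ct^{2/n-1}+C\int_t^{|\Omega|}R_k(s)v_k(s)\,ds$. Gronwall's inequality (Lemma~\ref{Gronwall}) on $(a,\infty)$ for arbitrary $a>0$ with $g_1(t)=Ct^{2/n-1}$, $g_2\equiv 1$, $g_3=CR_k$, combined with the monotonicity $s^{2/n-1}\leq t^{2/n-1}$ for $s\geq t$ and $\|R_k\|_{L^1}\leq C$, produces $v_k(t)\leq Ct^{2/n-1}$; plugging back gives the bound on $-v_k'$, and the $L^{n/(n-2),\infty}$ estimate is immediate from $\|g_x^k\|_{L^{n/(n-2),\infty}}=\sup_s s^{(n-2)/n}v_k(s)$.

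The main obstacle is the extra factor of $t$ in \eqref{eq:UseLatert} compared to \eqref{eq:UseLater}: it reflects that the divergence term $-\dive(cg_x^k)$ in the adjoint equation pushes the solution $g_x^k$ itself into the key integral, while in the non-adjoint case only $\nabla G_y^m$ enters. This $t$ factor prevents closing a Gronwall loop at the level of $\mu_k$ alone (as was done for $\mu_m$ in Lemma~\ref{WeakForG}); instead the argument must be transferred to the level of the rearrangement $v_k$, where a quadratic inequality in $-v_k'$ is solved and a second Gronwall application---on a self-referential inequality for $v_k$---is required to decouple the pointwise bounds on $v_k$ and $-v_k'$.
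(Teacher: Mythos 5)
There is a genuine gap: your derivation does not establish the Lipschitz continuity of $\tilde H_k$ on $(0,\infty)$, which is one of the conclusions of the lemma and is needed later (to apply Lemma~\ref{Decomp} in the proof of Lemma~\ref{WeakForDGt}). You claim it "follows as in \eqref{eq:FIsLipschitz}," but the structure of the two inequalities is different. In the $G$-case the second term is $h\int_{[G_y^m>t]}|b-c|\,|\nabla G_y^m|$, where the explicit factor $h$ sits outside an integral over the \emph{full} superlevel set, so it is bounded by $h\,\|b-c\|_{L^2}\|\nabla G_y^m\|_{L^2}$, directly giving $I_h\le Ch$. In your $g$-inequality the second term is $(t+h)\int_{[t<g_x^k\le t+h]}|b-c|\,|\nabla g_x^k|$, where the integral is over the thin shell. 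Estimating it with Cauchy--Schwarz leaves you with either $(t+h)\|b-c\|_{L^2}\sqrt{I_h}$ (which only bounds $I_h$ by a constant, not by $O(h)$) or $(t+h)\|b-c\|_\infty\sqrt{\mu_k(t)-\mu_k(t+h)}\sqrt{I_h}$, and $\mu_k(t)-\mu_k(t+h)$ is not controlled linearly in $h$ for a generic distribution function. The a.e.\ derivative bound you obtain in the next step does not rescue this: a decreasing function with bounded a.e.\ derivative need not be Lipschitz (or even absolutely continuous). The paper's proof handles this by integrating by parts in the term $\int_\Omega(c-b)\nabla\phi\cdot g_x^k=-\int_\Omega\dive(c-b)\,\phi\,g_x^k-\int_\Omega(c-b)\nabla g_x^k\cdot\phi$, which uses the hypothesis $b,c\in\Lip(\Omega)$ to get $\dive(b-c)\in L^\infty$; both resulting terms then carry the factor $|\phi|\le h$ against integrals over $\Omega$ that are independent of $h$, yielding the Lipschitz bound $I_h\le\tilde Ch$ uniformly in $t$. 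You never invoke the Lipschitz regularity of $b,c$, which is exactly what is needed here.

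Aside from this, the remainder of your argument — the test function $T_{t,h}(g_x^k)$, the derivation of \eqref{eq:UseLatert} via Cauchy--Schwarz and Lemma~\ref{ChangeDerivative}, the Talenti estimate, the substitution $t=v_k(s)$ through Lemma~\ref{Splitting}, the quadratic inequality for $-v_k'$, the bound on $\int_0^\infty R_k$, and the Gronwall loop at the level of $v_k$ — matches the paper's proof and your concluding discussion correctly identifies why the argument has to be transferred from $\mu_k$ to $v_k$.
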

\begin{proof}
Let $T_{t,h}$ be as in \eqref{eq:Tth}, and use $\phi=T_{t,h}(g_x^k)$ as a test function. Since $sT_{t,h}(s)\geq 0$ and $d\geq\dive b$, we obtain that
\begin{equation}\label{eq:At}
\int_{\Omega}A^t\nabla g_x^k\nabla\phi\leq\fint_{B_{1/k}(x)}\phi+\int_{\Omega}(c-b)\nabla\phi\cdot g_x^k.
\end{equation}
For the last integral, we integrate by parts and use that $g_x^k\in W_0^{1,2}(\Omega)$ and $|\phi|\leq h$, to estimate
\begin{align*}
\int_{\Omega}(c-b)\nabla\phi\cdot g_x^k&=\int_{\Omega}(c-b)\nabla(\phi g_x^k)-\int_{\Omega}(c-b)\nabla g_x^k\cdot\phi=-\int_{\Omega}\dive(c-b)\cdot\phi g_x^k-\int_{\Omega}(c-b)\nabla g_x^k\cdot\phi\\
&\leq\left(\|\dive(b-c)\|_{\infty}\|g_x^k\|_{L^1(\Omega)}+\|b-c\|_{L^2(\Omega)}\|\nabla g_x^k\|_{L^2(\Omega)}\right)h\leq\tilde{C}h,
\end{align*}
for some $\tilde{C}>0$. Plugging the last estimate in \eqref{eq:At} and using that $|\phi|\leq h$, $\nabla\phi=\nabla g_x^k$ in $[t<g_x^k\leq t+h]$ and $\nabla\phi=0$ otherwise, we obtain that
\[
\lambda\int_{[t<g_x^k\leq t+h]}|\nabla g_x^k|^2\leq\int_{\Omega}A^t\nabla g_x^k\nabla\phi\leq h+\tilde{C}h,
\]
therefore $\tilde{H}$ is Lipschitz continuous.

We now return to \eqref{eq:At}, and we estimate
\[
\lambda\int_{[t<g_x^k\leq t+h]}|\nabla g_x^k|^2\leq h+\int_{[t<g_x^k\leq t+h]}(c-b)\nabla g_x^k\cdot g_x^k\leq h+(t+h)\int_{[t<g_x^k\leq t+h]}|b-c||\nabla g_x^k|.
\]
Hence, after dividing by $h$, using the Cauchy-Schwartz inequality as in \eqref{eq:CSForDerivativeLater} and letting $h\to 0$, we obtain that
\begin{align*}
-\lambda\frac{d}{dt}\int_{[g_x^k>t]}|\nabla g_x^k|^2&\leq 1+t\left(-\frac{d}{dt}\int_{[g_x^k>t]}|b-c|^2\right)^{1/2}\left(-\frac{d}{dt}\int_{[g_x^k>t]}|\nabla g_x^k|^2\right)^{1/2}\\
&\leq 1+\frac{t^2}{2\lambda}\left(-\frac{d}{dt}\int_{[g_x^k>t]}|b-c|^2\right)+\frac{\lambda}{2}\left(-\frac{d}{dt}\int_{[g_x^k>t]}|\nabla g_x^k|^2\right).
\end{align*}
Hence, for almost every $t>0$,
\begin{align*}
-\frac{d}{dt}\int_{[g_x^k>t]}|\nabla g_x^k|^2&\leq C+Ct^2\left(-\frac{d}{dt}\int_{[g_x^k>t]}|b-c|^2\right)\leq C+Ct^2D_{g_x^k}(|b-c|^2)(\mu_k(t))(-\mu_k'(t)),
\end{align*}
where we used Lemma~\ref{ChangeDerivative} for the last estimate, and $C$ depends on $\lambda$ only. This shows \eqref{eq:UseLatert}.

We now multiply both sides of the last estimate with $\mu_k^{\frac{2}{n}-2}(-\mu_k')$ and we apply Lemma~\ref{TalentiEstimate}, to obtain that
\begin{equation}\label{eq:Bk}
1\leq C\mu_k(t)^{\frac{2}{n}-2}(-\mu_k'(t))+Ct^2\mu_k(t)^{\frac{2}{n}-2}\Psi_{g_x^k}(|b-c|^2)(\mu_k(t))(-\mu_k'(t))^2,
\end{equation}
in a set $B_k\subseteq(0,\infty)$ with full measure.

Consider the decomposition $(0,\infty)=G_{g_x^k}\cup D_{g_x^k}\cup N_{g_x^k}$ from Lemma~\ref{Splitting}. From  Lemma~\ref{Splitting}, $v_k$ is differentiable at $s$ for every $s\in G_{g_x^k}$, therefore Theorem 1 in \cite{SerrinVarberg} shows that $v_k'(s)=0$ for almost every $s\in (v_k)^{-1}((0,\infty)\setminus B_k)$. Since $v_k'(s)\neq 0$ for every $s\in G_{g_x^k}$, this shows that $v_k(s)\in B_k$ for almost every $s\in G_{g_x^k}$. For these $s$, plugging $v_k(s)$ in \eqref{eq:Bk}, we obtain that
\[
1\leq C\mu_k(v_k(s))^{\frac{2}{n}-2}(-\mu_k'(v_k(s)))+Cv_k(s)^2\mu_k(v_k(s))^{\frac{2}{n}-2}\Psi_{g_x^k}(|b-c|^2)(\mu_k(v_k(s)))(-\mu_k'(v_k(s)))^2.
\]
Then, using the formulas in \eqref{eq:xGuFormulas} and the last estimate we obtain that
\[
1\leq Cs^{\frac{2}{n}-2}\left(-\frac{1}{v_k'(s)}\right)+Cv_k(s)^2s^{\frac{2}{n}-2}\Psi_{g_x^k}(|b-c|^2)(s)\left(-\frac{1}{v_k'(s)}\right)^2.
\]
Hence, after multiplying with $(-v_k'(s))^2$ we obtain that for almost every $s\in G_{g_x^k}$,
\[
(-v_k'(s))^2\leq Cs^{\frac{2}{n}-2}\left(-v_k'(s)\right)+CR_k(s)^2v_k(s)^2\leq\frac{C^2}{2}s^{\frac{4}{n}-4}+\frac{1}{2}\left(-v_k'(s)\right)^2+CR_k(s)^2v_k(s)^2,
\]
and after rearranging and taking square roots, this implies that
\[
-v_k'(s)\leq Cs^{\frac{2}{n}-2}+CR_k(s)v_k(s)
\]
for almost every $s\in G_{g_x^k}$. Moreover, for every $s\in D_{g_x^k}$, $-v_k'(t)=0$. Hence, we obtain that, for almost every $s>0$, and for some $C>0$ that only depends on $\lambda$,
\begin{equation}\label{eq:AfterT}
-v_k'(s)\leq Cs^{\frac{2}{n}-2}+CR_k(s)v_k(s).
\end{equation}
Fix $t>0$, and note that $v_k$ is absolutely continuous in $[t,|\Omega|]$, from Lemma~\ref{AbsoluteContinuity}. Therefore, integrating the last inequality in $[t,|\Omega|]$, we obtain that
\begin{align*}
v_k(t)&=v_k(t)-v_k(|\Omega|)=\int_t^{|\Omega|}-v_k'(\tau)\,d\tau\leq C\int_t^{\infty}\left(\tau^{\frac{2}{n}-2}+R_k(\tau)v_k(\tau)\right)\,d\tau\\
&\leq Ct^{\frac{2}{n}-1}+C\int_t^{\infty}R_k(\tau)v_k(\tau)\,d\tau.
\end{align*}
Note now that, similarly to \eqref{eq:ToGronwall}, using \eqref{eq:Hardy} and Lemma~\ref{LorentzEstimate}, we obtain
\begin{equation}\label{eq:Rk}
\int_0^{\infty}R_k(\tau)\,d\tau\leq C\int_0^{\infty}\tau^{\frac{1}{n}-1}\sqrt{\Psi_{g_x^k}(|b-c|^2)(\tau)}\,d\tau\leq C\|b-c\|_{L^{n,1}(\Omega)},
\end{equation}
therefore $R_k$ is integrable in $(0,\infty)$; this also shows the first estimate in \eqref{eq:hBounds}. Now, for $t_0>0$ fixed, the function $t^{\frac{2}{n}-1}R_k(t)$ is integrable in $(t_0,\infty)$, and $R_kv_k\in L^1(t_0,\infty)$ since $v_k(t)\leq v_k(t_0)$ in $(t_0,\infty)$. Therefore all the hypotheses in Gronwall's inequality (Lemma~\ref{Gronwall}) are satisfied, and we obtain that, for any $t>t_0$,
\[
v_k(t)\leq Ct^{\frac{2}{n}-1}+C\int_t^{\infty}\left(\tau^{\frac{2}{n}-1}R_k(\tau)\exp\left(\int_t^{\tau}R_k(\rho)\,d\rho\right)\right)\,d\tau\leq Ct^{\frac{2}{n}-1},
\]
where we used \eqref{eq:Rk} for the last estimate, and where $C$ depends on $n,\lambda$ and $\|b-c\|_{n,1}$. This shows the second estimate in \eqref{eq:hBounds}. Finally, to show the third estimate in \eqref{eq:hBounds} we plug the last estimate back to \eqref{eq:AfterT}, and this completes the proof.
\end{proof}

We now show a weak type bound for $\nabla g_x^k$, which is the analog of Lemma~\ref{WeakForDG} for $g_x^k$.

\begin{lemma}\label{WeakForDGt}
Under the same assumptions as in Lemma~\ref{WeakForGt}, for any $x\in\Omega$ and $k>\frac{2}{\delta(x)}$,
\[
\|\nabla g_x^k\|_{L^{\frac{n}{n-1},\infty}(\Omega)}\leq C,
\]
where $C$ depends on $n,\lambda$ and $\|b-c\|_{n,1}$ only.
\end{lemma}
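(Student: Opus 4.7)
The plan is to adapt the proof of Lemma~\ref{WeakForDG} to the adjoint setting. Set $v_k=(g_x^k)^*$ and $\Phi_k=\tilde{H}_k\circ v_k$, where $\tilde{H}_k$ is the Lipschitz function from Lemma~\ref{WeakForGt}. By Lemma~\ref{Decomp}, $\Phi_k'(s)=\tilde{H}_k'(v_k(s))v_k'(s)$ for almost every $s$, interpreted as $0$ where $v_k'(s)=0$. Decomposing $(0,\infty)=G_{g_x^k}\cup D_{g_x^k}\cup N_{g_x^k}$ via Lemma~\ref{Splitting} and using the formulas $\mu_k(v_k(s))=s$ and $-\mu_k'(v_k(s))=1/(-v_k'(s))$ on $G_{g_x^k}$ (with Theorem~1 in \cite{SerrinVarberg} to push the full-measure set $B_k$ of \eqref{eq:Bk}--\eqref{eq:UseLatert} into the $s$-variable, exactly as in the proofs of Lemmas~\ref{WeakForDG} and \ref{WeakForGt}), one evaluates \eqref{eq:UseLatert} at $t=v_k(s)$ and multiplies by $(-v_k'(s))$ to obtain the pointwise bound
\[
\Phi_k'(s)\leq C(-v_k'(s))+Cv_k(s)^2\Psi_{g_x^k}(|b-c|^2)(s)\quad\text{for a.e. } s\in(0,|\Omega|),
\]
with $C$ depending only on $\lambda$.

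Next, following exactly the construction in the proof of Lemma~\ref{WeakForDG} (based on Lemma~3.3 in \cite{AlvinoFeroneTrombetti}), produce a sequence $(g_j)\subset L^1(0,\infty)$ with $g_j^*=|\nabla g_x^k|^*$ such that $\int_0^{|\Omega|}g_j^2\phi\to\int_0^{|\Omega|}\Phi_k'\phi$ as $j\to\infty$ for every Lipschitz $\phi$ compactly supported in $(0,|\Omega|]$. Applying this with the cutoffs $\phi_\e$ from the proof of Lemma~\ref{WeakForDG}, using \eqref{eq:HardyLower}, and letting first $j\to\infty$ and then $\e\to 0$ gives
\[
\frac{s^2}{2}(|\nabla g_x^k|^*(s))^2\leq\int_0^s t\,\Phi_k'(t)\,dt+s\int_s^\infty\Phi_k'(t)\,dt.
\]

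The remaining task is to show the right-hand side is bounded by $Cs^{2/n}$ with $C=C(n,\lambda,\|b-c\|_{n,1})$. The contribution from $C(-v_k'(t))$ is controlled by integration by parts on $(\e,s)$ (valid by Lemma~\ref{AbsoluteContinuity}, with boundary term handled by $\e v_k(\e)\leq C\e^{2/n}\to 0$) together with $v_k(t)\leq Ct^{2/n-1}$ from \eqref{eq:hBounds}: one obtains $\int_0^s tC(-v_k'(t))\,dt\leq C\int_0^s v_k\leq Cs^{2/n}$ and $s\int_s^\infty C(-v_k'(t))\,dt=Csv_k(s)\leq Cs^{2/n}$.

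The main obstacle is the $Cv_k(t)^2\Psi_{g_x^k}(|b-c|^2)(t)$ contribution, for which the weight $\Psi_{g_x^k}(|b-c|^2)$ is not monotone and not directly controlled pointwise. Using $v_k(t)^2\leq Ct^{4/n-2}$ from \eqref{eq:hBounds}, it suffices to bound $\int_0^s t^{4/n-1}\Psi_{g_x^k}(|b-c|^2)(t)\,dt$ and $s\int_s^\infty t^{4/n-2}\Psi_{g_x^k}(|b-c|^2)(t)\,dt$. Apply H\"older's inequality for Lorentz spaces (Theorem 1.4.17 in \cite{Grafakos}) in the $L^{n/2,1}$--$L^{n/(n-2),\infty}$ pairing, together with $\|\Psi_{g_x^k}(|b-c|^2)\|_{L^{n/2,1}(0,|\Omega|)}\leq C\|b-c\|_{n,1}^2$ (from Lemma~\ref{LorentzEstimate} and the embedding $L^{n/2,1/2}\hookrightarrow L^{n/2,1}$ via \eqref{eq:LorentzNormsRelations}). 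A direct computation of distribution functions and the supremum $\sup_t t^{(n-2)/n}f^*(t)$ yields
\[
\|t^{4/n-1}\chi_{(0,s)}\|_{L^{n/(n-2),\infty}}\leq Cs^{2/n},\qquad \|t^{4/n-2}\chi_{(s,\infty)}\|_{L^{n/(n-2),\infty}}\leq Cs^{2/n-1},
\]
so both contributions are bounded by $Cs^{2/n}$. Combining everything gives $(|\nabla g_x^k|^*(s))^2\leq Cs^{2/n-2}$, i.e.\ $\sup_s s^{(n-1)/n}|\nabla g_x^k|^*(s)\leq C$, which by \eqref{eq:EquivalentForInf} is the desired $L^{n/(n-1),\infty}$ estimate.
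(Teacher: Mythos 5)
Your proof is correct and follows the same basic architecture as the paper's: you obtain the pointwise differential inequality for $\tilde{\Phi}_k'$ from \eqref{eq:UseLatert} and the formulas \eqref{eq:xGuFormulas} via Lemma~\ref{Splitting} and Theorem~1 of \cite{SerrinVarberg}, run the Alvino--Ferone--Trombetti approximation to get $\tfrac{s^2}{2}(|\nabla g_x^k|^*(s))^2\leq\int_0^s t\,\tilde{\Phi}_k'+s\int_s^\infty\tilde{\Phi}_k'$, and then estimate both terms by $Cs^{2/n}$. Where you diverge is in how $I_1,I_2$ are handled. The paper, at the level of the pointwise inequality, immediately substitutes $-v_k'(s)\leq Cs^{2/n-2}+Cs^{2/n-1}R_k(s)$ and $v_k(s)^2\leq Cs^{4/n-2}$ from \eqref{eq:hBounds}, obtaining $\tilde{\Phi}_k'(s)\leq Cs^{2/n-2}+Cs^{2/n-1}R_k(s)+Cs^{4/n-2}\Psi_k(s)$, and then estimates $I_1,I_2$ using $\|R_k\|_{L^1}$ and Hardy's inequality \eqref{eq:Hardy} applied to $t^{2/n-1}\Psi_k^*$ to reach $\|\Psi_k\|_{L^{n/2,1}}$. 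You instead keep $-v_k'$ intact, integrate by parts against $t$ on $(\e,s)$ (legitimate by Lemma~\ref{AbsoluteContinuity}, with the boundary term killed by $v_k(t)\leq Ct^{2/n-1}$), and compute $s\int_s^\infty(-v_k')=s\,v_k(s)$ directly; this sidesteps the $R_k$ term entirely and is arguably cleaner. For the $\Psi_k$ contribution you invoke Lorentz--H\"older in the $L^{n/2,1}$--$L^{n/(n-2),\infty}$ pairing, computing the weak norms $\|t^{4/n-1}\chi_{(0,s)}\|_{L^{n/(n-2),\infty}}\leq Cs^{2/n}$ and $\|t^{4/n-2}\chi_{(s,\infty)}\|_{L^{n/(n-2),\infty}}\leq Cs^{2/n-1}$ by hand (these hold for all $n\geq 3$, including $n=3$ where $4/n-1>0$); the paper instead factors out $s^{2/n}$ from the integral and applies Hardy's inequality, which is essentially the same estimate phrased differently. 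Both routes give the same constant dependence, and your integration-by-parts step is a mild simplification of the original.
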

\begin{proof}
The proof is similar to the proof of Lemma~\ref{WeakForDG}. Fix $x\in\Omega$ and consider the functions $\tilde{H}_k$, $\mu_k$ and $v_k$ from Lemma~\ref{WeakForGt}, then $\tilde{H}_k$ is Lipschitz and increasing in $(0,\infty)$.  Define $\tilde{\Phi}_k=\tilde{H}_k\circ v_k$, and consider the decomposition $(0,\infty)=G_{g_x^k}\cup D_{g_x^k}\cup N_{g_x^k}$ from Lemma~\ref{Splitting}. Then, since $\tilde{H}_k$ is Lipschitz and $v_k$ is decreasing, we apply Lemma~\ref{Decomp} and then Theorem 1 in \cite{SerrinVarberg} (as right after \eqref{eq:Bk}) to obtain \eqref{eq:UseLatert} for $t=v_k(s)$ for almost every $s\in G_{g_x^k}$; then, \eqref{eq:xGuFormulas} shows that, for almost every $s\in G_{g_x^k}$,
\begin{align*}
\tilde{\Phi}_k'(s)&=(\tilde{H}_k\circ v_k)'(s)=-\tilde{H}_k'(v_k(s))\cdot (-v_k'(s))\\
&\leq C\left(1+v_k(s)^2\Psi_{g_x^k}(|b-c|^2)(s)\cdot(-\mu_k'(v_k(s)))\right)\cdot (-v_k'(s))\\
&\leq C(-v_k'(s))+Cv_k(s)^2\Psi_k(s)\leq Cs^{\frac{2}{n}-2}+Cs^{\frac{2}{n}-1}R_k(s)+Cs^{\frac{4}{n}-2}\Psi_k(s),
\end{align*}
where $\Psi_k=\Psi_{g_x^k}(|b-c|^2)$, and where we used \eqref{eq:hBounds} for the last estimate. In addition, for every $s\in D_{g_x^k}$, $v_k'(s)=0$, hence $\tilde{\Phi}_k'(s)=0$ for almost every $s\in D_{g_x^k}$, from Lemma~\ref{Decomp}. Therefore, for almost every $s\in(0,\infty)$,
\begin{equation}\label{eq:PhiTilde}
\frac{d}{ds}\int_{[g_x^k>(g_x^k)^*(s)]}|\nabla g_x^k|^2=\tilde{\Phi}_k'(s)\leq Cs^{\frac{2}{n}-2}+Cs^{\frac{2}{n}-1}R_k(s)+Cs^{\frac{4}{n}-2}\Psi_k(s).
\end{equation}
Now, as in the proof of Lemma~\ref{WeakForDG}, we fix $k$ and we construct a sequence $G_j$ of functions in $L^1(0,\infty)$, such that $G_j^*=|\nabla g_x^k|^*$, and for all $\phi$ which are Lipschitz and compactly supported in $(0,|\Omega|]$,
\[
\int_0^{|\Omega|}G_j^2\phi\xrightarrow[j\to\infty]{}\int_0^{|\Omega|}\tilde{\Phi}_k'\phi.
\]
Using a procedure as in Lemma~\ref{WeakForDG}, we then obtain the analog of the first estimate in \eqref{eq:UseSimilarLater} for $\tilde{\Phi}_k'$: that is, for any $s>0$,
\begin{equation}\label{eq:nablagxkbound}
\frac{s^2}{2}\left(|\nabla g_x^k|^*(s)\right)^2\leq\int_0^st\tilde{\Phi}_k'(t)\,dt+s\int_s^{\infty}\tilde{\Phi}_k'(t)\,dt=I_1+I_2.
\end{equation}
To bound $I_1$, using \eqref{eq:PhiTilde}, we estimate
\begin{align}\nonumber
I_1&\leq C\int_0^s\left(t^{\frac{2}{n}-1}+t^{\frac{2}{n}}R_k(t)+t^{\frac{4}{n}-1}\Psi_k(t)\right)\,dt\leq Cs^{\frac{2}{n}}\left(1+\|R_k\|_{L^1(0,\infty)}+\int_0^{\infty}t^{\frac{2}{n}-1}\Psi_k(t)\,dt\right)\\
\nonumber 
&\leq Cs^{\frac{2}{n}}\left(1+\|R_k\|_{L^1(0,\infty)}+\int_0^{\infty}t^{\frac{2}{n}-1}\Psi_k^*(t)\,dt\right)=Cs^{\frac{2}{n}}\left(1+\|R_k\|_{L^1(0,\infty)}+\|\Psi_k\|_{L^{\frac{n}{2},1}(0,\infty)}\right)\\
\label{eq:DNormBound}
&\leq Cs^{\frac{2}{n}}\left(1+\|R_k\|_{L^1(0,\infty)}+\|\Psi_k\|_{L^{\frac{n}{2},\frac{1}{2}}(0,\infty)}\right)\leq Cs^{\frac{2}{n}},
\end{align}
where $C$ depends only on $n,\lambda$ and $\|b-c\|_{n,1}$, and where we used \eqref{eq:Hardy} for the third estimate, \eqref{eq:LorentzDfn} for the first equality, \eqref{eq:LorentzNormsRelations} for the fourth estimate, and \eqref{eq:hBounds} and Lemma~\ref{LorentzEstimate} for the fifth estimate. Similarly, for $I_2$, if $t\geq s$ and $\frac{2}{n}-1<0$ we have that $t^{\frac{2}{n}-1}\leq s^{\frac{2}{n}-1}$, therefore
\begin{align*}
I_2&\leq Cs\int_s^{\infty}\left(t^{\frac{2}{n}-2}+t^{\frac{2}{n}-1}R_k(t)+t^{\frac{4}{n}-2}\Psi_k(t)\right)\,dt\leq Cs^{\frac{2}{n}}+Cs^{\frac{2}{n}}\int_s^{\infty}\left(R_k(t)+t^{\frac{2}{n}-1}\Psi_k(t)\right)\,dt\\
&\leq Cs^{\frac{2}{n}}\left(1+\|R_k\|_{L^1(0,\infty)}+\int_0^{\infty}t^{\frac{2}{n}-1}\Psi_k(t)\,dt\right)\leq Cs^{\frac{2}{n}},
\end{align*}
where the last estimate follows as in \eqref{eq:DNormBound}. Then, plugging the last estimate together with \eqref{eq:DNormBound} to \eqref{eq:nablagxkbound} completes the proof.
\end{proof}

\section{Constructions}
\subsection{A preliminary construction}
In this subsection we pass to the limit as $m\to\infty$ for $G_y^m$ and as $k\to\infty$ for $g_x^k$ to construct Green's functions in the case that $\Omega$ is bounded, $b,c$ are Lipschitz and $d$ is bounded.

Fix $q_0\in\left(1,\frac{n}{n-1}\right)$. Under the same assumptions as in Lemma~\ref{WeakForG}, for $y\in\Omega$ fixed and $m>\frac{2}{\delta(y)}$, the functions $(G_y^m)$ have uniformly bounded $W_0^{1,q_0}(\Omega)$ norm. Moreover, if $r>0$, then for $m>\frac{2}{r}$, $G_y^m$ is a $W^{1,2}(\Omega\setminus B_r(y))$ solution of the equation
\[
-\dive(A\nabla G_y^m+bG_y^m)+c\nabla G_y^m+dG_y^m=0
\]
in $\Omega\setminus B_r(y)$. In addition, Lemma~\ref{Pointwise} shows that, for $m$ sufficiently large, $G_y^m(x)\leq C'|x-y|^{2-n}$ in $\Omega\setminus B_r(y)$ , where $C'$ depends on $n,\lambda,\|A\|_{\infty}$ and $\|b-c\|_{n,1}$. Hence, choosing $\phi$ to be a smooth cutoff function, with $\phi\equiv 1$ outside $B_r(y)$, $\phi\equiv 0$ in $B_{r/2}(y)$ and $|\nabla\phi|\leq\frac{C}{r}$, and $m$ large enough so that $h_m$ vanishes in $B_r$, Lemma \ref{DerivativeByU} and the pointwise bound on $G_y^m$ shows that
\begin{equation}\label{eq:boundL2}
\int_{\Omega\setminus B_r(y)}|\nabla G_y^m|^2\leq C'\left(\int_{\Omega\setminus B_{r/2}(y)}|G_y^m|^{2^*}\right)^{\frac{2}{2^*}}+\frac{C'}{r^2}\int_{\Omega\setminus B_{r/2}(y)}|G_y^m|^2\leq C'r^{2-n},
\end{equation}
where $C'$ depends on $n,\lambda,\|A\|_{\infty}$ and $\|b-c\|_{n,1}$.

Since $L^{\frac{n}{n-2},\infty}(\Omega)$, $L^{\frac{n}{n-1},\infty}(\Omega)$ are the dual spaces of $L^{\frac{n}{2},1}(\Omega)$ and $L^{n,1}(\Omega)$, respectively, Lemmas~\ref{WeakForG} and \ref{WeakForDG} show that there exists a subsequence $G_y^{i_m}$ and $G_y\in W_0^{1,q_0}(\Omega)$, such that $G_y\in L^{\frac{n}{n-2},\infty}(\Omega)$, $\nabla G_y\in L^{\frac{n}{n-1},\infty}(\Omega)$, and also
\begin{equation}\label{eq:WeakAllBounded}
\begin{gathered}
G_y^{i_m}\rightharpoonup G_y\quad\text{weakly}^*\,\,\text{in}\,\,\,L^{\frac{n}{n-2},\infty}(\Omega),\quad\nabla G_y^{i_m}\rightharpoonup \nabla G_y\quad\text{weakly}^*\,\,\text{in}\,\,\,L^{\frac{n}{n-1},\infty}(\Omega),\\
G_y^{i_m}\rightharpoonup G_y\quad\text{weakly}\,\,\text{in}\,\,\,W_0^{1,q_0}(\Omega),\quad G_y^{i_m}\to G_y\quad\text{almost everywhere}.
\end{gathered}
\end{equation}
Moreover, from \eqref{eq:boundL2}, $\nabla G_y^{i_m}\rightharpoonup \nabla G_y$, weakly in $L^2(\Omega\setminus B_r)$. So, using Lemmas~\ref{WeakForG}, \ref{WeakForDG}, \ref{Pointwise} and \eqref{eq:boundL2}, we obtain that
\begin{equation}\label{eq:GPrelim}
\|G_y\|_{L^{\frac{n}{n-2},\infty}(\Omega)}+\|\nabla G_y\|_{L^{\frac{n}{n-1},\infty}(\Omega)}\leq C,\quad G_y(x)\leq C'|x-y|^{2-n},\quad \|\nabla G_y\|_{L^2(\Omega\setminus B_r)}^2\leq C'r^{2-n},
\end{equation}
where $C$ depends on $n,\lambda$ and $\|b-c\|_{n,1}$, and $C'$ depends on $n,\lambda,\|A\|_{\infty}$ and $\|b-c\|_{n,1}$.

Let now $\delta(y)>\e_2>\e_1>0$ and consider any function $\psi\in C^{\infty}(\bR^n)$ with $\psi\equiv 0$ in $B_{\e_1}(y)$ and $\psi\equiv 1$ outside $B_{\e_2}(y)$. Then $\nabla(G_y\psi)=\psi\nabla G_y+G_y\nabla\psi$, hence \eqref{eq:GPrelim} shows that
\begin{equation}\label{eq:GlobalL2AllBounded}
G_y\psi\in W_0^{1,2}(\Omega).
\end{equation}
In addition, since $\Omega$ and $b,c,d$ are bounded, Lemma 4.4 in \cite{KimSak} shows that, for any $f\in L^{\infty}(\Omega)$, there exists a unique $u\in W_0^{1,2}(\Omega)$ that solves the equation $-\dive(A^t\nabla u+cu)+b\nabla u+du=f$ in $\Omega$. Then, since $u,G_y^m\in W_0^{1,2}(\Omega)$, using $u$ as a test function in \eqref{eq:G_y^m} and $G_y^m$ as a test function in the variational definition of $u$, we obtain that
\[
\fint_{B_{1/m}(y)}u=\int_{\Omega}A\nabla G_y^m\nabla u+c\nabla G_y^m\cdot u+b\nabla u\cdot G_y^m+dG_y^mu=\int_{\Omega}G_y^mf.
\]
From Theorem 8.22 in \cite{Gilbarg}, $u$ is continuous in $\Omega$. Hence, letting $m\to\infty$ and using \eqref{eq:WeakAllBounded}, we obtain that
\begin{equation}\label{eq:FormulaAllBounded}
u(y)=\int_{\Omega}G_yf\,\,\,\text{is the}\,\,\,W_0^{1,2}(\Omega)\,\,\,\text{solution to}\,\,\,-\dive(A^t\nabla u+cu)+b\nabla u+du=f\,\,\,\text{in}\,\,\,\Omega.
\end{equation}
We also note that, from \eqref{eq:WeakAllBounded} and \eqref{eq:G_y^m}, for any $\phi\in C_c^{\infty}(\Omega)$ and any $y\in\Omega$,
\begin{equation}\label{eq:DiracForBoundedBounded}
\int_{\Omega}A\nabla G_y\nabla\phi+b\nabla\phi\cdot G_y+c\nabla G_y\cdot\phi+dG_y\phi=\phi(y).
\end{equation}

We now turn to Green's function for the adjoint equation. Under the assumptions of Lemma~\ref{Symmetry} note that, if $y\neq x$, then the function $g_x^k$ is continuous at $y$ from Theorem 8.22 in \cite{Gilbarg}. Hence, letting $m\to\infty$ in Lemma~\ref{Symmetry} and using \eqref{eq:GPrelim}, we obtain that
\begin{equation}\label{eq:Sym}
g_x^k(y)=\lim_{m\to\infty}\fint_{B_{1/m}(y)}g_x^k=\lim_{m\to\infty}\fint_{B_{1/k}(x)}G_y^m=\fint_{B_{1/k}(x)}G_y\leq C\fint_{B_{1/k}(x)}|z-y|^{2-n}\,dz,
\end{equation}
which implies that $g_x^k(y)\leq C|x-y|^{2-n}$ if $k>\frac{2}{|x-y|}$. Using Lemma~\ref{DerivativeByU}, the bound on $\|g_x^k\|_{L^{\frac{n}{n-2},\infty}}$ from Lemma~\ref{WeakForGt} instead of Lemma~\ref{WeakForG}, and Lemma~\ref{WeakForDGt} instead of Lemma~\ref{WeakForDG}, an argument identical to the one before \eqref{eq:GPrelim} implies the existence of a subsequence $g_x^{j_k}$ and a function $g_x\in W_0^{1,q_0}(\Omega)$, with $g_x\in L^{\frac{n}{n-2},\infty}(\Omega)$, $\nabla g_x\in L^{\frac{n}{n-1},\infty}(\Omega)$, $\nabla g_x\in L^2(\Omega\setminus B_r)$ for any $r>0$ fixed, such that $g_x^{j_k}\to g_x$ almost everywhere, and also
\begin{equation}\label{eq:gPrelim}
\|g_x\|_{L^{\frac{n}{n-2},\infty}(\Omega)}+\|\nabla g_x\|_{L^{\frac{n}{n-1},\infty}(\Omega)}\leq C,\quad g_x(y)\leq C'|x-y|^{2-n},\quad \|\nabla g_x\|_{L^2(\Omega\setminus B_r)}^2\leq C'r^{2-n},
\end{equation}
where $C$ depends on $n,\lambda$ and $\|b-c\|_{n,1}$, and $C'$ depends on $n,\lambda,\|A\|_{\infty}$ and $\|b-c\|_{n,1}$. Moreover, for every $\delta(y)>\e_2>\e_1>0$ and and $\psi\in C^{\infty}(\bR^n)$ with $\psi\equiv 0$ in $B_{\e_1}(y)$ and $\psi\equiv 1$ outside $B_{\e_2}(y)$, we obtain that
\begin{equation}\label{eq:GlobalL2AllBoundedt}
g_x\psi\in W_0^{1,2}(\Omega).
\end{equation}
Using Lemma 4.2 in \cite{KimSak} and an argument similar to the one before \eqref{eq:FormulaAllBounded} we obtain that, for all $f\in L^{\infty}(\Omega)$,
\begin{equation}\label{eq:FormulaAllBoundedt}
v(y)=\int_{\Omega}g_xf\,\,\,\text{is the}\,\,\,W_0^{1,2}(\Omega)\,\,\,\text{solution to}\,\,\,-\dive(A\nabla v+bv)+c\nabla v+dv=f\,\,\,\text{in}\,\,\,\Omega.
\end{equation}
In addition, for any $\phi\in C_c^{\infty}(\Omega)$ and any $x\in\Omega$,
\begin{equation}\label{eq:DiracForBoundedBoundedt}
\int_{\Omega}A^t\nabla g_x\nabla\phi+c\nabla\phi\cdot g_x+b\nabla g_x\cdot\phi+dg_x\phi=\phi(y).
\end{equation}
Now, if $x\neq y$ and $k>\frac{2}{|x-y|}$, \eqref{eq:Sym} shows that $g_x^k$ is uniformly bounded in $B_{2\e}(y)$ for $\e>0$ sufficiently small. So, Theorem 8.22 in \cite{Gilbarg} shows that $g_x^k$ is equicontinuous in $B_{\e}(y)$; hence a subsequence $(g_x^{j_k'})$ of $(g_x^{j_k})$ converges uniformly to $g_x$ in $U$. Also, if $k$ is large enough, \eqref{eq:GPrelim} shows that $G_y\in W^{1,2}(B_{1/k}(x))$, and it solves the equation $-\dive(A\nabla G_y+bG_y)+c\nabla G_y+dG_y=0$ in $B_{1/k}$; so, from Theorem 8.22 in \cite{Gilbarg}, $G_y$ is continuous at $x$. Hence, from the equalities in \eqref{eq:Sym},
\begin{equation}\label{eq:Symmetry}
g_x(y)=\lim_{k\to\infty}g_x^{j_k'}(y)=\lim_{k\to\infty}\fint_{B_{1/j_k'}(x)}G_y=G_y(x).
\end{equation}
We now set $G(x,y)=G_y(x)$ and $g_x(y)=g(y,x)$ for $(x,y)\in\Omega^2\setminus\Delta$, where $\Delta=\{(x,x):x\in\Omega\}$. Then, from Theorem 8.22 in \cite{Gilbarg},
\begin{equation}
\label{eq:JointCont}
G,g\,\,\,\text{are continuous in}\,\,\,\Omega^2\setminus\Delta.
\end{equation}
This also shows that $G,g$ are measurable in $\Omega^2$.

Finally, consider a set $\mathcal{U}\subseteq\Omega^2$ with smooth boundary. Then, for any $\Phi\in C_c^{\infty}(\mathcal{U})$ and $x,y$ fixed, the functions $\Phi_y^1(z)=\Phi(z,y)$ and $\Phi_x^2(z)=\Phi(x,z)$ belong to $C_c^{\infty}(\Omega)$. Hence, for $i\in\{1,\dots n\}$,
\[
\int_{\mathcal{U}}G\cdot\partial_i\Phi=\int_{\Omega}\left(\int_{\Omega}G(z,w)\partial_i\Phi_w^1(z)\,dz\right)\,dw=-\int_{\Omega}\left(\int_{\Omega}\partial_iG_w(z)\cdot\Phi_w^1(z)\,dz\right)\,dw.
\]
So, $\partial_iG(z,w)=-\partial_iG_w(z)$, therefore if $U_1,U_2$ are the projections of $\mathcal{U}\subseteq\Omega^2$ in the first and second component, using the weak $L^{\frac{n}{n-2},\infty}$ bound from \eqref{eq:GPrelim}, we obtain that
\[
\|\partial_iG\|_{L^{q_0}(\mathcal{U})}^{q_0}=\int_{U_2}\left(\int_{U_1}|\partial_iG_w|^{q_0}\right)\,dw\leq\tilde{C},
\]
where $\tilde{C}$ depends on $n,\lambda$, $\|b-c\|_{n,1}$ and $\mathcal{U}$. Moreover, \eqref{eq:Symmetry} shows that, for $i=n+1,\dots 2n$,
\begin{equation}\label{eq:FirstGrad}
\int_{\mathcal{U}}G\cdot\partial_i\Phi=\int_{\Omega}\left(\int_{\Omega}g(w,z)\partial_i\Phi_z^2(w)\,dw\right)\,dz=-\int_{\Omega}\left(\int_{\Omega}\partial_ig_z(w)\cdot\Phi_z^2(w)\,dw\right)\,dz.
\end{equation}
So, $\partial_iG(z,w)=-\partial_ig_z(w)$, and similarly to \eqref{eq:FirstGrad}, we finally obtain that the $2n$-dimensional gradient of $G$ is uniformly bounded in $L^{q_0}(\mathcal{U})$. Hence,
\begin{equation}\label{eq:JointBoundedness}
\|G\|_{W^{1,q_0}(\mathcal{U})}\leq\tilde{C},\,\,\,\text{and}\,\,\,\|g\|_{W^{1,q_0}(\mathcal{U})}\,\leq\tilde{C},
\end{equation}
where $\tilde{C}$ depends on $n,\lambda$, $\|b-c\|_{n,1}$ and $\mathcal{U}$, and the second estimate follows from the first one and \eqref{eq:Symmetry}.

\subsection{Constructions in general domains}
In this subsection we will construct Green's function for general domains $\Omega\subseteq\bR^n$ and coefficients that are not necessarily bounded. This will be done in two steps: we will first assume that $\Omega$ is bounded and we will drop the Lipschitz assumption on the lower order coefficients, and we will then drop the boundedness assumption on $\Omega$.

To pass to unbounded coefficients, we will need the following lemma, which is in the same spirit as Lemma 6.9 in \cite{KimSak}.
\begin{lemma}\label{ToApproximate}
Let $\Omega\subseteq\bR^n$ be a bounded domain. Suppose that $b\in L^{p,q}(\Omega)$ for some $p\in(1,\infty)$ and $q\in[1,\infty)$, $d\in L^{p,\infty}(\Omega)$ with $d\geq\dive b$. Let $\psi_j(x)=j^n\psi(jx)$ be a mollifier, and define
\[
b_j=(b\chi_{\Omega})*\psi_j,\quad d_j=(d\chi_{\Omega})*\psi_j.
\]
Then $d_j$ and $b_j$ are Lipschitz continuous in $\Omega$, and also
\[
\|b_j\|_{L^{n,q}(\Omega)}\leq \|b\|_{L^{n,q}(\Omega)},\quad \|d_j\|_{L^{\frac{n}{2},\infty}(\Omega)}\leq\|d\|_{L^{\frac{n}{2},\infty}(\Omega)}.
\]
Moreover, $b_j\to b$ in $L^{n,q}(\Omega)$, and $d_j\chi_{\Omega_m}\to d$ weakly-* in $L^{\frac{n}{2},\infty}(\Omega)$. In addition, if we set $\Omega_j=\left\{x\in\Omega:\dist(x,\partial\Omega)>\frac{1}{j}\right\}$, then $d_j\geq\dive b_j$ in $\Omega_j$. Finally, if $d\in L^{\frac{n}{2},1}(\Omega)$, then $d_j\to d$ in $L^{\frac{n}{2},1}(\Omega)$.
\end{lemma}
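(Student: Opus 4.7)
The lemma packages several standard mollification facts; the only non-routine piece is the distributional inequality $d_j \geq \dive b_j$ on $\Omega_j$. Everything else is bookkeeping with density plus a contraction property of convolution on Lorentz spaces.

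For the regularity and norm bounds: since $\Omega$ is bounded, $b\chi_\Omega, d\chi_\Omega \in L^1(\bR^n)$, so $b_j, d_j \in C^\infty(\bR^n)$ with $\|\nabla b_j\|_\infty \leq \|\nabla \psi_j\|_\infty \|b\|_{L^1(\Omega)}$, which gives Lipschitz continuity on $\Omega$. For the Lorentz norm inequalities I would write $b_j(x) = \int \psi_j(y)\,(b\chi_\Omega)(x-y)\,dy$ and apply Minkowski's integral inequality for the Banach-space norm on $L^{n,q}$ equivalent to the Lorentz seminorm (valid for $p > 1, q \geq 1$), exploiting $\int \psi_j = 1$ and translation invariance of Lebesgue measure to obtain
\[
\|b_j\|_{L^{n,q}(\Omega)} \leq \|b_j\|_{L^{n,q}(\bR^n)} \leq \int \psi_j(y)\,\|b\chi_\Omega\|_{L^{n,q}(\bR^n)}\,dy = \|b\|_{L^{n,q}(\Omega)}.
\]
The bound for $d_j$ in the weak space $L^{n/2,\infty}$ follows by the same Minkowski argument, or equivalently by interpolating the trivial $L^1 \to L^1$ and $L^\infty \to L^\infty$ contractions of convolution with a probability density to every Lorentz space in between.

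For the key distributional inequality, take any nonnegative $\phi \in C_c^\infty(\Omega_j)$. Setting $\tilde\psi_j(y) := \psi_j(-y)$, the convolution $\phi * \tilde\psi_j$ is nonnegative, smooth, compactly supported in the $1/j$-neighborhood of $\operatorname{supp}\phi$, and hence in $\Omega$. Applying the hypothesis $d \geq \dive b$ against $\phi * \tilde\psi_j$ and swapping integration via Fubini yields
\[
0 \leq \int_\Omega \bigl(b \cdot \nabla(\phi * \tilde\psi_j) + d\,(\phi * \tilde\psi_j)\bigr) = \int_{\Omega_j}\bigl(b_j \cdot \nabla\phi + d_j\,\phi\bigr),
\]
which is precisely $d_j \geq \dive b_j$ in $\Omega_j$.

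The convergence statements follow from density plus the contraction just established. For $b_j \to b$ in $L^{n,q}$ with $q < \infty$, I would choose $\phi \in C_c^\infty(\bR^n)$ with $\|b\chi_\Omega - \phi\|_{L^{n,q}} < \e$ (allowed since $q < \infty$), decompose
\[
b_j - b\chi_\Omega = (b\chi_\Omega - \phi)*\psi_j + (\phi*\psi_j - \phi) + (\phi - b\chi_\Omega),
\]
bound the first and third terms by $\e$ via the contraction, and kill the middle term using uniform convergence with support in a fixed compact set. The identical argument yields $d_j \to d$ in $L^{n/2,1}$ when $d \in L^{n/2,1}$. For the weak-$*$ statement I would test against $f$ in the predual $L^{n/(n-2),1}(\Omega)$: Fubini gives
\[
\int_\Omega d_j\,\chi_{\Omega_m}\,f = \int_\Omega d\,\bigl((f\chi_{\Omega_m})*\tilde\psi_j\bigr) \xrightarrow[j\to\infty]{} \int_{\Omega_m} d\,f,
\]
where the convolution approximation on the right side uses the same density argument in $L^{n/(n-2),1}$. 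No single step is a genuine obstacle; the main care required is recording the contraction of mollification on Lorentz spaces, which underpins essentially every estimate here.
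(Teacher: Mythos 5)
Your proposal follows essentially the same route as the paper: mollification contracts on Lorentz spaces (you derive this; the paper cites Theorem V4(i) of Costea's thesis), convergence follows from density of nice functions plus this contraction (the paper invokes absolute continuity of the norm and Theorem V4(ii) of the same reference), the weak-$*$ statement follows from uniform boundedness together with convergence against the dense class $C_c^\infty$, and $d_j\geq\dive b_j$ in $\Omega_j$ is obtained by testing the original inequality against $\phi*\tilde\psi_j$ and applying Fubini (the paper simply refers to Lemma 6.9 of \cite{KimSak}, which is exactly your argument). Your version is more self-contained; the mathematics is the same.

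One small imprecision worth flagging: you assert the \emph{exact} contraction $\|b_j\|_{L^{n,q}}\leq\|b\|_{L^{n,q}}$, but the mechanism you invoke — Minkowski's integral inequality for a Banach norm \emph{equivalent} to the Lorentz seminorm (e.g.\ the one built from $f^{**}$), or Calderón–Mityagin interpolation, which likewise produces a contraction only in an equivalent rearrangement-invariant Banach norm — yields $\|b_j\|_{L^{n,q}}\leq C_{p,q}\|b\|_{L^{n,q}}$ with $C_{p,q}$ the equivalence constant, not $C_{p,q}=1$, since the seminorm in \eqref{eq:LorentzDfn} is defined via $f^*$ and is in general only a quasi-norm. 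The exact constant is immaterial for every downstream use in the paper (only uniform boundedness of $\|b_j-c_j\|_{n,1}$ is ever needed), but as written your claim slightly overstates what the argument proves. The remaining steps, including the Fubini computation giving $\int_\Omega\bigl(b\cdot\nabla(\phi*\tilde\psi_j)+d\,(\phi*\tilde\psi_j)\bigr)=\int_{\Omega_j}\bigl(b_j\cdot\nabla\phi+d_j\phi\bigr)$ and the $\e$-decomposition for strong convergence, are correct.
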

\begin{proof}
First, if $x,y\in\Omega$, then
\begin{align*}
|d_j(x)-d_j(y)|&=\left|\int_{B_{1/j}(x)}d(z)(\psi_j(x-z)-\psi_j(y-z))\,dz\right|\leq\|\nabla \psi_j\|_{\infty}|x-y|\int_{B_{1/k}(x)}|d|\\
&\leq \|\nabla \psi_j\|_{\infty}\|d\|_{\frac{n}{2},\infty}\|\chi_{B_{1/j}(x)}\|_{\frac{n}{n-2},1}|x-y|\leq C_j|x-y|,
\end{align*}
where $C_j$ depends on $\psi_j$ and $\|d\|_{\frac{n}{2},\infty}$. Therefore $d_j\in\Lip(\Omega)$. Similarly, $b_j\in\Lip(\Omega)$.

We now use part (i) of Theorem V4 in \cite{CosteaThesis}, to obtain that
\[
\|b_j\|_{L^{n,q}(\Omega)}\leq \|b\|_{L^{n,q}(\Omega)},\quad \|d_j\|_{L^{\frac{n}{2},\infty}(\Omega)}\leq \|d\|_{L^{\frac{n}{2},\infty}(\Omega)}.
\]
Note now that, with the terminology of \cite{CosteaThesis} (or, Definition I-3.1 in \cite{BennettSharpley}) and Lemma~\ref{NormOnDisjoint}, every $b\in L^{n,q}(\Omega)$ has absolutely continuous norm. Hence, from part (ii) of Theorem V4 in \cite{CosteaThesis}, $b_j\to b\chi_{\Omega}$ in $L^{n,q}(\bR^n)$, hence $b_j\to b$ in $L^{n,q}(\Omega)$. Similarly, if $d\in L^{\frac{n}{2},1}(\Omega)$, then $d_j\to d$ in $L^{\frac{n}{2},1}(\Omega)$.

Now, since $\Omega$ is bounded and $d_m$ is a mollification of $d\chi_{\Omega}$, we obtain that $d_m\to d$ in $L^{4/3}(\Omega)$. Hence, for any $\phi\in C_c^{\infty}(\Omega)$,
\[
\int_{\Omega}d_m\phi\xrightarrow[m\to\infty]{}\int_{\Omega}d\phi.
\]
Since $(d_m)$ is bounded in $L^{\frac{n}{2},\infty}(\Omega)$, $L^{\frac{n}{n-2},1}(\Omega)$ is the predual of $L^{\frac{n}{2},\infty}(\Omega)$ and $C_c^{\infty}(\Omega)$ is dense in $L^{\frac{n}{n-2},1}(\Omega)$ (from Theorem 1.4.13 in \cite{Grafakos}) , we obtain that $d_m\to d$, weakly-* in $L^{\frac{n}{2},\infty}(\Omega)$.

Finally, to show that $d_m\geq\dive b_m$ in $\Omega_m$ in the sense of distributions, we follow the same argument as in the proof of Lemma 6.9 in \cite{KimSak}. 
\end{proof}

We will now drop the assumption that the lower order coefficients are Lipschitz to construct Green's functions. In order to obtain the symmetry relation $G(x,y)=g(y,x)$ for almost every $(x,y)\in\Omega^2$, we will have to consider convergent subsequences for functions defined in the product space $\Omega^2$. For this reason, we construct $G$ and $g$ concurrently in the next lemma.

\begin{lemma}\label{DropBound}
Let $\Omega\subseteq\bR^n$ be a bounded domain. Let $A$ be uniformly elliptic and bounded in $\Omega$, with ellipticity $\lambda$, and suppose that $b,c\in L^{n,q}(\Omega)$ for some $q\in[1,\infty)$, $d\in L^{\frac{n}{2},\infty}(\Omega)$, with $b-c\in L^{n,1}(\Omega)$ and $d\geq\dive b$. Then, for every $x,y\in\Omega$ there exist nonnegative functions $G_y(z)=G(z,y)$, $g_x(z)=g(z,x)$, where $G,g$ are measurable in $\Omega^2$, with $G_y,g_x\in L^1_{\loc}(\Omega)$ such that, for all $f\in L^{\infty}(\Omega)$, if we define
\[
v^f(y)=\int_{\Omega}G(z,y)f(z)\,dz,\quad u^f(x)=\int_{\Omega}g(z,x)f(z)\,dz,\quad x,y\in\Omega,
\]
then $u,v\in W_0^{1,2}(\Omega)$, and they solve the equations
\[
-\dive(A^t\nabla v^f+cv^f)+b\nabla v^f+dv^f=f,\quad -\dive(A\nabla u^f+bu^f)+c\nabla u^f+du^f=f
\]
in $\Omega$. Moreover, for any $x,y$,
\begin{equation}\label{eq:WeakBounded}
\|G_y\|_{L^{\frac{n}{n-2},\infty}(\Omega)}+\|\nabla G_y\|_{L^{\frac{n}{n-1},\infty}(\Omega)}+\|g_x\|_{L^{\frac{n}{n-2},\infty}(\Omega)}+\|\nabla g_x\|_{L^{\frac{n}{n-1},\infty}(\Omega)}\leq C,
\end{equation}
where $C$ depends on $n,\lambda$ and $\|b-c\|_{n,1}$ only. In addition, if $x,y$ are fixed, then
\begin{equation}\label{eq:BoundedForBounded}
\begin{gathered}
G_y(z)\leq C'|z-y|^{2-n}\,\,\,\text{for almost every}\,\,\,z\in\Omega,\,\,\,\text{and}\\
g_x(w)\leq C'|w-x|^{2-n},\,\,\,\text{for almost every}\,\,\,w\in\Omega,
\end{gathered}
\end{equation}
where $C'$ depends on $n,\lambda,\|A\|_{\infty}$ and $\|b-c\|_{n,1}$ only. Furthermore, for any $\delta(x)>\e_2>\e_1>0$, $\delta(y)>\e_2'>\e_1'>0$ and any two functions $\psi,\psi'\in C^{\infty}(\bR^n)$ with $\psi\equiv 0$ in $B_{\e_1}(y)$, $\psi'\equiv 0$ in $B_{\e_1}(x)$, and $\psi\equiv 1$ outside $B_{\e_2}(y)$, $\psi'\equiv 1$ outside $B_{\e_2'}(x)$, we have that $G_y\psi,g_x\psi'\in W_0^{1,2}(\Omega)$ and, for any $r>0$,
\[
\int_{\Omega\setminus B_r(y)}|\nabla G_y|^2+\int_{\Omega\setminus B_r(x)}|\nabla g_x|^2\leq Cr^{2-n},
\]
where $C$ depends on $n,\lambda,\|A\|_{\infty}$ and $\|b-c\|_{n,1}$ only. Moreover, the functions $G(z,w)$ and $g(w,z)$, for $z,w\in\Omega^2$ are measurable in $\Omega^2$, and
\begin{equation}\label{eq:SymmetryBounded}
G(z,w)=g(w,z)\,\,\,\text{for almost every}\,\,\,(z,w)\in\Omega^2.
\end{equation}
Finally, in the case where $d\in L^{\frac{n}{2},1}(\Omega)$, we have that, for every $y\in\Omega$ and every $\phi\in C_c^{\infty}(\Omega)$,
\begin{equation}\label{eq:DiracForBounded}
\int_{\Omega}A\nabla G_y\cdot\phi+b\nabla\phi\cdot G_y+c\nabla G_y\cdot\phi+dG_y\phi=\phi(y),
\end{equation}
and also, for every $x\in\Omega$ and every $\phi\in C_c^{\infty}(\Omega)$,
\begin{equation}\label{eq:DiracForBoundedt}
\int_{\Omega}A^t\nabla g_x\cdot\phi+c\nabla\phi\cdot g_x+b\nabla g_x\cdot\phi+dg_x\phi=\phi(x).
\end{equation}
\end{lemma}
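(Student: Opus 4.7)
The plan is to regularize the lower order coefficients via Lemma~\ref{ToApproximate}, invoke the preliminary construction of Section~4.1 on exhausting subdomains, and then pass to the limit using the scale invariant bounds from Section~4 together with the weak compactness of Lorentz spaces. Concretely, let $\psi_j$ be a standard mollifier and set $b_j=(b\chi_\Omega)\ast\psi_j$, $c_j=(c\chi_\Omega)\ast\psi_j$, $d_j=(d\chi_\Omega)\ast\psi_j$. By Lemma~\ref{ToApproximate}, $b_j,c_j,d_j$ are Lipschitz on $\Omega$, the norms $\|b_j\|_{n,q}$, $\|c_j\|_{n,q}$, $\|d_j\|_{n/2,\infty}$ are uniformly controlled, $b_j-c_j\to b-c$ in $L^{n,1}(\Omega)$, and $d_j\ge\dive b_j$ holds on $\Omega_j=\{x\in\Omega:\dist(x,\partial\Omega)>1/j\}$. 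On each $\Omega_j$, the preliminary construction of Section~4.1 yields nonnegative approximate Green's functions $G_y^j$ and $g_x^j$ for the operators with coefficients $(A,b_j,c_j,d_j)$; extending them by zero outside $\Omega_j$, the scale invariant bounds \eqref{eq:GPrelim} and \eqref{eq:gPrelim}, the local $L^2$ gradient bound $\int_{\Omega\setminus B_r(y)}|\nabla G_y^j|^2\le C'r^{2-n}$, and the pointwise symmetry $G^j(z,w)=g^j(w,z)$ from \eqref{eq:Symmetry} all hold with constants depending only on $n,\lambda,\|A\|_\infty,\|b-c\|_{n,1}$.

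Next I would extract weak limits. Fixing $q_0\in(1,\tfrac{n}{n-1})$, the sequences $(G_y^j)$ and $(g_x^j)$ are uniformly bounded in $W^{1,q_0}_0(\Omega)$ as well as in the weak spaces $L^{n/(n-2),\infty}(\Omega)$ and $L^{n/(n-1),\infty}(\Omega)$ for the gradients. By Rellich's theorem and the Banach--Alaoglu theorem applied to the preduals $L^{n/2,1}$ and $L^{n,1}$, one extracts subsequences converging strongly in $L^{q_0}$, almost everywhere, and weakly-$*$ in the Lorentz spaces. To preserve the symmetry relation in the limit I would work on the product space: by \eqref{eq:JointBoundedness}, $G^j$ is uniformly bounded in $W^{1,q_0}(\mathcal U)$ for every compactly contained $\mathcal U\Subset\Omega^2$. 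A diagonal extraction and Rellich then yield a single subsequence along which $G^j\to G$ and $g^j\to g$ almost everywhere in $\Omega^2$, so that \eqref{eq:SymmetryBounded} is inherited from the prelimit symmetry. Lower semicontinuity of Lorentz norms under weak-$*$ limits gives \eqref{eq:WeakBounded}, Fatou gives the almost everywhere pointwise bound \eqref{eq:BoundedForBounded}, and the local $L^2$ estimate passes to the limit likewise. The cutoff statement $G_y\psi\in W^{1,2}_0(\Omega)$ follows as in \eqref{eq:GlobalL2AllBounded}, using the local $L^2$ bound and the weak $L^{n/(n-2),\infty}$ bound on $G_y$ on $B_{\e_2}(y)\setminus B_{\e_1}(y)$.

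For the characterization through $u^f$ and $v^f$, I would pass to the limit in the identity from the preliminary construction \eqref{eq:FormulaAllBounded}: letting $u^{f,j}\in W^{1,2}_0(\Omega)$ solve the approximate adjoint equation with $f\in L^\infty(\Omega)$, the uniform energy estimate from Lemma~\ref{DerivativeByU} gives weak $W^{1,2}_0$ convergence of a subsequence to some $u^f$, and Proposition~\ref{Uniqueness} ensures the limit is the unique $W^{1,2}_0$-solution to the limiting equation. The pointwise identity $u^{f,j}(y)=\int G_y^j f$ then passes to the integral identity $v^f(y)=\int G_y f$ by weak-$*$ convergence of $G_y^j$ paired with the strong convergence of $f$ in $L^{n/2,1}$ (or by Fubini combined with a.e. convergence in $\Omega^2$). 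The analogous argument with the roles of $G$ and $g$ reversed handles $u^f$. Finally, for \eqref{eq:DiracForBounded} and \eqref{eq:DiracForBoundedt} under the stronger assumption $d\in L^{n/2,1}(\Omega)$, I would pass to the limit in \eqref{eq:DiracForBoundedBounded}: the $A$-term uses weak-$*$ convergence of $\nabla G_y^j$ in $L^{n/(n-1),\infty}$ against $A\nabla\phi\in L^{n,1}$; the $b_j,c_j$ terms use strong $L^{n,q}$ convergence of $b_j,c_j$ combined with weak convergence of $G_y^j,\nabla G_y^j$ (together with Rellich-type strong convergence of $G_y^j$ in $L^{q_0}$ on compact sets); the $d_j$ term uses the strong $L^{n/2,1}$ convergence from Lemma~\ref{ToApproximate}, paired with $G_y^j\to G_y$ weakly-$*$ in $L^{n/(n-2),\infty}$ on the compact support of $\phi$.

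The main obstacle, and the point requiring the most care, is the simultaneous passage to the limit that preserves the symmetry relation: one must extract a single subsequence along which both $G^j$ and $g^j$ converge jointly almost everywhere in $\Omega^2$ (not just in the $y$-slice or $x$-slice separately), which is where the joint $W^{1,q_0}(\mathcal U)$ bound \eqref{eq:JointBoundedness} and Rellich compactness on relatively compact $\mathcal U\Subset\Omega^2\setminus\Delta$ are essential. A secondary technical point is that $d_j\ge\dive b_j$ holds only on $\Omega_j$, so the preliminary construction must be carried out on $\Omega_j$ and its conclusions transferred back to $\Omega$ via the exhaustion $\Omega_j\uparrow\Omega$; this is handled uniformly because the constants in Section~4 depend on the coefficients only through $\|b_j-c_j\|_{n,1}$, which is uniformly bounded.
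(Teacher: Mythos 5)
Your proposal follows the paper's own proof essentially step for step: mollify the lower order coefficients via Lemma~\ref{ToApproximate}, run the Section~4.1 construction on exhausting connected subdomains $\tilde\Omega_j$, use the product bound \eqref{eq:JointBoundedness} with a diagonal Rellich argument to extract a single subsequence converging a.e.\ in $\Omega^2$ (which is exactly how the paper preserves the symmetry relation), and then pass to the limit in the variational and Dirac identities using the strong Lorentz convergence of the coefficients. The one point to state more carefully is the ordering of extractions: the product-space subsequence must be taken first, with the $y$-slice and $x$-slice subsequences nested inside it, so that the slice-wise pointwise limits $G_y(\cdot)$, $g_x(\cdot)$ coincide a.e.\ with the product limits $G(\cdot,y)$, $g(\cdot,x)$; your writeup lists the slice extractions before the product extraction, but the intended nesting is clearly the right one.
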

\begin{proof}
Fix $x_0\in\Omega$, and let $\psi_j(x)=j^n\psi(jx)$ be a mollifier. For any $j\in\mathbb N$ we define the mollifications $b_j=(b\chi_{\Omega})*\psi_j$, $c_j=(c\chi_{\Omega})*\psi_j$, $d_j=(d\chi_{\Omega})*\psi_j$, and we also let $\tilde{\Omega}_j$ to be the connected component of $\left\{x\in\Omega:\dist{x,\partial\Omega}>\frac{1}{j}\right\}$ that contains $x_0$. From Lemma~\ref{ToApproximate}, $b_j,c_j$ and $d_j$ are Lipschitz continuous in $\tilde{\Omega}_j$ and $d_j\geq\dive b_j$ in $\tilde{\Omega}_j$ in the sense of distributions.

Let $y\in\Omega$, and assume that $j$ is large enough, so that $y\in\tilde{\Omega}_j$. From \eqref{eq:GPrelim}, there exists $G_y^j$ defined in $\tilde{\Omega}_j$, continuous in $\tilde{\Omega}_j\setminus\{y\}$, such that
\begin{equation}\label{eq:GyjBounds}
\|G_y^j\|_{L^{\frac{n}{n-2},\infty}(\tilde{\Omega}_j)}+\|\nabla G_y^j\|_{L^{\frac{n}{n-1},\infty}(\tilde{\Omega}_j)}\leq C,\quad G_y^j(z)\leq C'|z-y|^{2-n},\quad\|\nabla G_y^j\|_{L^2(\tilde{\Omega}_j\setminus B_r)}^2\leq Cr^{2-n},
\end{equation}
where the second to last estimate holds for all $z\in\tilde{\Omega}_j$ with $z\neq y$, and where $C$ depends on $n,\lambda$ and $\|b_j-c_j\|_{n,1}$; hence, from Lemma~\ref{ToApproximate}, $C$ depends on $n,\lambda$ and $\|b-c\|_{n,1}$. Also, we obtain that $C'$ depends on $n,\lambda,\|A\|_{\infty}$ and $\|b-c\|_{n,1}$. Moreover, from \eqref{eq:FormulaAllBounded}, for any $f\in L^{\infty}(\Omega)$, the function
\begin{equation}\label{eq:vjDfn}
v_j^f(y)=\int_{\tilde{\Omega}_j}G_y^j(z)f(z)\,dz,\,\,\,y\in\tilde{\Omega_j}
\end{equation}
is the unique $W_0^{1,2}(\tilde{\Omega_j})$ solution to $-\dive(A^t\nabla v_j^f+c_jv_j^f)+b_j\nabla v_j^f+d_jv_j^f=f$ in $\tilde{\Omega}_j$.

For the adjoint equation, let $x\in\Omega$. From \eqref{eq:gPrelim}, for $j$ large enough, there exists $g_x^j$ defined in $\tilde{\Omega}_j$ which is continuous in $\tilde{\Omega}_j\setminus\{x\}$, such that
\begin{equation}\label{eq:gxjBounds}
\|g_x^j\|_{L^{\frac{n}{n-2},\infty}(\tilde{\Omega}_j)}+\|\nabla g_x^j\|_{L^{\frac{n}{n-1},\infty}(\tilde{\Omega}_j)}\leq C,\quad g_x^j(z)\leq C'|z-x|^{2-n},\quad\|\nabla g_x^j\|_{L^2(\tilde{\Omega}_j\setminus B_r)}^2\leq Cr^{2-n},
\end{equation}
where the second to last estimate holds for all $z\in\tilde{\Omega}_j$ with $z\neq x$, and where $C$ depends on $n,\lambda$ and $\|b-c\|_{n,1}$, and $C'$ depends on $n,\lambda,\|A\|_{\infty}$ and $\|b-c\|_{n,1}$. Moreover, from \eqref{eq:FormulaAllBoundedt}, for any $f\in L^{\infty}(\Omega)$, the function
\begin{equation}\label{eq:ujDfn}
u_j^f(x)=\int_{\tilde{\Omega}_j}g_x^j(y)f(y)\,dy,\,\,\,x\in\tilde{\Omega_j}
\end{equation}
is the unique $W_0^{1,2}(\tilde{\Omega_j})$ solution to $-\dive(A\nabla u_j^f+b_ju_j^f)+c_j\nabla u_j^f+d_ju_j^f=f$ in $\tilde{\Omega}_j$.

We now set
\[
g^j(z,x)=g_x^j(z),\quad G^j(w,y)=G_y^j(w),
\]
for $x,y,z,w\in\tilde{\Omega}_j$; those functions are well defined for any $z\neq x$ and $w\neq y$, from \eqref{eq:JointCont}.

From \eqref{eq:JointBoundedness}, for any $\mathcal{U}\subseteq\Omega^2$ with smooth boundary, $\|G^j\|_{W^{1,q_0}(\mathcal{U})}\leq C$, where $C$ depends on $n,\lambda,\|b-c\|_{n,1}$ and $\mathcal{U}$. Since the embedding $W^{1,q_0}(\mathcal{U})\hookrightarrow L^{q_0}(\mathcal{U})$ is compact, there exists $G_{\mathcal{U}}\in W^{1,q_0}(\mathcal{U})$ such that, for a subsequence $(G^{j_i})$ of $(G^j)$, $G^{j_i}\to G_{\mathcal{U}}$ weakly in $W^{1,q_0}(\mathcal{U})$, strongly in $L^{q_0}(\mathcal{U})$ and almost everywhere in $\mathcal{U}$. Using a diagonalization argument, there exists $G\in W^{1,q_0}_{\loc}(\Omega^2)$ such that, for a subsequence $(G^{j_i})$ of $(G^j)$, $G^{j_i}\to G_{\mathcal{U}}$ weakly in $W^{1,q_0}_{\loc}(\Omega^2)$, strongly in $L^{q_0}_{\loc}(\Omega^2)$ and almost everywhere in $\Omega^2$.

Now, with an argument similar to the above, we have that there exists $g\in W^{1,q_0}_{\loc}(\Omega^2)$ such that, for a subsequence $(g^{j_i^1})$ of $(g^{j_i})$, $g^{j_i^1}\to g$ weakly in $W^{1,q_0}_{\loc}(\Omega^2)$, strongly in $L^{q_0}_{\loc}(\Omega^2)$ and almost everywhere in $\Omega^2$. From \eqref{eq:Symmetry}, $G^j(x,y)=g^j(y,x)$ for every $x,y\in\Omega$ with $x\neq y$ from \eqref{eq:Symmetry}. Since $g^j\to\tilde{g}$ and $G^j\to\tilde{G}$ almost everywhere in $\Omega^2$, there exists $\mathcal{F}\subseteq\Omega^2$, with full measure in $\Omega^2$, such that
\begin{equation}\label{eq:gtildeGtilde}
g(y,x)=G(x,y)\,\,\,\text{for every}\,\,\,(x,y)\in\mathcal{F}.
\end{equation}
Fix now $y\in\Omega$. Since $L^{\frac{n}{n-2},\infty}$ and $L^{\frac{n}{n-1},\infty}$ are the dual spaces of $L^{\frac{n}{2},1}$ and $L^{n,1}$, respectively, \eqref{eq:GyjBounds} and the Banach-Alaoglou theorem imply that there exists a subsequence $(G_y^{j_i^2})$ of $(G_y^{j_i^1})$ (which depends on $y$) and $G_y\in W_0^{1,q}(\Omega)$, such that
\begin{equation}\label{eq:WeakConv,n1}
\begin{gathered}
G_y^{j_i^2}\rightharpoonup G_y\quad\text{weakly}^*\,\,\text{in}\,\,\,L^{\frac{n}{n-2},\infty}(\Omega),\quad\nabla G_y^{j_i^2}\rightharpoonup \nabla G_y\quad\text{weakly}^*\,\,\text{in}\,\,\,L^{\frac{n}{n-1},\infty}(\Omega),\\
G_y^{j_i^2}\rightharpoonup g_x\quad\text{weakly}^*\,\,\text{in}\,\,\,W_0^{1,q}(\Omega),\quad G_y^{j_i^2}\to G_y\quad\text{almost everywhere}.
\end{gathered}
\end{equation}
If $F_y\subseteq\Omega$ is the set of $z$ for which $(G_y^{j_i^2}(z))$ converges, then $F_y$ is measurable and has full measure in $\Omega$. Then, we define 
\begin{equation}\label{eq:Fy}
G_y(z)=\lim_{i\to\infty}G_y^{j_i^2}(z),
\end{equation}
so that $G_y$ is defined for every $z\in F_y$. Also, combining with \eqref{eq:GyjBounds}, we obtain that
\[
\|G_y\|_{L^{\frac{n}{n-2},\infty}(\Omega)}+\|\nabla G_y\|_{L^{\frac{n}{n-1},\infty}(\Omega)}\leq C,\quad G_y(z)\leq C'|z-x|^{2-n},\quad\|\nabla G_y\|_{L^2(\Omega\setminus B_r)}^2\leq C'r^{2-n},
\]
where the second to last estimate holds for almost every $z\in\Omega$, $C$ depends on $n,\lambda$ and $\|b-c\|_{n,1}$, and $C'$ depends on $n,\lambda,\|A\|_{\infty}$ and $\|b-c\|_{n,1}$. In addition, if $\psi$ is as in the statement of the lemma, using \eqref{eq:GlobalL2AllBounded} for $G_y^j$ we obtain that $G_y\psi\in W_0^{1,2}(\Omega)$. Note also that, if $(x,y)\in\mathcal{F}$, then $(G^{j_i^2}(x,y))$ converges; hence $x\in F_y$, and, from \eqref{eq:Fy},
\begin{equation}\label{eq:gEqualsTildeg}
G_y(x)=\lim_{i\to\infty}G_y^{j_i^2}(x)=\lim_{i\to\infty}G^{j_i^2}(x,y)=G(x,y).
\end{equation}
We now fix $x\in\Omega$. Then, using \eqref{eq:gxjBounds} and proceeding as above, there exists a subsequence $(g_x^{j_i^3})$ of $(g_x^{j_i^2})$ (which depends on $x$) and $g_x\in W_0^{1,q_0}(\Omega)$, such that
\begin{equation}\label{eq:WeakConv,n1g}
\begin{gathered}
g_x^{j_i^3}\rightharpoonup g_x\quad\text{weakly}^*\,\,\text{in}\,\,\,L^{\frac{n}{n-2},\infty}(\Omega),\quad\nabla g_x^{j_i^3}\rightharpoonup \nabla g_x\quad\text{weakly}^*\,\,\text{in}\,\,\,L^{\frac{n}{n-1},\infty}(\Omega),\\
g_x^{j_i^3}\rightharpoonup g_x\quad\text{weakly}^*\,\,\text{in}\,\,\,W_0^{1,q_0}(\Omega),\quad g_x^{j_i^3}\to g_x\quad\text{almost everywhere}.
\end{gathered}
\end{equation}
If $F^x\subseteq\Omega$ is the set of $z$ for which $(g_x^{j_i^3}(z))$ converges, then $F^x$ is measurable and has full measure in $\Omega$. Then, we define
\begin{equation}\label{eq:Fx}
g_x(z)=\lim_{i\to\infty}g_x^{j_i^3}(z),
\end{equation}
so that $g_x(z)$ is defined for every $z\in F^x$. Then, from \eqref{eq:gxjBounds}, we obtain that
\[
\|g_x\|_{L^{\frac{n}{n-2},\infty}(\Omega)}+\|\nabla g_x\|_{L^{\frac{n}{n-1},\infty}(\Omega)}\leq C,\quad g_x(z)\leq C'|z-y|^{2-n},\quad\|\nabla g_x\|_{L^2(\Omega\setminus B_r)}^2\leq C'r^{2-n},
\]
where the second to last estimate holds for almost every $z\in\Omega$, $C$ depends on $n,\lambda$ and $\|b-c\|_{n,1}$, and $C'$ depends on $n,\lambda,\|A\|_{\infty}$ and $\|b-c\|_{n,1}$. In addition, if $\psi'$ is as in the statement of the lemma using \eqref{eq:GlobalL2AllBoundedt} for $g_x^j$, we obtain that $g_x\psi'\in W_0^{1,2}(\Omega)$.

Note now that, if $(x,y)\in\mathcal{F}$, then $(g^{j_i^3}(y,x))$ converges; hence $y\in F^x$, and, from \eqref{eq:Fx},
\begin{equation}\label{eq:GEqualsTildeG}
g_x(y)=\lim_{i\to\infty}g_x^{j_i^3}(y)=\lim_{i\to\infty}g^{j_i^3}(y,x)=g(y,x),
\end{equation}
Combining \eqref{eq:gEqualsTildeg} and \eqref{eq:GEqualsTildeG} with \eqref{eq:gtildeGtilde}, we obtain that
\[
G_x(y)=G(x,y)=g(y,x)=g_x(y)\,\,\,\text{for almost every}\,\,\,(x,y)\in\Omega^2.
\]

We now note that, from \eqref{eq:vjDfn} and \eqref{eq:WeakConv,n1}, for almost every $y\in\tilde{\Omega}_j$,
\[
|v_j^f(y)|\leq C\|G_y^j\|_{L^1(\tilde{\Omega}_j)}\|f\|_{\infty}\leq C\|G_y^j\|_{L^{\frac{n}{n-2},\infty}(\Omega)}\|f\|_{\infty}|\Omega|^{\frac{2}{n}}\leq\tilde{C},
\]
where $\tilde{C}$ depends on $n,\lambda$, $\|b-c\|_{n,1}$, $\|f\|_{\infty}$ and $|\Omega|$. Hence, using the last estimate and Lemma~\ref{DerivativeByU} (choosing $\phi\equiv 1$),
\[
\int_{\tilde{\Omega}_j}|\nabla v_j^f|^2\leq C\|f\|_{2_*}^2+C\|v_j^f\|_{2^*}^2\leq\tilde{C}.
\]
The last two estimates show that, extending $v_j^f$ by $0$ in $\Omega\setminus\tilde{\Omega}_j$, $(v_j^f)$ is bounded in $W_0^{1,2}(\Omega)$. Then, for a subsequence $(v_{j_i^4}^f)$ of $(v_{j_i^3}^f)$, we obtain that $(v_{j_i^4}^f)$ converges to a function $v^f_0\in W_0^{1,2}(\Omega)$ weakly in $W_0^{1,2}(\Omega)$.

Let now $\phi\in C_c^{\infty}(\Omega)$, and let $U$ be an open and bounded set that contains the support of $\phi$ (we could use $U=\Omega$, but the following argument will also be useful in the proof of Theorem~\ref{Green}). We then have that a further subsequence $(v_{j_i^5}^f)$ converges to $v^f_0$ strongly in $L^{\frac{3n}{2n-4}}(U)$ (since $1<\frac{3n}{2n-4}<2^*$ and $U$ is bounded), and almost everywhere in $U$. Moreover, for any $\phi\in C_c^{\infty}(U)$, if $j_i^5$ is large enough so that the support of $\phi$ is contained in $\tilde{\Omega}_{j_i^4}$, we obtain that
\begin{equation}\label{eq:ToPlugWeak}
\int_UA^t\nabla v_{j_i^5}^f\nabla\phi+c_{j_i^5}\nabla\phi\cdot v_{j_i^5}^f+b_{j_i^5}\nabla v_{j_i^5}^f\cdot\phi+d_{j_i^5}v_{j_i^5}^f\phi=\int_Uf\phi.
\end{equation}
Since $b_j\to b$ and $c_j\to c$ in $L^{n,q}(\Omega)$ from Lemma~\ref{ToApproximate}, and also $\nabla v_{j_i^5}^f\to \nabla v^f_0$ weakly in $L^2(\Omega)$ and $v_{j_i^5}^f\to v^f_0$ in $L^2(U)$, we obtain that 
\begin{equation}\label{eq:Weak1}
\int_UA^t\nabla v_{j_i^5}^f\nabla\phi+c_{j_i^5}\nabla\phi\cdot v_{j_i^5}^f+b_{j_i^5}\nabla v_{j_i^5}^f\cdot\phi\xrightarrow[i\to\infty]{}\int_UA\nabla v^f_0\nabla\phi+c\nabla\phi\cdot v^f_0+b\nabla v^f_0\cdot\phi.
\end{equation}
Moreover, $d_j\to d$ weakly-* in $L^{\frac{n}{2},\infty}(\Omega)$ from Lemma~\ref{ToApproximate}, and $v_{j_i^5}^f\to v^f_0$ strongly in $L^{\frac{3n}{2n-4}}(U)$. Since $\frac{3n}{2n-4}>\frac{n}{n-2}$ and $U$ is bounded, we obtain that $v_{j_i^5}^f\to v^f_0$ strongly in $L^{\frac{n}{n-2},1}(U)$, which is the predual of $L^{\frac{n}{2},\infty}(U)$. Therefore,
\begin{equation}\label{eq:Weak2}
\int_Ud_{j_i^5}v_{j_i^5}^f\phi\xrightarrow[i\to\infty]{}\int_Udv^f_0\phi.
\end{equation}
Hence, plugging \eqref{eq:Weak1} and \eqref{eq:Weak2} to \eqref{eq:ToPlugWeak}, we obtain that $v^f_0$ is a $W_0^{1,2}(\Omega)$ solution to the equation $-\dive(A^t\nabla v^f+cv^f)+b\nabla v^f+dv^f=f$ in $\Omega$. Then, letting $j\to\infty$ in \eqref{eq:vjDfn} and using \eqref{eq:WeakConv,n1} and that $v_{j_i^5}^f\rightharpoonup v^f_0$ weakly in $W_0^{1,2}(\Omega)$, we obtain that $\displaystyle v^f_0(y)=v^f(y)=\int_{\Omega}G(z,y)f(z)\,dz$ for almost every $y\in\Omega$.

Using \eqref{eq:ujDfn} and \eqref{eq:WeakConv,n1g}, an argument similar to the above shows that $u^f$ is a $W_0^{1,2}(\Omega)$ solution to the equation $-\dive(A\nabla u^f+bu^f)+c\nabla u^f+du^f=f$ in $\Omega$.

It only remains to show \eqref{eq:DiracForBounded} and \eqref{eq:DiracForBoundedt}. For this, we first note that the definition of $G_y^j$ and \eqref{eq:DiracForBoundedBounded} show that, for any $y\in\Omega$ and any $\phi\in C_c^{\infty}(\Omega)$,
\begin{equation}\label{eq:ji}
\int_{\Omega}A\nabla G_y^{j_i^3}\nabla\phi+b_{j_i^3}\nabla\phi\cdot G_y^{j_i^3}+c_{j_i^3}\nabla G_y^{j_i^3}\cdot\phi+d_{j_i^3}G_y^{j_i^3}\phi=\phi(y)
\end{equation}
From Lemma~\ref{ToApproximate}, $b_j\to b$ and $c_j\to c$ strongly in $L^{n,1}(\Omega)$, and $d_j\to d$ strongly in $L^{\frac{n}{2},1}(\Omega)$. Hence, letting $i\to\infty$ in \eqref{eq:ji} and using \eqref{eq:WeakConv,n1}, we obtain \eqref{eq:DiracForBounded}.

Using \eqref{eq:DiracForBoundedBoundedt}, the proof of \eqref{eq:DiracForBoundedt} is similar, and this completes the proof.
\end{proof}

We now drop the boundedness assumption on $\Omega$, and we construct Green's function in arbitrary domains.

\begin{thm}\label{Green}
Let $\Omega\subseteq\bR^n$ be a domain. Let $A$ be uniformly elliptic and bounded in $\Omega$, with ellipticity $\lambda$, and suppose that $b,c\in L^{n,q}(\Omega)$ for some $q\in[1,\infty)$, $d\in L^{\frac{n}{2},\infty}(\Omega)$, with $b-c\in L^{n,1}(\Omega)$ and $d\geq\dive b$. Then, for every $x,y\in\Omega$ there exist nonnegative functions $G_y(z)=G(z,y)$, $g_x(z)=g(z,x)$, where $G,g$ are measurable in $\Omega^2$, with $G_y,g_x\in L^1_{\loc}(\Omega)$ such that, for all $f\in L_c^{\infty}(\Omega)$, if we define
\[
v^f(y)=\int_{\Omega}G(z,y)f(z)\,dz,\quad u^f(x)=\int_{\Omega}g(z,x)f(z)\,dz,\quad x,y\in\Omega,
\]
then $u,v\in Y_0^{1,2}(\Omega)$, and they are solutions to the equations
\[
-\dive(A^t\nabla v^f+cv^f)+b\nabla v^f+dv^f=f,\quad -\dive(A\nabla u^f+bu^f)+c\nabla u^f+du^f=f
\]
in $\Omega$. Moreover, for any $x,y$,
\[
\|G_y\|_{L^{\frac{n}{n-2},\infty}(\Omega)}+\|\nabla G_y\|_{L^{\frac{n}{n-1},\infty}(\Omega)}+\|g_x\|_{L^{\frac{n}{n-2},\infty}(\Omega)}+\|\nabla g_x\|_{L^{\frac{n}{n-1},\infty}(\Omega)}\leq C,
\]
where $C$ depends on $n,\lambda$ and $\|b-c\|_{n,1}$ only. In addition, if $x,y$ are fixed, then
\[
\begin{gathered}
G_y(z)\leq C'|z-y|^{2-n}\,\,\,\text{for almost every}\,\,\,z\in\Omega,\,\,\,\text{and}\\
g_x(w)\leq C'|w-x|^{2-n},\,\,\,\text{for almost every}\,\,\,w\in\Omega,
\end{gathered}
\]
where $C'$ depends on $n,\lambda,\|A\|_{\infty}$ and $\|b-c\|_{n,1}$ only. Furthermore, for any $\delta(x)>\e_2>\e_1>0$, $\delta(y)>\e_2'>\e_1'>0$ and any two functions $\psi,\psi'\in C^{\infty}(\bR^n)$ with $\psi\equiv 0$ in $B_{\e_1}(y)$, $\psi'\equiv 0$ in $B_{\e_1}(x)$, and $\psi\equiv 1$ outside $B_{\e_2}(y)$, $\psi'\equiv 1$ outside $B_{\e_2'}(x)$, we have that $G_y\psi,g_x\psi'\in W_0^{1,2}(\Omega)$ and, for any $r>0$,
\[
\int_{\Omega\setminus B_r(y)}|\nabla G_y|^2+\int_{\Omega\setminus B_r(x)}|\nabla g_x|^2\leq Cr^{2-n},
\]
where $C$ depends on $n,\lambda,\|A\|_{\infty}$ and $\|b-c\|_{n,1}$ only. Moreover, the functions $G(z,w)$ and $g(w,z)$, for $z,w\in\Omega^2$ are measurable in $\Omega^2$, and
\[
G(z,w)=g(w,z)\,\,\,\text{for almost every}\,\,\,(z,w)\in\Omega^2.
\]
Finally, in the case where $d\in L^{\frac{n}{2},1}(\Omega)$, we have that, for every $y\in\Omega$ and every $\phi\in C_c^{\infty}(\Omega)$,
\begin{equation}\label{eq:Dirac1}
\int_{\Omega}A\nabla G_y\cdot\phi+b\nabla\phi\cdot G_y+c\nabla G_y\cdot\phi+dG_y\phi=\phi(y),
\end{equation}
and also, for every $x\in\Omega$ and every $\phi\in C_c^{\infty}(\Omega)$,
\begin{equation}\label{eq:Dirac2}
\int_{\Omega}A^t\nabla g_x\cdot\phi+c\nabla\phi\cdot g_x+b\nabla g_x\cdot\phi+dg_x\phi=\phi(x).
\end{equation}
\end{thm}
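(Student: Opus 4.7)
The plan is to exhaust $\Omega$ by an increasing family of bounded subdomains $\Omega_m\subseteq\Omega$ (fix $x_0\in\Omega$ and let $\Omega_m$ be the connected component of $\{x\in\Omega:|x-x_0|<m,\,\dist(x,\partial\Omega)>\frac{1}{m}\}$ containing $x_0$, so that $\Omega_m\nearrow\Omega$), apply Lemma~\ref{DropBound} on each $\Omega_m$ to obtain $G^m,g^m$, and then pass to a subsequential limit. The crucial point is that every estimate in Lemma~\ref{DropBound}---the weak-type bounds in $L^{\frac{n}{n-2},\infty}$ and $L^{\frac{n}{n-1},\infty}$, the pointwise bound $G_y^m(z)\leq C'|z-y|^{2-n}$, and the local $L^2$ bound on $\nabla G_y^m$ away from $y$---has a constant depending only on $n,\lambda,\|A\|_{\infty},\|b-c\|_{n,1}$, hence uniform in $m$; the joint $W^{1,q_0}_{\loc}(\Omega^2)$ bound is obtained exactly as in \eqref{eq:JointBoundedness}.

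Using Banach--Alaoglou on the dual pairs $L^{\frac{n}{n-2},\infty}=(L^{\frac{n}{2},1})^*$ and $L^{\frac{n}{n-1},\infty}=(L^{n,1})^*$, together with a diagonalization along a countable dense set of basepoints and an exhaustion of $\Omega^2$ by compacts, one mimics the argument of Lemma~\ref{DropBound} following \eqref{eq:WeakConv,n1}--\eqref{eq:Fx} to produce a single subsequence $(m_i)$ and measurable functions $G,g:\Omega^2\to[0,\infty)$ such that $G^{m_i}\to G$ and $g^{m_i}\to g$ a.e.\ in $\Omega^2$ and weakly in $W^{1,q_0}_{\loc}(\Omega^2)$, with fibers $G_y=G(\cdot,y)$ and $g_x=g(\cdot,x)$ obtained as further weak-$*$ limits for each fixed $y,x$. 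All of the stated norm and pointwise bounds, the local $L^2$ estimate, and the inclusion $G_y\psi,g_x\psi'\in W_0^{1,2}(\Omega)$ transfer from the corresponding bounds on $\Omega_m$ by weak-$*$ lower semicontinuity, a.e.\ convergence, and the cutoff argument of \eqref{eq:boundL2}, while the symmetry $G(z,w)=g(w,z)$ for a.e.\ $(z,w)\in\Omega^2$ is inherited from \eqref{eq:SymmetryBounded}.

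The main obstacle is showing that, for $f\in L^{\infty}_c(\Omega)$, the function $v^f(y)=\int_{\Omega}G(z,y)f(z)\,dz$ actually lies in $Y_0^{1,2}(\Omega)$ and solves $-\dive(A^t\nabla v^f+cv^f)+b\nabla v^f+dv^f=f$; the subtlety is that $\Omega$ may have infinite measure, so that $W_0^{1,2}(\Omega)\subsetneq Y_0^{1,2}(\Omega)$. For $m$ so large that $K:=\mathrm{supp}(f)\subseteq\Omega_m$, the function $v^{f,m}\in W_0^{1,2}(\Omega_m)\subseteq Y_0^{1,2}(\Omega)$ provided by Lemma~\ref{DropBound} satisfies
\[
|v^{f,m}(y)|\leq C'\|f\|_{\infty}\int_{K}|z-y|^{2-n}\,dz,
\]
and since this right-hand side is a truncated Riesz potential $I_2*\chi_K$ with $\chi_K\in L^{2_*,1}(\bR^n)$, it lies in $L^{2^*,2}(\Omega)$ with norm depending only on $f$ (as the dual statement to the improved Sobolev embedding of Lemma~\ref{ImprovedSobolev}). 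Applying Lemma~\ref{DerivativeByU} to $v^{f,m}$---which, being a solution, admits $v^{f,m}\phi_R^2$ as a legitimate test function regardless of sign---with a cutoff $\phi_R\in C_c^{\infty}(\bR^n)$ such that $\phi_R\equiv 1$ on a ball containing $\Omega_m$ (so that $v^{f,m}\nabla\phi_R\equiv 0$) then yields
\[
\|\nabla v^{f,m}\|_{L^2(\Omega)}\leq C\bigl(\|f\|_{2_*}+\|v^{f,m}\|_{L^{2^*,2}}\bigr)\leq C_f
\]
uniformly in $m$. Reflexivity of $Y_0^{1,2}(\Omega)$ supplies a weak limit $v^f_0\in Y_0^{1,2}(\Omega)$; pairing the equation for $v^{f,m}$ against any $\phi\in C_c^{\infty}(\Omega)$ (whose support is in $\Omega_m$ for $m$ large) and passing to the limit---using weak convergence in $Y_0^{1,2}$, strong convergence in $L^p_{\loc}$ on $\mathrm{supp}(\phi)$, and the membership of $b,c\in L^{n,q}$, $d\in L^{\frac{n}{2},\infty}$---identifies $v^f_0$ as the sought solution, exactly as at the end of the proof of Lemma~\ref{DropBound}. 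The identification $v^f_0(y)=\int G(z,y)f(z)\,dz$ follows by pairing the weak-$*$ convergence $G_y^{m_i}\rightharpoonup G_y$ in $L^{\frac{n}{n-2},\infty}(\Omega)$ against $f\in L^{\frac{n}{2},1}(\Omega)$. The construction of $u^f$ is entirely parallel, and \eqref{eq:Dirac1}--\eqref{eq:Dirac2} in the case $d\in L^{\frac{n}{2},1}$ are obtained by passing to the limit in \eqref{eq:DiracForBounded}--\eqref{eq:DiracForBoundedt} on each $\Omega_m$, since any $\phi\in C_c^{\infty}(\Omega)$ is supported in $\Omega_m$ for large $m$.
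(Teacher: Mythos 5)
Your proposal follows the same route as the paper: exhaust $\Omega$ by bounded connected subdomains, apply Lemma~\ref{DropBound} on each, use the uniform weak-type and pointwise bounds together with Banach--Alaoglu and a diagonalization to extract limits $G,g$, dominate $v^{f,m}$ by a truncated Riesz potential of $f$ to get a uniform $Y_0^{1,2}$ bound via Lemma~\ref{DerivativeByU}, and pass to the limit in the weak formulation. The only (immaterial) differences are that you appeal to the Lorentz-space refinement of Hardy--Littlewood--Sobolev to land in $L^{2^*,2}$ where the paper is content with the classical $L^{2^*}$ bound (which suffices since $b-c\in L^n$), and your exhaustion additionally imposes $\dist(x,\partial\Omega)>1/m$, which the paper does not need because Lemma~\ref{DropBound} is already valid on any bounded subdomain.
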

\begin{proof}
Fix $x_0\in\Omega$, and for $j\in\mathbb N$, let $\Omega_j$ to be the connected component of $\Omega\cap B_j(x_0)$. Let now $G^j,g^j$ be Green's functions for the operators $\mathcal{L}u=-\dive(A\nabla u+bu)+c\nabla u+du$ and $\mathcal{L}^tu=-\dive(A^t\nabla u+cu)+b\nabla u+du$ in $\Omega_j$, in the sense of Lemma 5.2. Then $G^j$ and $g^j$ are measurable in $\Omega_j^2$, and, from Lemma~\ref{DropBound}, $G^j(x,y)=g^j(y,x)$ for almost every $(x,y)\in\Omega_j$. Extending $G^j,g^j$ by $0$ in $\Omega^2\setminus\Omega_j^2$ and using \eqref{eq:SymmetryBounded}, we obtain that $G^j(x,y)=g^j(y,x)$ for every $(x,y)\in\mathcal{F}_j$, where $\mathcal{F}_j\subseteq\Omega^2$ has full measure. Also, for any $x,y\in\Omega$, \eqref{eq:WeakBounded} shows that
\begin{equation}\label{eq:ToDropj}
\|G_y^j\|_{L^{\frac{n}{n-2},\infty}(\Omega)}+\|\nabla G_y^j\|_{L^{\frac{n}{n-1},\infty}(\Omega)}+\|g_x^j\|_{L^{\frac{n}{n-2},\infty}(\Omega)}+\|\nabla g_x^j\|_{L^{\frac{n}{n-1},\infty}(\Omega)}\leq C,
\end{equation}
where $C$ depends on $n,\lambda$ and $\|b-c\|_{n,1}$.

Fix now $\mathcal{U}\subseteq\Omega^2$ with smooth boundary. As in the proof of Lemma~\ref{DropBound}, there exist $G_{\mathcal{U}},g_{\mathcal{U}}\in W^{1,q_0}(\mathcal{U})$ such that, for subsequences $(G^{j_i}),(g^{j_i})$, $G^{j_i}\to G_{\mathcal{U}}$ and $g^{j_i}\to g_{\mathcal{U}}$ almost everywhere in $\mathcal{U}$. Using a diagonalization argument, we obtain that there exist measurable functions $G,g$ defined in $\Omega^2$ and a set $\mathcal{F}\subseteq\Omega^2$ with full measure, such that, for subsequences $(G^{j_i^1}),(g^{j_i^1})$,
\[
G^{j_i^1}(x,y)\to G(x,y),\quad g^{j_i^1}(y,x)\to g(y,x),\quad\text{for every}\,\,\,(x,y)\in\mathcal{F}.
\]
Setting $\mathcal{F}_0=\mathcal{F}\cap\bigcap_{j=1}^{\infty}\mathcal{F}_j$, we obtain that $\mathcal{F}_0\subseteq\Omega^2$ has full measure, and for all $(x,y)\in\mathcal{F}_0$,
\begin{equation}\label{eq:ToEqualTildes}
G(x,y)=\lim_{i\to\infty}G^{j_i^1}(x,y)=\lim_{i\to\infty}g^{j_i^1}(y,x)=g(y,x).
\end{equation}
We now fix $x,y\in\Omega$. Using \eqref{eq:ToDropj}, we obtain that there exist subsequences $(G_y^{j_i^2}), (g_x^{j_i^2})$ (depending on $y,x$ respectively) and functions $G_y,g_x$ defined in $\Omega$, such that
\begin{equation*}
\begin{gathered}
G_y^{j_i^2}\rightharpoonup G_y\quad\text{weakly}^*\,\,\text{in}\,\,\,L^{\frac{n}{n-2},\infty}(\Omega),\quad\nabla G_y^{j_i^2}\rightharpoonup \nabla G_y\quad\text{weakly}^*\,\,\text{in}\,\,\,L^{\frac{n}{n-1},\infty}(\Omega),\\
G_y^{j_i^2}\to G_y\quad\text{almost everywhere},
\end{gathered}
\end{equation*}
and also
\begin{equation*}
\begin{gathered}
g_x^{j_i^2}\rightharpoonup g_y\quad\text{weakly}^*\,\,\text{in}\,\,\,L^{\frac{n}{n-2},\infty}(\Omega),\quad\nabla g_x^{j_i^2}\rightharpoonup \nabla g_x\quad\text{weakly}^*\,\,\text{in}\,\,\,L^{\frac{n}{n-1},\infty}(\Omega),\\
g_x^{j_i^2}\to g_x\quad\text{almost everywhere}.
\end{gathered}
\end{equation*}
If $F_y,F^x$ are the sets in which $(G_y^{j_i^2})$, $(g_x^{j_i^2})$ converge pointwise, we explicitly define
\begin{equation}\label{eq:EqualTildes}
G_y(z)=\lim_{i\to\infty}G_y^{j_i^2}(z)\,\,\,\text{for every}\,\,\,z\in F_y,\quad g_x(w)=\lim_{i\to\infty}g_x^{j_i^2}(w)\,\,\,\text{for every}\,\,\,w\in F^x.
\end{equation}
If now $(x,y)\in\mathcal{F}_0$, we obtain that $x\in F_y$ and $y\in F^x$. Hence, \eqref{eq:ToEqualTildes} and \eqref{eq:EqualTildes} show that
\[
G_y(x)=G(x,y)=g(y,x)=g_x(y)\,\,\,\text{for almost every}\,\,\,(x,y)\in\Omega^2.
\]
Let now $f\in L_c^{\infty}(\Omega)$, and set
\[
v_j^f(y)=\int_{\Omega_j}G_y^j(z)f(z)\,dz.
\]
From Lemma~\ref{DropBound}, $v_j^f\in W_0^{1,2}(\Omega_j)$ for every $j$, and it solves the equation $-\dive(A^t\nabla u+cu)+b\nabla u+du=f$ in $\Omega_j$. Now, from \eqref{eq:BoundedForBounded}, $|v_j^f|$ is bounded above by a constant multiple of the Riesz potential $|I_2f|$ (as on page 117 in \cite{SteinSingular}). Since the exponents $2^*,2_*$ satisfy the relation $\frac{1}{2^*}=\frac{1}{2_*}-\frac{2}{n}$, Theorem 1 on page 119 in \cite{SteinSingular} shows that 
\[
\|v_j^f\|_{L^{2^*}(\Omega)}\leq \|I_2f\|_{L^{2^*}(\bR^n)}\leq C\|f\|_{L^{2_*}(\bR^n)}\leq C'\|f\|_{L^{2_*}(\Omega)},
\]
where $C'$ depends on $n,\lambda,\|A\|_{\infty}$ and $\|b-c\|_{n,1}$. Then, using Lemma~\ref{DerivativeByU} in $\Omega_j$ (with $\phi\equiv 1$), we obtain that
\[
\int_{\Omega_j}|\nabla v_j^f|^2\leq C\|f\|_{L^{2^*}(\Omega_j)}^2+C\|u\|_{L^{2^*}(\Omega_j)}^2\leq\tilde{C},
\]
hence $(v_j^f)$ is uniformly bounded in $Y_0^{1,2}(\Omega_j)$. Extending $(v_j^f)$ by $0$ in $\Omega\setminus\Omega_j$, we obtain that $(v_j^f)$ is uniformly bounded in $Y_0^{1,2}(\Omega)$; hence, reflexivity of $Y_0^{1,2}(\Omega)$ implies that there exists a subsequence of $(v_{{j_i}^2}^f)$ that converges weakly to some $v^f_0\in Y_0^{1,2}(\Omega)$. Using an argument similar to the proof of Lemma~\ref{DropBound}, we have that
\[
v^f(y)=\int_{\Omega}G(z,y)f(z)\,dz
\]
is a $Y_0^{1,2}(\Omega)$ solution to the equation $-\dive(A^t\nabla v^f+cv^f)+b\nabla v^f+dv^f=f$ in $\Omega$. Similarly, we show that $u^f$ is a $Y_0^{1,2}(\Omega)$ solution to the equation $-\dive(A\nabla u^f+bu^f)+c\nabla u^f+du^f=f$ in $\Omega$.

The rest of the proof is similar to the proof of Lemma~\ref{DropBound}, where to show \eqref{eq:Dirac1} and \eqref{eq:Dirac2} we also use that $b\chi_{\Omega_j}\to b$ in $L^{n,1}(\Omega)$ and $d\chi_{\Omega_j}\to d$ in $L^{\frac{n}{2},1}(\Omega)$ (if $d\in L^{\frac{n}{2},1}(\Omega)$). This completes the proof.
\end{proof}

Note that the previous theorem asserts existence of solutions to the equation $\mathcal{L}u=f$ for $f\in L_c^{\infty}(\Omega)$. Using those solutions, we can show uniqueness for solutions to the adjoint equation.

\begin{prop}\label{Uniquenesst}
Under the same assumptions as in Theorem~\ref{Green}, if $v\in Y_0^{1,2}(\Omega)$ is a solution to the equation
\[
-\dive(A^t\nabla v+cv)+b\nabla v+dv=0,
\]
then $v\equiv 0$.
\end{prop}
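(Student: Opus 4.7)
The plan is to argue by duality, leveraging the \emph{existence} half of Theorem~\ref{Green}. For any $f \in L^\infty_c(\Omega)$, that theorem produces $u^f \in Y_0^{1,2}(\Omega)$ solving $\mathcal{L} u^f = -\dive(A\nabla u^f + b u^f) + c\nabla u^f + d u^f = f$ in $\Omega$. I plan to pair $v$ against $u^f$ through each other's variational formulations to deduce that $\int_\Omega f v = 0$ for every such $f$, which would force $v \equiv 0$ almost everywhere.

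First, I will approximate $v$ by a sequence $\phi_k \in C_c^\infty(\Omega)$ with $\phi_k \to v$ in $Y_0^{1,2}(\Omega)$. Applying Lemma~\ref{ImprovedSobolev} to $\phi_k - \phi_m$ shows that $(\phi_k)$ is also Cauchy in $L^{2^*,2}(\Omega)$, so $\phi_k \to v$ strongly in $L^{2^*,2}$ as well. I then test $\mathcal{L} u^f = f$ against each $\phi_k$ and pass to the limit $k \to \infty$. The $A$-term and $b$-term should converge using the $L^2$-convergence of $\nabla\phi_k$, together with $A\nabla u^f \in L^2$ and $b u^f \in L^2$ (the latter by Lorentz--H\"older pairing $L^{n,\infty}$ with $L^{2^*,2}$). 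The $c$-term and $d$-term should converge using the $L^{2^*,2}$-convergence of $\phi_k$ and the facts that $c\nabla u^f, \, d u^f \in L^{2_*,2}(\Omega)$ by Lorentz--H\"older, and that $L^{2_*,2}$ pairs into $L^1$ with $L^{2^*,2}$. The resulting identity will be
\begin{equation*}
\int_\Omega A\nabla u^f\cdot\nabla v + b\nabla v\cdot u^f + c\nabla u^f\cdot v + d u^f v \;=\; \int_\Omega f v.
\end{equation*}

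Next, I will perform the symmetric computation: choose $\psi_k \in C_c^\infty(\Omega)$ with $\psi_k \to u^f$ in $Y_0^{1,2}(\Omega)$ (and hence in $L^{2^*,2}$), test $\mathcal{L}^t v = 0$ against $\psi_k$, and pass to the limit using the analogous Lorentz--H\"older bounds (with the roles of $b,c$ and $v, u^f$ interchanged). This should yield
\begin{equation*}
\int_\Omega A^t\nabla v\cdot\nabla u^f + c\nabla u^f\cdot v + b\nabla v\cdot u^f + d v u^f \;=\; 0.
\end{equation*}
Since $\langle A^t\nabla v,\nabla u^f\rangle = \langle \nabla v, A\nabla u^f\rangle$ pointwise, subtracting the two identities gives $\int_\Omega f v = 0$. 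As $f \in L^\infty_c(\Omega)$ is arbitrary, $v \equiv 0$ almost everywhere.

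The only delicate point, and really the only non-routine ingredient, is verifying that every bilinear form in $\mathcal{L}$ and $\mathcal{L}^t$ extends continuously from $C_c^\infty\times C_c^\infty$ to $Y_0^{1,2}\times Y_0^{1,2}$; this rests on Lemma~\ref{ImprovedSobolev} (the improved $L^{2^*,2}$ embedding of $Y_0^{1,2}$), which is precisely what makes Lorentz--H\"older match $L^{n,\infty}$ and $L^{n/2,\infty}$ in the lower-order terms. Once this continuity is in place, the two limit passes above are routine and the duality argument closes.
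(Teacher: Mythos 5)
Your proof is correct and takes exactly the same duality route as the paper: use the existence half of Theorem~\ref{Green} to produce $u^f \in Y_0^{1,2}(\Omega)$ with $\mathcal{L}u^f = f$, pair $v$ and $u^f$ against each other's weak formulations, and cancel the two bilinear forms using $\langle A^t\nabla v,\nabla u^f\rangle = \langle A\nabla u^f,\nabla v\rangle$ to obtain $\int_\Omega f v = 0$ for all $f \in L_c^\infty(\Omega)$. The paper states this more tersely (a single chain of equalities), while you have spelled out the density and Lorentz--H\"older justifications needed to legitimize testing with functions in $Y_0^{1,2}(\Omega)$ rather than $C_c^\infty(\Omega)$; the underlying argument is the same.
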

\begin{proof}
Let $f\in L_c^{\infty}(\Omega)$. From Theorem~\ref{Green}, there exists a solution $u^f\in Y_0^{1,2}(\Omega)$ to the equation $-\dive(A\nabla u^f+bu^f)+c\nabla u^f+du^f=f$ in $\Omega$. Therefore, for all $f\in L_c^{\infty}(\Omega)$,
\[
\int_{\Omega}fv=\int_{\Omega}A\nabla u^f\nabla v+b\nabla v\cdot u^f+c\nabla u^f\cdot v+du^fv=0,
\]
which implies that $v\equiv 0$. This completes the proof.
\end{proof}

Combining Propositions~\ref{Uniqueness} and \ref{Uniquenesst}, we can now show that the solutions $u^f$ and $v^f$ in Theorem~\ref{Green} are unique.

\begin{prop}\label{UniquenessOfSolutions}
Under the same assumptions as in Theorem~\ref{Green}, for any $f\in L^{\infty}_c(\Omega)$, the functions
\[
v^f(y)=\int_{\Omega}G(z,y)f(z)\,dz,\quad u^f(x)=\int_{\Omega}g(z,x)f(z)\,dz,\quad x,y\in\Omega,
\]
are the unique $Y_0^{1,2}(\Omega)$ solutions to the equations
\[
-\dive(A\nabla u+bu)+c\nabla u+du=f,\quad -\dive(A^t\nabla v+cv)+b\nabla v+dv=f,\,\,\,\text{in}\,\,\,\Omega.
\]
\end{prop}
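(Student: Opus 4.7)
The plan is straightforward: combine the existence assertion of Theorem~\ref{Green} with the two uniqueness statements already established (Propositions~\ref{Uniqueness} and \ref{Uniquenesst}), exploiting linearity of $\mathcal{L}$ and $\mathcal{L}^t$.

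First, Theorem~\ref{Green} itself asserts that, for every $f\in L^\infty_c(\Omega)$, the functions $u^f$ and $v^f$ defined by the stated integrals against $g$ and $G$ lie in $Y_0^{1,2}(\Omega)$ and solve the equations $\mathcal{L}u^f=f$ and $\mathcal{L}^t v^f=f$ respectively, in $\Omega$. Hence the only thing left to check is uniqueness.

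For uniqueness of $u^f$, I would suppose $\tilde u\in Y_0^{1,2}(\Omega)$ is any other solution to $-\dive(A\nabla \tilde u+b\tilde u)+c\nabla \tilde u+d\tilde u=f$ in $\Omega$. Then the difference $w=u^f-\tilde u\in Y_0^{1,2}(\Omega)$ satisfies the homogeneous equation
\[
-\dive(A\nabla w+bw)+c\nabla w+dw=0\quad\text{in }\Omega,
\]
in the sense of distributions (subtracting the variational identities). Since the hypotheses of Theorem~\ref{Green} include $b,c\in L^{n,q}(\Omega)$ for some $q\in[1,\infty)$, $b-c\in L^{n,1}(\Omega)\subseteq L^{n,q'}(\Omega)$ for some finite $q'$, and $d\geq\dive b$, Proposition~\ref{Uniqueness} applies and forces $w\equiv 0$, i.e.\ $\tilde u=u^f$. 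Uniqueness of $v^f$ is entirely analogous: if $\tilde v\in Y_0^{1,2}(\Omega)$ solves $\mathcal{L}^t\tilde v=f$, then $v^f-\tilde v\in Y_0^{1,2}(\Omega)$ solves $\mathcal{L}^t(v^f-\tilde v)=0$, and Proposition~\ref{Uniquenesst} (which has the same hypotheses as Theorem~\ref{Green}) gives $v^f-\tilde v\equiv 0$.

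There is no real obstacle here; the proof is a one-line invocation of the two uniqueness propositions once one observes that the difference of two $Y_0^{1,2}$ solutions to the same inhomogeneous equation is a $Y_0^{1,2}$ solution to the corresponding homogeneous equation. The only mild point to verify is that the hypothesis $b-c\in L^{n,1}(\Omega)$ assumed in Theorem~\ref{Green} indeed implies $b-c\in L^{n,q}(\Omega)$ for some $q<\infty$ as required by Proposition~\ref{Uniqueness}, which is immediate from the inclusion $L^{n,1}\hookrightarrow L^{n,q}$ for any $q\geq 1$ given by \eqref{eq:LorentzNormsRelations}.
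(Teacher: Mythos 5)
Your proof is correct and is essentially the paper's intended argument: the paper states the result immediately after Propositions~\ref{Uniqueness} and~\ref{Uniquenesst} precisely because it follows by subtracting any two $Y_0^{1,2}$ solutions of the same inhomogeneous equation and applying the relevant homogeneous uniqueness result. One very minor remark: you do not need the embedding $L^{n,1}\hookrightarrow L^{n,q}$ at all, since Proposition~\ref{Uniqueness} asks only that $b-c\in L^{n,q}(\Omega)$ for \emph{some} $q<\infty$, and $q=1$ already qualifies.
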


Finally, we show uniqueness of Green's functions.

\begin{prop}\label{GreenUniqueness}
Under the same assumptions as in Theorem~\ref{Green}, suppose that $G^*(x,y)=G^*_y(x)$ is a function such that $G^*_y\in L^1_{\loc}(\Omega)$ for almost every $y\in\Omega$, and for all $f\in L_c^{\infty}(\Omega)$, the function
\[
v_*^f(y)=\int_{\Omega}G^*(z,y)f(z)\,dz
\]
is a $Y_0^{1,2}(\Omega)$ solution to the equation $-\dive(A^t\nabla v_*^f+cv_*^f)+b\nabla v_*^f+dv_*^f=f$ in $\Omega$. If $G$ is Green's function constructed in Theorem~\ref{Green}, then for almost every $y\in\Omega$, $G^*_y=G_y$ almost everywhere in $\Omega$. The analogous statement holds for $g$.
\end{prop}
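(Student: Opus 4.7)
The strategy is to combine $Y_0^{1,2}$ uniqueness (Propositions \ref{Uniqueness} and \ref{Uniquenesst}) with a Lebesgue differentiation argument driven by a countable collection of test functions. Fix any $f \in L_c^{\infty}(\Omega)$. By Theorem \ref{Green} the function $v^f(y) = \int_{\Omega} G(z,y) f(z)\, dz$ and by hypothesis the function $v_*^f(y) = \int_{\Omega} G^*(z,y) f(z)\, dz$ are both $Y_0^{1,2}(\Omega)$ solutions to $-\dive(A^t \nabla v + cv) + b\nabla v + dv = f$ in $\Omega$. Hence $w = v^f - v_*^f \in Y_0^{1,2}(\Omega)$ solves the homogeneous adjoint equation, and Proposition \ref{Uniquenesst} forces $w \equiv 0$. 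Consequently, for every $f \in L_c^{\infty}(\Omega)$ there exists a null set $N_f \subset \Omega$ with
\[
\int_{\Omega} \bigl(G(z,y) - G^*(z,y)\bigr) f(z)\, dz = 0 \quad \text{for every } y \in \Omega \setminus N_f.
\]

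To upgrade this to the pointwise conclusion I would choose a countable dense set $\{x_j\} \subset \Omega$ and let $\mathcal{B}$ be the countable family of open balls $B = B_r(x_j)$ with rational $r$ such that $\overline{B_r(x_j)} \subset \Omega$. Applying the preceding identity with $f = \chi_B$ for each $B \in \mathcal{B}$ and taking the union $N = \bigcup_{B \in \mathcal{B}} N_{\chi_B}$, we obtain a null set $N \subset \Omega$ such that, for every $y$ in the full-measure set $E \subset \Omega \setminus N$ on which both $G_y, G_y^* \in L_{\loc}^1(\Omega)$,
\[
\int_{B} G(z,y)\, dz = \int_{B} G^*(z,y)\, dz \quad \text{for every } B \in \mathcal{B}.
\]
Fix such a $y$ and write $H_y = G_y - G_y^* \in L_{\loc}^1(\Omega)$. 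Given any ball $B_r(x) \subset \Omega$ with $\overline{B_r(x)} \subset \Omega$, we can find $(x_{j_k}, r_k) \to (x, r)$ with $B_{r_k}(x_{j_k}) \in \mathcal{B}$; the characteristic functions $\chi_{B_{r_k}(x_{j_k})}$ then converge to $\chi_{B_r(x)}$ almost everywhere (the boundary sphere being a null set), and dominated convergence with majorant $|H_y|\chi_K$ on a fixed compact $K \supset \overline{B_r(x)}$ yields $\int_{B_r(x)} H_y\, dz = 0$. The Lebesgue differentiation theorem then gives $H_y(x) = \lim_{r \to 0^+} \fint_{B_r(x)} H_y\, dz = 0$ at every Lebesgue point, so $G_y = G_y^*$ almost everywhere in $\Omega$, proving the first assertion.

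The analogous statement for $g$ proceeds identically: the functions $u^f$ (built from $g$ via Theorem \ref{Green}) and $u_*^f$ (built from a candidate $g^*$) are both $Y_0^{1,2}(\Omega)$ solutions to $\mathcal{L} u = f$, and Proposition \ref{Uniqueness} yields $u^f = u_*^f$; the same Lebesgue differentiation argument then gives $g_x = g_x^*$ almost everywhere for almost every $x$. The only mildly delicate point is that the exceptional $y$-null set depends on $f$, but this is exactly what forces one to restrict to the countable family $\mathcal{B}$; once that is in place the rest is soft measure theory.
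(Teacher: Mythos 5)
Your proof is correct and follows essentially the same route as the paper: invoke $Y_0^{1,2}$ uniqueness (the paper cites Proposition~\ref{UniquenessOfSolutions}, which is built on the same Proposition~\ref{Uniquenesst} you use directly), then test against a countable family of characteristic functions of rational balls to produce a single full-measure set of $y$'s, and finish with the Lebesgue differentiation theorem. The only difference is that you make the passage from rational balls to arbitrary balls explicit via dominated convergence, whereas the paper leaves it implicit; both are fine, and your extra care there correctly handles the fact that one only needs $\int_{B} H_y = 0$ on enough balls to apply differentiation at Lebesgue points.
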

\begin{proof}	
Let $f\in L_c^{\infty}(\Omega)$ and consider the function $v^f(y)=\int_{\Omega}G(z,y)f(z)\,dz$ from Theorem~\ref{Green}. Then, from Proposition~\ref{UniquenessOfSolutions}, $v^f=v_*^f$ almost everywhere in $\Omega$. Set now $F_f$ to be the set of $y\in\Omega$ with $G_y^*\in L^1_{\loc}(\Omega)$ and $v^f(y)=v_*^f(y)$, then $F_f$ has full measure in $\Omega$. Let $(f_m)$ be an enumeration of the characteristic functions of $B_{r_j}(q_i)$, where $q_i\in\Omega\cap\mathbb Q^n$ and $r_j>0$ with $r_j\in\mathbb Q$, and set $F=\bigcap_mF_{f_m}$. Then $F$ has full measure in $\Omega$, and for all $y\in F$, $q_i\in\Omega\cap\mathbb Q^n$ and $r_j>0$ with $r_j\in\mathbb Q$,
\begin{equation}\label{eq:ABEquality}
\int_{B_{r_j}(q_i)}(G^*(z,y)-G(z,y))\,dz=0.
\end{equation}
If we fix $y\in F$, then the set of Lebesgue points of $G^*(\cdot,y)-G(\cdot,y)$ has full measure in $\Omega$, hence \eqref{eq:ABEquality} shows that $G^*(z,y)=G(z,y)$ for almost every $z\in\Omega$.

The analogous statement for $g$ is proved in the same way, and this completes the proof.
\end{proof}

\section{Counterexamples}
In this section we show that, in the setting of Lorentz spaces, the space $L^{n,1}$ is optimal in order to deduce pointwise bounds as in Theorem~\ref{Green}. This will be done using the function $c(x)$ defined in \eqref{eq:CFormula} in the Introduction, for which we first show the following lemma.

\begin{lemma}\label{bNorms}
Let $B=B_{1/e}$ be the ball with radius $1/e$, centered at $0$, and consider the function $c$ in \eqref{eq:CFormula}. Then $c\in L^{n,q}(B)$ for all $q>1$, but $c\notin L^{n,1}(B)$.
\end{lemma}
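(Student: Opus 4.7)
The plan is to compute the decreasing rearrangement $c^*$ explicitly on $B$ and then reduce the Lorentz seminorm to an integral $\int_n^\infty u^{-q}\,du$ via a logarithmic substitution; this integral converges iff $q>1$, which gives exactly the dichotomy in the statement.

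First I would observe that on $B = B_{1/e}$, writing $r = |x|$, one has $|c(x)| = 1/(r(-\ln r))$, and that the map $r \mapsto r(-\ln r)$ is strictly increasing from $(0, 1/e)$ onto $(0, 1/e)$, since its derivative $-\ln r - 1$ is positive there. Hence $|c|$ is radially strictly decreasing, taking values in $(e, +\infty)$, so for each $t > e$ there is a unique $h(t) \in (0, 1/e)$ with $h(t)(-\ln h(t)) = 1/t$. Consequently
$$\mu_c(t) = \alpha_n h(t)^n \quad \text{for } t > e, \qquad \mu_c(t) = |B| = \alpha_n e^{-n} \quad \text{for } t \leq e,$$
where $\alpha_n$ is the volume of the unit ball. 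Inverting, for $s \in (0, |B|)$ the value $c^*(s)$ is the unique $t > e$ with $h(t) = (s/\alpha_n)^{1/n}$, and substituting this into $h(t)(-\ln h(t)) = 1/t$ yields the clean formula
$$c^*(s) = \frac{n\,\alpha_n^{1/n}}{s^{1/n}\,\ln(\alpha_n/s)}, \qquad 0 < s < |B|.$$

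With this in hand, $s^{1/n} c^*(s) = n\alpha_n^{1/n} / \ln(\alpha_n/s)$, and the substitution $u = \ln(\alpha_n/s)$ (so that $du = -ds/s$, $u = n$ at $s = |B|$, and $u \to \infty$ as $s \to 0^+$) collapses the Lorentz seminorm to
$$\|c\|_{L^{n,q}(B)}^q = \int_0^{|B|} \left(\frac{n\,\alpha_n^{1/n}}{\ln(\alpha_n/s)}\right)^q \frac{ds}{s} = (n\alpha_n^{1/n})^q \int_n^{\infty} \frac{du}{u^q},$$
which is finite for $q > 1$ and divergent at $q = 1$.

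No step is a real obstacle: the key technical points are the monotonicity of $r \mapsto r(-\ln r)$ on $(0,1/e)$, which makes the distribution function transparent, and the careful bookkeeping of the substitution to identify the lower endpoint as $u = n$ (rather than some value requiring further estimation). The underlying phenomenon is simply the classical dichotomy of $\int^\infty du/u^q$ at $q=1$, transported back through the explicit form of $c^*$.
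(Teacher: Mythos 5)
Your proof is correct and takes essentially the same approach as the paper: both compute the decreasing rearrangement $c^*$ explicitly via the formula for a radial decreasing function and reduce the Lorentz seminorm to a one–dimensional integral whose convergence is equivalent to $q>1$. The only cosmetic difference is the choice of substitution ($u=\ln(\alpha_n/s)$ versus $s=\sigma^n C_n$ in the paper, yielding $\int_n^\infty u^{-q}\,du$ versus $\int_0^{1/e}\sigma^{-1}(-\ln\sigma)^{-q}\,d\sigma$, which are the same integral); your explicit check that $r\mapsto r(-\ln r)$ is increasing on $(0,1/e)$ is a small detail that the paper leaves implicit.
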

\begin{proof}
Let $C_n$ be the volume of the unit ball in $\bR^n$. It is straightforward to check that, if $B$ is a ball centered at $0$ and $f(x)=f(|x|):B\setminus\{0\}\to\mathbb R$, with $f\geq 0$ decreasing and continuous, then $f^*(s)=f\left(C_n^{-1/n}s^{1/n}\right)$, so $c^*(s)=C_n^{1/n}s^{-1/n}\left(-\ln(C_n^{-1/n}s^{1/n})\right)^{-1}$. Hence, for $q>1$, if $s=\sigma^nC_n$,
\[
\int_0^{|B|}s^{q/n-1}c^*(s)^q\,ds=\int_0^{C_ne^{-n}}\frac{C_n^{q/n}}{s(-\ln( C_n^{-1/n}s^{1/n}))^q}\,ds=\int_0^{1/e}\frac{nC_n^{q/n}}{\sigma\left(-\ln\sigma\right)^q}\,d\sigma<\infty.
\]
This shows that $c\in L^{n,q}(B)$ for any $q>1$. A similar calculation shows that $c\notin L^{n,1}(B)$.
\end{proof}

Using the previous lemma, Definition~\ref{GreenDfn} and the comment after it, we can follow the argument in Proposition 7.5 in \cite{KimSak} to obtain the next proposition for Green's function for $-\Delta u+c\nabla u=0$.
\begin{prop}\label{NoPointwiseG}
Let $B=B_{1/e}$. Let $q>1$ and $\delta>0$ and set $c_{\delta}=\delta c$, where $c$ is as in \eqref{eq:CFormula}. Then $c_{\delta}\in L^{n,q}(B)$, $\|c_{\delta}\|_{n,q}\leq \|c\|_{L^{n,q}}\delta$, and if Green's function $G_y^{\delta}(x)=G^{\delta}(x,y)$ for the operator $-\Delta u+c_{\delta}\nabla u$ exists, then it cannot belong to $L^1(B)$ uniformly in $y$. In particular, the bounds
\begin{equation*}
G^{\delta}(x,y)\leq C|x-y|^{2-n},\quad \|G^{\delta}(\cdot,y)\|_{L^{\frac{n}{n-2},\infty}(\Omega)}\leq C
\end{equation*}
for some $C>0$, for almost every $x,y\in B$, \textbf{cannot} hold.
\end{prop}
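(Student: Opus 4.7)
The claims $c_\delta\in L^{n,q}(B)$ and $\|c_\delta\|_{n,q}\le\delta\|c\|_{n,q}$ follow immediately from Lemma~\ref{bNorms} and the positive homogeneity of the Lorentz quasinorm, and the \emph{in particular} assertion is a consequence of the main claim since $|B|<\infty$ makes either the pointwise or the weak-$L^{n/(n-2)}$ bound imply a uniform $L^1$ control on $G^\delta_y$. My plan is to argue the main claim by contradiction: assume $\|G^\delta_y\|_{L^1(B)}\le M$ for every $y\in B$ and take $f\equiv 1\in L^\infty_c(B)$. By Definition~\ref{GreenDfn} the function $u(y)=\int_B G^\delta(x,y)\,dx$ then lies in $Y^{1,2}_0(B)$ and solves the adjoint equation $\mathcal{L}^tu=-\Delta u-\dive(c_\delta u)=1$ in $B$, with $\|u\|_{L^\infty(B)}\le M$.

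The heart of the proof is an explicit radial analysis of this adjoint problem. A direct computation verifies that $v_1(x)=|\ln|x||^\delta$ satisfies $\mathcal{L}^tv_1=0$ in $B\setminus\{0\}$, and reduction of order produces a second linearly independent radial homogeneous solution $v_2$ with leading behavior $v_2(x)\sim -|x|^{2-n}/(n-2)$ near the origin, with Wronskian $r^{-(n-1)}|\ln r|^\delta$. The key integrability dichotomy is that $|\nabla v_1|^2\sim r^{-2}|\ln r|^{2\delta-2}$ is in $L^1_{\mathrm{loc}}(B)$ for $n\ge 3$, so $v_1\in Y^{1,2}(B)$, while $|\nabla v_2|^2\sim r^{-2(n-1)}$ is not locally integrable at $0$, so $v_2\notin Y^{1,2}$. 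Hence any radial $Y^{1,2}_0$ solution to $\mathcal{L}^tu=1$ must be of the form $u_{\rm rad}=u_p+av_1$, where $u_p$ is the particular solution produced by variation of parameters with integration constants chosen so that $u_p(r)\to 0$ as $r\to 0$, and $a$ is fixed by the Dirichlet condition at $r=1/e$. An asymptotic calculation yields $u_p(r)\sim -r^2/(2n)$ near $0$ and, once $v_2$ is normalized by $v_2(1/e)=0$, a sign check of the variation-of-parameters integrals gives $u_p(1/e)<0$; therefore $a=-u_p(1/e)>0$ and $u_{\rm rad}(x)\sim a|\ln|x||^\delta\to\infty$ as $x\to 0$, in particular $u_{\rm rad}\notin L^\infty(B)$.

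To close the argument I would symmetrize: set $\tilde u(x)=\int_{O(n)}u(Rx)\,dR$ against Haar measure. Since $c_\delta$ is a radial vector field (so $c_\delta(Rx)=Rc_\delta(x)$) and $\dive c_\delta$ is a radial function, the operator $\mathcal{L}^t$ commutes with composition by rotations, whence $\tilde u\in Y^{1,2}_0(B)$ is a radial solution to $\mathcal{L}^t\tilde u=1$ with $\|\tilde u\|_{L^\infty}\le M$. The radial-class uniqueness from the previous paragraph forces $\tilde u=u_{\rm rad}$, which contradicts the unboundedness of $u_{\rm rad}$ and completes the proof.

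The main obstacle is the sign computation yielding $a>0$ in the radial analysis---this encodes the nondegeneracy of the boundary value problem and is precisely where the specific choice $c(x)=-x/(r^2\ln r)$ enters essentially, via the fact that the homogeneous mode $|\ln|x||^\delta$ lies in $Y^{1,2}(B)$ but not in $L^\infty$ for any $\delta>0$. Once the sign is verified, the exclusion of $v_2$ on $Y^{1,2}$-grounds and the rotational averaging are routine, and the remaining assertions of the proposition follow from $|B|<\infty$ as noted above.
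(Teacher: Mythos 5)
Your overall strategy --- symmetrize against rotations, reduce to the radial ODE, and exhibit the unbounded mode $(-\ln r)^{\delta}$ --- is exactly the kind of argument the paper is pointing to when it cites \cite{KimSak} (Lemma 7.4 and Proposition 7.5 there): one constructs an unbounded $W_0^{1,2}(B)$ solution to the adjoint problem $-\Delta u-\dive(c_{\delta}u)=f$ with $f\in L^{\infty}(B)$, which contradicts the boundedness $|u(y)|\le M\|f\|_{\infty}$ that would follow from a uniform $L^1$ bound on $G_y^{\delta}$. The sign check you flag as the main obstacle does come out right: in radial variables $\mathcal{L}^t u=-r^{1-n}\bigl(r^{n-1}(u'-\delta u/(r\ln r))\bigr)'$, the first reduction of order gives $v_1(r)=(-\ln r)^{\delta}$ (note $-\ln r>0$ on $B$), which lies in $Y^{1,2}(B)$, is unbounded at $0$, and equals $1$ at $r=1/e$; the second homogeneous solution behaves like $-r^{2-n}/(n-2)$ with gradient not in $L^2$; and the particular solution normalized to vanish at $0$ is $u_p(r)=-\tfrac1n(-\ln r)^{\delta}\int_0^r s(-\ln s)^{-\delta}\,ds$, so $u_p(1/e)=-\tfrac1n\int_0^{1/e}s(-\ln s)^{-\delta}\,ds<0$ and $a=-u_p(1/e)>0$ as you predicted, whence $u_p+av_1\to+\infty$ at the origin.

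The one concrete misstep is the choice $f\equiv 1$: the space $L^{\infty}_c(B)$ in Definition~\ref{GreenDfn} consists of \emph{compactly supported} bounded functions, and $f\equiv 1$ is not compactly supported in $B=B_{1/e}$, so the representation $u(y)=\int_B G^{\delta}(x,y)f(x)\,dx$ cannot be invoked for it directly. Replace $f$ by $\chi_{B_{\rho}}$ for any fixed $0<\rho<1/e$; the radial computation is unchanged on $(0,\rho)$, and on $(\rho,1/e)$ the flux identity $r^{n-1}(u_p'-\delta u_p/(r\ln r))=-\rho^n/n$ still forces $u_p$ to decrease in the integrating-factor variable, so $u_p(1/e)<0$, $a>0$, and the contradiction persists. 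You should also state explicitly why the rotational average $\tilde u$ coincides with $u_p+av_1$: interior elliptic regularity (the coefficients are smooth on $B\setminus\{0\}$) makes $\tilde u$ a classical solution of the radial ODE on $(0,1/e)$, and membership in $Y^{1,2}(B)$ together with zero trace at $r=1/e$ then determine both constants of integration --- this is the radial uniqueness you allude to. With these two points tightened the argument is complete.
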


Hence, even assuming that $c$ has small $L^{n,q}$ norm for some $q>1$ does not necessarily imply the pointwise bounds for Green's function. In view of Theorem~\ref{Green}, the previous proposition shows that the consideration of $L^{n,1}$ is both necessary and optimal in order to deduce weak type bounds and pointwise bounds in the case $d\geq\dive b$: 

We now turn to the equation $-\Delta u-\dive(cu)=0$. Since $c\in L^n(B)$, Theorem 7.2 in \cite{KimSak} shows that, for any $\delta>0$, Green's function $g^{\delta}_x(y)=g^{\delta}(y,x)$ for the operator $-\Delta u-\dive(\delta cu)$ in $B$ exists, and also satisfies the bounds
\[
\|g_x^{\delta}\|_{L^{\frac{n}{n-2},\infty}(B)}+\|\nabla g_x^{\delta}\|_{L^{\frac{n}{n-1},\infty}(B)}\leq C,
\]
where $C$ depends on $n$, $r_{\delta b}\left(\frac{\lambda}{9C_n}\right)$ and $\tilde{r}_{\delta b}\left(\frac{\lambda}{9C_n}\right)$ (defined in (2.2) and (2.6) in \cite{KimSak}). However, the following counterexample shows that the pointwise bound $g^{\delta}(y,x)\leq C|y-x|^{2-n}$ fails for $g^{\delta}$.

\begin{prop}\label{NoPointwiseGt}
Let $B=B_{1/e}$. Le $q>1$ and $\delta>0$ and set $b_{\delta}=\delta c$, where $c$ is as in \eqref{eq:CFormula}. Then $c_{\delta}\in L^{n,q}(B)$, $\|c_{\delta}\|_{n,q}\leq\|c\|_{n,q}\delta$, and if $g_x^{\delta}(y)=g^{\delta}(y,x)$ is Green's function for the operator $-\Delta u-\dive(c_{\delta}u)$ in $B$, then, the bound
\[
g^{\delta}(y,x)\leq C|y-x|^{2-n}
\]
for some $C>0$, for almost every $x,y\in B$, \textbf{cannot} hold.
\end{prop}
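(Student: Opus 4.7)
The approach is to reduce to Proposition~\ref{NoPointwiseG} via transposition. Observe that the adjoint of $\mathcal{L}u:=-\Delta u-\dive(c_\delta u)$ is precisely $\mathcal{L}^tu=-\Delta u+c_\delta\nabla u$, the operator treated in Proposition~\ref{NoPointwiseG}. By Lemma~\ref{bNorms}, $c_\delta\in L^{n,q}(B)$ with $\|c_\delta\|_{n,q}\leq\delta\|c\|_{n,q}$ for every $q>1$, and in particular $c_\delta\in L^n(B)$; Theorem~7.2 of \cite{KimSak} therefore guarantees that Green's function $g^\delta$ exists.

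Suppose for contradiction that $g^\delta(y,x)\leq C|y-x|^{2-n}$ for almost every $(x,y)\in B\times B$, and set $G^\delta(x,y):=g^\delta(y,x)$. Then $G^\delta(x,y)\leq C|x-y|^{2-n}$, so $\|G^\delta(\cdot,y)\|_{L^1(B)}\leq C$ uniformly in $y$. The plan is to verify that $G^\delta$ satisfies Definition~\ref{GreenDfn} as Green's function for $\mathcal{L}^t$; Proposition~\ref{NoPointwiseG} is then directly contradicted.

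To this end, fix $f\in L_c^\infty(B)$ and set $u(y):=\int_BG^\delta(x,y)f(x)\,dx=\int_Bg^\delta(y,x)f(x)\,dx$. For any $\psi\in L_c^\infty(B)$, the defining property of $g^\delta$ provides $v^\psi(x):=\int_Bg^\delta(y,x)\psi(y)\,dy$ as the $Y_0^{1,2}(B)$ solution of $\mathcal{L}v^\psi=\psi$; Fubini yields the duality identity
\[
\int_B u\,\psi\,dy=\int_B f\,v^\psi\,dx\qquad\text{for every }\psi\in L_c^\infty(B).
\]
The main obstacle is to upgrade this identity to the statement that $u\in Y_0^{1,2}(B)$ solves $\mathcal{L}^tu=f$ weakly in $B$. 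I would handle it by approximation: mollify $c_\delta$ through Lemma~\ref{ToApproximate} to produce Lipschitz vector fields $c_\delta^m\to c_\delta$ in $L^{n,q}(B)$, apply Lemma~\ref{DropBound} to the approximate problems (yielding Green's functions $g^{\delta,m}$ with the symmetry $G^{\delta,m}(x,y)=g^{\delta,m}(y,x)$), and pass to the limit using the weak-type Lorentz bounds together with the assumed pointwise bound on $g^\delta$. Weak-$*$ compactness in $L^{\frac{n}{n-2},\infty}(B)$ and $L^{\frac{n}{n-1},\infty}(B)$, combined with the Fubini identity, should yield $u\in Y_0^{1,2}(B)$ as a weak solution of the adjoint equation. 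Once $G^\delta$ is identified as Green's function for $-\Delta+c_\delta\nabla$ in the sense of Definition~\ref{GreenDfn}, the uniform bound $\|G^\delta(\cdot,y)\|_{L^1(B)}\leq C$ directly contradicts Proposition~\ref{NoPointwiseG}, completing the proof.
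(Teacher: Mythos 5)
Your high-level plan — transpose $g^\delta$ and reduce to Proposition~\ref{NoPointwiseG} — is sensible in spirit, and the Fubini identity $\int_B u\,\psi = \int_B f\,v^\psi$ is indeed the correct duality relation to exploit. However, the sketched resolution of what you call ``the main obstacle'' does not work, and this is a genuine gap, not a detail.

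You propose to mollify $c_\delta$ via Lemma~\ref{ToApproximate}, invoke Lemma~\ref{DropBound} for the approximate problems to get the symmetry $G^{\delta,m}(x,y)=g^{\delta,m}(y,x)$ together with weak-type Lorentz bounds, and then ``pass to the limit using the weak-type Lorentz bounds.'' But every quantitative estimate that Lemma~\ref{DropBound} (and before it, Lemmas~\ref{WeakForG}--\ref{Pointwise}) provides for $G^{\delta,m}$ carries a constant depending on $\|b_m-c_m\|_{L^{n,1}}=\|c_\delta^m\|_{L^{n,1}}$. Since $c_\delta\notin L^{n,1}(B)$ by Lemma~\ref{bNorms}, the mollified norms $\|c_\delta^m\|_{L^{n,1}(B)}$ diverge as $m\to\infty$, and those a~priori bounds degenerate. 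There is therefore no compactness available to extract a limit $G^\delta$ with the Green's function property for the adjoint operator, and the assumed pointwise bound on the \emph{limit} $g^\delta$ cannot be transferred back to the approximants $G^{\delta,m}$. This is not a technical nuisance: it is precisely the phenomenon that Proposition~\ref{NoPointwiseG} is about, namely that the Green's function in the ``$d\geq\dive b$'' direction has no good bounds when $b-c\in L^{n,q}\setminus L^{n,1}$, so one should not expect to manufacture it by the paper's approximation machinery. (There is also a sign slip in the write-up: with your $\mathcal{L}u=-\Delta u-\dive(c_\delta u)$, the function $v^\psi=\int g^\delta(\cdot,x)\psi$ solves $\mathcal{L}^t v^\psi=-\Delta v^\psi+c_\delta\nabla v^\psi=\psi$, not $\mathcal{L}v^\psi=\psi$, and the equation you want $u$ to satisfy is $\mathcal{L}u=f$, not $\mathcal{L}^tu=f$.)

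The paper sidesteps the transposition issue entirely. It builds, for each small $\e>0$, an explicit radial Lipschitz function $u_{\delta,\e}\in W_0^{1,2}(B)$ solving $-\Delta u_{\delta,\e}+\delta c\nabla u_{\delta,\e}=\chi_{B_\e}$, and then applies the Green's function property of $g^\delta$ \emph{in the variable in which it is defined} to write $u_{\delta,\e}(x)=\int_B g^\delta(y,x)\chi_{B_\e}(y)\,dy$; no symmetry relation or adjoint Green's function is ever needed. The assumed pointwise bound on $g^\delta$ then forces $u_{\delta,\e}(x)\leq C\e^n$ on an annulus, while the explicit formula for $u_{\delta,\e}$ gives a lower bound of order $(-\ln\e)^\delta\e^n$, and letting $\e\to 0$ produces the contradiction. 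If you wish to rescue your route, the clean way is not approximation but uniqueness: use that both $v^\psi$ and the unique $W_0^{1,2}(B)$ solution $w_f$ of $-\Delta w-\dive(c_\delta w)=f$ exist (Lemmas~4.2 and~4.4 of \cite{KimSak}), test each equation against the other solution to get $\int f v^\psi=\int w_f\psi$, and combine with Fubini to conclude $u=w_f$; but that is a different argument from the one you sketch, and you would still need to take some care with the integrability justifying the cross-testing.
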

\begin{proof}
We will show a stronger fact: for any $0<\e_1<\e_2<\e_3<e^{-1}$, the inequality
\[
g^{\delta}(y,x)\leq C\,\,\,\text{for almost every}\,\,\,y\in B_{\e_1},\,x\in B_{\e_3}\setminus B_{\e_2},
\]
\textbf{cannot} hold, for any $C>0$. To show this, let $\e\in(0,e^{-1})$, and set
\[
G_{\delta,\e}(\rho)=\left\{\begin{array}{c l}\displaystyle\int_0^{\rho}\sigma^{n-1}(-\ln\sigma)^{\delta}\,d\sigma,& 0\leq\rho\leq\e \\ \displaystyle\int_0^{\e}\sigma^{n-1}(-\ln\sigma)^{\delta}\,d\sigma,& \e<\rho\leq e^{-1}.\end{array}\right.
\]
Then $G_{\delta,\e}$ is continuous in $[0,e^{-1}]$, $G_{\delta,\e}'(\rho)=\rho^{n-1}(-\ln\rho)^{\delta}$ for $0<\rho<\e$, and $G_{\delta,\e}'(\rho)=0$ for $\e<\rho<e^{-1}$. Moreover, $G_{\delta,\e}\geq 0$, and also
\begin{equation}\label{eq:BoundGe}
G_{\delta,\e}(\rho)\leq C_n\rho^n(-\ln\rho)^{\delta}\,\,\,\text{for}\,\,\,0<\rho<e^{-1}, \quad G_{\delta,\e}(\e)\geq C_n(-\ln\e)^{\delta}\e^n.
\end{equation}
We now set
\[
u_{\delta,\e}(r)=\int_r^{e^{-1}}\frac{G_{\delta,\e}(\rho)}{\rho^{n-1}(-\ln\rho)^{\delta}}\,d\rho.
\]
From \eqref{eq:BoundGe}, $u_{\delta,\e}$ is Lipschitz in $(0,e^{-1})$. So, if $u_{\delta,\e}(x)=u_{\delta,\e}(|x|)$, then $u_{\delta,\e}\in\Lip(B)\cap W_0^{1,2}(B)$ and
\[
-\Delta u_{\delta,\e}+\delta c\nabla u_{\delta,\e}=-u''_{\delta,\e}-\frac{n-1}{r}u'_{\delta,\e}-\frac{\delta}{r\ln r}u'_{\delta,\e}=\chi_{(0,\e)}(|x|),
\]
hence $u_{\delta,\e}$ is the $W_0^{1,2}(B)$ solution to the equation $-\Delta u_{\delta,\e}+\delta c\nabla u_{\delta,\e}=\chi_{B_{\e}}$ in $B$.

Let now $0<\e_1<\e_2<\e_3<e^{-1}$, and suppose that $g^{\delta}(y,x)\leq C$ for almost every $y\in B_{\e_1}$ and $x\in B_{\e_3}\setminus B_{\e_2}$, for some $C>0$. Then, for $\e<\e_1$ and almost every $x$ with $\e_2<|x|<\e_3$, we should have that
\begin{equation}\label{eq:BoundudeltaAbove}
u_{\delta,\e}(x)=\int_Bg^{\delta}(y,x)\chi_{B_{\e}}(y)\,dy\leq C\int_{B_{\e}}\,dy\leq C\e^n.
\end{equation}
Now, from the definition of $G_{\delta,\e}$, $G_{\delta,\e}(\rho)=G_{\delta,\e}(\e)$ for $\rho>\e$. So, if $\e_2<|x|<\e_3$, then
\begin{align}\nonumber
u_{\delta,\e}(x)&=\int_{|x|}^{e^{-1}}\frac{G_{\delta,\e}(\rho)}{\rho^{n-1}(-\ln\rho)^{\delta}}\,d\rho\geq\int_{\e_3}^{e^{-1}}\frac{G_{\delta,\e}(\rho)}{\rho^{n-1}(-\ln\rho)^{\delta}}\,d\rho\\
\label{eq:BoundudeltaBelow}
&=G_{\delta,\e}(\e)\int_{\e_3}^{e^{-1}}\frac{1}{\rho^{n-1}(-\ln\rho)^{\delta}}\,d\rho\geq C_n(-\ln\e)^{\delta}\e^n\int_{\e_3}^{e^{-1}}\frac{1}{\rho^{n-1}(-\ln\rho)^{\delta}}\,d\rho,
\end{align}
where we used \eqref{eq:BoundGe} for the last estimate. So, \eqref{eq:BoundudeltaAbove} and \eqref{eq:BoundudeltaBelow} show that
\[
C_n(-\ln\e)^{\delta}\e^n\int_{\e_3}^{e^{-1}}\frac{1}{\rho^{n-1}(-\ln\rho)^{\delta}}\,d\rho\leq C\e^n\Rightarrow (-\ln\e)^{\delta}\int_{\e_3}^{e^{-1}}\frac{1}{\rho^{n-1}(-\ln\rho)^{\delta}}\,d\rho\leq C,
\]
which is a contradiction by letting $\e\to 0$. This completes the proof.
\end{proof}

So, in the case $d\geq\dive c$, we see again that the space $L^{n,1}$ is necessary and optimal in order to deduce pointwise bounds for Green's function in Theorem~\ref{Green}: from the previous proposition, even assuming smallness of an $L^{n,q}$ norm for some $q>1$ is not enough to guarantee those bounds.

\begin{remark} In the case when $d,c$ are not identically $0$, it might seem that the assumption $d\geq\dive c$ allows us to consider lower regularity than $c\in L^{n,1}(\Omega)$ in order to deduce pointwise bounds in Theorem~\ref{Green}. However, this is not the case: if $c$ is as in \eqref{eq:CFormula}, then
\[
\dive c=\frac{1-(n-2)\ln r}{r^2\ln^2r}\in L^{\frac{n}{2},q}(B)\,\,\,\text{for all}\,\,\,q>1.
\]
So, setting $\tilde{c}=-c$, $\tilde{d}=\dive\tilde{c}$ and $b=0$, we obtain that $\tilde{d}\in L^{\frac{n}{2},q}$ for all $q>1$, $\tilde{d}\geq\dive\tilde{c}$, and then the operator $\mathcal{L}u=-\Delta u+\tilde{c}\nabla u+\tilde{d}u$ is equal to $\mathcal{L}u=-\Delta u-\dive(cu)$. Then, from Proposition~\ref{NoPointwiseGt}, Green's function for $\mathcal{L}$ in $u$ cannot satisfy pointwise bounds. So, the assumption $c\in L^{n,1}(\Omega)$ is necessary in this case as well.
\end{remark}

\section{Applications}
\subsection{Global boundedness}
As a first application of our results, we will show a maximum principle (with a constant) for subsolutions. In order to do this, we first show a pointwise estimate for solutions that vanish on the boundary to equations with right hand sides.

\begin{lemma}\label{BoundedSolutions}
Let $\Omega\subseteq\bR^n$ be a domain with $|\Omega|<\infty$. Let $A$ be uniformly elliptic and bounded in $\Omega$, and suppose that $b,c\in L^{n,q}(\Omega)$ for some $q\in[1,\infty)$, $d\in L^{\frac{n}{2},\infty}(\Omega)$, with $b-c\in L^{n,1}(\Omega)$, and either $d\geq\dive b$, or $d\geq\dive c$. If $f\in L^{n,1}(\Omega)$ and $g\in L^{\frac{n}{2},1}(\Omega)$, the solution $u\in W_0^{1,2}(\Omega)$ that solves the equation $-\dive(A\nabla u+bu)+c\nabla u+du=-\dive f+g$ in $\Omega$ is bounded, with
\[
\|u\|_{\infty}\leq C\|f\|_{n,1}+C\|g\|_{\frac{n}{2},1},
\]
where $C$ depends only on $n,\lambda$ and $\|b-c\|_{n,1}$.
\end{lemma}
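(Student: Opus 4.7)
The strategy is to represent $u$ pointwise by duality against the adjoint Green's function from Theorem~\ref{Green}, and then estimate the resulting pairing via H\"older's inequality in Lorentz spaces. Since $|\Omega|<\infty$ we have $W^{1,2}_0(\Omega)=Y^{1,2}_0(\Omega)$ throughout. In the case $d\geq\dive c$, applying Theorem~\ref{Green} to $\mathcal{L}^t$ (with the roles of $b,c$ swapped) produces Green's functions with the same weak-type and gradient bounds, so in both cases I obtain Green's functions $G$ for $\mathcal{L}$ and $g$ for $\mathcal{L}^t$ with bounds depending only on $n,\lambda,\|b-c\|_{n,1}$; in particular the weak-type estimates in Theorem~\ref{Green} do not involve $\|A\|_{\infty}$, which is consistent with the claimed dependence of the constant. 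To avoid the notational conflict between the source term named $g$ in the lemma statement and the Green's function $g$, I rename the source by $h$ throughout the proof.

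Fix a Lebesgue point $x_0$ of $u$ and, for $m$ large, set $h_m=|B_{1/m}(x_0)|^{-1}\chi_{B_{1/m}(x_0)}$. By Theorem~\ref{Green} the function
\[
v_m(y):=\int_{\Omega}G(z,y)h_m(z)\,dz=\fint_{B_{1/m}(x_0)}g(y,z)\,dz
\]
lies in $W^{1,2}_0(\Omega)$ and solves $\mathcal{L}^t v_m=h_m$. Using $v_m$ as a test function in the weak formulation of $\mathcal{L}u=-\dive f+h$, which is legitimate after a density argument since the bilinear form is continuous on $W^{1,2}_0(\Omega)\times W^{1,2}_0(\Omega)$ under the given hypotheses on $b,c,d$ (via Sobolev embedding), yields the pairing identity
\[
\fint_{B_{1/m}(x_0)}u=\int_{\Omega}u\,h_m=\int_{\Omega}f\cdot\nabla v_m+h\,v_m.
\]
Lebesgue differentiation sends the left-hand side to $u(x_0)$ as $m\to\infty$, so it suffices to bound the right-hand side uniformly in $m$.

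Theorem~\ref{Green} gives, uniformly in $z\in\Omega$,
\[
\|g(\cdot,z)\|_{L^{n/(n-2),\infty}(\Omega)}+\|\nabla g(\cdot,z)\|_{L^{n/(n-1),\infty}(\Omega)}\leq C.
\]
Since for $p>1$ the space $L^{p,\infty}(\Omega)$ admits an equivalent Banach-space norm (as noted after \eqref{eq:EquivalentForInf}), the integral Minkowski inequality transfers these bounds to the $z$-average, yielding $\|v_m\|_{L^{n/(n-2),\infty}}+\|\nabla v_m\|_{L^{n/(n-1),\infty}}\leq C$ uniformly in $m$. H\"older's inequality in Lorentz spaces (Theorem~1.4.17 in \cite{Grafakos}) applied to the conjugate pairs $(L^{n,1},L^{n/(n-1),\infty})$ and $(L^{n/2,1},L^{n/(n-2),\infty})$ then gives
\[
\left|\int_{\Omega}f\cdot\nabla v_m+h\,v_m\right|\leq C\bigl(\|f\|_{n,1}+\|h\|_{n/2,1}\bigr),
\]
and passing to the limit $m\to\infty$ yields $|u(x_0)|\leq C(\|f\|_{n,1}+\|h\|_{n/2,1})$ at a.e. $x_0$, which is the claim. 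The main technical points are the rigorous justification of using $v_m\in W^{1,2}_0(\Omega)$ (rather than $C_c^{\infty}(\Omega)$) as a test function, and the integral Minkowski step in the weak Lorentz spaces $L^{p,\infty}$; both are handled by density and the Banach-space equivalents of the relevant seminorms.
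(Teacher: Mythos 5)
Your proof is correct but takes a genuinely different route from the paper's. The paper approximates the data: it takes compactly supported Lipschitz $f_j\to f$ in $L^{n,1}$ and $g_j\to g$ in $L^{\frac n2,1}$, represents $u_j(y)=\int_\Omega\nabla_xG_y\cdot f_j+\int_\Omega G_yg_j$ as a solution with right-hand side $-\dive f_j+g_j$, bounds $|u_j(y)|$ uniformly via Lorentz H\"older against the weak-type bounds on $G_y,\nabla G_y$, and then uses $u_j\to u$ in $W^{1,2}_0$ (from the variational continuity of Lemma 4.2/4.4 in \cite{KimSak}) to pass to the limit a.e. You instead approximate the evaluation functional: you build $v_m$ solving $\mathcal L^t v_m=h_m$, pair $u$ against $v_m$ via the bilinear form, and use Lebesgue differentiation on $\fint_{B_{1/m}(x_0)}u$. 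Both arguments ultimately rest on the same Theorem~\ref{Green} bounds and the same duality $(L^{n,1},L^{\frac{n}{n-1},\infty})$, $(L^{\frac n2,1},L^{\frac{n}{n-2},\infty})$, so the core estimate is identical; what differs is whether the Dirac mass enters through the kernel (paper) or through the test function (you).

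Two technical points in your version deserve a little more care than you give them. First, in the integral Minkowski step you cite the remark after \eqref{eq:EquivalentForInf}, but that remark only asserts normability of $L^{p,q}$ for $q\in[1,\infty)$; you need the (true, but separately established) fact that $L^{p,\infty}$ for $p>1$ has an equivalent Banach norm (e.g.\ via $f^{**}$). Second, the Minkowski step for $\nabla v_m$ implicitly requires differentiation under the integral, i.e.\ that $\nabla v_m(y)=\fint_{B_{1/m}(x_0)}\nabla_yg(y,z)\,dz$; this follows from Fubini and the fact that $g\in W^{1,q_0}_{\mathrm{loc}}(\Omega^2)$ established in the construction, but it is not automatic from the statement of Theorem~\ref{Green} and should be spelled out. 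The paper's version sidesteps both of these, at the cost of needing the $W^{1,2}_0$ continuity of the solution map to pass from $u_j$ to $u$; which price you prefer to pay is a matter of taste.
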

\begin{proof}
Existence and uniqueness of such a solution is guaranteed from Lemmas 4.2 and 4.4 in \cite{KimSak}. Let now $(f_j),(g_j)$ be compactly supported Lipschitz continuous functions in $\Omega$, with $f_j\to f$ in $L^{n,1}(\Omega)$ and $g_j\to g$ in $L^{\frac{n}{2},1}(\Omega)$. In the case that $d\geq\dive c$, let $G$ be Green's function for the equation $-\dive(A^t\nabla u+cu)+b\nabla u+du=0$ in $\Omega$ from Theorem~\ref{Green}, and define
\begin{equation}\label{eq:umExists}
u_j(y)=\int_{\Omega}G_y(x)(-\dive f_j(x)+g_j(x))\,dx=\int_{\Omega}\nabla_xG_y(x)\cdot f_j(x)\,dx+\int_{\Omega}G_y(x)g_j(x)\,dx.
\end{equation}
From Theorem~\ref{Green}, $u_j$ is a $W_0^{1,2}(\Omega)$ solution to the equation
\begin{equation*}
-\dive(A\nabla u_j+bu_j)+c\nabla u_j+du_j=-\dive f_j+g_j
\end{equation*}
in $\Omega$. Moreover, since $n>2$ and $\frac{n}{2}>2_*$, we have that $f_j\to f$ in $L^2(\Omega)$ and $g_j\to g$ in $L^{2_*}(\Omega)$, therefore, from (4.5) in \cite{KimSak} we obtain that $u_j\to u$ in $W_0^{1,2}(\Omega)$. Hence, for a subsequence $(u_{j_m})$, $u_{j_m}\to u$ almost everywhere in $\Omega$, from the Rellich compactness theorem. Note now that, from Theorem~\ref{Green}, $\|G_y\|_{L^{\frac{n}{n-2},\infty}(\Omega)}+\|\nabla G_y\|_{L^{\frac{n}{n-1},\infty}(\Omega)}\leq C$, where $C$ depends only on $n,\lambda$ and $\|b-c\|_{n,1}$. Hence, using \eqref{eq:umExists}, we obtain that, for almost every $y\in\Omega$,
\begin{align*}
|u_{j_m}(y)|&\leq C_n\|\nabla G_y\|_{L^{\frac{n}{n-1},\infty}(\Omega)}\|f_{j_m}\|_{L^{n,1}(\Omega)}+C_n\|\nabla G_y\|_{L^{\frac{n}{n-2},\infty}(\Omega)}\|g_{j_m}\|_{L^{\frac{n}{2},1}(\Omega)}\\
&\leq C\|f_{j_m}\|_{L^{n,1}(\Omega)}+C\|g_{j_m}\|_{L^{\frac{n}{2},1}(\Omega)},
\end{align*}
and letting $m\to\infty$ completes the proof.
\end{proof}

Note the sharp contrast between Lemma~\ref{BoundedSolutions} above and Lemma 7.4 in \cite{KimSak}, in which a solution $u\in W_0^{1,2}(\Omega)$ to $-\Delta u-\dive(cu)=f$ in $B_{1/e}$ is constructed, for the $c$ in \eqref{eq:CFormula} and some $f\in L^{\infty}(B)$, but where $u$ is not bounded. From Lemma~\ref{bNorms}, $c\in L^{n,q}(\Omega)$ for every $q>1$, and this shows the necessity and the optimality of $L^{n,1}$ to obtain pointwise bounds for solutions as in Lemma~\ref{BoundedSolutions}.

\begin{remark}\label{Rem}
We remark that the assumption $|\Omega|<\infty$ in Lemma~\ref{BoundedSolutions} can be dropped, but in this case we will have to add the assumptions $f\in L^2(\Omega)$ and $g\in L^{2_*}(\Omega)$. Then, using the estimates in \cite{MourgoglouRegularity} to show that $u_m\to u$ in $Y_0^{1,2}(\Omega)$, the same result will hold. 
\end{remark}

We now show a proposition whose proof is inspired by Theorem 8.1 in \cite{Gilbarg}. Note that under slightly weaker hypotheses, this proposition has appeared in \cite{MourgoglouDraft} (see also Theorem 5.1 in \cite{MourgoglouRegularity} for a different proof).

\begin{prop}\label{BeforeMaxPrinciplet}
Let $\Omega\subseteq\bR^n$ be a domain. Assume that $A$ is uniformly elliptic and bounded, with ellipticity $\lambda$, and $b,c\in L^{n,\infty}(\Omega)$, $d\in L^{\frac{n}{2},\infty}(\Omega)$, with $b-c\in L^{n,q}(\Omega)$ for some $q<\infty$ and $d\geq\dive c$. If $u\in Y^{1,2}(\Omega)$ is a subsolution of the equation
\[
-\dive(A\nabla u+bu)+c\nabla u+du\leq 0
\]
in $\Omega$, with $u\leq 0$ on $\partial\Omega$, then $u\leq 0$ in $\Omega$.
\end{prop}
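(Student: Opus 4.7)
My plan exploits the duality with $\mathcal{L}^t$. Observe that $\mathcal{L}^t w = -\dive(A^t\nabla w + cw) + b\nabla w + dw$ has $c$ in the role of the "divergence coefficient" and $b$ in the role of the "drift coefficient", so our hypothesis $d\geq\dive c$ becomes exactly the hypothesis "$d\geq\dive b_{\mathcal{L}^t}$" needed to apply Theorem~\ref{Green} to $\mathcal{L}^t$. Therefore, for any $\phi\in L^\infty_c(\Omega)$ with $\phi\geq 0$, Theorem~\ref{Green} (applied to $\mathcal{L}^t$) produces a \emph{nonnegative} $v\in Y_0^{1,2}(\Omega)$ solving $\mathcal{L}^t v=\phi$ in the weak sense, with the appropriate Lorentz regularity to be used as a test function.

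Given such $v$, the plan is to pair $u$ against $v$ on one side and $u^+$ against the equation for $v$ on the other. Using $v\geq 0$ as a test function in the subsolution inequality for $u$ gives
\[
\int_\Omega A\nabla u\cdot\nabla v + bu\cdot\nabla v + c\cdot\nabla u\,v + duv \;\leq\; 0, \qquad (\mathrm{II})
\]
(justified by density and the Lorentz regularity of $v$), while $u^+\in Y_0^{1,2}(\Omega)$ is admissible as a test function in the weak form of $\mathcal{L}^t v=\phi$:
\[
\int_\Omega A^t\nabla v\cdot\nabla u^+ + cv\cdot\nabla u^+ + b\cdot\nabla v\, u^+ + dvu^+ \;=\; \int_\Omega \phi u^+. \qquad (\mathrm{I})
\]
Using $\int A^t\nabla v\cdot\nabla u^+ = \int A\nabla u^+\cdot\nabla v$ (from the definition of transpose by relabelling indices) and subtracting $(\mathrm{II})$ from $(\mathrm{I})$, the identity $u^+-u=u^-$ yields
\[
\int_\Omega A\nabla u^-\cdot\nabla v + cv\cdot\nabla u^- + b\cdot\nabla v\, u^- + dvu^- \;\geq\; \int_\Omega \phi u^+.\qquad (*)
\]

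If $u^-$ were in $Y_0^{1,2}(\Omega)$, the left-hand side of $(*)$ would equal $\int_\Omega \phi u^-$ by the same adjoint weak form, giving $\int_\Omega \phi(u^- - u^+) \geq 0$, i.e., $\int_\Omega \phi u\leq 0$ for every $\phi\in C_c^\infty(\Omega)$ with $\phi\geq 0$, whence $u\leq 0$ a.e.\ in $\Omega$. The main obstacle is that the hypothesis $u\leq 0$ on $\partial\Omega$ gives only $u^+\in Y_0^{1,2}(\Omega)$ and \emph{not} $u^-\in Y_0^{1,2}(\Omega)$, since $u^-$ inherits a generally nonzero nonnegative boundary trace. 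I would resolve this by approximation: replace $u^-$ by $u^-\eta_m$ for a sequence of cutoffs $\eta_m\in C_c^\infty(\Omega)$ with $\eta_m\nearrow 1$, which \emph{is} in $Y_0^{1,2}(\Omega)$; the resulting identity picks up extra terms involving $\nabla\eta_m$, and one shows these vanish in the limit using the $L^2$-integrability of $\nabla v$ away from $\partial\Omega$ and the $L^{2^*}$-integrability of $u^-$ (so that $\int u^- A^t\nabla v\cdot\nabla\eta_m\to 0$ by dominated convergence along a suitable choice of $\eta_m$).

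An alternative route would mirror the proof of Proposition~\ref{MaxPrinciple} directly: use $u_k=(u-k)^+\in Y_0^{1,2}(\Omega)$ as test function in the subsolution and apply the hypothesis $d\geq\dive c$ with $v=uu_k\geq 0$ to derive, after cancellation, the key inequality $\lambda\|\nabla u_k\|_{L^2}^2 \leq \int_\Omega(c-b)u\,\nabla u_k$. The difficulty in this direct approach, and what I expect to be the hardest point, is that $u$ (not $u_k$) appears on the right-hand side, so the clean self-cancellation of Proposition~\ref{MaxPrinciple} is unavailable; one must split $u=u_k+k$ on $\mathrm{supp}\,\nabla u_k$ and estimate the $k$-term using that $\mathrm{supp}\,\nabla u_k$ shrinks to a null set as $k\nearrow\mathrm{ess\,sup}_\Omega u$ (by Stampacchia), so that $\|b-c\|_{L^{n,q}(\mathrm{supp}\,\nabla u_k)}\to 0$ by Lemma~\ref{NormOnDisjoint}. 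I would prefer the duality approach above, where the technical difficulty is cleanly isolated in a single approximation step at the boundary.
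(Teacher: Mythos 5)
Your \emph{primary} (duality) route has a hypothesis mismatch that prevents it from giving the proposition in its stated generality. You propose to invoke Theorem~\ref{Green} for $\mathcal{L}^t$, which (after swapping the roles of $b$ and $c$) requires $b,c\in L^{n,q}(\Omega)$ for some $q<\infty$ and, crucially, $b-c\in L^{n,1}(\Omega)$. Proposition~\ref{BeforeMaxPrinciplet} only assumes $b,c\in L^{n,\infty}(\Omega)$ and $b-c\in L^{n,q}(\Omega)$ for some $q<\infty$, which is strictly weaker in both places. So Green's function for $\mathcal{L}^t$ may not exist under the hypotheses of the proposition, and the construction of the test function $v$ fails before the approximation-at-the-boundary step you flag as the hard part. (The duality idea is not wrong in spirit, but it proves a narrower statement and imports the full Green's function machinery to do it.)

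Your \emph{alternative} route correctly locates the argument (it should parallel Proposition~\ref{MaxPrinciple}) and correctly locates the obstruction: with $d\geq\dive c$ the surviving term is $\int_\Omega(c-b)\,u\,\nabla u_{\mathrm{test}}$ rather than $\int_\Omega(b-c)\,\nabla u\,u_{\mathrm{test}}$, so the self-cancellation of Proposition~\ref{MaxPrinciple} does not come for free. But your proposed fix --- test with $u_k=(u-k)^+$, split $u=u_k+k$ on $\mathrm{supp}\,\nabla u_k$, and send $k\nearrow\mathrm{ess\,sup}\,u$ --- does not close the gap. The residual term $k\int(c-b)\nabla u_k$ has no $\|u_k\|_{L^{2^*,2}}$ factor to pair with $\|\nabla u_k\|_{L^2}$, so even when $\|b-c\|_{L^{n,q}(\mathrm{supp}\,\nabla u_k)}$ is small you only get $\|\nabla u_k\|_{L^2}\lesssim k\|b-c\|_{L^2(\mathrm{supp}\,\nabla u_k)}$, which at best yields $u\leq\sup_\Omega u^+$ and not $u\leq 0$. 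The paper's actual proof uses the \emph{opposite} truncation: $u_\delta=\delta-(\delta-u^+)^+=\min(u^+,\delta)$, and sends $\delta\to 0^+$. The point is that $\nabla u_\delta$ is supported in $D_\delta=[0<u\leq\delta]$, and \emph{on $D_\delta$ one has $u=u_\delta$ exactly}, so $\int(c-b)\,u\,\nabla u_\delta=\int_{D_\delta}(c-b)\,u_\delta\,\nabla u_\delta$ and the residual term never appears. H\"older (for Lorentz spaces) and the improved Sobolev inequality then give $\lambda\|\nabla u_\delta\|_{L^2}^2\leq C\|b-c\|_{L^{n,q}(D_\delta)}\|\nabla u_\delta\|_{L^2}^2$, forcing $\|b-c\|_{L^{n,q}(D_\delta)}\geq C_{n,q,\lambda}$ for every $\delta>0$; since $\chi_{D_{1/m}}\to 0$ everywhere, Lemma~\ref{NormOnDisjoint} yields the contradiction. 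The structural reason different truncations are needed in Propositions~\ref{MaxPrinciple} and~\ref{BeforeMaxPrinciplet} is precisely which factor ($\nabla u$ or $u$) the integration by parts leaves stranded: $d\geq\dive b$ leaves $\nabla u\cdot u_{\mathrm{test}}$ (handled by truncating from below), while $d\geq\dive c$ leaves $u\cdot\nabla u_{\mathrm{test}}$ (handled by truncating from above).
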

\begin{proof}
By contradiction, assume that the set $[u>0]$ has positive measure in $\Omega$. Moreover, from \eqref{eq:LorentzNormsRelations}, we can assume that $q>n$.

From the assumptions, $u^+\in Y_0^{1,2}(\Omega)$. Let now $\delta>0$, and define
\[
u_{\delta}(x)=\delta-(\delta-u^+)^+=\left\{\begin{array}{c l}0, & u(x)\leq 0 \\ u(x), & 0<u(x)\leq\delta \\ \delta, & u(x)>\delta\end{array}\right.
\]
then $u_{\delta}\in Y_0^{1,2}(\Omega)$ and $uu_{\delta}\geq 0$ in $\Omega$. As in Proposition~\ref{MaxPrinciple}, we use $u_{\delta} $ as a test function, and since $A\nabla u\nabla u_{\delta}=A\nabla u_{\delta}\nabla u_{\delta}$ and $d\geq\dive c$, combining with Lemma~\ref{ImprovedSobolev} and \eqref{eq:Div} we obtain that
\[
\lambda\|\nabla u_{\delta}\|_{L^2(\Omega)}^2\leq\int_{\Omega}A\nabla u_{\delta}\nabla u_{\delta}\leq\int_{\Omega}(c-b)\nabla u_{\delta}\cdot u.
\]
Set $D_{\delta}=[0<u\leq\delta]$ and note that $u=u_{\delta}$ in $D_{\delta}$ and $\nabla u_{\delta}=0$ almost everywhere in $\Omega\setminus D_{\delta}$. Therefore, if $\frac{p}{2}$ is the conjugate exponent to $\frac{q}{2}>\frac{n}{2}$, as in \eqref{eq:Forb-c} we estimate
\begin{align*}
\lambda\|\nabla u_{\delta}\|_{L^2(\Omega)}^2&\leq\int_{D_{\delta}}|c-b||\nabla u_{\delta}||u_{\delta}|\leq C_n\|b-c\|_{L^{n,q}(D_{\delta})}\|\nabla u_{\delta}\|_{L^2(\Omega)}\|u_{\delta}\|_{L^{2^*,p}(\Omega)}\\
&\leq C_{n,q}\|b-c\|_{L^{n,q}(D_{\delta})}\|\nabla u_{\delta}\|_{L^2(\Omega)}\|u_{\delta}\|_{L^{2^*,2}(\Omega)}\leq C_{n,q}\|b\|_{L^{n,q}(D_{\delta})}\|\nabla u_{\delta}\|_{L^2(\Omega)}^2,
\end{align*}
where $C$ depends on $n,q$.

If $\|\nabla u_{\delta}\|_2=0$ for some $\delta>0$, then $u_{\delta}$ is constant in $\Omega$. Since $u_{\delta}\in Y_0^{1,2}(\Omega)$, this implies that $u_{\delta}\equiv 0$, hence $u\leq 0$, which is a contradiction; hence $\|\nabla u_{\delta}\|_2\neq 0$ for all $\delta>0$. Therefore, the last estimate shows that, for every $\delta>0$,
\[
\|b-c\|_{L^{n,q}(D_{\delta})}\geq C_{n,q,\lambda}.
\]
On the other hand, $\chi_{D_{1/m}}\to 0$ everywhere in $\Omega$ as $m\to\infty$, therefore Lemma~\ref{NormOnDisjoint} shows that $\|b-c\|_{L^{n,q}(D_{1/m})}\to 0$ as $m\to\infty$, which is a contradiction. Hence $u\leq 0$ in $\Omega$, which completes the proof.
\end{proof}

\begin{remark}
The estimate in Proposition~\ref{BeforeMaxPrinciplet} is notable since an assumption of the form $u\leq s$ on $\partial\Omega$, for $s>0$, does not guarantee that $u$ is bounded in $\Omega$, even if $s$ is assumed to be small and $b-c$ has small $L^{n,q}(\Omega)$ norm. This is exhibited by the argument in the proof of Lemma 7.4 in \cite{KimSak}.
\end{remark}

We now show a maximum principle for subsolutions in the case $d\geq\dive c$.

\begin{prop}\label{MaxPrinciplet}
Under the same assumptions as in Lemma~\ref{BoundedSolutions}, assume that $u\in W^{1,2}(\Omega)$ is a subsolution to the equation $-\dive(A\nabla u+bu)+c\nabla u+du\leq -\dive f+g$. Then,
\[
\sup_{\Omega}u\leq C\left(\sup_{\partial\Omega}u^++\|f\|_{n,1}+\|g\|_{\frac{n}{2},1}\right),
\]
where $C$ depends on $n,\lambda$ and $\|b-c\|_{n,1}$ only.
\end{prop}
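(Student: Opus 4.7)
The strategy is to reduce the inhomogeneous subsolution problem with non-zero boundary data to the homogeneous maximum principle of Proposition~\ref{BeforeMaxPrinciplet} by two successive subtractions. The appearance of the constant $C$ in the conclusion is forced by the fact that, under $d \geq \dive c$, constants are not supersolutions of $\mathcal{L}$, so the shift needed to reduce to zero boundary data produces a residual forcing that must itself be absorbed by a second auxiliary solution.

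First, Lemma~\ref{BoundedSolutions} produces $w \in W_0^{1,2}(\Omega)$ solving $\mathcal{L}w = -\dive f + g$ with $\|w\|_{\infty} \leq C_1(\|f\|_{n,1} + \|g\|_{\frac{n}{2},1})$, where $C_1 = C_1(n,\lambda,\|b-c\|_{n,1})$. Set $s = \sup_{\partial\Omega} u^+$, $K = s + \|w\|_{\infty}$, and $v = u - w - K$. A direct case check using $w + \|w\|_{\infty} \geq 0$ gives the pointwise identity
\[
(u - w - K)^+ = \bigl((u-s)^+ - w - \|w\|_{\infty}\bigr)^+,
\]
and since $(u-s)^+ - w \in Y_0^{1,2}(\Omega)$ and truncating a $Y_0^{1,2}$ function from above by a nonnegative constant preserves the space, $v^+ \in Y_0^{1,2}(\Omega)$; that is, $v \leq 0$ on $\partial\Omega$ in the sense of the paper.

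Next, one computes $\mathcal{L}v = \mathcal{L}u - \mathcal{L}w - K(d - \dive b) \leq -K(d - \dive b)$ distributionally, and the hypothesis $d \geq \dive c$ rewrites this obstruction as a divergence-form forcing. Indeed, for every $\phi \in C_c^{\infty}(\Omega)$ with $\phi \geq 0$, applying \eqref{eq:Div0} to $K\phi$ yields
\[
\langle -K(d - \dive b), \phi\rangle = -K\int d\phi - K\int b\nabla\phi \leq K\int c\nabla\phi - K\int b\nabla\phi = \langle -\dive(K(c-b)), \phi\rangle,
\]
so that $\mathcal{L}v \leq -\dive F$ distributionally with $F := K(c-b) \in L^{n,1}(\Omega)$ of norm $K\|b-c\|_{n,1}$. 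This is precisely the step where the assumption $d \geq \dive c$ combined with $b-c \in L^{n,1}(\Omega)$ is essential and constitutes the main obstacle of the proof.

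Applying Lemma~\ref{BoundedSolutions} a second time with forcing $-\dive F$ produces $w_1 \in W_0^{1,2}(\Omega)$ with $\mathcal{L}w_1 = -\dive F$ and $\|w_1\|_{\infty} \leq C_2 K\|b-c\|_{n,1}$. Then $\mathcal{L}(v - w_1) \leq 0$. The pointwise bound $(v - w_1)^+ \leq v^+ + |w_1|$, together with the fact that $v^+ + |w_1| \in Y_0^{1,2}(\Omega)$ and the standard property that a $W^{1,2}$ function dominated in modulus by a $Y_0^{1,2}$ function belongs to $Y_0^{1,2}$, yields $(v - w_1)^+ \in Y_0^{1,2}(\Omega)$. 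Proposition~\ref{BeforeMaxPrinciplet} therefore gives $v - w_1 \leq 0$ in $\Omega$, so $v \leq \|w_1\|_{\infty}$ and
\[
\sup_\Omega u \leq \|w_1\|_{\infty} + \|w\|_{\infty} + K \leq s\bigl(1 + C_2\|b-c\|_{n,1}\bigr) + \bigl(2 + C_2\|b-c\|_{n,1}\bigr)\|w\|_{\infty}.
\]
Inserting the bound on $\|w\|_{\infty}$ yields the claimed estimate with $C$ depending only on $n$, $\lambda$, and $\|b-c\|_{n,1}$.
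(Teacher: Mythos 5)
Your proof is correct and takes a genuinely different route from the paper, though both exploit the same central observation: under $d\geq\dive c$, the error introduced by subtracting a constant can be written as a divergence-form forcing $-\dive((\text{const})(c-b))$ with $(c-b)\in L^{n,1}(\Omega)$, hence absorbable by a second application of Lemma~\ref{BoundedSolutions}. The paper's proof first passes from $u-v$ to $(u-v)^+$ using Stampacchia's theorem that the maximum of two subsolutions is a subsolution (with a partition-of-unity argument flagged in the non-coercive case), then uses $w^+\geq 0$ together with $d\geq\dive c$ to discard the $c\nabla u+du$ terms from the operator entirely, reducing to the "pure drift" form $-\dive(A\nabla u+(b-c)u)\leq 0$ before shifting by the constant $l$ and correcting with an auxiliary solution $v_0$. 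You instead keep the full operator $\mathcal{L}$ throughout, perform the constant shift directly, and bound the resulting distributional error $-K(d-\dive b)$ from above by $-\dive(K(c-b))$ by testing the hypothesis $d\geq\dive c$; this avoids the Stampacchia step altogether, at the cost of requiring the (standard but uncited) fact that a $W^{1,2}$ function with $0\leq h\leq H$ for some $H\in Y_0^{1,2}(\Omega)$ lies in $Y_0^{1,2}(\Omega)$ — a step the paper's proof also uses implicitly when asserting $w_0\leq 0$ on $\partial\Omega$. One trivial slip: you cite \eqref{eq:Div0} (which encodes $d\geq\dive b$) when you mean its analogue for $d\geq\dive c$; the algebra that follows correctly invokes the latter.
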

\begin{proof}
Let $v\in W_0^{1,2}(\Omega)$ be the solution to the equation $-\dive(A\nabla v+bv)+c\nabla v+dv=-\dive f+g$ in $\Omega$, from Lemma 4.4 in \cite{KimSak}. Then, Lemma~\ref{BoundedSolutions} shows that $\|v\|_{\infty}\leq C\|f\|_{n,1}+C\|g\|_{\frac{n}{2},1}$ for some $C$ that depends on $n,\lambda$ and $\|b-c\|_{n,1}$, and also $w=u-v$ is a subsolution to
\[
-\dive(A\nabla w+bw)+c\nabla w+dw\leq 0
\]
in $\Omega$. Since $0$ is a subsolution to the equation above, we have that $w^+=\max\{w,0\}\in W^{1,2}(\Omega)$ is a subsolution to the same equation in $\Omega$: in the case that the operator is coercive, this follows from Theorem 3.5 in \cite{StampacchiaDirichlet}; in the general case, we split the domain in finitely many subdomains in which the operator is coercive, and we use a partition of unity argument. Since $w^+\geq 0$, the assumption $d\geq\dive c$ shows that $w^+$ is a subsolution to
\[
-\dive(A\nabla w^++(b-c)w^+)\leq 0.
\]
Let now $l=\sup_{\partial\Omega}u^+$, and assume that $l<\infty$. We then let $v_0\in W_0^{1,2}(\Omega)$ be the solution to the equation $-\dive(A\nabla v_0+(b-c)v_0)=-\dive(l(b-c))$ in $\Omega$. Then, Lemma~\ref{BoundedSolutions} shows that $\|v_0\|_{\infty}\leq C\|l(b-c)\|_{n,1}$. Moreover, since $w_0=w^+-l+v_0\in W^{1,2}(\Omega)$, and $-\dive(A\nabla w_0+(b-c)w_0)\leq 0$ in $\Omega$ with $w_0\leq 0$ on $\partial\Omega$, Proposition~\ref{BeforeMaxPrinciplet} shows that $w_0\leq 0$ in $\Omega$. Hence,
\[
\sup_{\Omega}u=\sup_{\Omega}(v+w)\leq\|v\|_{\infty}+\sup_{\Omega}w^+\leq\|v\|_{\infty}+\sup_{\Omega}(w_0+l-v_0)\leq \|v\|_{\infty}+\|v_0\|_{\infty}+l,
\]
and combining with the pointwise bounds on $v$ and $v_0$ above completes the proof.
\end{proof}

\begin{remark}
As in Remark~\ref{Rem}, the assumption $|\Omega|<\infty$ in the previous Proposition can be dropped, after assuming also that $g\in L^{2_*}(\Omega)$ and $f\in L^2(\Omega)$.
\end{remark}

\subsection{Local Boundedness}
We now turn to a local boundedness estimate. We will follow the idea in the proof of Proposition~\ref{SupByIntegral}; that is, we will first show the estimate in the case that the $L^{n,1}$ norm of $b-c$ is small, and using the maximum principle in Proposition~\ref{MaxPrinciplet} we will pass to general norms. The first step is the following.

\begin{lemma}\label{BeforeLocalBoundt}
Let $B_r\subseteq \bR^n$ be a ball of radius $r$. Let $A$ be uniformly elliptic and bounded in $B_r$ with ellipticity $\lambda$. There exists $\e_0>0$, depending only on $n$ and $\lambda$ such that, if $b\in L^{n,1}(B_r)$ with $\|b\|_{n,1}\leq\e_0$, then for every $u\in W^{1,2}(B_r)$ that is a nonnegative subsolution to $-\dive(A\nabla u+bu)\leq 0$ in $B_r$, we have that
\[
\sup_{B_{r/2}}u\leq C_0\fint_{B_r}u,
\]
where $C_0$ depends on $n,\lambda$ and $\|A\|_{\infty}$ only.
\end{lemma}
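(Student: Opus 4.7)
The natural approach is Moser iteration, where the smallness of $\|b\|_{L^{n,1}}$ is used to absorb the lower-order contributions. Since $c=d=0$, the subsolution inequality reads $\int A\nabla u\cdot\nabla\phi+\int bu\cdot\nabla\phi\le 0$ for nonnegative $\phi\in C_c^\infty(B_r)$. For a standard cutoff $\eta$ with $\eta\equiv1$ on $B_\rho$ and $\eta\equiv 0$ outside $B_\sigma$, $|\nabla\eta|\le 2/(\sigma-\rho)$, I would test (after truncating $u$ from above for regularity) with $\phi=\eta^2u^{2\gamma-1}$ for $\gamma\ge 1$. Setting $w=u^\gamma$ and collecting terms, ellipticity and Cauchy's inequality give
\[
\frac{c_1(\lambda)}{\gamma}\int|\nabla w|^2\eta^2 \le \frac{c_2(\lambda,\|A\|_\infty)}{\gamma}\int w^2|\nabla\eta|^2 + R_b,
\]
where $R_b\le C\int|b|w|\nabla w|\eta^2+C\int|b|w^2\eta|\nabla\eta|$ with constants independent of $\gamma$ (crucially exploiting $(2\gamma-1)/\gamma\le 2$).

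\textbf{Step 2.} Each $b$-integral is handled by the trilinear H\"older inequality in Lorentz spaces (Theorem 1.4.17 in \cite{Grafakos}) with exponent triple $(L^{n,1},L^{2^*,2},L^2)$ (valid because $\tfrac{1}{n}+\tfrac{1}{2^*}+\tfrac{1}{2}=1$), combined with the improved Sobolev estimate of Lemma~\ref{ImprovedSobolev}, yielding
\[
R_b \le C_n\|b\|_{L^{n,1}}\bigl(\|\eta\nabla w\|_2+\|w\nabla\eta\|_2\bigr)^2.
\]
Choosing $\e_0=\e_0(n,\lambda)$ so small that $C_n\e_0$ is controlled by the coercive coefficient, one absorbs and obtains the reverse-H\"older inequality
\[
\|u\|_{L^{2^*\gamma}(B_\rho)} \le \Bigl(\frac{C\gamma^{a}}{(\sigma-\rho)^{2}}\Bigr)^{\!1/(2\gamma)}\|u\|_{L^{2\gamma}(B_\sigma)}
\]
with $C=C(n,\lambda,\|A\|_\infty)$ and a fixed exponent $a$. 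Standard Moser iteration on $\gamma_k=(n/(n-2))^k$ over a shrinking family of radii then produces $\sup_{B_{r/2}}u\le C\,r^{-n/2}\|u\|_{L^2(B_r)}$. Finally, the interpolation $\|u\|_{L^2(B_\sigma)}^2\le\sup_{B_\sigma}u\cdot\|u\|_{L^1(B_r)}$ with Young's inequality furnishes, for $r/2\le\rho<\sigma\le r$,
\[
\sup_{B_\rho}u\le\tfrac{1}{2}\sup_{B_\sigma}u+\frac{C}{(\sigma-\rho)^n}\fint_{B_r}u,
\]
and the Giaquinta--Giusti iteration lemma concludes $\sup_{B_{r/2}}u\le C_0\fint_{B_r}u$.

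\textbf{Main obstacle.} The delicate point is in Step 2: one must secure the smallness threshold $\e_0$ independently of $\gamma$. A naive accounting produces a coercive coefficient of order $1/\gamma$ on the left while the $b$-terms remain $\gamma$-independent, threatening a $\gamma$-dependent $\e_0$. This is overcome by a careful distribution of powers of $u$ among the three Lorentz H\"older factors (keeping one copy of $\eta\nabla w$ or $w\nabla\eta$ whose $L^2$ norms carry the appropriate scaling), which forces the dangerous term to match the coercivity exactly. Should a direct $\gamma$-uniform bound prove elusive, a fallback is to split $b=b_1+b_2$ with $b_1\in L^\infty$ of fixed size and $\|b_2\|_{L^{n,1}}$ arbitrarily small, applying the smallness argument to $b_2$ and the classical Moser iteration (for bounded drift) to the contribution of $b_1$.
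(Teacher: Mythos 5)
Your approach via Moser iteration is genuinely different from the paper's, and the "main obstacle" you flag is a real obstruction, not a technical nuisance that can be reshuffled away. Testing with $\phi=\eta^2u^{2\gamma-1}$ and setting $w=u^\gamma$, the coercive term comes out as $\frac{2\gamma-1}{\gamma^2}\int A\nabla w\cdot\nabla w\,\eta^2$ while the drift term $\frac{2\gamma-1}{\gamma}\int b\cdot\nabla w\,w\eta^2$ carries a relative factor of $\gamma$; after Lorentz--H\"older and the improved Sobolev inequality, the dangerous contribution is of size $\gamma\|b\|_{n,1}\|\eta\nabla w\|_2^2$ against a coercive $\lambda\|\eta\nabla w\|_2^2$. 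No regrouping of the three H\"older factors changes this, because the mismatch is structural (it comes from the ratio of powers $u^{\beta-1}|\nabla u|^2$ versus $u^{\beta}\nabla u$, not from how you split the Lorentz exponents); the admissible exponent triple is essentially forced, and you always recover the same $\gamma$-factor. So absorption requires $\|b\|_{n,1}\lesssim\lambda/\gamma$, which is incompatible with a fixed $\e_0$ as $\gamma_k\to\infty$.

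The fallback $b=b_1+b_2$ with $b_1\in L^\infty$ and $\|b_2\|_{n,1}$ small also does not close the gap. The $b_1$-part is fine ($L^\infty$ drift gives a lower-order term, no smallness needed), but the $b_2$-part still produces $\gamma\|b_2\|_{n,1}\|\eta\nabla w\|_2^2$. If the split is fixed once and for all, the same $\gamma$-dependent threshold is needed for $b_2$. If the split is allowed to depend on $\gamma_k$ (raising the truncation level $T_k$ so that $\|b\chi_{[|b|>T_k]}\|_{n,1}\lesssim\lambda/\gamma_k$), then $\|b_1^{(k)}\|_\infty\le T_k$ grows, and there is no control on the growth rate of $T_k$ from $\|b\|_{L^{n,1}}$ alone -- the norm is absolutely continuous but not at any quantified rate -- so the Moser infinite product need not converge.

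The paper avoids Moser iteration entirely. After a single Caccioppoli-type estimate (Lemma~\ref{DerivativeByU}, valid for small $\|b\|_{n,1}$ because the form is then coercive), the subsolution $u$ is decomposed as $u=v+w$ in $B_{7/8}$, where $v\in W_0^{1,2}(B_{7/8})$ solves the same equation with inhomogeneity $F=-\dive(A\nabla u+bu)\in W^{-1,2}$ and satisfies $v\le 0$ by the maximum principle (Proposition~\ref{BeforeMaxPrinciplet}), so $u\le w$ with $w$ an exact solution of the homogeneous equation. Cutting off, $w_0=w\phi_3$ satisfies an equation with right-hand side $-\dive f-g$ where $f,g$ are supported away from $B_{5/8}$; approximating $f,g$ by bounded data and using the Green's function representation and the $L^{\frac{n}{n-2},\infty}$, $L^{\frac{n}{n-1},\infty}$ bounds of Theorem~\ref{Green} gives $\sup_{B_{1/2}}u\le C(\fint_{B_1}|u|^2)^{1/2}$. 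The pass from the $L^2$ to the $L^1$ average is then the same Giaquinta-style iteration you use at the end. The decisive advantage of this route is that only one fixed energy estimate is needed, so a single $\e_0$ suffices; the exponent escalation that defeats the Moser argument never occurs.
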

\begin{proof}
Since the estimate is scale invariant, we will assume that $r=1$.

Note first that there exists $\e_0>0$ depending only on $n$ and $\lambda$ such that, if $\|b\|_{n,1}\leq\e_0$, then
\begin{equation}\label{eq:Coercive}
\int_{B_1}A\nabla w\nabla w+b\nabla w\cdot w\geq\frac{\lambda}{3}\int_{B_1}|\nabla w|^2,
\end{equation}
for every $w\in W_0^{1,2}(\Omega)$. Then, an inspection of the proof of Lemma~\ref{DerivativeByU}, together with the Sobolev inequality and \eqref{eq:LorentzNormsRelations} show that, if $u_0$ is a nonnegative subsolution to $-\dive(A\nabla u_0+bu_0)\leq -\dive f_0$ in $B_1$ for some $f_0\in L^2(B_1)$, then for every $\phi\in C_c^{\infty}(B_1)$,
\begin{equation}\label{eq:PlugPhi}
\int_{B_1}|\phi\nabla u_0|^2\leq C\int_{B_1}|u_0\nabla\phi|^2+\int_{B_1}|f_0\nabla\phi|^2,\quad\left(\int_{B_1}|\phi u_0|^{2^*}\right)^{2/2^*}\leq C\int_{B_1}|u_0\nabla\phi|^2+\int_{B_1}|f_0\nabla\phi|^2,
\end{equation}
where $C$ depends only on $n,\lambda$ and $\|A\|_{\infty}$, and where the second estimate follows from the first one by adding the term $\int_{B_1}|u_0\nabla\phi|^2$ to both sides of the first estimate, and using Sobolev's inequality. For the rest of this proof, we will assume that $\|b\|_{L^{n,1}(B_1)}\leq\e_0$.

We first apply \eqref{eq:PlugPhi} for $u_0=u$, $f_0=0$ and $\phi=\phi_1$ being a smooth cutoff function supported in $B_1$, with $\phi_1\equiv 1$ in $B_{7/8}$ and $0\leq\phi_1\leq 1$. Then, we obtain that
\begin{equation}\label{eq:Deru}
\left(\int_{B_{7/8}}|u|^{2^*}\right)^{2/2^*}+\int_{B_{7/8}}|\nabla u|^2\leq C\int_{B_1}|u|^2.
\end{equation}
Set now $F=-\dive(A\nabla u+bu)$, then $F\in W^{-1,2}(B_{7/8})$ (the dual of $W_0^{1,2}(B_{7/8})$). Therefore, from Lemma 4.4 in \cite{KimSak} (for $c,d$ being identically equal to $0$), there exists $v\in W_0^{1,2}(B_{7/8})$ such that $-\dive(A\nabla v+bv)=F=-\dive(A\nabla u+bu)$. Then, using $v$ in \eqref{eq:Coercive} and using H{\"o}lder's inequality and Sobolev's inequality, we estimate
\begin{align*}
\frac{\lambda}{3}\int_{B_{7/8}}|\nabla v|^2&\leq \int_{B_{7/8}}A\nabla v\nabla v+b\nabla v\cdot v=\int_{B_{7/8}}A\nabla u\nabla v+b\nabla v\cdot u\\
&\leq C\|\nabla v\|_{L^2(B_{7/8})}\left(\|\nabla u\|_{L^2(B_{7/8})}+\|bu\|_{L^2(B_{7/8})}\right)\leq C\|\nabla v\|_{L^2(B_{7/8})}\|u\|_{L^2(B_1)},
\end{align*}
where $C$ depends on $n,\lambda$ and $\|A\|_{\infty}$, and where we used \eqref{eq:Deru} in the last step. Combining with Sobolev's inequality, we obtain that
\begin{equation}\label{eq:vbyu}
\left(\int_{B_{7/8}}|v|^{2^*}\right)^{2/2^*}+\int_{B_{7/8}}|\nabla v|^2\leq C\left(\int_{B_1}|u|^2\right)^{1/2}.
\end{equation}
Since $-\dive(A\nabla u+bu)\leq 0$, we obtain that $v\in W_0^{1,2}(B_{7/8})$ is a subsolution to $-\dive(A\nabla v+bv)\leq 0$, hence Proposition~\ref{BeforeMaxPrinciplet} shows that $v\leq 0$ in $B_{7/8}$. Therefore, setting $w=u-v$, we obtain that $w\in W^{1,2}(B_{7/8})$,
\[
-\dive(A\nabla w+bw)=0\,\,\,\text{in}\,\,\,B_{7/8},\,\,\,\text{and}\,\,\,u=v+w\leq w.
\]

Suppose that $\phi_2$ is a smooth cutoff function, with $\phi_2\equiv 1$ in $B_{3/4}$, $\phi_2$ supported in $B_{7/8}$ and $0\leq\phi_2\leq 1$. Then, using $\phi_2$ in \eqref{eq:PlugPhi}, for $u_0=w$ and $f_0=0$, we obtain that
\begin{equation}\label{eq:Nabw}
\left(\int_{B_{3/4}}|w|^{2^*}\right)^{2/2^*}+\int_{B_{3/4}}|\nabla w|^2\leq C\int_{B_{7/8}}|w|^2.
\end{equation}
Let now $\phi_3$ be a smooth cutoff function, with $\phi_3\equiv 1$ in $B_{5/8}$, $\phi_3$ supported in $B_{3/4}$ and $0\leq\phi_2\leq 1$, and set $w_0=w\phi_3\in W_0^{1,2}(B_1)$. Then, we have that $w_0$ solves the equation
\[
-\dive(A\nabla w_0+bw_0)=-\dive(A\nabla\phi_3\cdot w)-A\nabla w\nabla\phi_3-b\nabla\phi_3\cdot w
\]
in $B_1$. Define $f=A\nabla\phi_3\cdot w$ and $g=A\nabla w\nabla\phi_3+b\nabla\phi_3\cdot w\in L^2(B_2)$. Then, we estimate
\begin{equation}\label{eq:fSquared}
\|f\|_{L^2(B_1)}^2\leq C\int_{B_{3/4}}|w|^2\leq C\int_{B_1}|u|^2,
\end{equation}
where we used that $w=u-v$ and \eqref{eq:vbyu} in the last estimate. Moreover, using \eqref{eq:Nabw}, H{\"o}lder's inequality and \eqref{eq:vbyu}, we obtain that
\begin{equation}\label{eq:gSquared}
\|g\|_{L^2(B_1)}^2\leq C\int_{B_{3/4}}|\nabla w|^2+C\int_{B_{3/4}}|b-c|^2|w|^2\leq C\int_{B_{7/8}}|w|^2\leq C\int_{B_1}|u|^2.
\end{equation}
Note now that $f,g$ vanish in $B_{5/8}$, therefore there exist two sequences $(f_j),(g_j)$ of bounded functions in $B_1$, vanishing in $B_{9/16}$, with $f_j\to f$ and $g_j\to g$ in $L^2(B_1)$. We then consider Green's function $G(x,y)=G_y(x)$ for the operator $-\dive(A^t\nabla u)+b\nabla u$ in $B_1$ from Theorem~\ref{Green}, and set
\[
w_j(y)=\int_{B_1}\nabla G_y\cdot f_j-\int_{B_1}G_yg_j.
\]
From Theorem~\ref{Green}, $w_j\in W_0^{1,2}(B_1)$ and solves the equation $-\dive(A\nabla w_j+bw_j)=-\dive f_j-g_j$; hence $w_j-w_0\in W_0^{1,2}(B_1)$, and solves the equation
\[
-\dive(A\nabla(w_j-w_0)+b(w_j-w_0))=-\dive(f_j-f)-(g_j-g)
\]
in $B_1$. Moreover, since $f_j\to f$ and $g_j\to g$ in $L^2(B_1)$, Lemma 4.2 in \cite{KimSak} shows that $w_j\to w_0$ in $W_0^{1,2}(B_1)$. Hence, for a subsequence $(w_{j_i})$, $w_{j_i}\to w_0$ almost everywhere in $B_1$. Note also that, for every $y\in B_{1/2}$, $G_y\in W^{1,2}(B_1\setminus B_{9/16})$ from Theorem~\ref{Green}. Hence, using the formula of $w_j$ and the support properties of $f_j,g_j$ we obtain that, for almost every $y\in B_{1/2}$, 
\begin{align*}
|w_{j_i}(y)|&\leq \int_{B_1\setminus B_{9/16}}|\nabla G_y\cdot f_{j_i}|+\int_{B_1\setminus B_{9/16}}|G_yg_{j_i}|\\
&\leq C\|\nabla G_y\|_{L^2(B_1\setminus B_{1/16}(y))}\|f_{j_i}\|_{L^2(B_1)}+C\|G_y\|_{L^2(B_1\setminus B_{1/16}(y))}\|g_{j_i}\|_{L^2(B_1)},
\end{align*}
where we used that $B_{1/16}(y)\subseteq B_{9/16}$ for any $y\in B_{1/2}$. Letting $i\to\infty$, using the bounds in Theorem~\ref{Green}, \eqref{eq:fSquared} and \eqref{eq:gSquared} and also $u\leq w=w_0$ in $B_{1/2}$, we obtain that
\[
\sup_{B_{1/2}}u\leq C\left(\fint_{B_1}|u|^2\right)^{1/2},
\]
where $C$ depends on $n,\lambda$ and $\|A\|_{\infty}$ only.

We now let $0<t<s<1$ and set $\rho=\frac{s-t}{2}$. Then, for each $x$ with $|x|\leq t$, we use a scaling argument to apply the previous estimate in $B_{\rho}(x)$, and we obtain that
\[
\sup_{B_{\rho/2}(x)}u\leq\frac{C}{\rho^{n/2}}\left(\int_{B_{\rho}(x)}|u|^2\right)^{1/2}\leq\frac{C}{(s-t)^{n/2}}\left(\int_{B_s}|u|^2\right)^{1/2},
\]
since $B_{\rho}(x)\subseteq B_s$. This implies that
\[
\sup_{B_t}u\leq\frac{C}{(s-t)^{n/2}}\left(\int_{B_s}|u|^2\right)^{1/2}\leq\frac{C}{(s-t)^{n/2}}\left(\int_{B_s}|u|\cdot\sup_{B_s}u\right)^{1/2}\leq\frac{C^2}{2(s-t)^n}\int_{B_s}|u|+\frac{1}{2}\sup_{B_s}u
\] 
for all $0<t<s<1$, where $C$ only depends on $n,\lambda$ and $\|A\|_{\infty}$. Hence, using Lemma 5.1 on page 81 in \cite{Giaquinta} completes the proof.
\end{proof}

We now drop the smallness assumption in Lemma~\ref{BeforeLocalBoundt} and we show the following proposition.
\begin{prop}\label{LocalBoundt}
Let $B_r\subseteq \bR^n$ be a ball of radius $r$. Let also $A$ be uniformly elliptic and bounded in $B_r$ with ellipticity $\lambda$, and $b,c\in L^{n,q}(B_r)$ for some $q<\infty$, $d\in L^{\frac{n}{2},\infty}(B_r)$, with $b-c\in L^{n,1}(B_r)$ and $d\geq\dive c$, and $f\in L^{n,1}(B_r),g\in L^{\frac{n}{2},1}(B_r)$. Then, for every solution or nonnegative subsolution $u\in W^{1,2}(B_r)$ of $\mathcal{L}u=-\dive(A\nabla u+bu)+c\nabla u+du\leq -\dive f+g$ in $B_r$, we have that
\[
\sup_{B_{r/2}}|u|\leq C\left(\fint_{B_r}|u|+C\|f\|_{L^{n,1}(B_r)}+C\|g\|_{L^{\frac{n}{2},1}(B_r)}\right),
\]
where $C$ depends on $n,\lambda,\|A\|_{\infty}$ and $\|b-c\|_{n,1}$ only.
\end{prop}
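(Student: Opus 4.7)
The proof will follow the same two-stage template as Proposition~\ref{SupByIntegral}: first establish the estimate under a smallness assumption on $\|b-c\|_{L^{n,1}(B_r)}$, then remove the smallness by induction on the size of that norm, invoking the maximum principle Proposition~\ref{MaxPrinciplet} at each step. By scaling it suffices to treat $r=1$. If $u$ is a solution, the functions $u^+$ and $u^-=(-u)^+$ are nonnegative subsolutions to equations of the same form (with $f,g$ replaced by $\pm f,\pm g$), and $|u|=u^+ + u^-$; thus it suffices to prove the bound for nonnegative subsolutions.

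For the base case, I would fix a threshold $\e_0=\e_0(n,\lambda)>0$ and assume $\|b-c\|_{L^{n,1}(B_1)}\leq\e_0$. The key observation is that, under $d\geq\dive c$, the bilinear form associated with $\mathcal{L}$ becomes coercive on $W_0^{1,2}(B_1)$: writing $bv\nabla v+c\nabla v\cdot v=\tfrac{1}{2}(b-c)\nabla(v^2)+c\nabla(v^2)$ and using \eqref{eq:Div} with the nonnegative test function $v^2$ (justified by approximation), one obtains via H\"older for Lorentz norms and Lemma~\ref{ImprovedSobolev} the coercivity estimate
\[
\int_{B_1}A\nabla v\nabla v+bv\nabla v+c\nabla v\cdot v+dv^2\geq \bigl(\lambda-C_n\|b-c\|_{L^{n,1}}\bigr)\|\nabla v\|_{L^2}^2,
\]
so that for $\e_0<\lambda/(2C_n)$ the form is coercive with constant $\lambda/2$. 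With this in hand I would adapt the proof of Lemma~\ref{BeforeLocalBoundt}: a Cacciopoli-type estimate from Lemma~\ref{DerivativeByU} with a cutoff equal to $1$ on $B_{7/8}$ controls $\|u\|_{L^{2^*}(B_{7/8})}$ and $\|\nabla u\|_{L^2(B_{7/8})}$ by $\|u\|_{L^2(B_1)}+\|f\|_{n,1}+\|g\|_{n/2,1}$; solving $\mathcal{L}v=-\dive f+g$ on $B_{7/8}$ with $v\in W_0^{1,2}(B_{7/8})$ (which exists by coercivity; see Lemma~\ref{BoundedSolutions}) and applying Proposition~\ref{BeforeMaxPrinciplet} to $u-v$ (which uses precisely $d\geq\dive c$) transfers the boundary control onto $u-v$; the remaining piece is then represented via Green's function of $\mathcal{L}$ on $B_1$, which exists by Theorem~\ref{Green} applied to the adjoint (since $d\geq\dive c$ for $\mathcal{L}$ corresponds to $d\geq\dive b$ for $\mathcal{L}^t$). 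The localized $L^2$ bounds for $G(\cdot,y)$ and $\nabla G(\cdot,y)$ away from the pole supplied by Theorem~\ref{Green} yield
\[
\sup_{B_{1/2}} u\leq C\Bigl(\fint_{B_1} u^2\Bigr)^{1/2}+C\|f\|_{L^{n,1}(B_1)}+C\|g\|_{L^{n/2,1}(B_1)},
\]
and the passage from $L^2$ to $L^1$ averages is achieved, exactly as at the end of Lemma~\ref{BeforeLocalBoundt}, by Lemma~5.1 on page~81 of \cite{Giaquinta}.

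The inductive step then follows the argument of Proposition~\ref{SupByIntegral} essentially verbatim, replacing the direct additivity of $\|\cdot\|_n^n$ with the superadditivity of the $n$-th power of $\|\cdot\|_{L^{n,1}}$ over disjoint subsets supplied by Lemma~\ref{NormOnDisjoint0}. In the interior-dominated case one covers $B_{1/2}$ by balls $B_{1/4}(x)\subset B_{3/4}$ and invokes the inductive hypothesis; in the boundary-dominated case the $L^{n,1}$ norm of $b-c$ on the annulus $B_1\setminus B_{3/4}$ is at most $\e_0$, so the base case applied in balls $B_{1/8}(y)$ with $|y|=7/8$ bounds $u$ on a neighborhood of $\partial B_{7/8}$, and Proposition~\ref{MaxPrinciplet} applied in $B_{7/8}$ propagates this bound to the interior with an additive cost of $C(\|f\|_{n,1}+\|g\|_{n/2,1})$. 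Monotonicity of the norms $\|f\|_{L^{n,1}(\cdot)}$ and $\|g\|_{L^{n/2,1}(\cdot)}$ in the domain ensures that both steps interact correctly with the right hand side terms.

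The principal technical difficulty is the base case: matching the integrability of the auxiliary data produced by the cutoff-and-Green's-function manipulation (which involves products of $u$, $\nabla u$, $f$, $g$ and derivatives of the cutoff) against the weak-type bounds for $G$ and $\nabla G$ from Theorem~\ref{Green}, so as to keep the $L^{n,1}$ and $L^{n/2,1}$ norms of $f$ and $g$ isolated on the right hand side rather than absorbed into $L^2$-type quantities. The switch from the maximum principle valid under $d\geq\dive b$ to its analogue under $d\geq\dive c$ (Proposition~\ref{BeforeMaxPrinciplet}) also imposes a slightly different ordering of the argument compared with the proof of Lemma~\ref{BeforeLocalBoundt}.
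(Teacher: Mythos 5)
Your two-stage template (small-norm base case, then induction on $\|b-c\|_{L^{n,1}}^n$ using superadditivity from Lemma~\ref{NormOnDisjoint0} and the maximum principle Proposition~\ref{MaxPrinciplet}) is exactly the paper's strategy, and your inductive step matches the paper's. The difference is in the base case, where the paper makes two preliminary reductions that you bypass, and bypassing them creates a genuine gap.

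The paper first subtracts the \emph{global} $W_0^{1,2}(B_r)$ solution to $\mathcal{L}v=-\dive f+g$ from Lemma~\ref{BoundedSolutions}, so that $f$ and $g$ disappear entirely; the extra additive cost is $\|v\|_\infty\leq C(\|f\|_{n,1}+\|g\|_{n/2,1})$. It then observes that, for a nonnegative subsolution $u$, the hypothesis $d\geq\dive c$ gives $\int c\nabla(u\phi)+du\phi\geq 0$ for $\phi\geq 0$, so $u$ is automatically a subsolution to the much simpler operator $-\dive(A\nabla u+(b-c)u)\leq 0$. After this, $c$ and $d$ are gone, the new drift is $b-c\in L^{n,1}$, and Lemma~\ref{BeforeLocalBoundt} applies verbatim as the base case. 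You keep $f,g,c,d$ throughout and try to adapt the internals of Lemma~\ref{BeforeLocalBoundt} instead. That is where the trouble starts. Your auxiliary function $v\in W_0^{1,2}(B_{7/8})$ is chosen to solve $\mathcal{L}v=-\dive f+g$; but then $u-v$ satisfies only $\mathcal{L}(u-v)\leq 0$, i.e.\ it is a subsolution, not a solution. The Green's function representation step of Lemma~\ref{BeforeLocalBoundt} is applied to the cutoff $w\phi_3$ of a genuine \emph{solution} $w$, and a subsolution cannot be represented that way. In the paper's Lemma~\ref{BeforeLocalBoundt}, this is handled by solving $\mathcal{L}v=F$ where $F$ is the actual distribution $\mathcal{L}u\in W^{-1,2}(B_{7/8})$ (not $-\dive f+g$), so that $w=u-v$ is an honest solution and moreover $v\leq 0$ (by $F\leq 0$ and zero boundary data), giving $u\leq w$. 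If you instead solve $\mathcal{L}v=\mathcal{L}u$ in the inhomogeneous case, $v$ is no longer nonpositive, you only get a one-sided bound on $v$ via Proposition~\ref{MaxPrinciplet}, and you lose the inequality $u\leq w$ and the nonnegativity of $w$ that the rest of that argument uses. Related to this, your invocation of Proposition~\ref{BeforeMaxPrinciplet} to $u-v$ is not applicable as stated, since $u-v$ does not have nonpositive boundary data on $\partial B_{7/8}$.

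In short: your coercivity computation under $d\geq\dive c$ is correct but unnecessary once one makes the reduction to $-\dive(A\nabla u+(b-c)u)\leq 0$, and the missing idea is precisely that reduction together with the initial subtraction of the global solution from Lemma~\ref{BoundedSolutions}. With those two steps, the base case is just Lemma~\ref{BeforeLocalBoundt} and there is nothing new to prove; without them, the base case requires rebuilding a substantially harder version of that lemma, and as written the plan has the subsolution-versus-solution obstruction described above.
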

\begin{proof}
First, we subtract from $u$ the solution $v\in W_0^{1,2}(B_r)$ with $\mathcal{L}v=-\dive f+g$ constructed in Lemma~\ref{BoundedSolutions}. If $u$ solves $\mathcal{L}u=-\dive f+g$, then $\mathcal{L}(u-v)=0$, and if $\mathcal{L}u\leq 0$ is a nonnegative subsolution, then $\mathcal{L}(u-v)\leq 0$, so as in the proof of Proposition~\ref{MaxPrinciplet}, $\mathcal{L}\left((u-v)^+\right)\leq 0$ and $(u-v)^+\geq 0$. Hence, using the estimate in Proposition~\ref{MaxPrinciplet}, we can assume that $f$ and $g$ are identically $0$ in both cases.
	
Since the estimate is scale invariant, we will assume that $r=1$. Then, if $u$ is a solution to $\mathcal{L}u=0$, then as in the proof of Proposition~\ref{MaxPrinciplet}, $u^+,u^-$ are subsolutions to $\mathcal{L}u\leq 0$, hence $|u|=u^++u^-$ is a subsolution. Therefore it suffices to show the proposition for nonnegative subsolutions $u$. Note then that $-\dive(A\nabla u+(b-c)u)\leq 0$ in $B_1$, so we can assume in the above that $c,d$ are identically equal to $0$ in $B_1$ and $b\in L^{n,1}(B_1)$. Suppose also that $\lambda$ and $\|A\|_{\infty}$ are fixed.

We follow the idea of the proof of Proposition~\ref{SupByIntegral}. Let $\e_0>0$ be the number in Lemma~\ref{BeforeLocalBoundt}. From Proposition~\ref{MaxPrinciplet}, for every $m\in\mathbb N$ there exists $c_m\geq 1$ depending on $n,\lambda$ and $m$ such that, if $\Omega$ has finite measure, $\|b\|_{L^{n,1}(\Omega)}\leq \e_0\sqrt[n]{m}$ and $u\in W^{1,2}(\Omega)$ is a nonnegative subsolution of $-\dive(A\nabla u+bu)\leq 0$ in $\Omega$, then
\[
\sup_{\Omega}u\leq c_m\sup_{\partial\Omega}u.
\]
We will now inductively show that, if $C_0$ is the constant in Lemma~\ref{BeforeLocalBoundt} and $u\in W^{1,2}(B_1)$ is a nonnegative subsolution to $-\dive(A\nabla u+bu)\leq 0$ in $B_1$, then
\begin{equation}\label{eq:InductiveHypothesist}
\sup_{B_{1/2}}u\leq 8^{(m-1)n}C_0\prod_{j=1}^mc_j\fint_{B_1}u,\quad\text{if}\quad\|b\|_{L^{n,1}(B_1)}^n\leq m\e_0^n.
\end{equation}
First, when $m=1$ the estimate holds, from Proposition~\ref{BeforeMaxPrinciplet}. Let now $m\geq 1$, and suppose that \eqref{eq:InductiveHypothesist} holds for $m$. Suppose now that $b\in L^{n,1}(B_1)$ is such that $m\e_0^n<\|b\|_{L^{n,1}(B_1)}^n\leq(m+1)\e_0^n$. We distinguish between two cases: $\|b\|_{L^{n,1}(B_{3/4})}^n\leq m\e_0^n$, and $\|b\|_{L^{n,1}(B_{3/4})}^n>m\e_0^n$.

In the first case, for any $x$ with $|x|<\frac{1}{2}$, $B_{1/4}(x)\subseteq B_{3/4}$, therefore  $\|b\|_{L^{n,1}(B_{1/4}(x))}^n\leq m\e_0^n$. Then, from the inductive hypothesis \eqref{eq:InductiveHypothesist} and a scaling argument,
\[
\sup_{B_{1/8}(x)}u\leq 8^{(m-1)n}C_0\prod_{j=1}^mc_j\fint_{B_{1/4}(x)}u\leq 8^{mn}C_0\prod_{j=1}^mc_j\fint_{B_1}u.
\]
Since this estimate holds for any $x$ with $|x|<\frac{1}{2}$, we obtain that
\begin{equation}\label{eq:Case1t}
\sup_{B_{1/2}}u\leq\sup_{|x|\leq 1/2}\left(\sup_{B_{1/8}(x)}u\right)\leq 8^{mn}C_0\prod_{j=1}^mc_j\fint_{B_1}u\leq 8^{mn}C_0\prod_{j=1}^{m+1}c_j\fint_{B_1}u.
\end{equation}
In the second case, we have that $\|b\|_{L^n(B_{3/4})}^n>m\e_0^n$, therefore Lemma~\ref{NormOnDisjoint0} shows that
\[
\|b\|_{L^{n,1}(B_1\setminus B_{3/4})}^n\leq \|b\|_{L^{n,1}(B_1)}^n-\|b\|_{L^{n,1}(B_{3/4})}^n\leq(m+1)\e_0^n-m\e_0^n=\e_0^n.
\]
Now, for any $y$ with $|y|=\frac{7}{8}$, we have that $B_{1/8}(y)\subseteq B_1\setminus B_{3/4}$, therefore $\|b\|_{L^{n,1}(B_{1/8}(y))}^n\leq\e_0^n$. So, from Lemma~\ref{BeforeLocalBoundt}, we have that
\[
\sup_{B_{1/16}(y)}u\leq C_0\fint_{B_{1/8}(y)}u\leq 8^nC_0\fint_{B_1}u.
\]
Hence, using the definition of $c_{m+1}$, we obtain that
\begin{equation}\label{eq:Case2t}
\sup_{B_{1/2}}u\leq\sup_{B_{7/8}}u\leq c_{m+1}\sup_{\partial B_{7/8}}u\leq 8^nC_0c_{m+1}\fint_{B_1}u\leq 8^{mn}C_0\prod_{j=1}^{m+1}c_j\fint_{B_1}u.
\end{equation}
Hence, in all cases, \eqref{eq:Case1t} and \eqref{eq:Case2t} show that, if $m\e_0^n<\|b\|_{n,1}^n\leq (m+1)\e_0^n$, then
\begin{equation}\label{eq:InductionEndt}
\sup_{B_{1/2}}u\leq 8^{mn}C_0\prod_{j=1}^{m+1}c_j\fint_{B_1}u.
\end{equation}
If now $\|b\|_{n,1}^n\leq m\e_0^n$, \eqref{eq:InductiveHypothesist} for $m$ shows that \eqref{eq:InductionEndt} holds in this case as well; therefore, \eqref{eq:InductionEndt} holds whenever $\|b\|_{n,1}^n\leq (m+1)\e_0^n$. Inductively, this shows that \eqref{eq:InductiveHypothesist} holds for every $m\in\mathbb N$.

Finally, if $b\in L^{n,1}(B_1)$, choosing $m\in\mathbb N$ such that $(m-1)\e_0^n\leq\|b\|_{n,1}^n\leq m\e_0^n$ and applying \eqref{eq:InductiveHypothesist} for this $m$ completes the proof.
\end{proof}

We remark that passing through the smallness assumption could be avoided by using a Cacciopoli estimate of the type that appears in \cite{MourgoglouRegularity}. However, we wanted to exhibit a similar application of the idea in Proposition~\ref{SupByIntegral}.

\bibliographystyle{amsalpha}
\bibliography{Bibliography}

\end{document}